\newcommand\cO{\mathscr{O}}
\newcommand{\OO}{\cO}
\newcommand\cT{\mathscr{T}}
\newcommand\cY{\mathscr{Y}}
\newcommand\cU{\mathscr{U}}
\renewcommand{\leq}{\leqslant}
\renewcommand{\geq}{\geqslant}
\def\bA{\mathbf{A}}
\def\bC{\mathbf{C}}
\def\bG{\mathbf{G}}
\def\bV{\mathbf{V}}
\def\sfB{\mathsf{B}}
\def\sfC{\mathsf{C}}
\def\sfG{\mathsf{G}}
\def\sfH{\mathsf{H}}
\def\sfJ{\mathsf{J}}
\def\sfU{\mathsf{U}}
\def\sfR{\mathsf{R}}
\def\sfT{\mathsf{T}}
\def\sfY{\mathsf{Y}}
\def\sfZ{\mathsf{Z}}
\newcommand{\beq}{\begin{equation}}
\newcommand{\eeq}{\end{equation}}
\theoremstyle{plain}
\newtheorem{theorem}[subsubsection]{Theorem}
\newtheorem{proposition}[subsubsection]{Proposition}
\newtheorem{lemma}[subsubsection]{Lemma}
\newtheorem{conjecture}[subsubsection]{Conjecture}
\theoremstyle{remark}
\newtheorem{remark}[subsubsection]{Remark}
\theoremstyle{definition}
\newtheorem{definition}[subsubsection]{Definition}
\newcommand{\on}{\operatorname}
\DeclareMathOperator{\fnt}{f}
\DeclareMathOperator{\Rep}{Rep}
\DeclareMathOperator{\RHom}{RHom}
\DeclareMathOperator{\Hom}{Hom}
\DeclareMathOperator{\Coh}{Coh}
\DeclareMathOperator{\Perf}{Perf}
\DeclareMathOperator{\QCoh}{QCoh}
\DeclareMathOperator{\IndCoh}{IndCoh}
\DeclareMathOperator{\pt}{pt}
\DeclareMathOperator{\GL}{GL}
\DeclareMathOperator{\SL}{SL}
\DeclareMathOperator{\reg}{reg}
\DeclareMathOperator{\rs}{rs}
\DeclareMathOperator{\Ind}{Ind}
\DeclareMathOperator{\gr}{gr}
\DeclareMathOperator{\image}{im}
\DeclareMathOperator{\colim}{colim}
\DeclareMathOperator{\End}{End}
\DeclareMathOperator{\Ext}{Ext}
\DeclareMathOperator{\Spin}{Spin}
\DeclareMathOperator{\soc}{soc}
\DeclareMathOperator{\Spec}{Spec}
\DeclareMathOperator{\Free}{Free}
\DeclareMathOperator{\KFree}{KFree}
\DeclareMathOperator{\Shv}{Shv}
\DeclareMathOperator{\Mod}{Mod}
\DeclareMathOperator{\PGL}{PGL}
\DeclareMathOperator{\module}{mod-}
\DeclareMathOperator{\Bim}{Bim}
\DeclareMathOperator{\Perv}{Perv}
\DeclareMathOperator{\KPerv}{KPerv}
\DeclareMathOperator{\waki}{waki}
\DeclareMathOperator{\Fl}{Fl}
\DeclareMathOperator{\A}{A}
\DeclareMathOperator{\K}{K}
\DeclareMathOperator{\Z}{Z}
\DeclareMathOperator{\Gr}{Gr}
\DeclareMathOperator{\id}{id}
\DeclareMathOperator{\supp}{supp}
\DeclareMathOperator{\Sym}{Sym}
\DeclareMathOperator{\aff}{aff}
\DeclareMathOperator{\un}{un}
\DeclareMathOperator{\sct}{sc}
\DeclareMathOperator{\ad}{ad}
\DeclareMathOperator{\act}{act}
\DeclareMathOperator{\bdry}{bdry}
\DeclareMathOperator{\IC}{IC}
\DeclareMathOperator{\mix}{mix}
\DeclareMathOperator{\Av}{Av}
\newcommand{\bs}{\backslash}
\newcommand{\G}{\sfG}
\newcommand{\wG}{\widetilde{\G}}
\renewcommand{\H}{\Shv_{(I)}(\Fl)}
\newcommand{\Hs}{\mathscr{H}}
\newcommand{\wHws}{ {}_\chi\mathscr{H}_\chi}
\newcommand{\wHs}{ {}_\chi\mathscr{H}}
\newcommand{\Hws}{\mathscr{H}_{\chi}}
\newcommand{\wH}{\Shv_{(I, \chi)}(\Fl)}
\newcommand{\wHw}{\Shv((I, \chi) \bs G(\!(t)\!) /(I, \chi))}
\newcommand{\Hwfs}{\mathscr{H}^{\fnt}_\chi}
\newcommand{\Hfs}{\mathscr{H}^{\fnt}}
\newcommand{\wHfs}{{}_\chi\mathscr{H}^{\fnt}}
\newcommand{\wHwfs}{{}_\chi\mathscr{H}^{\fnt}_\chi}
\newcommand{\sq}{}
\title[Universal Arkhipov--Bezrukavnikov]{The universal monodromic Arkhipov--Bezrukavnikov equivalence}
\author{Gurbir Dhillon and Jeremy Taylor}
\date{}
\begin{document}
\begin{abstract}

\iffalse 
We prove a universal monodromic enhancement of the Arkhipov--Bezrukavnikov equivalence.  The universal monodromic Iwahori--Whittaker is equivalent to equivariant quasicoherent sheaves on the Grothendieck alteration of the Langlands dual group. Moreover we also give a spectral description of the universal monodromic bi-Whittaker category.

\fi 
We identify equivariant quasicoherent sheaves on the Grothendieck alteration of a reductive group $\sfG$ with universal monodromic Iwahori--Whittaker sheaves on the enhanced affine flag variety of the Langlands dual group $G$.  This extends a similar result for equivariant quasicoherent sheaves on the Springer resolution due to Arkhipov--Bezrukavnikov. We further give a monoidal identification between adjoint equivariant coherent sheaves on the group $\sfG$ itself and bi-Iwahori--Whittaker sheaves on the loop group of $G$. These results are used in the sequel to this paper to prove the tame local Betti geometric Langlands conjecture of Ben-Zvi--Nadler. 

\iffalse 
Given a reductive group $\sfG$ over a field $k$ of characteristic zero, we identify the category of $\sfG$-equivariant coherent sheaves on the Grothendieck alteration $\widetilde{\sfG}$ with a category of universal monodromic Iwahori--Whittaker sheaves on the enhanced affine flag variety of its Langlands dual group.  This extends a similar result for equivariant coherent sheaves on the Springer resolution $\widetilde{\mathscr{N}} \subset \widetilde{\sfG}$ due to Arkhipov--Bezrukavnikov. We further give a monoidal identification between adjoint equivariant coherent sheaves on $\sfG$ itself and a category of bi-Iwahori--Whittaker sheaves. 
\fi

Our proof of fully faithfulness provides an alternative to the argument of Arkhipov--Bezrukavnikov. Namely, while they localize in unipotent directions, we localize in semi-simple directions, thereby reducing fully faithfulness to an order of vanishing calculation in semi-simple rank one.
\end{abstract}
\maketitle
\setcounter{tocdepth}{1}

\vspace{-0.5cm}

\tableofcontents

\section{Introduction}

\subsection{Overview}\phantom{hi}

\sq In the paper \cite{BZN}, Ben-Zvi--Nadler introduced the Betti geometric Langlands program, a version of geometric Langlands in which the spectral side concerns the usual character varieties of topological surfaces, i.e., the representation varieties of their fundamental groups.\footnote{In the case of surfaces without punctures, i.e., the global unramified case, their conjecture is a now a theorem \cite{ABC}.}

Among their many interesting conjectures, the basic local assertion is an identification of two universal versions of the affine Hecke category; this statement in fact had appeared in their earlier work \cite{BZN07}. Here, in terms of character varieties, local means near a given puncture in the surface, and universal means that one does not fix the monodromy of local systems around the puncture, but rather allows them to vary through all conjugacy classes in the group. \iffalse I.e., one passes from the individual symplectic leaves of the character variety to the entire foliation. \fi 

 As they emphasized in their initial paper, when one restricts from all conjugacy classes to unipotent ones, their local conjecture was already a celebrated theorem of Bezrukavnikov \cite{B}. This result of Bezrukavnikov, in turn, built on an earlier work joint with Arkhipov \cite{AB}. 

\sq The goal of this paper and its sequel is to supply a proof of Ben-Zvi--Nadler's  local conjecture, i.e., tame local Betti geometric Langlands. Accordingly, in this first paper, we provide the deformation of the results of \cite{AB} over all conjugacy classes, and in its sequel, we provide the deformation of \cite{B}.  

A direct deformation of the arguments of Arkhipov and Bezrukavnikov seems to present nontrivial challenges at some key steps, cf. Remark \ref{r:brr} below. Partly for this reason, we have chosen to provide some alternative arguments at various points; we hope these provide a useful supplement to their perspective.

\sq In the remainder of this introduction, we review for context the statement of tame local Betti geometric Langlands in Section \ref{s:conj}, provide a precise statement of our results in Section \ref{ss:results}, and discuss some aspects of the argument in Section \ref{ss:idea}.

\subsection{Statement of tame local Betti geometric Langlands} \phantom{hi}
\label{s:conj}

\sq It is both helpful as context and requires less technical input to formulate the deformation of Bezrukavnikov's equivalence, rather than that of Arkhipov--Bezrukavnikov, so this is where we begin. 

 In what follows, let $G$ be a pinned complex reductive group. Fix an auxiliary coefficient field $k$ of characteristic zero, and write $\sfG$ for Langlands dual group defined over $k$. 

\sq On the automorphic side, we denote by $\Fl$ the enhanced affine flag variety of $G$, viewed as an ind-analytic variety over the complex numbers. As such, we may consider the derived category of weakly Iwahori--constructible sheaves of $k$-vector spaces on $\Fl$ in the analytic topology. We denote this category, which is naturally monoidal under convolution, by $\Shv_{(I)}(\Fl)$. 

\begin{remark} One can get a good feel for this category, in particular its fine dependence on the chosen sheaf theory, simply by considering the monoidal unit. This object, for the usual reasons, is supported on the trivial Iwahori orbit. The full subcategory of $\Shv_{(I)}(\Fl)$ supported on this orbit is canonically equivalent to the category of all local systems of $k$-vector spaces on the abstract Cartan $T$. Under the tautological identification of the group algebra $R := k[\pi_1(T)]$  with the ring of functions on the dual torus $\sfT$,
we may equivalently think of this category as all quasicoherent sheaves on $\sfT$. Under this correspondence, the monoidal unit corresponds to the structure sheaf of $\sfT$, and individual character sheaves on $T$ correspond to the skyscraper sheaves at closed points of $\sfT$. In this sense, the monoidal unit is the direct integral of all the character sheaves on $T$, and is a prototypical object with universal monodromy.  \end{remark}

\sq On the dual side, we write $\sfB$ for the Borel subgroup of $\sfG$, and consider the tautological fiber product of adjoint quotient stacks $\sfB / \sfB \times_{\sfG / \sfG} \sfB / \sfB.$ Its category of ind-coherent sheaves, which we denote by 
$$\IndCoh(\sfB/\sfB \times_{\sfG/\sfG} \sfB/\sfB)$$is naturally monoidal under convolution. 

\sq Having introduced notation, let us state Ben-Zvi--Nadler's local conjecture, and explain its relation to Bezrukavnikov's equivalence. The conjecture reads as follows.

\iffalse 
A prototypical object of this category is  

We emphasize that, we work in the analytic topology and impose no finiteness conditions. Thus the stalks of compact objects may be infinite dimensional, but must be finitely generated modules over the fundamental group.
\fi 

\iffalse 
More generally, monodromy around the left and right $T$-orbits makes $\Shv_{(I)}(\Fl)$ into an $R$-bilinear category. Regarding $\Shv_{(I)}(\Fl)$ as a family of categories over $\sfT \times \sfT$, the fibers are then affine Hecke categories for certain endoscopy groups, cf. \cite{DLYZ}.
\fi 
\iffalse 
\subsection{Bezrukavnikov's equivalence}
This paper is the first step towards the tame local Betti geometric Langlands equivalence.
\fi 
\begin{conjecture}[\cite{BZN07, BZN}]\label{TameBetti}
There is a monoidal equivalence $$\Shv_{(I)}(\Fl) \simeq \IndCoh(\sfB/\sfB \times_{\sfG/\sfG} \sfB/\sfB).$$
\end{conjecture}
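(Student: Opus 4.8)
The plan is to realize the full affine Hecke category as the \emph{relative} endomorphisms of the universal monodromic Iwahori--Whittaker category, and then to evaluate both sides using the two main theorems of this paper. First, recall that convolution makes $\Hws := \Shv_{(I,\chi)}(\Fl)$ simultaneously a right module over $\H = \Shv_{(I)}(\Fl)$ and a left module over the bi-Iwahori--Whittaker category $\wHws$, with the two actions commuting; passing to the right action yields a monoidal functor
\[
F\colon \H \longrightarrow \End_{\wHws}(\Hws)
\]
to the monoidal category of $\wHws$-linear endofunctors of $\Hws$. I would prove the conjecture by showing that $F$ is an equivalence and that its target is $\IndCoh(\sfB/\sfB \times_{\sfG/\sfG} \sfB/\sfB)$.

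For the latter, apply the universal monodromic Arkhipov--Bezrukavnikov equivalence $\Hws \simeq \QCoh^{\sfG}(\wG)$ and the bi-Whittaker equivalence $\wHws \simeq \Coh^{\sfG}(\sfG)$, together with the compatibility --- which should come with the construction of the former --- that the $\wHws$-action on $\Hws$ is intertwined with the natural module structure of $\QCoh^{\sfG}(\wG)$ over $\QCoh^{\sfG}(\sfG)$ along the Grothendieck alteration $p\colon \wG \to \sfG$ (generically over $\sfG$ a degree-$|W|$ cover). Since $\wG/\sfG \cong \sfB/\sfB$ tautologically, one is reduced to the purely geometric identity $\End_{\QCoh(\sfG/\sfG)}\bigl(\QCoh(\sfB/\sfB)\bigr) \simeq \IndCoh(\sfB/\sfB \times_{\sfG/\sfG} \sfB/\sfB)$, which is an instance of $\IndCoh$ base change: the stacks $\sfB/\sfB$ and $\sfG/\sfG$ are smooth, so there is no ambiguity in the module structure, while their derived fiber product --- the (multiplicative) Steinberg stack --- is singular, which is exactly why $\IndCoh$, and not $\QCoh$, appears; keeping these two flavours, and the choice of pullback defining the module structure, straight is a recurring bookkeeping point.

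It remains to show $F$ is an equivalence, equivalently that the resulting monoidal functor $\H \to \IndCoh(\sfB/\sfB \times_{\sfG/\sfG} \sfB/\sfB)$ is one; this is the universal monodromic form of Bezrukavnikov's theorem \cite{B} and is where the real work lies. I would follow the strategy of \cite{B}: reduce both full faithfulness and essential surjectivity to computing $F$ on a generating family of standard objects --- the Wakimoto sheaves together with the standard (or costandard) sheaves indexed by the affine Weyl group --- which are expected to be carried, up to twisting by line bundles, to the structure sheaves of the finitely many irreducible components of the Steinberg stack. Essential surjectivity then follows because those sheaves generate the target under colimits, and full faithfulness by a Hom-comparison: geometric $\Ext$'s between standard sheaves on $\Fl$ against Koszul-type $\Ext$'s on the Steinberg stack. \emph{The main obstacle} is the identification of $F$ on standards; as in \cite{B} I expect this to go by induction on length, reducing to the semisimple rank-one case, with the ambient family over $\sfT \times \sfT$ playing the rigidifying role --- the same device used in this paper for full faithfulness of the Arkhipov--Bezrukavnikov equivalence. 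Over the regular semisimple locus of $\sfT \times \sfT$ the endoscopy is trivial and the comparison is immediate, so the content concentrates near the walls; completing at the identity of $\sfT \times \sfT$ recovers the unipotent theorem of \cite{B}, which serves both as a consistency check and as an input to the wall analysis. Monoidality of the final equivalence is built in, being associativity of convolution, once one checks that the geometric identification of the target intertwines right convolution on $\Hws$ with convolution over $\sfG/\sfG$, again by base change.
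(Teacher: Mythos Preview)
The paper does not prove this statement: it is labeled a \emph{Conjecture} (of Ben-Zvi--Nadler), and the introduction says explicitly ``In \cite{DT2} we prove Conjecture \ref{TameBetti} using the results of the present paper.'' The present paper's contribution is the two inputs --- the Whittaker equivalence (Theorem~\ref{t:main1}) and the bi-Whittaker equivalence (Theorem~\ref{t:main2}) --- not the conjecture itself. So there is no proof here to compare against, only the one-line summary that in the sequel the conjecture is obtained ``essentially\ldots\ by a double centralizer argument.''

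Your first half is consistent with that summary: the paper describes producing the monoidal functor $\Hs \to \QCoh(\sfB/\sfB \times_{\sfG/\sfG} \sfB/\sfB)$ from the datum of commuting actions of $\Hs$ and $\QCoh(\sfG/\sfG)$ on $\QCoh(\sfB/\sfB)\simeq \wHs$, which is your $\End_{\wHws}(\wHs)$ picture transported through Theorem~\ref{t:main2}. One caution: your identification $\End_{\QCoh(\sfG/\sfG)}(\QCoh(\sfB/\sfB)) \simeq \IndCoh(\sfB/\sfB \times_{\sfG/\sfG} \sfB/\sfB)$ is not literally ``an instance of $\IndCoh$ base change.'' The Ben-Zvi--Francis--Nadler result the paper cites yields $\QCoh$ of the fiber product, and the paper itself flags the passage to $\IndCoh$ as a nontrivial renormalization step (see the remark following Theorem~\ref{t:main1}, with its footnote about ``obtaining the further projection from $\IndCoh$ to $\QCoh$''). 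You have not addressed this.

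Your second half is not a proof but a plan: you say the equivalence ``is where the real work lies'' and propose to imitate \cite{B} by computing $F$ on standard objects via induction on length and reduction to rank one, but you supply no argument, only the expectation that the method will go through. That is precisely the content of the sequel \cite{DT2}, which the present paper does not attempt. As written, your proposal for this part is an outline of hopes, not a demonstration.
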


 To explain the connection with Bezrukavnikov's equivalence, we need to first recall that both categories are naturally linear over $R \otimes R$. On the automorphic side, the linearity over $R \otimes R$ is afforded by the monodromy of sheaves along left and right Iwahori cosets. On the spectral side, the linearity over $R \otimes R$ comes from the two projections to $\sfB/\sfB$, postcomposed with the tautological map to the invariant theory quotient $\sfB / \sfB \rightarrow \sfB /\!\!/ \sfB \simeq \sfT$. 
 We recall this again has an interpretation in terms of monodromy. Namely, the Steinberg stack is the moduli space of $\sfG$ local systems on a cylinder with $\sfB$ reductions around the two punctures. Under this identification, the maps to $\sfT$ simply record the semi-simplified monodromy around each of the two punctures. 

Let us specialize to the identity in $\sfT \times \sfT$, i.e., unipotent monodromy. With this, the automorphic side becomes Iwahori equivariant sheaves on the usual affine flag variety, and the spectral side becomes ind-coherent sheaves with nilpotent singular support on the (derived) stack $\sfU / \sfB \times_{\sfG/\sfG} \sfU / \sfB$. The matching of these two specializations as monoidal categories is precisely the result of Bezrukavnikov \cite{B}.

\iffalse 
This conjecture is a universal monodromic enhancement of Bezrukavnikov's equivalence \cite{B},
which categorifies two realizations \cite{KL} of the affine Hecke algebra.
\fi

\sq

In \cite{DT2} we prove Conjecture \ref{TameBetti} using the results of the present paper, which we now turn to. 

\iffalse 
Whittaker and bi-Whittaker equivalences proved in this paper.
\fi 

\subsection{Statement of results}\phantom{onceagainiamaskingyouhi}

\label{ss:results}

\sq Our two main results are  Theorems \ref{t:main1} and \ref{t:main2} below. Before giving their formal statements, we would like to first recall some relevant context.

At a high level, the argument of Bezrukavnikov establishing the main result of \cite{B} parallels an earlier argument of Kazhdan--Lusztig at the function theoretic level, identifying two Langlands dual realizations of the affine Hecke algebra \cite{KL}. 

We recall that the basic pattern of both arguments is visible in the following toy model. Consider a map of finite sets $X \rightarrow Y$. If we pass to the algebras of $k$-valued functions, pullback defines a map of algebras $\on{Fun}(Y) \rightarrow \on{Fun}(X)$, and one has $$\on{End}_{\on{Fun}(Y)}(\on{Fun}(X)) \simeq \on{Fun}(X \times_Y X).$$ That is, the convolution algebra $\on{Fun}(X \times_Y X)$ emerges from the more basic action of $\on{Fun}(Y)$ on $\on{Fun}(X)$. We emphasize that similar mechanisms, e.g. the action of a group on its flag variety, produce the appearances of Hecke algebras and categories throughout representation theory. In any case, given an algebra $A$, the datum of a map of algebras $\phi: A \rightarrow \on{Fun}(X \times_Y X)$ is the same as an action of $A$ on $\on{Fun}(X)$ commuting with the action of $\on{Fun}(Y)$, and for $\phi$ to be an isomorphism is the assertion that the action is faithful, with image the full centralizer of the action of $\on{Fun}(Y)$. That is, in general, to prove an isomorphism of algebras $A_1 \simeq A_2$, it is of course enough to realize them as the same set of operators acting on the same vector space. However, when one of the algebras is presented as functions on a fiber product $\on{Fun}(X \times_Y X)$, it admits a preferred choice of representation, namely $\on{Fun}(X)$, and one additionally has an explicit description of the centralizer of the action, namely the image of $\on{Fun}(Y)$ in $\on{Fun}(X \times X)$.

%We recall that the basic pattern of both arguments is visible in the following toy model. Consider a map of finite sets $X \rightarrow Y$. If we pass to the algebras of $k$-valued functions, pullback defines a map of algebras $\on{Fun}(Y) \rightarrow \on{Fun}(X)$, and one has $$\on{End}_{\on{Fun}(Y)}(\on{Fun}(X)) \simeq \on{Fun}(X \times_Y X).$$That is, the convolution algebra $\on{Fun}(X \times_Y X)$ emerges from the more basic action of $\on{Fun}(Y)$ on $\on{Fun}(X)$. We emphasize that similar mechanisms, e.g. the action of a group on its flag variety, produce the appearances of Hecke algebras and categories throughout representation theory. In any case, given an algebra $A$, the datum of a map of algebras $\phi: A \rightarrow \on{Fun}(X \times_Y X)$ is the same as an action of $A$ on $\on{Fun}(X)$ commuting with the action of $\on{Fun}(Y)$, and for $\phi$ to be an isomorphism is the assertion that the action is faithful, with image the full centralizer of the action of $\on{Fun}(Y)$. That is, in general, to prove an isomorphism of algebras $A_1 \simeq A_2$, it is of course enough to realize them as the same set of operators acting on the same vector space, but moreover when one of the algebras is presented as functions on a fiber product $\on{Fun}(X \times_Y X)$, one has a preferred choice of vector space and action, namely on $\on{Fun}(X)$, as well as an explicit description of the centralizer of the action, namely the image of $\on{Fun}(Y)$.  

When we replace $X \rightarrow Y$ as above with $\mathsf{U} / \sfB \rightarrow \sfG / \sfG$, and functions with Grothendieck groups of coherent sheaves, Kazhdan--Lusztig essentially found that $K_0(\mathsf{U} / \sfB)$ is naturally identified with the underlying vector space of the antispherical representation of the affine Hecke algebra, i.e., Iwahori--Whittaker functions on affine flags. Moreover, under this identification, $K_0(\sfG / \sfG)$ acted as the center of the affine Hecke algebra, which then yielded the identification of the $K$-theory of the Steinberg with the affine Hecke algebra itself.\footnote{In truth, we recall one needs to consider a further $\bG_m$-grading, to account for the parameter $q$ in the affine Hecke algebra.} However, the double centralizer property did not hold literally in their setting, which complicates the argument. As we will see below, these complications disappear at the sheaf-theoretic level. 

When we pass from Grothendieck groups of coherent sheaves to the categories of coherent sheaves themselves, it is again true that given a map $X \rightarrow Y$ between reasonable stacks, one has 
$$\on{End}_{\QCoh(Y)}(\QCoh(X)) \simeq \QCoh(X \times_Y X),$$as shown by Ben-Zvi--Francis--Nadler \cite{BFN10}. In particular, given a monoidal category $\mathscr{A}$, the datum of a monoidal functor $\mathscr{A} \rightarrow \QCoh(X \times_Y X)$ is the same as an action of $\mathscr{A}$ on $\on{QCoh}(X)$ commuting with the action of $\on{QCoh}(Y)$. 
With this in mind, to identify coherent sheaves on Steinberg with the affine Hecke category, one should in particular show that the natural categorifications of the antispherical representation on the automorphic and spectral sides, namely Iwahori--Whittaker sheaves on affine flags and $\QCoh(\mathsf{U} / \sfB)$, respectively, are still canonically equivalent. I.e., one again wants to identify both monoidal categories as the `same' endofunctors of the `same' category. The assertion that the two natural modules are still the `same' is the main result of \cite{AB}.

\iffalse 

The basic strategy of the latter work is that to identify two algebras, it is enough to realize them both as the same algebra of operators on the same vector space. For the affine Hecke algebra, the relevant faithful representation is its polynomial representation. For the affine Hecke category, the two Langlands dual categorifications of the polynomial representation are respectively given by the category of Iwahori--Whittaker sheaves on the affine flag variety, and quasicoherent sheaves on $\sfU/\mathsf{B}$. The identification of these two categories is the main result of \cite{AB}. \fi 

\sq In the universal case, the preceding discussion suggests we should replace $\sfU/\mathsf{B}$ with $\mathsf{B}/\mathsf{B}$, and match it with a suitably defined category of universal Iwahori--Whittaker sheaves on $\Fl$. The latter is slightly delicate to set up, but we do so and denote it by $\Shv_{(I, \chi)}(\Fl).$ The first task of this paper is then to prove the following.  

\begin{theorem} \label{t:main1} There is a canonical equivalence of categories
$$\Shv_{(I, \chi)}(\Fl) \simeq \QCoh(\sfB/\sfB).$$    
\end{theorem}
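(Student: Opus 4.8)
The plan is to construct the equivalence by exhibiting a compact projective generator on the automorphic side whose endomorphism algebra matches $\mathscr{O}(\sfB/\sfB)$, or — more structurally — by realizing both sides as module categories over a common base and lifting a known statement. Since $\sfB/\sfB$ admits the projection $\sfB/\sfB \to \sfB/\!\!/\sfB \simeq \sfT$, the category $\QCoh(\sfB/\sfB)$ is naturally $R$-linear (where $R = \mathscr{O}(\sfT)$), and similarly $\Shv_{(I,\chi)}(\Fl)$ is $R$-linear via right monodromy. The first concrete step is to define $\Shv_{(I,\chi)}(\Fl)$ carefully: the universal Iwahori--Whittaker condition averages against a nondegenerate character $\chi$ of the unipotent radical on the left while keeping universal (free) monodromy on the right, and one must check this category is well-behaved (compactly generated, the averaging functors behave as expected). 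Then I would identify the Iwahori--Whittaker orbits on $\Fl$: they are indexed by $W$ (or rather the relevant coset space), with each orbit contributing a standard/costandard object, and the universal monodromy means each such object carries an action of $R$.

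Second, I would compute the "Whittaker-averaged Wakimoto" or standard objects and their convolutions, showing that the subcategory generated by the image of the unit (the big projective $\Delta_w$ for $w$ the longest element, or the clean extension from the open orbit) has endomorphisms computed by a cohomology calculation over $\Fl$. On the spectral side, $\QCoh(\sfB/\sfB)$ is generated by the structure sheaf, whose endomorphisms are $\mathscr{O}(\sfB/\sfB)$; as a module over $R \otimes R$ this is $\mathscr{O}(\sfT \times_{\sfT/W} \mathfrak{?})$ — more precisely $\sfB/\sfB \to \sfT \times \sfT$ has image and fibers one can describe explicitly. The strategy of the paper, as flagged in the abstract, is to prove fully faithfulness by localizing in semisimple directions: restrict over the regular semisimple locus $\sfT^{\rs} \subset \sfT$ (or its image), where $\sfB/\sfB$ becomes something like $\sfT^{\rs} \times \sfT \times^{\text{something}}$ or a disjoint-along-$W$ union, and the Iwahori--Whittaker category correspondingly decomposes; there the statement reduces to a rank-one computation. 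Then one must control what happens along the non-regular locus — this is an "order of vanishing" calculation, comparing how sections of the relevant sheaves degenerate as one approaches the walls, matched against the codimension/multiplicity of the corresponding orbit closures on the automorphic side.

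The main obstacle I expect is precisely this passage from the generic (regular semisimple) locus to the full statement: fully faithfulness of the comparison functor is easy to check after restriction to $\sfT^{\rs}$, but extending it requires showing that no $\Ext$'s or morphisms are lost or gained along the discriminant. Concretely, one needs that the relevant Hom-complexes, viewed as complexes of $R$-modules (or $R \otimes R$-modules), are flat/torsion-free in the appropriate sense — or that both sides are reflexive and agree in codimension one — so that the generic isomorphism propagates. The semisimple-rank-one reduction handles the codimension-one walls, and the key input there is matching the order of vanishing of a monodromic sheaf's stalks against the order of vanishing of functions on $\sfB/\sfB$ along a wall in $\sfT$; verifying this is where the real content lies, and it is the step where a direct deformation of the Arkhipov--Bezrukavnikov argument (which instead localizes in unipotent directions) would run into the difficulties alluded to in Remark \ref{r:brr}. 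Essential surjectivity should then follow formally once fully faithfulness is known and one checks the generator on the automorphic side maps to a generator on the spectral side.
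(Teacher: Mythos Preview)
Your proposal captures the broad shape of the fully-faithfulness argument in Part~\ref{Part2} --- localize in semi-simple directions, reduce to codimension-one walls, and match orders of vanishing --- but it has two genuine gaps.

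First, you have no account of how the comparison functor is built. The paper devotes all of Part~\ref{Part1} to constructing the monoidal functor $F:\QCoh(\sfB/\sfB)\to\Shv_{(I)}(\Fl)$ via Gaitsgory's nearby-cycles central sheaves, Wakimoto sheaves, highest-weight arrows satisfying the Pl\"ucker relations, and the Drinfeld--Pl\"ucker factorization through $(\sfG/\sfU)^{\aff}$; the Whittaker equivalence is then obtained by post-composing $F$ with Whittaker averaging. Your suggestion of matching endomorphism algebras of a single generator does not produce a functor, and in any case $\QCoh(\sfB/\sfB)$ is \emph{not} generated by the structure sheaf: $\sfB/\sfB$ is a stack, and one needs the line bundles $\cO(\eta)$ (Proposition~\ref{SpecGen}). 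Second, your description of the Whittaker category as ``averaging against a nondegenerate character $\chi$'' is exactly what the paper explains does \emph{not} work in the universal monodromic setting (see the discussion in \S\ref{ss:idea}): there is no Artin--Schreier sheaf, no exponential $D$-module, and Gaitsgory's Kirillov model is incompatible with universal monodromy. The paper instead defines $\Shv_{(I,\chi)}(\Fl)$ by inducing from the finite Whittaker module, which is built via the monoidal Soergel functor (Section~\ref{WhittakerSheaves}).

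Even within the fully-faithfulness step, your plan is missing the key mechanism. The paper does not directly compare Hom spaces on the two sides over $\sfT^{\rs}$ and then extend; rather it uses the Wakimoto filtration on central sheaves and the \emph{faithfulness} of the associated-graded functor $\gr:\Perv_{(I)}^{\waki}(\Fl)\to\Free_{\sfT}(\sfT)$ (Proposition~\ref{grFaithful}) to embed both sides of the Hom comparison into the same free $R$-module, and then checks that the two images agree after localizing away from all but one wall (Propositions~\ref{SpectralImageGraded} and~\ref{AutImageGraded}). The freeness over $R$ needed to invoke Hartogs is a separate input (Propositions~\ref{SVan} and~\ref{ZChiFree}), the latter resting on the fact that Whittaker-averaged central sheaves are tilting. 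Without the Wakimoto filtration and its associated graded, your order-of-vanishing calculation has nothing to attach to.
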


To motivate our second main result, note that to obtain from Theorem \ref{t:main1} a monoidal functor
$$\on{Shv}_{(I)}(\Fl) \rightarrow \QCoh(\sfB / \sfB \times_{\sfG / \sfG} \sfB / \sfB),$$
 it remains to argue why the action of $\on{Shv}_{(I)}(\Fl)$ on $\QCoh(\sfB / \sfB)$ commutes with the action of $\QCoh(\sfG / \sfG)$. 

\begin{remark} We note this direct naive approach to producing the functor between the two sides\footnote{Strictly speaking, of course we are producing the spectral side up to renormalization issues, i.e., obtaining the further projection from $\IndCoh$ to $\QCoh$.} of Conjecture \ref{TameBetti}, which is the one used in \cite{DT2}, mildly differs from the strategy of \cite{B}. But, we should emphasize, it differs even less mildly from unpublished approaches of Gaitsgory--Lurie, Ben-Zvi--Nadler, and Arinkin--Bezrukavnikov to derived geometric Satake; the first of these approaches was recently realized by Campbell--Raskin \cite{CR}.   
\end{remark}

\iffalse 
\sq To make such a strategy work, we cannot consider these as bare endofunctors, but must consider on the spectral side the tautologically commuting action of $\QCoh(\sfG/\sfG)$.\fi  To produce a datum of commutativity between the actions of $\Shv_{(I)}(\Fl)$ and $\QCoh(\sfG / \sfG)$, we should look for an automorphic realization of the action of $\QCoh(\sfG / \sfG)$ on $\Shv_{(I, \chi)}(\Fl)$. In fact, we show it is simply the convolution action of bi-Iwahori--Whittaker sheaves on the loop group of $G$, which we denote by $_{\chi}\sfH_{\chi}$. We recall this is the most obvious candidate, i.e., is a monoidal category acting on $\Shv_{(I, \chi)}(\Fl)$ by convolution on the `other side' from the affine Hecke category. Moreover, this matching is actually forced by more general expectations in local geometric Langlands. 

At all events, our second basic result in this paper is the following. 

\begin{theorem} \label{t:main2} There is an equivalence of monoidal categories
$${}_{\chi}\sfH_\chi \simeq \QCoh(\sfG/\sfG),$$
    compatible with their actions on both sides of Theorem \ref{t:main1}. 
\end{theorem}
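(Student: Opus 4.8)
\textbf{Strategy.} The plan is to use the same double-centralizer mechanism that structures the rest of the paper. By Theorem \ref{t:main1}, we have an equivalence $\Shv_{(I,\chi)}(\Fl) \simeq \QCoh(\sfB/\sfB)$, and this equivalence is visibly module over $\Shv_{(I)}(\Fl)$ acting on one side and over $\QCoh(\sfB/\sfB)$ (acting on itself, hence over $\QCoh(\sfG/\sfG)$ via the structure map $\sfB/\sfB \to \sfG/\sfG$) on the other. The category ${}_\chi\sfH_\chi$ of bi-$(I,\chi)$-equivariant sheaves on the loop group is by construction the monoidal category of endofunctors of $\Shv_{(I,\chi)}(\Fl)$ that commute with the left convolution action of $\Shv_{(I)}(\Fl)$ --- this is the standard bimodule/endofunctor description, parallel to $\on{End}_{\QCoh(Y)}(\QCoh(X)) \simeq \QCoh(X\times_Y X)$ of \cite{BFN10}. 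On the spectral side, since $\sfB/\sfB \to \sfG/\sfG$ is (up to the relevant completion/finiteness bookkeeping) an affine morphism with $\QCoh(\sfB/\sfB)$ a generator of $\QCoh(\sfG/\sfG)$-modules, one has $\on{End}_{\QCoh(\sfG/\sfG)}(\QCoh(\sfB/\sfB)) \simeq \QCoh(\sfG/\sfG)$. Thus both monoidal categories in the theorem are identified as endomorphisms of the \emph{same} module category, provided one checks the two sets of endofunctors coincide.

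\textbf{Key steps.} First, I would make precise the claim that ${}_\chi\sfH_\chi \simeq \on{End}_{\Shv_{(I)}(\Fl)}(\Shv_{(I,\chi)}(\Fl))$: one exhibits $\Shv_{(I,\chi)}(\Fl)$ as an invertible-type bimodule between $\Shv_{(I)}(\Fl)$ and ${}_\chi\sfH_\chi$, using that the averaging functors relating $I$-equivariance and $(I,\chi)$-equivariance are adjoint and that the relevant orbit combinatorics (Whittaker vanishing away from the finite Weyl group) make these averagings well behaved. Second, transport this endofunctor category along the equivalence of Theorem \ref{t:main1} to get ${}_\chi\sfH_\chi \simeq \on{End}_{\Shv_{(I)}(\Fl)}(\QCoh(\sfB/\sfB))$, where $\Shv_{(I)}(\Fl)$ now acts through whatever monoidal functor Theorem \ref{t:main1}'s proof produces (a priori this is the affine Hecke category acting on $\QCoh(\sfB/\sfB)$, and at minimum its action factors through the $\QCoh(\sfG/\sfG)$-linear structure, by the $R\otimes R$-linearity discussed before Theorem \ref{t:main1}). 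Third, identify this with $\on{End}_{\QCoh(\sfG/\sfG)}(\QCoh(\sfB/\sfB))$: the containment $\supseteq$ is automatic since $\QCoh(\sfG/\sfG)$-linear endofunctors commute with the Hecke action, and the containment $\subseteq$ is the substantive point --- it says the Hecke-linear endofunctors are no larger than the $\QCoh(\sfG/\sfG)$-linear ones. Fourth, compute $\on{End}_{\QCoh(\sfG/\sfG)}(\QCoh(\sfB/\sfB)) \simeq \QCoh(\sfB/\sfB \times_{\sfG/\sfG} \sfB/\sfB)$ by \cite{BFN10}, and then observe that the bi-Whittaker normalization collapses this to $\QCoh(\sfG/\sfG)$: Whittaker-averaging on both sides kills all but the "big cell" and one is left, via a Kostant-section / universal-centralizer type computation, with $\QCoh$ of the adjoint quotient itself. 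Finally, check monoidality: the equivalence must intertwine convolution of bi-Whittaker sheaves with tensor product in $\QCoh(\sfG/\sfG)$, and compatibility with the actions on Theorem \ref{t:main1} is then essentially built in by construction since everything was phrased in terms of the same bimodule.

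\textbf{Main obstacle.} The hard part will be the double-centralizer step, i.e. showing that a $\Shv_{(I)}(\Fl)$-linear (equivalently affine-Hecke-category-linear) endofunctor of $\QCoh(\sfB/\sfB)$ is automatically $\QCoh(\sfG/\sfG)$-linear --- that no \emph{extra} endofunctors appear. In the function-theoretic Kazhdan--Lusztig picture this double-centralizer property notoriously fails, and the assertion here is precisely that it is restored categorically; making this rigorous requires controlling the image of the affine Hecke category in $\on{End}(\QCoh(\sfB/\sfB))$, presumably by a generation argument (e.g. Wakimoto/standard objects, or the big projective, generate enough of $\QCoh(\sfB/\sfB)$ over the Hecke action that their endomorphisms are forced to be $\QCoh(\sfG/\sfG)$-linear). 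A secondary but real difficulty is the bookkeeping around $\IndCoh$ versus $\QCoh$, completions of $\sfG/\sfG$ along $\sfB/\sfB$, and the precise finiteness hypotheses under which the \cite{BFN10} and \cite{BFN10}-type identifications hold in this stacky, non-smooth setting; I would expect to need the monodromic/universal framework (sheaves on $\sfT$, the ring $R$) to make the affineness of $\sfB/\sfB \to \sfG/\sfG$ effective after the appropriate base change over $\sfT$.
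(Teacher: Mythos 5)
Your strategy runs aground in the third key step, and the error is not cosmetic. You write that ${}_{\chi}\sfH_\chi \simeq \End_{\Shv_{(I)}(\Fl)}(\Shv_{(I,\chi)}(\Fl))$, transport this along Theorem \ref{t:main1} to $\End_{\Shv_{(I)}(\Fl)}(\QCoh(\sfB/\sfB))$, and then propose to identify this with $\End_{\QCoh(\sfG/\sfG)}(\QCoh(\sfB/\sfB))$. But the latter is (up to $\IndCoh$-renormalization) $\QCoh(\sfB/\sfB \times_{\sfG/\sfG} \sfB/\sfB)$, the spectral Steinberg category; your step $4$ acknowledges this and then tries to ``collapse'' it to $\QCoh(\sfG/\sfG)$ by a Whittaker-averaging/Kostant-slice argument. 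There is no such collapse to perform: if the identification in step $3$ held, you would have proved ${}_{\chi}\sfH_\chi \simeq \QCoh(\sfB/\sfB \times_{\sfG/\sfG} \sfB/\sfB)$ on the nose, which is not Theorem \ref{t:main2}. (Your Strategy paragraph actually asserts $\End_{\QCoh(\sfG/\sfG)}(\QCoh(\sfB/\sfB)) \simeq \QCoh(\sfG/\sfG)$ by an appeal to affineness and generation — this is false, and your own step $4$ contradicts it.) The resolution is that step $3$ is also false: the affine Hecke action on $\QCoh(\sfB/\sfB)$, granting Conjecture \ref{TameBetti}, is the full Steinberg action, while $\QCoh(\sfG/\sfG)$ acts through it as the center; so Hecke-linear endofunctors form a \emph{smaller} category than $\QCoh(\sfG/\sfG)$-linear ones. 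You also have the containments reversed: $\subseteq$ (Hecke-linear implies $\QCoh(\sfG/\sfG)$-linear) is the automatic direction, and it is the right-to-left containment that would encode a double-centralizer phenomenon. The conceptual point is that ${}_{\chi}\sfH_\chi$ and $\Shv_{(I)}(\Fl)$ are meant to \emph{centralize each other} inside $\End(\Shv_{(I,\chi)}(\Fl))$; the dual statement that the Hecke category equals $\QCoh$ of the Steinberg is precisely Conjecture \ref{TameBetti}, deduced in \cite{DT2} \emph{from} Theorems \ref{t:main1} and \ref{t:main2} by a double-centralizer argument. Trying to run the double centralizer here to prove \ref{t:main2} is circular unless you supply an independent functor from $\QCoh(\sfG/\sfG)$ to ${}_{\chi}\sfH_\chi$.

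The paper supplies exactly that functor directly, bypassing abstract endomorphism manipulations on the spectral side. One lifts Gaitsgory's central functor to a monoidal functor $Z: \QCoh_{\sfG}(\sfG) \rightarrow \Z(\Shv_{(I)}(\Fl))$ into the Drinfeld center, and composes with the tautological map $\Z(\Shv_{(I)}(\Fl)) \rightarrow \End_{\Shv_{(I)}(\Fl)}(\Shv_{(I,\chi)}(\Fl)) \simeq {}_{\chi}\sfH_\chi$, obtaining a concrete monoidal ${}_{\chi}F_\chi$. Fully faithfulness is then checked on the compact generators coming from $V_\lambda \otimes \cO$, by base-changing along the faithfully flat cover $\sfT \rightarrow \sfC$ (here $\sfC$ is the corrected adjoint quotient) and reducing to the Whittaker equivalence of Theorem \ref{t:main1}; the inputs making this work are the monodromic Endomorphismensatz $\End(\Xi) \simeq \cO(\sfT \times_{\sfC} \sfT)$ identifying the finite bi-Whittaker category with $\QCoh(\sfC)$, and the computation of the affinization $\wG^{\mathrm{aff}} \simeq \sfG \times_{\sfC} \sfT$. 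So the dependence of \ref{t:main2} on \ref{t:main1} that you sensed is real, but the mechanism is descent along $\sfT \rightarrow \sfC$, not an abstract identification of endofunctor categories.
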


\sq In the case of unipotent monodromy, a version of Theorem \ref{t:main2} was first observed by Bezrukavnikov \cite{BezNilp}. Slightly more carefully, his work was not explicitly concerned with the monoidal structure, but rather matched the perverse t-structure on the automorphic side with the perverse coherent t-structure on the spectral side, and developed implications for two-sided cells in the affine Hecke category. An explicitly monoidal equivalence appeared later in the work of Chen and the first named author \cite{CD}, building on unpublished ideas of Arinkin--Bezrukavnikov.

\sq Given the two results of this paper, in \cite{DT2} we essentially obtain Conjecture \ref{TameBetti} by a double centralizer argument. 

\subsection{Structure of the argument}\phantom{thinkyou'vemadeit}
\label{ss:idea}

\sq We now discuss the proofs of Theorems \ref{t:main1} and \ref{t:main2}, with a view towards to what we leave undisturbed from \cite{AB}, and we have opted to change. 

The argument of \cite{AB} roughly consists of two steps. In the first step, they construct a functor from the spectral side to the automorphic side, and in the second step they prove that it is an equivalence. In short, there are very good reasons not to disturb the first step, so we do not. However the second step can be modified, and we choose to give a different argument. 

\sq We now recall, in slightly more detail, the basic strategy of \cite{AB}, stated {\em mutatis mutandis} in the universal setting. On the spectral side, the composition 
$$\QCoh(\sfB/\sfB) \xrightarrow{\Delta_*}  \IndCoh(\sfB/\sfB \times_{\sfG/\sfG} \sfB/\sfB) \xrightarrow{\pi_*} \QCoh(\sfB/\sfB)$$
is tautologically equivalent to the identity, where $\Delta$ and $\pi$ respectively denote the diagonal and second projection. Note that $\Delta_*$ is monoidal, with respect to the usual tensor product on the source and convolution on the target, and similarly $\pi_*$ can be interpreted as convolution with the structure sheaf of $\sfB/\sfB$. To orient the reader, we emphasize that the basic pattern here follows its analogue in the toy model of $\on{Fun}(X \times_Y X)$. 

From the identification of the Grothendieck groups with the antispherical representation, we expect the structure sheaf of $\sfB/\sfB$ should be exchanged with the `simplest' object on the automorphic side, namely the Iwahori--Whittaker sheaf with minimal support. On the automorphic side, convolution with this object amounts to performing Whittaker averaging $\Shv_{(I)}(\Fl) \rightarrow \Shv_{(I, \chi)}(\Fl).$ Therefore assuming Conjecture \ref{TameBetti}, we can rewrite this as 
$$\QCoh(\sfB/\sfB) \xrightarrow{\Delta_*} \IndCoh(\sfB/\sfB \times_{\sfG/\sfG} \sfB/\sfB) \simeq \Shv_{(I)}(\Fl) \xrightarrow{} \Shv_{(I, \chi)}(\Fl).$$
The first step of \cite{AB} is the direct construction of the monoidal functor $\QCoh(\sfB/\sfB) \rightarrow \Shv_{(I)}(\Fl),$ and the second step is verifying that the composite $\QCoh(\sfB/\sfB) \rightarrow \Shv_{(I, \chi)}(\Fl)$ is indeed an equivalence. 

Although we closely follow the arguments of Arkhipov--Bezrukavnikov in Part \ref{Part1}, our paper is logically self-contained when $G$ is classical, in the sense that we do not assume the results of \cite{AB}. In particular, we bypass their use of the regular centralizer, cf. the discussion in Section \ref{ss:RegCent} below. However for $G$ exceptional, we do use a result of \cite{BFOIII}, depending on Arkhipov--Bezrukavnikov's regular centralizer arguments, to prove that Whittaker averaged central sheaves are tilting.

\subsection{Structure of the argument: part \ref{Part1}}\phantom{hi}

\subsubsection*{Arkhipov--Bezrukavnikov's construction} Let us recall the idea of part \ref{Part1} of the argument. Ignoring at first the monodromy of the local systems, one wants to construct a monoidal functor $\on{Rep}(\sfB) \rightarrow \Shv_{(I)}(\Fl)$. This is difficult essentially due to the complexity of $\Rep(\sfB)$. The nearby categories $\Rep(\sfG)$ and $\Rep(\sfT)$ are somewhat simpler, and one may use them via the following maneuver, sometimes called {\em Drinfeld--Pl\"{u}cker formalism}. Namely, one considers the usual parabolic induction diagram $\sfT \leftarrow \sfB \rightarrow \sfG$, and notes that in terms of this $$\pt/\sfB \simeq 
 \sfG \bs (\sfG / \sfU) / \sfT,$$ compatibly with maps to $\pt/\sfG$ and $\pt / \sfT$. Let us pretend that $\sfG/\sfU$ is affine, so that $\Rep(\sfB)$ is approximately the same as $\Rep(\sfB)' := \on{Fun}(\sfG/\sfU)\on{-mod}(\Rep(\sfG \times \sfT))$. Given the simple structure of $\on{Fun}(\sfG/\sfU)$ as an algebra, as dictated by the Peter--Weyl theorem, a monoidal functor out of  $\Rep(\sfB)'$ is the same data as a monoidal functor from $\Rep(\sfG \times \sfT)$, along with appropriate highest weight arrows between the objects of $\Rep(\sfG)$ and $\Rep(\sfT)$ indexed by the same dominant weights. 

In the setting of \cite{AB}, one already has Gaitsgory's functor $\Rep(\sfG) \rightarrow \Shv_{(I)}(\Fl)$, constructed via geometric Satake and the factorization action of the affine Grassmannian on affine flags, i.e., via nearby cycles. One similarly already has a functor $\Rep(\sfT) \rightarrow \Shv_{(I)}(\Fl)$, essentially obtained from the torus case by parabolic induction; the essential image are the Wakimoto sheaves. As \cite{AB} observe, one has natural highest weight arrows of the required form, essentially for support reasons, and this gives the desired functor from $\Rep(\sfB)$; the monodromy of nearby cycles then provides the desired lift to $\QCoh(\sfB / \sfB)$. 

\begin{remark} Some of the fundamental ideas in the above argument fit into subsequent language as follows: for an algebraic group $\mathsf{H}$, an $E_2$-action of the symmetric monoidal category $\Rep(\mathsf{H})$ is the same data as a usual, i.e., $E_1$-, action of its integral over the circle, namely 
$\QCoh(\mathsf{H}/\mathsf{H})$. 
\end{remark}

More truthfully, this argument is complicated by the non-affineness of $\sfG/\sfU$, and circumventing this is one of the technical feats of \cite{AB}. Their basic idea is as follows. While $\sfG / \sfU$ is not affine, it is not so far from it. Namely, the map to the affinization $\sfG / \sfU \rightarrow (\sfG / \sfU)^{\aff}$ is an open embedding, and the argument so far has actually produced a functor $F$ out of $$\on{Rep}(\sfB)' \simeq \QCoh( \sfG \bs (\sfG / \sfU)^{\aff} / \sfT).$$ 
To argue $F$ really does factor through restriction to $\on{QCoh}(\sfG \bs (\sfG / \sfU) / \sfT)$, one must further check that $F$ kills all objects supported on the boundary of $(\sfG / \sfU)^{\aff}$, i.e., those objects whose restrictions to the open orbit vanish. By equivariance, this is the same as asking that the fiber at the identity coset in $(\sfG / \sfU)^{\aff}$ should vanish. The latter fiber retains an action of $\sfT$, and the resulting functor $\on{Rep}(\sfB) \rightarrow \on{Rep}(\sfT)$ is given by semi-simplification, i.e., passing to the associated graded representation. For this reason, \cite{AB} show that highest weight arrows out of central sheaves are the last step in Wakimoto filtrations. They then inspect the functor of taking the associated graded and by a Tannakian argument identify it with the above fiber functor, which completes the first step of the argument.

\subsubsection*{Applications} In applications, the form of Arkhipov--Bezrukavnikov's approach to part \ref{Part1} of the argument is crucial. This is notably the case in the work of Frenkel--Gaitsgory on localization theory for affine Lie algebras at critical level, essentially due to the similar use of factorization, i.e. vertex operator algebraic, methods in that theory \cite{FG}. In extensions of their work to treat general highest weight modules, which will be performed elsewhere, one needs again the compatibility with factorization; see e.g. the interesting work of F\ae rgeman on the motivicity of rigid local systems for recent uses of this expected extension \cite{Fae}.

Relatedly, one expects similar nearby cycles constructions for affine Hecke categories of Bernstein blocks of arbitrary depth, and for local global compatibilities, as in the work of Genestier--Lafforgue in arithmetic \cite{GL}, it will likely be important that spectral descriptions of the blocks are compatible with the action of $\QCoh(\sfG / \sfG)$.

\subsubsection*{Universal monodromic deformation} The practical consequence for us of the preceding discussion is that the first step of \cite{AB}, i.e. the construction of the functor, should not be modified. Accordingly, in Part \ref{Part1} of this paper we perform the necessary checks that this indeed works, i.e. deforms over all conjugacy classes in the desired way. 

Here, almost all the arguments and constructions of \cite{AB}, as well as those of Gaitsgory \cite{GZ}, essentially deform straightforwardly. We now briefly describe the two closely related exceptions. The first is the verification of the Pl\"{u}cker relations, performed in Section \ref{s:Plucker}. The problem here is that writing down a canonical highest weight arrow in the universal setting is slightly more delicate, because loop rotation no longer fixes the standard base points where one rigidifies the sheaves. The second exception is checking that the associated graded of the Wakimoto filtrations on central sheaves takes the expected form, performed in Section \ref{Fiberfunctor}. The problem here is that, after running the Tannakian formalism, it remains to further identify a certain $\sfG$-bundle over $\sfT / \sfT$. In Arkhipov-Bezrukavnikov's unipotent monodromic setting, the analogous $\sfG$-bundle is over $\pt / \sfT$, so when the coefficient field is algebraically closed the problem is vacuous. 

With that said, 
the reader already familiar with \cite{AB}, or the wonderful book of Achar--Riche \cite{AR}, may wish to simply take the first part of our paper on faith, and skip directly to the second part, which we now describe. 

\subsection{Structure of the argument: part \ref{Part2}}\phantom{hi}

\sq{} With the monoidal functor $\QCoh(\sfB / \sfB) \rightarrow \Shv_{(I)}(\Fl)$ in hand, it remains to do the following two things. 

\begin{enumerate}[label=(\roman*)]

\item \label{StructureStep1} First, one needs to construct an appropriate Iwahori--Whittaker category $\Shv_{(I, \chi)}(\Fl)$, equipped with an action of $\Shv_{(I)}(\Fl)$ and an averaging functor $\Shv_{(I)}(\Fl) \rightarrow \Shv_{(I, \chi)}(\Fl)$. I.e., we need the analogue of the antispherical representation along with its canonical generator.

\item \label{StructureStep2} Second, one needs to show the composite map $\QCoh(\sfB/\sfB) \rightarrow \Shv_{(I, \chi)}(\Fl)$ is an equivalence. 

\end{enumerate}

Let us describe our approach to both points. 

\subsubsection*{Whittaker sheaves} In our estimation, step \ref{StructureStep1} is deceptively subtle, and may well be why the equivalences proven in this paper and its sequel were not obtained earlier. The problem is as follows. At a fixed semi-simplified monodromy, one has many standard recipes for defining Whittaker categories. Namely, for $\ell$-adic sheaves, one may work with an affine flag variety over a field of positive characteristic, and use the Artin--Schreier sheaf. For D-modules, one may use the exponential D-module. For Betti sheaves at a fixed monodromy, Gaitsgory introduced a version of the Kirillov model, familiar from the representation theory of $GL_2(\mathbb{F}_q)$, which has the desired behavior \cite{GWhit}. 

When we work with Betti sheaves with varying monodromy, none of the above approaches work: $\ell$-adic, de Rham, and Betti families of local systems give genuinely different moduli spaces, which rules out the first two approaches. For the remaining Kirillov model, one needs to impose a certain $\mathbf{C}^\times$-equivariance in its construction, which is incompatible with universal monodromy. Constructing the desired universal Whittaker model is therefore one of the basic problems one must confront here. 

Fortunately, the solution was essentially supplied in \cite{LNY, IY, T}, where the authors defined the Whittaker functional on universal monodromic Hecke categories in terms of vanishing cycles, and proved that it is monoidal; such a microlocal approach to Whittaker functionals appears also in \cite{N06, NT, FR}. We use monoidality of the Whittaker functional to construct the Whittaker module for the universal finite Hecke category. The desired category $\on{Shv}_{(I, \chi)}(\Fl)$ is then obtained by induction, exactly as one defines the antispherical representation of the affine Hecke algebra.

\subsubsection*{Fully faithfulness} Let us now turn to step \ref{StructureStep2}, i.e., arguing the obtained functor is an equivalence.  Here, it is helpful to recall two extremely influential ideas of Soergel, first deployed together in the Category $\mathscr{O}$ \cite{S90}. The first idea, known as Soergel's Struktursatz, is that to calculate homomorphisms between tilting objects or their variants, the answers are unchanged after passing to the quotient by all objects whose singular support does not touch the regular nilpotent orbit. The second idea, known as {Soergel's deformation philosophy}, is essentially that the answers will also be flat in the highest weight, so that one may perform calculations after deforming to nearby simpler categories.

In some sense, \cite{AB} apply Soergel's first idea to prove step \ref{StructureStep2}, whereas we apply Soergel's second idea. That is, they localize in the nilpotent directions, whereas we localize in the semi-simple ones. Let us say this all more precisely, beginning with the approach of \cite{AB}. 

\subsubsection*{Localization in the nilpotent directions} \label{ss:RegCent}Momentarily, suppose one already knows an equivalence between $\IndCoh_{\on{nilp}}(\sfU / \sfB \times_{\sfG/\sfG} \sfU / \sfB)$ and the automorphic affine Hecke category. Within $\sfU/\sfB$, one has the unique open orbit of regular unipotent elements $\sfU^{\on{reg}}/\sfB \simeq \pt / \mathsf{Z}$, where $\mathsf{Z}$ is the centralizer of any particular regular unipotent element $e$. It follows that one has a monoidal localization $$\IndCoh_{\on{nilp}}(\sfU/\sfB  \times_{\sfG/\sfG} \sfU/\sfB) \rightarrow \IndCoh_{\on{nilp}}( \pt / \sfZ \times_{\sfG/\sfG} \pt / \sfZ).$$
The latter open substack has much simpler geometry - as the centralizer of $e$ in $\sfG$ is again given by $\mathsf{Z}$, the latter fiber product is essentially $\on{\pt / \sfZ}$, up to a derived nil-thickening arising from the self-intersection.\footnote{In fact, the derived thickening comes  exactly from the semi-simple deformation directions we are concerned with.}

To reduce calculations to inspecting this regular locus, and in particular the fully faithful embedding from $\on{Rep}(\sfZ)$ to the Iwahori--Whittaker category, one would like to explicitly identify this simple quotient in automorphic terms. This was done, among other things, by Bezrukavnikov in the work \cite{BI}, motivated by work of Lusztig and Vogan, particularly the work of Lusztig on two-sided cells in the affine Hecke category, where it corresponds to the smallest two sided cell \cite{Lus89, V}. We recall that these considerations also have a direct analogue in arithmetic, namely how the center of the affine Hecke algebra acts on subquotients of unramified principal series, with the principal orbit corresponding to the character of the Steinberg representation. 

This was the localization in nilpotent directions used by \cite{AB}. Namely, they prove fully faithfulness via further restriction to this microlocalization, which they identify with $\on{Rep}(\sfZ)$ using a Tannakian argument.

\begin{remark}\label{r:brr} Extending the strategy of \cite{AB} to the universal monodromic case, i.e., directly proving an equivalence between $\sfB^{\on{reg}}/\sfB$ with a suitable microlocalization of $\Shv_{(I)}(\Fl)$, does not appear to be straightforward. The obstruction is explained by Bezrukavnikov--Riche in the discussion following Theorem 1.3 of \cite{BR22}. Namely the general result, Proposition 1 of \cite{BI}, that was used in \cite{AB} to construct the fiber functor out of the microlocalized category, does not appear to work in families.  We expect, however, that it may be possible to factor the fiber functor through microlocalization using an affine nondegenerate Whittaker model as in \cite{DLYZ}.
\end{remark}

In contrast to the approach of \cite{AB}, the approach we will describe below does not seem to have a direct analogue at the function theoretic level, as characters of an Iwahori form a discrete group, i.e., no longer have moduli. 

\subsubsection*{Localization in the semi-simple directions}

In the present work, we will perform calculations following Soergel's second idea, i.e., the deformation philosophy. Namely, we first check that various spaces of homomorphisms on both sides of the equivalence are vector bundles over $\sfT$, i.e., do not jump as the semi-simplified monodromy varies. As such, to verify fully faithfulness, by Hartogs' lemma it is enough to work on an open subset of $\sfT$ whose complement is codimension two. This allows us to throw away the intersections of root hyperplanes in $\sfT$, after which, working locally around each wall separately, we essentially reduce to the case of $\sfG$ being semi-simple rank one. 

% \begin{remark} In arithmetic, the characters of the Iwahori form a discrete group, i.e., have no moduli. As such, it is unclear what analogue this deformation argument could have directly at the function theoretic level, unlike the approach of \cite{AB}.  \end{remark}

In the rank one case, we then give a direct analysis of the relevant spaces of homomorphisms. The main content is a certain order of vanishing calculation, involving the associated graded of the Wakimoto filtration on central sheaves and its spectral analogue. On the spectral side, we perform the order of vanishing calculation in Section \ref{Rank1Spectral} by further restricting to the regular locus $\sfB^{\on{reg}}$. On the automorphic side, we perform the order of vanishing calculation in Section \ref{SectionRank1} using the weight filtration.

These calculations seem to be new, are particular to the affine case, and in our view are interesting in their own right. With that said, we also recall that a similar idea, namely controlling homomorphisms between tilting objects in monodromic Hecke categories via the associated graded, appears in earlier work of Bezrukavnikov--Riche \cite{BR}.

\subsubsection*{The bi-Whittaker equivalence} Finally, let us briefly comment on the argument for Theorem \ref{t:main2}, i.e., the equivalence 
$$_{\chi}\sfH_{\chi} \simeq \QCoh(\sfG / \sfG).$$
We construct a monoidal functor from $\QCoh(\sfG / \sfG)$ to $_{\chi}\sfH_{\chi}$ as follows. Gaitsgory's central sheaves already provide a monoidal functor from $\QCoh(\sfG/\sfG)$ to the center of $\Shv_{(I)}(\Fl)$, and in particular to the equivariant endomorphisms of any $\Shv_{(I)}(\Fl)$-module. Specializing to the Whittaker module $\Shv_{(I, \chi)}(\Fl)$, we obtain the desired monoidal functor $$\QCoh(\sfG/\sfG) \rightarrow \End_{\Shv_{(I)}(\Fl)}(\Shv_{(I, \chi)}(\Fl)) \simeq {}_{\chi}\sfH_{\chi}.$$
That this is an equivalence of categories then follows from Theorem \ref{t:main1} essentially by faithfully flat descent along $\wG^{\on{aff}} \rightarrow \sfG$, where $\wG^{\on{aff}}$ denotes the affinization of $\wG$.

\begin{remark} In the course of the latter analysis, we use a description of the affinization $\wG^{\on{aff}}$ which we were unable to locate in the literature. Namely, if $\sfG$ has simply connected derived subgroup, then the natural map $\wG^{\on{aff}} \rightarrow \sfG \times_{\sfG/\!\!/ \sfG} \sfT$ is an isomorphism. In general, this is not true, and instead one has the following.  If we choose a finite isogeny $1 \rightarrow \sfZ \rightarrow  \sfG^{\on{sc}} \rightarrow \sfG \rightarrow 1,$  wherein $\sfG^{\on{sc}}$ has simply connected derived subgroup, then $$\wG^{\on{aff}} \simeq \sfG \underset{(\sfG^{\on{sc}}/\!\!/ \sfG^{\on{sc}}) / \sfZ} \times \sfT.$$\end{remark}

\subsection{Organization of the paper}\phantom{hi}

\sq Let us briefly indicate the contents of the subsequent sections. As discussed above, we have divided the paper into two parts, corresponding to the first and second steps of the argument.

\begin{itemize}
\renewcommand\labelitemi{--}
\item In Section \ref{Notation}, we collect some mostly standard notation. 
 \end{itemize}
\subsubsection*{Contents of Part \ref{Part1}}

\begin{itemize}
\renewcommand\labelitemi{--}

\item In Section \ref{s:outline1}, we provide an overview of Part \ref{Part1} of the paper. 

\item In Section \ref{UniversalSheaves} we gather some basic results about the automorphic category.

\item In Section \ref{s:Wakimoto}, we discuss the universal monodromic Wakimoto sheaves.

\item In Section \ref{CentralSheaves}, we discuss the universal monodromic version of Gaitsgory's construction of central sheaves via nearby cycles, and establish that central sheaves have Wakimoto filtrations. 

\item In Section \ref{s:Plucker}, we produce the highest weight arrows between the central sheaves and Wakimoto sheaves.

\item In Section \ref{Fiberfunctor}, we give a Tannakian description of the associated graded of the Wakimoto filtration on central sheaves. 

\item Finally, in Section \ref{ConstructionFunctor}, we assemble the previous ingredients to construct a monoidal functor from $\QCoh(\sfB/\sfB)$ to $\Shv_{(I)}(\Fl)$, thereby completing Part \ref{Part1}.  

\end{itemize}

\subsubsection*{Contents of Part \ref{Part2}}
\begin{itemize}
\renewcommand\labelitemi{--}

\item In Section \ref{s:outline2}, we provide an overview of Part \ref{Part2} of the paper. 

\item In Section \ref{s:specside}, we study the global sections of certain vector bundles on $\widetilde{\sfG}$, and perform the order of vanishing calculation on the spectral side.

\item In Section \ref{WhittakerSheaves}, we define the category of universal monodromic Whittaker sheaves in the finite and affine cases. 

\item In Section \ref{s:whittilt}, we show that the Whittaker averages of central sheaves are tilting.

\item In Section \ref{SectionRank1}, we perform the order of vanishing calculation on the automorphic side. 

\item In Section \ref{s:loccentr}, study the Wakimoto filtrations on central sheaves after localizing away from all but one wall. 

\item In Section \ref{s:whiteq}, we prove our first main Theorem \ref{t:main1}, the Whittaker equivalence. 

\item In Section \ref{s:biwhiteq} we prove our second main Theorem \ref{t:main2}, the bi-Whittaker equivalence, thereby completing Part \ref{Part2}.

\end{itemize}

\subsection{Acknowledgments} It is a pleasure to thank Pramod Achar, Dima Arinkin,  David Ben-Zvi, Roman Bezrukavnikov, Justin Campbell, Harrison Chen, Joakim F\ae rgeman, Dennis Gaitsgory,  Yau Wing Li, Mark Macerato, Sam Raskin, Simon Riche, David Yang, Ruotao Yang, and Xinwen Zhu for helpful discussions.

We especially thank David Nadler and Zhiwei Yun for illuminating conversations regarding nearby cycles and the Kirillov model, respectively. 

G.D. was supported by an NSF Postdoctoral Fellowship under grant No. 2103387. J.T. was partially supported by NSF grant DMS-1646385.

\section{Notation}\label{Notation}

\subsection{Reductive groups} Fix a coefficient field $k$ of characteristic 0.

Let $G$ be a pinned complex reductive group. Let $U \subset B$ be the unipotent radical of the Borel. Write $\Lambda$ for the coweight lattice of the maximal torus $T \subset G$. Let $\Lambda^+$ be the dominant coweights, and $\Lambda^{++}$ be the strictly dominant coweights. Let $W^{\fnt}$ denote the finite Weyl group.

Let $\sfG$ be the Langlands dual group defined over $k$. Let $V_{\lambda}$ be the irreducible $\sfG$-module of highest weight $\lambda \in \Lambda^+$. Let $\sfU \subset \sfB$ the unipotent radical of the \textit{negative} Borel.\footnote{We choose the negative Borel so that the line bundle $\sfG \times_{\sfB} k_{\lambda} \simeq \cO(\lambda)$ is ample when $\lambda \in \Lambda^{++}$.} Let $\widetilde{\sfG} \coloneqq \sfG \times^{\sfB} \sfB$ be the universal Grothendieck--Springer variety. Let $(\sfG/\sfU)^{\aff} \coloneqq \Spec(\cO(\sfG/\sfU))$ be the affine closure of base affine space. Write $R \coloneqq k[\Lambda]$ for the ring of functions on the maximal torus $\sfT \subset \sfG$. 

Let $Z \subset G$ be the center of the derived subgroup of $G$; note that $Z$ is a finite central subgroup of $G$. Let $\sfZ \coloneqq \Hom(Z, \bC^{\times})$ be its Cartier dual. Then $G^{\ad} \coloneqq G/Z$ is the product of a torus and an adjoint reductive group. Let $\sfG^{\sct}$ be the Langlands dual group to $G^{\ad}$. Then $\sfG^{\sct}$ is the product of a torus and a simply connected reductive group, $\sfZ \subset \sfG^{\sct}$ is a finite central subgroup, and $\sfG = \sfG^{\sct}/\sfZ$.

If the derived subgroup of $\sfG$ is not simply connected, then the na\"ive characteristic polynomial map should be replaced by \[\sfG = \sfG^{\sct}/\sfZ \rightarrow \sfC \coloneqq (\sfG^{\sct}/\!\!/\sfG^{\sct})/\sfZ.\]

\subsection{Loop groups}
Let $\mathring{I}$ be the pro-unipotent radical of the Iwahori subgroup $I \subset G[\![t]\!]$. The enhanced affine flag variety $\Fl \coloneqq G(\!(t)\!)/\mathring{I}$ is a $G/U$-torsor over the affine Grassmannian $\Gr \coloneqq G(\!(t)\!)/G[\![t]\!]$.\footnote{Beware that the enhanced affine flag variety is usually denoted with a tilde in the literature.}

Let $W \coloneqq W^{\fnt} \ltimes \Lambda$ be the extended affine Weyl group. Let $\ell(w)$ denote the length of an element $w \in W$. Let $\sfR$ be the set of affine coroots and $\sfR^+$ be the set of positive affine coroots.

\subsection{Constructible sheaves}
Suppose $H$ is a complex algebraic group and $Y = \colim Y_i$ an ind-complex analytic $H$-space. Our coefficient field $k$ is assumed to be characteristic 0.

Let $\Shv_{(H)}(Y) = \lim^! \Shv_{(H)}(Y_i)$ be the limit under $!$-pullback of the weakly $H$-constructible unbounded derived categories of sheaves of $k$-vector spaces on $Y_i$. Weakly $H$-constructible means locally constant along the $H$-orbit. We work in the analytic topology and impose no finiteness conditions on stalks.

Let $\Perv_{(H)}(Y)$ be the abelian subcategory of perverse sheaves. We allow perverse sheaves to have infinite dimensional stalks, but require that they are compact in $\Shv_{(H)}(Y)$.

All derived categories are interpreted in the DG sense.

\subsection{Coherent sheaves}
Suppose $\sfH$ is an algebraic group acting on a stack $\sfY$, both defined over $k$. We use the following notation for some associated categories of sheaves. 

\begin{itemize}
    
\renewcommand\labelitemi{--}
\item Let $\Rep(\sfH)$ be the abelian category of finite dimensional $\sfH$-modules.

\item Let $\Free_{\sfH}(\sfY)$ be the additive (underived) category of $\sfH$-equivariant free coherent sheaves.

\item Let $\Perf_{\sfH}(\sfY)$ be the derived category of perfect complexes, the bounded homotopy category of $\Free_{\sfH}(\sfY)$.

\item Let $\QCoh_{\sfH}(\sfY)$ be its ind-completion, the unbounded derived category of quasi-coherent sheaves.

\item Let $\Coh_{\sfH}(\sfY)$ be the full subcategory in $\QCoh_{\sfH}(\sfY)$ of bounded coherent complexes.

\item Let $\IndCoh_{\sfH}(\sfY)$ be the ind-completion of $\Coh_{\sfH}(\sfY)$.

\end{itemize}

{\large \part{Construction of the functor}\label{Part1}}

\section{Outline of Part \ref{Part1}}
\label{s:outline1}
The goal of this first part of the paper is to construct, in the present universal monodromic setting, the Arkhipov--Bezrukavnikov functor, a certain monoidal functor 
 \[F: \QCoh_{\sfG}(\widetilde{\sfG}) \rightarrow \Shv_{(I)}(\Fl).\]
The result itself appears in Section \ref{ConstructionFunctor}, let us now indicate the steps used in the construction.

% After taking associated graded for the Wakimoto filtration, \[\gr F: \Free_{\sfG}(\widetilde{\sfG}) \rightarrow \Perv_{(I)}^{\waki}(\Fl) \rightarrow \Free_{\sfT}(\sfT)\] is isomorphic to restriction along the inclusion of the maximal torus.
After collecting some preliminaries in Sections \ref{UniversalSheaves} and \ref{s:Wakimoto}, the starting point is a universal monodromic version of Gaitsgory's central functor, sending representations of $\sfG$ to Wakimoto filtered perverse sheaves on the enhanced affine flag variety \[Z: \Rep(\sfG) \rightarrow \Perv_{(I)}^{\waki}(\Fl).\] This is constructed in Section \ref{CentralSheaves} using nearby cycles from the affine Grassmannian to the affine flag variety, exactly as in the original work \cite{GZ}. Our proof that central sheaves admit Wakimoto filtrations is similar to the proof of Proposition 5 of \cite{AB}, with the mild caveat that we need the extra notion of universal perversity. 

In Section \ref{s:Plucker} we provide the extra data needed to extend the central functor to $\Rep(\sfB)$, that is,  the highest weight arrows from central to Wakimoto sheaves. Checking the relevant Pl\"ucker relations between these highest weight arrows is more delicate in the universal monodromic setting, because a certain trivialization of the relevant stratum in Gaitsgory's family does not extend globally.

Using monodromy of nearby cycles and highest weight arrows, we upgrade the central functor to a monoidal functor\[F: \Free_{\sfG \times \sfT}(\sfG \times (\sfG/\sfU)^{\aff} \times \sfT) \rightarrow \Perv_{(I)}^{\waki}(\Fl)\] such that 
\begin{enumerate}
\item[($\star$)] \label{grFRestrict} after taking associated graded for the Wakimoto filtration, $\gr F$ is isomorphic to restriction along $\sfT \rightarrow \sfG \times (\sfG/\sfU)^{\aff} \times \sfT$ sending $t \mapsto (t, 1, t)$.
\end{enumerate}
The proof of $(\star)$ requires additional arguments not necessary in the setting of \cite{AB}, and we defer a discussion of this to the final paragraph of this section. 

Assuming $(\star)$, to get a functor out of $\QCoh_{\sfG}(\widetilde{\sfG})$ there are two remaining steps, which are performed in Section \ref{ConstructionFunctor}.
\begin{enumerate}[label=(\roman*)]
\item \label{Factor1}First we factor $F$ through a closed subscheme $\widetilde{\sfG}^{\aff} \subset \sfG \times (\sfG/\sfU)^{\aff} \times \sfT$ cut out by a certain compatibility \eqref{ClosedEquations} between the monodromy and highest weight arrows.
\item \label{Factor2} Second we factor $F$ through an open subscheme $\widetilde{\sfG}^{\un} \subset \widetilde{\sfG}^{\aff}$, a $\sfT$-torsor over $\widetilde{\sfG}$. Here we use ($\star$) to show that, after passing to homotopy categories, $F$ kills complexes supported on the closed boundary.
\end{enumerate}
Here, the arguments for \ref{Factor1} and \ref{Factor2} are largely similar to ones that appeared in \cite{AB}, \cite{B}, and \cite{AR}.

Finally, let us comment on the assertion $(\star)$, whose proof appears in Sections \ref{Fiberfunctor} and \ref{ConstructionFunctor}. The proof crucially uses that $\gr Z: \Rep(\sfG) \rightarrow \Free_{\sfT}(\sfT)$ is \textit{monoidally} isomorphic to restriction to the maximal torus.\footnote{It is possible to construct such an isomorphism of plain functors using Braden's theorem, but it does not appear straightforward to prove directly that this isomorphism intertwines the monoidal structures.\iffalse, except on the highest weight lines, where the desired monoidality reduces to the Pl\"ucker relations.\fi}
Following Arkhipov--Bezrukavnikov, $\gr Z$ is a tensor functor and therefore identifies under Tannakian formalism with pullback along some map $\sfT/\sfT \rightarrow \pt/\sfG$. 
% So far the argument closely followed \cite{AB}, but 
If $k$ is not algebraically closed, it is not immediate how to identify this $\sfG$-torsor on $\sfT/\sfT$, unlike in the setting of \cite{AB}. In Section \ref{IDTorsor}, we use the Pl\"ucker relations relations to construct a $\sfB$-reduction of the $\sfG$-torsor. We then observe there is a unique isomorphism class of $\sfB$-torsor on $\sfT/\sfT$ whose associated graded $\sfT$-torsor is isomorphic to the pullback of the universal $\sfT$-torsor from $\pt / \sfT$, thereby allowing us to identify the $\sfG$-torsor.

\section{Universal monodromic sheaves}\label{UniversalSheaves}
Here we collect some basic results about universal monodromic sheaves on the enhanced affine flag variety. We recall the usual convolution formulas, and introduce the notion of universal perversity which combines perversity with flatness.

\subsection{Standard and costandard extensions} 
If $w \in W \coloneqq W^{\fnt} \ltimes \Lambda$, let $j_w:\Fl_w \coloneqq I\dot{w} \hookrightarrow \Fl$ be the Iwahori orbit through a lift $\dot{w} \in G(\!(t)\!)$.
Let $R_{\Fl_{w}}$ be the perverse universal local system on $\Fl_w \simeq T \times \bC^{\ell(w)}$, i.e., the regular representation of the fundamental group. Define the universal standard and costandard sheaves by \[\Delta_w \coloneqq j_{w!} R_{\Fl_w} \quad \text{and} \quad \nabla_w \coloneqq j_{w*} R_{\Fl_w} \quad \in \; \Perv_{(I)}(\Fl).\] 
Without having chosen a base point, these sheaves are defined uniquely, but only up to non-canonical isomorphism. After choosing a lift $\dot{w} \in G(\!(t)\!)$, they become unique up to canonical isomorphism, and we denote these canonical lifts by $R_{\Fl_{\dot{w}}}, \Delta_{\dot{w}}, \nabla_{\dot{w}}$. 

We prove in Lemma \ref{UPervW} below that these universal standard and costandard sheaves are indeed perverse.
By adjunction \beq \label{StandardtoCostandard}\RHom(\Delta_{\dot{w}}, \nabla_{\dot{w}}) \simeq R \quad \text{and} \quad \RHom(\Delta_w, \nabla_v) \simeq 0 \quad \text{ for } \quad w \neq v.\eeq 

Let $s$ be a simple reflection and $\alpha$ the corresponding simple coroot. As explained in \cite{T}, there is a short exact sequence of perverse sheaves \beq \label{DeltasNablas} 0 \rightarrow \Delta_{\dot{s}} \rightarrow \nabla_{\dot{s}} \rightarrow \nabla_1/(e^{\alpha} - 1) \rightarrow 0.\eeq

If $A, B \in \Shv_{(I)}(\Fl)$ define the convolution \[A * B \coloneqq m_! (A \widetilde{\boxtimes} B)[\dim T] \simeq m_* (A \widetilde{\boxtimes} B) \quad \text{where} \quad m: \Fl \widetilde{\times} \Fl \coloneqq G(\!(t)\!) \times^{\mathring{I}} \Fl \rightarrow \Fl.\]

\begin{proposition}\label{Convolution}
Let $v, w \in W$.
\begin{enumerate}[label=(\alph*)]
\item \label{Convolution1} There are canonical isomorphisms $\Delta_{\dot{v}} * \nabla_{\dot{v}^{-1}} \simeq \Delta_1$.

\item \label{Convolution2} If $\ell(vw) = \ell(v) + \ell(w)$, there are canonical isomorphisms $\Delta_{\dot{v}} * \Delta_{\dot{w}} \simeq \Delta_{\dot{v}\dot{w}}$ and $\nabla_{\dot{v}} * \nabla_{\dot{w}} \simeq \nabla_{\dot{v}\dot{w}}$.
\end{enumerate}
\end{proposition}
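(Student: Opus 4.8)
The plan is to reduce both statements to the case of a simple reflection, together with elements of length zero, and to a single computation in semisimple rank one, following the classical template for the affine Hecke category; the one genuinely new ingredient is the bookkeeping of the universal monodromy and of the chosen lifts $\dot{w} \in G(\!(t)\!)$.

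For part \ref{Convolution2}, the input is purely geometric and insensitive to the passage from the ordinary to the enhanced affine flag variety. When $\ell(vw) = \ell(v) + \ell(w)$, the multiplication map restricts to an isomorphism of the convolution cell $\Fl_v \widetilde{\times} \Fl_w$ onto $\Fl_{vw}$, and inside $\overline{\Fl_v} \widetilde{\times} \overline{\Fl_w}$ the preimage under $m$ of the open orbit $\Fl_{vw}$ is exactly this cell. Since $\Delta_{\dot v} \widetilde{\boxtimes} \Delta_{\dot w}$ (resp.\ $\nabla_{\dot v} \widetilde{\boxtimes} \nabla_{\dot w}$) is the $!$- (resp.\ $*$-) extension from $\Fl_v \widetilde{\times} \Fl_w$ of the twisted external product of the universal local systems, proper base change together with the cleanness of these extensions identifies $m_!(\Delta_{\dot v} \widetilde{\boxtimes} \Delta_{\dot w})[\dim T]$ with the $!$-extension from $\Fl_{vw}$ of the transported local system, and similarly with $*$. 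What remains is to match the cohomological shift against the perverse normalization --- a direct count using $\dim \Fl_v \widetilde{\times} \Fl_w = \ell(v) + \ell(w) + \dim T$ --- and to identify the transported local system canonically with $R_{\Fl_{\dot v \dot w}}$; it is here that the hypothesis $\dot v \cdot \dot w = \dot{(vw)}$ on the lifts enters, since this is precisely what makes the right monodromy of $\Fl_{\dot v}$ and the left monodromy of $\Fl_{\dot w}$ glue, under $\widetilde{\boxtimes}$, to the monodromy of $\Fl_{\dot v \dot w}$.

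For part \ref{Convolution1}, I would first reduce to simple reflections and length-zero elements. Writing $v = \omega s_1 \cdots s_n$ with $\omega$ of length zero and $s_1 \cdots s_n$ a reduced word, and --- as we may --- fixing the lift $\dot v := \dot\omega \dot s_1 \cdots \dot s_n$, so that $\dot v^{-1} = \dot s_n^{-1} \cdots \dot s_1^{-1} \dot\omega^{-1}$, part \ref{Convolution2} gives
\[ \Delta_{\dot v} \simeq \Delta_{\dot\omega} * \Delta_{\dot s_1} * \cdots * \Delta_{\dot s_n}, \qquad \nabla_{\dot v^{-1}} \simeq \nabla_{\dot s_n^{-1}} * \cdots * \nabla_{\dot s_1^{-1}} * \nabla_{\dot\omega^{-1}}. \]
Using associativity of $*$ and the fact that $\Delta_1$ --- the universal local system on the minimal orbit $\Fl_1 \simeq T$ --- is the monoidal unit, the desired identity $\Delta_{\dot v} * \nabla_{\dot v^{-1}} \simeq \Delta_1$ then telescopes to the two base cases $v = \omega$ and $v = s$. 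The length-zero case is immediate: $\dot\omega$ normalizes $\mathring{I}$, so convolution with $\Delta_{\dot\omega}$ is pushforward along the automorphism of $\Fl$ induced by left translation by $\dot\omega$, and the claim follows from $\dot\omega \cdot \dot\omega^{-1} = 1$. For a simple reflection $s$ the computation takes place inside the minimal parahoric, where $\overline{\Fl_s} \widetilde{\times} \overline{\Fl_s} \xrightarrow{m} \overline{\Fl_s}$ is the familiar $\mathbb{P}^1$-type correspondence collapsing the $s$-curve over $\Fl_1$; the assertion $\Delta_{\dot s} * \nabla_{\dot s^{-1}} \simeq \Delta_1$ is then the monodromic enhancement of the classical rank-one invertibility of $\Delta_s$ with inverse $\nabla_s$, which can be read off from the short exact sequence \eqref{DeltasNablas} and is recorded in \cite{T} (see also \cite{AR, BR}).

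The formal skeleton above is routine; I expect the real work, and the main obstacle, to be the two points of monodromy bookkeeping. First, in part \ref{Convolution2}, one must verify that the displayed isomorphisms are genuinely canonical once compatible lifts are fixed --- i.e.\ that the universal local systems and their left and right monodromies transport correctly through $\widetilde{\boxtimes}$ and $m_!$. Second, and more delicately, in the rank-one base case of part \ref{Convolution1} one must check that after the $\mathbb{P}^1$-collapse it is precisely the untwisted universal local system $R_{\Fl_1}$ on $\Fl_1 \simeq T$ that survives, with no residual monodromy twist --- equivalently, that the extra terms produced by convolving \eqref{DeltasNablas} with $\nabla_{\dot s^{-1}}$ cancel exactly. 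Essentially all of the content sits in this last point, which is however already available in the cited references.
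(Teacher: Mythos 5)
Your proof is correct and follows essentially the same strategy as the paper. For part \ref{Convolution2}, the entire content — and all the paper records — is the commutative square tracking $(\dot v, \dot w) \mapsto \dot v \dot w$ under the convolution isomorphism of Iwahori orbits, which is precisely the base-point and monodromy bookkeeping you isolate as the real work; for part \ref{Convolution1}, the paper defers to \cite{T}, and your telescoping reduction via \ref{Convolution2} to the length-zero case and the rank-one identity $\Delta_{\dot s} * \nabla_{\dot s^{-1}} \simeq \Delta_1$ is the intended argument (your use of the word \emph{cleanness} in part \ref{Convolution2} is a slight misnomer — what is really invoked for the costandards is properness of $m$ plus base change along the Cartesian square $m^{-1}(\Fl_{vw}) = \Fl_v \widetilde{\times} \Fl_w$ — but the underlying geometry you cite is correct).
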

\begin{proof}
Part \ref{Convolution1} is similar to \cite{T}. Part \ref{Convolution2} follows by commutativity of 
\[\begin{tikzcd} \Fl_v \widetilde{\times} \Fl_w \arrow[r] \arrow[d] & \Fl_{vw} \arrow[d] \\
T \times T \arrow[r] & T \end{tikzcd} \qquad \begin{tikzcd} (\dot{v}, \dot{w}) \arrow[r, mapsto] \arrow[d, mapsto] & \dot{v}\dot{w} \arrow[d, mapsto] \\
(1, 1) \arrow[r, mapsto] & 1. \end{tikzcd}\]
The downward maps were induced by the choices of lifts $\dot{v}, \dot{w} \in G(\!(t)\!)$.
\end{proof}

\begin{remark}\label{CanonicalWhittaker} Using the Whittaker module, to be defined in Section \ref{WhittakerSheaves}, it is possible to canonically define those standard and costandard sheaves that are indexed by the finite Weyl group \cite{LY}. Namely if $v \in W^{\fnt}$, then $\Delta_v$ and $\nabla_v$ can be canonically defined by the condition that there are canonical isomorphisms ${}^{\chi}\Delta_1 * \Delta_v \simeq {}^{\chi}\Delta_1 * \nabla_v \simeq {}^{\chi}\Delta_1$ in $\Shv_{(B, \chi)}(G/U)$. Thereby, for $v, w \in W^{\fnt}$, the isomorphisms in Proposition \ref{Convolution} become canonical independently of the chosen lifts.
\end{remark}

\subsection{Universal perversity}\label{UniversalPerversity}
First we define some (not stable) subcategories of $\Shv_{(I)}(\Fl)$.
\begin{enumerate}
\item[-] Let $\langle \Delta[\geq 0] \rangle$ be generated under extensions by  $\Delta_v[i]$ for $v \in W$ and $i \geq 0$.
\item[-] Let $\langle \nabla[\geq 0] \rangle$ be generated under extensions by  $\nabla_v[i]$ for $v \in W$ and $i \geq 0$.
\item[-] Let $\langle \Delta[\leq 0] \rangle$ be generated under extensions by  $\Delta_v[i]$ for $v \in W$ and $i \leq 0$.
\item[-] Let $\langle \nabla[\leq 0] \rangle$ be generated under extensions by  $\nabla_v[i]$ for $v \in W$ and $i \leq 0$.
\end{enumerate}

\begin{lemma}\label{RadonExactness}
If $w \in W$ then
\begin{enumerate}[label=(\alph*)]
\item convolution by $\nabla_w$ preserves $\langle \Delta[\geq 0] \rangle$ and $\langle \nabla[\geq 0] \rangle$,
\item convolution by $\Delta_w$ preserves $\langle \nabla[\leq 0] \rangle$ and $\langle \Delta[\leq 0] \rangle$.
\end{enumerate}
\end{lemma}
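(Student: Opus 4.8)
The strategy is to reduce to the case where $w = s$ is a simple reflection, and then use the short exact sequence \eqref{DeltasNablas} together with the convolution identities in Proposition \ref{Convolution}. First I would observe that all four statements are symmetric under the obvious symmetries (cohomological shift duality exchanges $[\geq 0]$ with $[\leq 0]$, and Verdier-type duality together with inversion $w \mapsto w^{-1}$ exchanges the roles of $\Delta$ and $\nabla$), so it suffices to prove, say, that convolution by $\nabla_w$ preserves $\langle \Delta[\geq 0]\rangle$. Since the categories $\langle \Delta[\geq 0]\rangle$ etc.\ are defined as being generated under extensions by the shifted standard/costandard objects, and convolution $A \mapsto A * \nabla_w$ is exact (triangulated) in $A$, it is enough to check that $\Delta_v[i] * \nabla_w$ lies in $\langle \Delta[\geq 0]\rangle$ for all $v \in W$, $i \geq 0$; and by shifting we may take $i = 0$.

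Next I would reduce $w$ to a simple reflection. Writing $w = s_1 \cdots s_r$ as a reduced expression in $W$ (handling the length-zero part $\Omega \subset W$ separately, where $\nabla_\omega = \Delta_\omega$ is invertible and the claim is clear), Proposition \ref{Convolution}\ref{Convolution2} gives $\nabla_w \simeq \nabla_{s_1} * \cdots * \nabla_{s_r}$, so by associativity of convolution and induction on $\ell(w)$ it suffices to treat $w = s$. So the heart of the matter is: for every $v \in W$, $\Delta_v * \nabla_s \in \langle \Delta[\geq 0]\rangle$. Here there are two cases. If $\ell(vs) = \ell(v) + 1$, then Proposition \ref{Convolution}\ref{Convolution2} gives $\Delta_v * \Delta_s \simeq \Delta_{vs}$, and convolving the short exact sequence \eqref{DeltasNablas} (pulled back along the appropriate lift, or just up to non-canonical isomorphism, which is harmless here) on the left by $\Delta_v$ yields a triangle $\Delta_{vs} \to \Delta_v * \nabla_s \to \Delta_v * (\nabla_1/(e^\alpha-1))$; the third term is a successive extension of copies of $\Delta_v$ (it is $R$-built from $\nabla_1 = \Delta_1$ convolved with $\Delta_v$), hence lies in $\langle \Delta[\geq 0]\rangle$, and the outer term $\Delta_{vs}$ does too, so the middle term does. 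If instead $\ell(vs) = \ell(v) - 1$, write $v = v's$ with $\ell(v') = \ell(v) - 1$; then $\Delta_v * \nabla_s \simeq \Delta_{v'} * \Delta_s * \nabla_s \simeq \Delta_{v'} * \Delta_1 \simeq \Delta_{v'}$ using Proposition \ref{Convolution}\ref{Convolution1} (and \ref{Convolution2}), which lies in $\langle \Delta[\geq 0]\rangle$ on the nose.

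The one point that needs a little care — and which I expect to be the main technical obstacle — is the claim that $\Delta_v * (\nabla_1/(e^\alpha - 1))$ lies in $\langle \Delta[\geq 0]\rangle$, i.e.\ that this "monodromy-twisted" object is still a (possibly infinite, but in fact finite, since $\nabla_1/(e^\alpha-1)$ is itself built from finitely many copies of $\Delta_1 \simeq \nabla_1$ over $R$) extension of shifted standards. This is really a statement about the $R$-module structure: $\nabla_1/(e^\alpha-1)$ sits in a triangle with $\nabla_1$ and $\nabla_1[1]$ via multiplication by $e^\alpha - 1$, so $\Delta_v * (\nabla_1/(e^\alpha - 1))$ sits in a triangle with $\Delta_v * \nabla_1 \simeq \Delta_v * \Delta_1 \simeq \Delta_v$ and its shift, using Proposition \ref{Convolution}. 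Granting the standard fact that $\Delta_1 \simeq \nabla_1$ is the monoidal unit's standard/costandard object and convolution is $R$-bilinear, this closes the argument. Once $\langle \Delta[\geq 0]\rangle$ is handled, the preservation of $\langle \nabla[\geq 0]\rangle$ under $\nabla_w$ follows by the identical argument with the roles of the triangle \eqref{DeltasNablas} read in the other direction (the cone of $\Delta_s \to \nabla_s$ being $R$-built from $\nabla_1$), and parts (b) follow by the shift/duality symmetry noted at the outset.
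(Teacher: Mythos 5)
Your proof is correct and follows essentially the same route as the paper: reduce by induction on length (via Proposition \ref{Convolution}) to the case of a simple reflection $s$, then split into the two length cases, using the short exact sequence \eqref{DeltasNablas} to produce the triangle $\Delta_{vs} \to \Delta_v * \nabla_s \to \Delta_v/(e^\alpha - 1)$ in the length-increasing case. The paper's proof is terser (it omits the explicit reduced-expression factorization and the explicit treatment of the length-zero subgroup, and simply says "the other claims are similar"), but the underlying argument is the same.
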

\begin{proof}
Let $s, v \in W$ such that $s$ is a simple reflection.
\begin{enumerate}
\item[-] If $vs < v$ then $\Delta_v * \nabla_s \simeq \Delta_{vs}$.
\item[-] If $v < vs$ then by \eqref{DeltasNablas} there is a triangle $\Delta_{vs} \rightarrow \Delta_v * \nabla_s  \rightarrow \Delta_v/(e^{\alpha} - 1).$
\end{enumerate}
In both cases $\Delta_v * \nabla_s \in \langle \Delta[\geq 0] \rangle$. Therefore induction on $\ell(w)$ shows that $\langle \Delta[\geq 0] \rangle$ is preserved by all $-* \nabla_w$. The other claims are similar.
\end{proof}

\begin{definition}
We call $A \in \Shv_{(I)}(\Fl)$ universally perverse if it satisfies the following equivalent conditions.  
\end{definition}

\begin{proposition}\label{UPerv}
For $A \in \Shv_{(I)}(\Fl)$ the following are equivalent.
\begin{enumerate}[label=(\alph*)]
\item \label{UPerv1} $M \otimes_R A \in \Perv_{(I)}(\Fl)$ is perverse, for any finitely generated $R$-module $M$.
\item $A \in \langle \Delta[\geq 0] \rangle \cap \langle \nabla[\leq 0] \rangle$.
\end{enumerate}
\end{proposition}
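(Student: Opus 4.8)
The plan is to prove the two conditions are equivalent by a standard weight-argument/recollement bootstrap, using the convolution exactness established in Lemma \ref{RadonExactness} together with the short exact sequence \eqref{DeltasNablas} and the $\Hom$-vanishing \eqref{StandardtoCostandard}. First I would observe that $\langle\Delta[\geq 0]\rangle$ and $\langle\nabla[\leq 0]\rangle$ can be detected stratum-by-stratum: an object $A$ lies in $\langle\Delta[\geq 0]\rangle$ if and only if for every $w\in W$ the $*$-restriction $j_w^* A$ lies in nonnegative perverse degrees (as a complex of universal local systems on $\Fl_w\simeq T\times\mathbf C^{\ell(w)}$), and dually $A\in\langle\nabla[\leq 0]\rangle$ iff every $!$-restriction $j_w^! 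A$ lies in nonpositive perverse degrees. This is proved by induction on the support, peeling off the open stratum via the recollement triangle and using that $\Delta_w,\nabla_w$ are perverse (Lemma \ref{UPervW}) together with \eqref{StandardtoCostandard} to split off the top piece. Granting this, the intersection $\langle\Delta[\geq 0]\rangle\cap\langle\nabla[\leq 0]\rangle$ is exactly the objects with $j_w^*A$ in nonnegative and $j_w^!A$ in nonpositive perverse degrees for all $w$ — i.e.\ a ``universal'' version of the perverse heart, where ``universal'' means the condition is imposed integrally over $R=k[\Lambda]$ rather than after base change.

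For (a)$\Rightarrow$(b): if $M\otimes_R A$ is perverse for all finitely generated $M$, then taking $M=R$ shows $A$ itself is perverse, hence each $j_w^*A$ (resp.\ $j_w^!A$) lies in the appropriate perverse degrees, so $A\in\langle\Delta[\geq0]\rangle\cap\langle\nabla[\leq0]\rangle$ by the stratum criterion. For (b)$\Rightarrow$(a): I would argue that the stratum-wise cohomology sheaves of an $A\in\langle\Delta[\geq0]\rangle\cap\langle\nabla[\leq0]\rangle$ are, on each $\Fl_w$, direct sums of shifts of the universal local system $R_{\Fl_w}$ placed in the single perverse degree $0$ — because the $\Delta$- and $\nabla$-conditions pin the $*$- and $!$-stalks to opposite sides, the only surviving perverse cohomology is in degree $0$, and it is a local system of free $R$-modules (the universal local system $R_{\Fl_w}$ has underlying stalk the free rank-one $R$-module $R$, so there is no $\mathrm{Tor}$). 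Concretely, one runs an induction on the number of strata in $\supp A$: peel off the open stratum, where $j^0_!(\text{free local system})\to A\to A'$ with $A'$ supported on the complement, note $-\otimes_R M$ is exact on the free local system factor, and conclude by the inductive hypothesis that $M\otimes_R A'$ is perverse, then that $M\otimes_R A$ is perverse by the two-out-of-three property of perversity in a recollement triangle (here one needs that $M\otimes_R j_{0!}(R_{\Fl_0})= j_{0!}(M\otimes_k \cdots)$ is perverse, which holds since $j_{0!}$ is right t-exact and the relevant local system is flat over $R$).

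The main obstacle I anticipate is the freeness/flatness bookkeeping in (b)$\Rightarrow$(a): one must verify that being built from the $\Delta_w$'s and $\nabla_w$'s under extensions forces the perverse cohomologies, stratum by stratum, to be not merely perverse but \emph{$R$-flat}, so that $-\otimes_R M$ does not introduce higher Tor contributions that would violate perversity of $M\otimes_R A$ for torsion $M$; equivalently, one must rule out that an extension of $\Delta$'s and $\nabla$'s could have a perverse cohomology sheaf that is an $R$-module like $R/(e^\alpha-1)$ sitting in the wrong degree. The resolution is that the defining extensions only ever glue the \emph{free} universal local systems $R_{\Fl_w}$, and any subquotient local system arising on a stratum is a successive extension of copies of $R_{\Fl_w}$, hence still free; combined with the fact — visible from \eqref{DeltasNablas} — that the ``torsion'' pieces like $\nabla_1/(e^\alpha-1)$ only appear as genuine subquotients in the \emph{derived} category and not as stratum-wise perverse cohomology of objects in the intersection, this gives the needed flatness. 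Once flatness is in hand, the equivalence follows formally, and in fact the argument simultaneously shows that the universally perverse objects form the heart of a t-structure compatible with base change along $\Spec(R/I)\hookrightarrow\Spec R$, which is the property the rest of the paper will exploit.
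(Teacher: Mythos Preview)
There is a genuine gap in your (a)$\Rightarrow$(b) argument. Your proposed ``stratum criterion'' --- that $A\in\langle\nabla[\leq 0]\rangle$ iff $j_w^! A$ lies in the appropriate perverse half --- is false: it only characterizes the perverse coconnective part ${}^p D^{\geq 0}$, which is strictly larger than $\langle\nabla[\leq 0]\rangle$. The object $A=\nabla_1/(e^{\alpha}-1)$ from \eqref{DeltasNablas} is perverse (it is a local system on the closed stratum), yet it is \emph{not} in $\langle\nabla[\leq 0]\rangle$: on the stratum it is the torsion $R$-module $R/(e^{\alpha}-1)$ in perverse degree $0$, and a torsion module cannot arise as $H^0$ of a bounded complex of free $R$-modules in nonnegative cohomological degrees. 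Correspondingly, your (a)$\Rightarrow$(b) argument uses only $M=R$, and this same $A$ shows that is insufficient: $A$ is perverse but $R/(e^{\alpha}-1)\otimes_R^L A$ is not, so $A$ fails (a) while passing your criterion.

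The paper's proof uses the full strength of (a): the hypothesis for \emph{all} finitely generated $M$ says precisely that $j_w^! A$ has Tor-amplitude $\geq -\ell(w)$ over $R$, whence (by \cite[0651]{Stacks}) it is represented by a finite complex of finitely generated flat $R$-modules in degrees $\geq -\ell(w)$; the Laurent polynomial Quillen--Suslin theorem \cite{Sw} upgrades flat to free, and then the Cousin filtration exhibits $A$ as an iterated extension of $\nabla_w[i]$ with $i\leq 0$. In short, the flatness/Tor-amplitude bookkeeping you correctly flag as the crux belongs in (a)$\Rightarrow$(b), not (b)$\Rightarrow$(a); the latter direction is the easy one, since $\Delta_w$ and $\nabla_w$ have free $R$-stalks and costalks, so $M\otimes_R^L(-)$ applied to them stays perverse and the extension structure is preserved.
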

\begin{proof}
Suppose that $A$ satisfies \ref{UPerv1}. Since $A$ is perverse, it belongs to $\langle \Delta[\geq 0] \rangle$. Moreover $j_w^! A$ has Tor-amplitude $\geq -\ell(w)$ as a complex of $R$-modules. Therefore $j_w^! A$ can be represented by a finite complex of finitely generated flat $R$-modules in degrees $\geq -\ell(w)$ by \cite[\href{https:/\!\!/stacks.math.columbia.edu/tag/0651}{0651}]{Stacks}. The Laurent polynomial Quillen--Suslin theorem \cite{Sw} says that every finitely generated flat $R$-module is free.\footnote{The Laurent polynomial Quillen--Suslin theorem is not really essential for this paper, but it mildly simplifies the notation by allowing us to avoid tensoring with finitely generated flat $R$-modules.} Therefore $A \in \langle \nabla[\leq 0] \rangle$ by the Cousin filtration.
\end{proof}

\begin{lemma}\label{UPervW}
For all $v, w \in W$, the sheaf $\Delta_w * \nabla_v$ is universally perverse.
\end{lemma}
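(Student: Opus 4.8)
The plan is to show $\Delta_w * \nabla_v \in \langle \Delta[\geq 0]\rangle \cap \langle \nabla[\leq 0]\rangle$ and then invoke Proposition \ref{UPerv}. For the first containment, we start from $\Delta_w \in \langle \Delta[\geq 0]\rangle$ (since $\Delta_w$ is perverse — indeed it is in the heart, being the $!$-extension of a perverse local system, as recorded around \eqref{StandardtoCostandard}) and apply Lemma \ref{RadonExactness}(a), which tells us that convolution on the right by any $\nabla_v$ preserves $\langle \Delta[\geq 0]\rangle$. Hence $\Delta_w * \nabla_v \in \langle \Delta[\geq 0]\rangle$. Symmetrically, $\nabla_v \in \langle \nabla[\leq 0]\rangle$ since $\nabla_v$ is perverse, and Lemma \ref{RadonExactness}(b) says convolution on the left by $\Delta_w$ preserves $\langle \nabla[\leq 0]\rangle$; hence $\Delta_w * \nabla_v \in \langle \nabla[\leq 0]\rangle$ as well. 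Combining the two, $\Delta_w * \nabla_v$ satisfies condition (b) of Proposition \ref{UPerv}, so it is universally perverse.

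There is a small gap in the above to address first: Proposition \ref{UPerv} is stated as an equivalence between condition \ref{UPerv1} (flat base change stays perverse) and condition (b) (membership in $\langle\Delta[\geq 0]\rangle \cap \langle\nabla[\leq 0]\rangle$), but the proof sketch in the excerpt only explicitly writes out the implication \ref{UPerv1} $\Rightarrow$ (b). For the present application I need (b) $\Rightarrow$ \ref{UPerv1}, or more precisely I should just take the Definition of ``universally perverse'' to be condition (b) directly — which is harmless since the Proposition asserts the equivalence — and note that the reverse implication is the elementary direction: tensoring the $\Delta$-resolution and $\nabla$-coresolution of $A$ by a finitely generated (hence, by Quillen--Suslin, a finite free, up to the standard reduction) $R$-module $M$ preserves the defining $t$-structure inequalities on each stratum, because $\Delta_w \otimes_R M$ and $\nabla_w \otimes_R M$ are direct sums of shifts of $\Delta_w$, $\nabla_w$ placed in the correct perverse degree.

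The argument is essentially formal once Lemma \ref{RadonExactness} is in hand, so I do not anticipate a genuine obstacle; the only point requiring minor care is the bookkeeping of which side the convolution acts on — Lemma \ref{RadonExactness}(a) concerns $-*\nabla_w$ and (b) concerns $-*\Delta_w$, so to bound $\Delta_w * \nabla_v$ from the $\nabla$-side I must use (b) applied with the roles arranged as ``left convolution by $\Delta_w$,'' which is legitimate since $\Delta_w * -$ and $- * \nabla_v$ commute and both $\langle\Delta[\geq 0]\rangle$, $\langle\nabla[\leq 0]\rangle$ are defined by conditions stable under the relevant one-sided convolutions. If one prefers to avoid even this, one can instead run the induction on $\ell(v) + \ell(w)$ directly using Proposition \ref{Convolution}\ref{Convolution2} and the short exact sequence \eqref{DeltasNablas}, exactly as in the proof of Lemma \ref{RadonExactness}, but the two-line deduction from Lemma \ref{RadonExactness} above is cleaner.
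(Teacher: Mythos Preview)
Your proposal is correct and is essentially identical to the paper's proof: the paper also shows $\Delta_w * \nabla_v \in \langle \Delta[\geq 0]\rangle$ via $\langle \Delta[\geq 0]\rangle * \nabla_v \subset \langle \Delta[\geq 0]\rangle$ and $\Delta_w * \nabla_v \in \langle \nabla[\leq 0]\rangle$ via $\Delta_w * \langle \nabla[\leq 0]\rangle \subset \langle \nabla[\leq 0]\rangle$, both from Lemma~\ref{RadonExactness}. Your digression on the (b)$\Rightarrow$(a) direction of Proposition~\ref{UPerv} is unnecessary here, since ``universally perverse'' is \emph{defined} as satisfying either equivalent condition, so establishing (b) suffices; the paper likewise does not revisit that implication.
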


\begin{proof}
Lemma \ref{RadonExactness} implies that $\Delta_w * \nabla_v$ is contained in both $\langle \Delta[\geq 0] \rangle * \nabla_v \subset \langle \Delta[\geq 0] \rangle$ and $\Delta_w * \langle \nabla[\leq 0] \rangle \subset \langle \nabla[\leq 0] \rangle$.
% We show that $\Delta_w * \nabla_v \in \langle \Delta[\geq 0] \rangle$ for any $w, v \in W$ by induction on $\ell(v)$. Let $s$ be a simple reflection such that $sv < v$. \begin{enumerate}[label=(\roman*)] \item If $ws < w$ then by induction \[\Delta_w * \nabla_v \simeq \Delta_w * \nabla_s * \nabla_{sv} \simeq \Delta_{ws} * \nabla_{sv} \; \in \; \langle \Delta[\geq 0] \rangle.\] \item If $w < ws$ then by \eqref{DeltasNablas} there is a triangle \[\Delta_{ws} \rightarrow \Delta_w * \nabla_s \rightarrow \Delta_w/(e^{\alpha} - 1),\] therefore by induction $\Delta_w * \nabla_v \simeq \Delta_w * \nabla_s * \nabla_{sv} \; \in \; \langle \Delta[\geq 0] \rangle$. \end{enumerate} A similar argument by induction on $\ell(w)$ shows that $\Delta_w * \nabla_v \in \langle \nabla[\leq 0] \rangle$.
\end{proof}

\begin{definition}
We call $A \in \Shv_{(I)}(\Fl)$ universally tilting if it satisfies the following equivalent conditions.
\end{definition}

\begin{proposition}
For $A \in \Shv_{(I)}(\Fl)$ the following are equivalent.
\begin{enumerate}[label=(\alph*)]
\item $A$ admits universal standard and costandard filtrations.
\item \label{UTilt2} $A \in \langle \nabla[\geq 0] \rangle \cap \langle \Delta[\leq 0] \rangle$.
\end{enumerate}
\end{proposition}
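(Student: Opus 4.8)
The statement to prove is the equivalence, for $A \in \Shv_{(I)}(\Fl)$, between (a) admitting universal standard and costandard filtrations, and (b) $A \in \langle \nabla[\geq 0] \rangle \cap \langle \Delta[\leq 0] \rangle$. This is the tilting analogue of the universal perversity characterization in Proposition \ref{UPerv}, and the plan is to mimic that proof's structure, replacing perversity with exactness of stalks/costalks in the appropriate range.

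\textbf{Plan of proof.} The implication (a) $\Rightarrow$ (b) is immediate: if $A$ has a universal standard filtration, then it lies in $\langle \Delta \rangle = \langle \Delta[\leq 0]\rangle \cap \langle \Delta[\geq 0] \rangle \subset \langle \Delta[\leq 0]\rangle$, and symmetrically a universal costandard filtration puts it in $\langle \nabla[\geq 0]\rangle$. The substance is (b) $\Rightarrow$ (a). I would argue as follows. Membership in $\langle \nabla[\geq 0] \rangle$ forces, for each $w \in W$, the costalk $j_w^! A$ to be concentrated in cohomological degrees $\geq \ell(w)$ (after the shift conventions fixed by $\Delta_w, \nabla_w$ being perverse), and to have Tor-amplitude bounded below there as a complex of $R$-modules; dually, membership in $\langle \Delta[\leq 0]\rangle$ forces the stalk $j_w^* A$ to sit in degrees $\leq \ell(w)$ with bounded Tor-amplitude. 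The combination of the two bounds, exactly as in the proof of Proposition \ref{UPerv}, shows $j_w^! A$ can be represented by a bounded complex of finitely generated flat — hence, by the Laurent polynomial Quillen--Suslin theorem \cite{Sw}, free — $R$-modules concentrated in a single degree, so $j_w^! A$ is a shifted free $R$-module, i.e.\ a shifted copy of $R_{\Fl_w}$. One then runs the Cousin/stratification induction on the support of $A$: pick $w$ minimal in the closure order among strata meeting $\supp A$, so $j_w$ is a closed embedding onto its image inside $\supp A$; the costalk computation shows the corresponding canonical triangle $j_{w!} j_w^! A \to A \to A'$ exhibits a sub-costandard sheaf $j_{w!}R_{\Fl_w}^{\oplus} = \nabla_w^{\oplus}$ (here using that a closed stratum has $j_w^! = j_w^*$) and that $A'$ again satisfies (b) with strictly smaller support, completing the standard-filtration induction; the costandard filtration is obtained dually by taking $w$ maximal and using $j_{w*}j_w^* A \to A$, or equivalently by Verdier-type duality considerations within this monodromic setting.

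\textbf{Main obstacle.} The delicate point, as in Proposition \ref{UPerv}, is upgrading "bounded Tor-amplitude plus single-degree cohomology" for $j_w^! A$ to "free $R$-module in a single degree": this is where the stacks-project lemma \cite[\href{https://stacks.math.columbia.edu/tag/0651}{0651}]{Stacks} on representing perfect complexes and the Laurent Quillen--Suslin theorem enter. One must be careful that the a priori bound coming from $\langle \nabla[\geq 0]\rangle$ is a bound on Tor-amplitude (flatness in the right range), not merely on cohomological amplitude, which is exactly what the extension-closure definitions of these subcategories provide via Lemma \ref{RadonExactness}-type reasoning on $j_w^! \nabla_v$. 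A secondary technical point is bookkeeping the cohomological shifts so that the single surviving degree is precisely $0$ relative to the perverse normalization, making $j_{w!}j_w^! A$ literally a direct sum of copies of $\nabla_w$; this is routine once the perversity of $\Delta_w, \nabla_w$ from Lemma \ref{UPervW} is in hand. I do not expect the induction on support to cause trouble — it is the standard recollement argument — so essentially all the content is the homological-algebra step over $R$, which has already been carried out in the proof of Proposition \ref{UPerv} and need only be dualized.
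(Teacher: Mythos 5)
Your direction (a) $\Rightarrow$ (b) is fine, but the core step of (b) $\Rightarrow$ (a) has a genuine gap. You propose to combine a degree/Tor-amplitude bound on the \emph{costalk} $j_w^!A$ (coming from $A \in \langle\nabla[\geq 0]\rangle$) with a bound on the \emph{stalk} $j_w^*A$ (coming from $A \in \langle\Delta[\leq 0]\rangle$), and conclude that $j_w^!A$ is concentrated in a single degree. But these two bounds constrain different objects: on a locally closed stratum, a bound on $j_w^*A$ gives no control on $j_w^!A$. The analogy with the proof of Proposition \ref{UPerv} does not transport, because there the universal perversity hypothesis supplies both a cohomological-degree bound and a Tor-amplitude bound on the \emph{same} object $j_w^!A$, and that is why the homological-algebra step over $R$ closes.

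The missing idea, and the actual content of the paper's short argument, is the containment $\langle\Delta[\leq 0]\rangle \subset \langle\nabla[\leq 0]\rangle$. This is not a formal recollement fact; it uses the monoidal structure via Lemma \ref{RadonExactness}: writing $\Delta_v[i] = \Delta_v * \nabla_1[i]$ with $\nabla_1[i] \in \langle\nabla[\leq 0]\rangle$ for $i \leq 0$, part (b) of that lemma shows $\Delta_v[i] \in \langle\nabla[\leq 0]\rangle$, and hence so is the extension closure. With this, hypothesis (b) is upgraded to $A \in \langle\nabla[\geq 0]\rangle \cap \langle\nabla[\leq 0]\rangle$, which now bounds $\RHom(\Delta_w, A)$, equivalently $j_w^!A$, from both sides on the \emph{same} object, forcing it to be finitely generated flat in a single degree; the Laurent--Quillen--Suslin theorem then gives freeness and a costandard filtration, and the standard filtration is obtained by the dual containment $\langle\nabla[\geq 0]\rangle \subset \langle\Delta[\geq 0]\rangle$. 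Your phrase about ``Lemma \ref{RadonExactness}-type reasoning on $j_w^!\nabla_v$'' gestures at the lemma but does not use it in the way needed: what is required is convolution applied to $A$ to change the type of the filtration, not a pointwise analysis of restricted costandards. A secondary slip: $j_{w!}R_{\Fl_w} = \Delta_w$, not $\nabla_w$, so the graded piece appearing in your minimal-stratum induction would be standard, not costandard.
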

\begin{proof}
Suppose that $A$ satisfies \ref{UTilt2}. Lemma \ref{RadonExactness} implies that $\langle \Delta[\leq 0] \rangle \subset \langle \nabla[\leq 0] \rangle$.
Therefore $A \in \langle \nabla[\geq 0] \rangle \cap \langle \nabla[\leq 0] \rangle$, so $j_w^! A$ is a finitely generated flat $R$-module in degree $-\ell(w)$. Therefore $A$ admits a universal costandard filtration by \cite{Sw}. Similarly $A$ admits a universal standard filtration.
\end{proof}

\begin{proposition}\label{ConvolveTilting}
The product of universally tilting sheaves is universally tilting.
\end{proposition}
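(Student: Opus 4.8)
The plan is to reduce everything to the characterization of universal tilting sheaves established just above, namely that $A \in \Shv_{(I)}(\Fl)$ is universally tilting if and only if $A \in \langle \nabla[\geq 0] \rangle \cap \langle \Delta[\leq 0] \rangle$, combined with the one-sided stability properties of Lemma \ref{RadonExactness}. So let $A, B$ be universally tilting; it suffices to show $A * B \in \langle \nabla[\geq 0] \rangle \cap \langle \Delta[\leq 0] \rangle$.

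For membership in $\langle \nabla[\geq 0] \rangle$: since $B$ is universally tilting it admits a universal costandard filtration, i.e. it is a finite iterated extension of sheaves $\nabla_v$, $v \in W$. Applying the triangulated functor $A * (-)$, which sends fiber sequences to fiber sequences, exhibits $A * B$ as a finite iterated extension of the sheaves $A * \nabla_v$. Now $A$, being universally tilting, lies in $\langle \nabla[\geq 0] \rangle$, and by Lemma \ref{RadonExactness}\ref{Convolution1} convolution on the right by $\nabla_v$ preserves $\langle \nabla[\geq 0] \rangle$; hence each $A * \nabla_v \in \langle \nabla[\geq 0] \rangle$. Since $\langle \nabla[\geq 0] \rangle$ is by definition closed under extensions, $A * B \in \langle \nabla[\geq 0] \rangle$. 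Dually, using the universal standard filtration of $B$, the sheaf $A * B$ is a finite iterated extension of the sheaves $A * \Delta_v$; as $A \in \langle \Delta[\leq 0] \rangle$ and convolution on the right by $\Delta_v$ preserves $\langle \Delta[\leq 0] \rangle$ by Lemma \ref{RadonExactness}, each $A * \Delta_v \in \langle \Delta[\leq 0] \rangle$, whence $A * B \in \langle \Delta[\leq 0] \rangle$. Combining the two inclusions and invoking the characterization above, $A * B$ is universally tilting.

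I do not expect a serious obstacle here: all the real content sits in Lemma \ref{RadonExactness} and the tilting characterization. The only point demanding a little care is the bookkeeping of left versus right convolution — one decomposes the \emph{right} tensor factor $B$ by its standard and costandard filtrations and applies the \emph{one-sided} stability of Lemma \ref{RadonExactness} to the left factor $A$ (symmetrically, one could decompose $A$ and use the evident left-convolution analogues). One should also note, perhaps in passing, that the equivalence $A * B \simeq m_*(A \,\widetilde{\boxtimes}\, B)$ makes $A*(-)$ a genuine triangulated endofunctor, so that filtrations are transported as claimed.
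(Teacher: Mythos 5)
Your proof is correct and follows essentially the same route as the paper: both reduce to showing $A*B \in \langle \nabla[\geq 0]\rangle \cap \langle\Delta[\leq 0]\rangle$ by filtering one tensor factor by (co)standards and applying the one-sided stability of Lemma \ref{RadonExactness} to the graded pieces. The one cosmetic difference is that you decompose the \emph{right} factor $B$ and invoke right convolution $A*\nabla_v$, $A*\Delta_v$ (which is exactly the case spelled out in the proof of Lemma \ref{RadonExactness}), whereas the paper decomposes $A$ and uses the left analogue $\Delta_w * B$; also, your internal reference ``Lemma \ref{RadonExactness}\ref{Convolution1}'' inadvertently points to part (a) of Proposition \ref{Convolution} and should simply read Lemma \ref{RadonExactness}(a).
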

\begin{proof}
Let $A, B \in \Shv_{(I)}(\Fl)$ be universally tilting sheaves. Then $A*B$ admits a filtration with graded pieces $\Delta_w * B$. Lemma \ref{RadonExactness} implies $\Delta_w * B \in \Delta_w * \langle \Delta[\leq 0] \rangle \subset \langle \Delta[\leq 0] \rangle$, therefore $A*B \in \langle \Delta[\leq 0] \rangle$. A similar argument shows that also $A*B \in \langle \nabla[\geq 0] \rangle$. Therefore $A*B$ is universally tilting.
\end{proof}

\iffalse
\begin{lemma}\label{ProductNablas}
For all $u, v \in W$, we have $\nabla_u * \nabla_v \in \langle \nabla [\geq 0] \rangle$ and $\Delta_u * \Delta_v \in \langle \Delta [\leq 0] \rangle$.
\end{lemma}
\begin{proof}
By induction it suffices to assume that $u = s$ is a simple reflection. 
\begin{enumerate}[label=(\roman*)]
\item If $v < sv$ then $\nabla_s * \nabla_v \simeq \nabla_{sv} \in \langle \nabla [\geq 0] \rangle$. 
\item If $sv < v$ then by \eqref{DeltasNablas} there is a short exact sequence \[0 \rightarrow \nabla_{sv} \rightarrow \nabla_s * \nabla_v \rightarrow \nabla_v/(e^{\alpha} - 1) \rightarrow 0,\] hence $\nabla_s * \nabla_v \in \langle \nabla [\geq 0] \rangle$.
\end{enumerate}
The claim about standards is similar.
\end{proof}
\fi

\subsection{Loop rotation}
Let $\bC^{\times}$ act on $\Fl$ by loop rotation $z \cdot t^{\lambda} \coloneqq \lambda(z) t^{\lambda}$. Every sheaf in $\Shv_{(I)}(\Fl)$ is locally constant along the loop rotation orbits. Monodromy makes $\Shv_{(I)}(\Fl)$ an $(R \otimes R)[m^{\pm 1}]$-linear category, where $m$ acts by monodromy around the loop rotation orbits.
The following is similar to Lemmas 6.6 and 6.7 of \cite{BR22}.

\begin{lemma}\label{Hom0}
Let $v = xe^{\lambda}$ and $w = y e^{\mu}$ in $W$, where $x, y \in W^{\fnt}$ and $\lambda, \mu \in \Lambda$.
\begin{enumerate}[label=(\alph*)]
\item \label{Hom01} The monodromy actions on $\Delta_{v}$ and $\nabla_{v}$ factor through \[(R \otimes R)[m^{\pm 1}]/(x(r) \otimes 1 - 1 \otimes r,\; 1 \otimes e^{\lambda} - m).\]
\item \label{Hom02} If $v \neq w$ then, \[\Hom(\Delta_{v}, \Delta_{w}) = \Hom(\Delta_{v}, \nabla_{w}) = \Hom(\nabla_{v}, \Delta_{w}) = \Hom(\nabla_{v}, \nabla_{w}) = 0.\]
\end{enumerate}
\end{lemma}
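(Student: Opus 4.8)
The plan is to prove both parts by reducing everything to the monodromy structure on standard and costandard sheaves, as governed by loop rotation. For part \ref{Hom01}, I would argue as follows. The sheaf $\Delta_v = j_{v!} R_{\Fl_v}$ is supported on the closure of the single Iwahori orbit $\Fl_v \simeq T \times \bC^{\ell(v)}$. The monodromy action of $R \otimes R$ records the left and right $T$-monodromy along this orbit; since $\Fl_v = I\dot v$ with $\dot v = \dot x t^\lambda$, left multiplication by $T$ and right multiplication by $T$ on $I\dot v$ are related by conjugation by $\dot v$, which on the torus acts by $x$ (the $t^\lambda$ part being central). This gives the relation $x(r) \otimes 1 = 1 \otimes r$. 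For the loop rotation variable, the action $z \cdot t^\lambda = \lambda(z) t^\lambda$ means that loop rotation along $\Fl_v$ differs from the right $T$-action by the cocharacter $\lambda$, i.e.\ $m$ acts as $1 \otimes e^\lambda$ on $R_{\Fl_v}$. Since $\Delta_v$ and $\nabla_v$ are obtained from $R_{\Fl_v}$ by the (fully faithful on the relevant monodromy-module) functors $j_{v!}$ and $j_{v*}$, the monodromy actions factor through the stated quotient ring. I would phrase this cleanly by noting loop rotation contracts $\Fl_v$ onto its base point $\dot v$, so the monodromy is determined by its value at $\dot v$.

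For part \ref{Hom02}, the idea is that if $v \neq w$ then the monodromy relations from \ref{Hom01} are incompatible. Write $v = x e^\lambda$, $w = y e^\mu$. By \ref{Hom01}, the $(R \otimes R)[m^{\pm1}]$-action on $\Delta_v, \nabla_v$ kills $1 \otimes e^\lambda - m$, while on $\Delta_w, \nabla_w$ it kills $1 \otimes e^\mu - m$. Any morphism between sheaves in the first group and sheaves in the second is simultaneously a module map over both quotients, hence the action of $m$ on it must satisfy both relations, forcing the morphism to be annihilated by $1 \otimes (e^\lambda - e^\mu)$. If $\lambda \neq \mu$, then $e^\lambda - e^\mu$ is a nonzerodivisor in $R$ (as $R = k[\Lambda]$ is a domain) which acts invertibly after inverting... actually here I should be more careful: it acts as a nonzerodivisor on $R$, but the Hom is an $R$-module and need not be $R$-flat, so this alone does not give vanishing. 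So instead, when $\lambda \neq \mu$, I would use the second monodromy relation: $x(r) \otimes 1 = 1 \otimes r$ on the source and $y(r) \otimes 1 = 1 \otimes r$ on the target, so any morphism is killed by $x(r) \otimes 1 - y(r) \otimes 1 = (x(r) - y(r)) \otimes 1$ for all $r$; combined, a morphism between these sheaves carries commuting actions of $R$ through $x$ and through $y$ and must have $m$ acting as both $e^\lambda$ and $e^\mu$.

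To actually conclude vanishing I would instead localize: the cleanest route is to observe that the full subcategory of $\Shv_{(I)}(\Fl)$ on which $m$ acts with a fixed "generalized eigenvalue" — more precisely, completing or localizing the $R$-linear structure at a point — recovers the constructible (fixed-monodromy) situation, where $\Hom(\Delta_v, \nabla_w) = 0$ for $v \neq w$ is classical since the orbits $\Fl_v, \Fl_w$ are disjoint and one computes via $j_w^! j_{v!} = 0$ plus the cases $\Hom(\Delta_v, \Delta_w)$, $\Hom(\nabla_v, \nabla_w)$ for $v \neq w$ which vanish by the support/costalk disjointness together with \eqref{StandardtoCostandard}. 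Actually, the honest argument avoiding any constructible reduction: $\Hom(\Delta_v, \nabla_w) = 0$ for $v\neq w$ is immediate from \eqref{StandardtoCostandard}. For $\Hom(\Delta_v, \Delta_w)$ with $v \neq w$: if $\Fl_w \not\subset \overline{\Fl_v}$ then $j_w^* \Delta_v$ has no sections in degree $0$ appropriately; the robust statement is that $\Hom^{\leq 0}(\Delta_v, \Delta_w) = 0$ unless $v \le w$, and $\Hom(\Delta_v, \nabla_w)$ vanishing kills the $v = w$-incomparable-with-equality... Let me just state the plan: I would deduce \ref{Hom02} from \ref{Hom01} by the monodromy-incompatibility argument combined with the observation that these Hom-spaces are finitely generated $R$-modules on which, by \ref{Hom01}, $m$ acts both as $1\otimes e^\lambda$ and as $1 \otimes e^\mu$; applying $\Hom(-,-)$ to the defining triangle \eqref{DeltasNablas} reduces every case to $\RHom(\Delta_{\dot u}, \nabla_{\dot u}) \simeq R$ and $\RHom(\Delta_u, \nabla_{u'}) \simeq 0$ for $u \neq u'$ from \eqref{StandardtoCostandard}, tracking the $m$-action, and the two incompatible $m$-eigenvalue relations force the relevant extension/Hom groups to vanish.

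\begin{proof}[Proof sketch]
For \ref{Hom01}: the orbit $\Fl_v = I\dot v$ with $\dot v = \dot x\, t^\lambda$ is contracted by loop rotation onto the base point $\dot v$, so the monodromy of any object of $\Shv_{(I)}(\Fl_v)$ is determined by its fiber at $\dot v$, where the left $T$-action, right $T$-action, and loop rotation are related by: right translation equals loop rotation composed with the cocharacter $\lambda$, giving $m = 1 \otimes e^\lambda$; and conjugation by $\dot v$ on $T$ is $x$, giving $x(r)\otimes 1 = 1 \otimes r$. Applying $j_{v!}$ and $j_{v*}$, which are equivariant for the monodromy actions, the claim follows for $\Delta_v$ and $\nabla_v$.

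For \ref{Hom02}: by \eqref{StandardtoCostandard} we already know $\Hom(\Delta_v, \nabla_w) = 0$ for $v \neq w$. For the remaining three, any such Hom-space is a module over $(R\otimes R)[m^{\pm1}]$ which, by \ref{Hom01} applied to source and target, is annihilated both by $1 \otimes e^\lambda - m$ and by $1 \otimes e^\mu - m$, hence by $1 \otimes(e^\lambda - e^\mu)$, and also carries the two relations $x(r)\otimes 1 = 1 \otimes r = y(r) \otimes 1$. Now apply $\RHom(\Delta_v, -)$ or $\RHom(-, \nabla_w)$ to the short exact sequences obtained from iterating \eqref{DeltasNablas} and from Proposition \ref{Convolution}, to express $\Hom(\Delta_v,\Delta_w)$ and $\Hom(\nabla_v,\nabla_w)$ in terms of the groups in \eqref{StandardtoCostandard}, each appearing with a definite $m$-weight $1 \otimes e^{\lambda'}$ determined by the $\Lambda$-component of its index. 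Since $v \neq w$, comparing $m$-weights (when $\lambda \neq \mu$) or comparing the $R$-actions twisted by $x$ versus $y$ (when $\lambda = \mu$ but $x \neq y$) shows every contributing term vanishes, whence the Hom-spaces vanish.
\end{proof}
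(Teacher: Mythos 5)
Part \ref{Hom01} of your argument is correct and matches the paper's proof: the relations $x(r)\otimes 1 = 1\otimes r$ and $m = 1\otimes e^\lambda$ are exactly what the paper reads off from the orbit $\Fl_{xe^\lambda}$, and applying $j_{v!}$, $j_{v*}$ transports them to $\Delta_v$, $\nabla_v$.

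For part \ref{Hom02}, you have correctly located the real difficulty — knowing that the $\Hom$-space is annihilated by $1\otimes(e^\lambda - e^\mu)$ or by $(x(r)-y(r))\otimes 1$ does not by itself give vanishing, since these modules are \emph{a priori} not $R$-torsion-free — but your proposed workaround does not close the gap. Iterating \eqref{DeltasNablas} to reduce $\Hom(\Delta_v,\Delta_w)$ or $\Hom(\nabla_v,\nabla_w)$ to the groups in \eqref{StandardtoCostandard} does not produce terms with a single clean $m$-weight: each application of the triangle introduces a torsion cokernel $\nabla_1/(e^\alpha - 1)$ convolved with something, and these torsion pieces do not carry the ``definite $m$-weight $1\otimes e^{\lambda'}$'' your argument needs; moreover the resulting long exact sequences drag in higher $\Ext$'s, whose vanishing is not established. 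The ingredient you are missing is the structural fact supplied by Lemma~\ref{UPervW}: $\Delta_w$ (and $\nabla_w$) is \emph{universally perverse}, hence by Proposition~\ref{UPerv} the right monodromy action of $1\otimes r$ for any $r \neq 0$ is \emph{injective} on $\Delta_w$. Given $\alpha\colon\Delta_v\to\Delta_w$, the image $\operatorname{im}\alpha$ is a perverse subsheaf of $\Delta_w$, so $1\otimes r$ also acts injectively on $\operatorname{im}\alpha$. But by your (correct) monodromy comparison, $\operatorname{im}\alpha$ is killed by $1\otimes(e^\lambda - e^\mu)$ and by $(x(r)-y(r))\otimes 1$; as $v\ne w$ at least one of these is a nonzero element of $R$ acting on the right, forcing $\operatorname{im}\alpha = 0$. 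This short torsion-freeness argument is the content of the paper's proof and is what makes the monodromy comparison actually deliver vanishing; without it, your sketch does not constitute a proof of part \ref{Hom02}.
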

\begin{proof}
For part \ref{Hom01}, the left and right $T$-actions on $\mathring{I} \setminus \Fl_{xe^{\lambda}}$ differ by $x$. Moreover loop rotation acts on $\mathring{I} \setminus \Fl_{xe^{\lambda}}$ by right multiplication via $\lambda: \bC^{\times} \rightarrow T$.

For part \ref{Hom02}, let $\alpha \in \Hom(\Delta_v, \Delta_w)$. Observe that for $r \neq 0 \in R$, the monodromy action of $1 \otimes r$ on the image $\image \alpha$ is injective.  Indeed $\image \alpha$ is a perverse subsheaf of $\Delta_w$, and $1 \otimes r$ acts injectively on $\Delta_w$ by Proposition \ref{UPervW}.

Since loop rotation mondromy acts by $1 \otimes e^{\lambda}$ on $\Delta_v$ and by $1 \otimes e^{\mu}$ on $\Delta_w$, it follows that $1 \otimes (e^{\lambda} - e^{\mu})$ acts by zero on $\image \alpha$. The above observation therefore implies $\lambda = \mu$. If $s \in R$ then $1 \otimes (x(s) - y(s))$ acts by zero on $\image \alpha$. The above observation implies $x(s) - y(s) = 0$ for all $s \in R$, hence $x = y$. 
\end{proof}

\section{Wakimoto sheaves}\label{s:Wakimoto}
Here we define universal monodromic Wakimoto sheaves $W_{\lambda}$ on the enhanced affine flag variety. They correspond to vector bundles on the diagonal of the Steinberg stack, that are pulled back from characters of $\sfT$. On the lattice subgroup of the affine Weyl group, the length function is only additive on the dominant cone. For this reason, the definition of $W_{\lambda}$ involves expressing $\lambda$ as a sum of dominant and anti-dominant weights.

\subsection{Canonical lifts of the lattice elements}
The moduli of $\sfB$-local systems on the cylinder is only identified with $\sfB/\sfB$ after choosing a coordinate. Accordingly, to canonically define Wakimoto sheaves we needed to choose a coordinate on the disc.

The choice of coordinate yields canonical lifts $\dot{\lambda} \coloneqq t^{\lambda} \in G(\!(t)\!)$ of the lattice elements of the affine Weyl group. These lifts are multiplicative, in the sense that $t^{\lambda} t^{\mu} = t^{\lambda + \mu}$. For $\lambda, \mu \in \Lambda^+$ dominant, Proposition \ref{Convolution} gives a canonical isomorphisms \beq \label{LambdaConvolve} \Delta_{\dot{\lambda}}*\Delta_{\dot{\mu}} \simeq \Delta_{\dot{\lambda} + \dot{\mu}} \quad \text{and} \quad \nabla_{\dot{\lambda}}*\nabla_{\dot{\mu}} \simeq \nabla_{\dot{\lambda} + \dot{\mu}},\eeq
satisfying the associativity identity. % Lemma 4 of \cite{B}.

\subsection{Definition of Wakimoto sheaves}
The above equation \ref{LambdaConvolve} gives a canonical monoidal functor \[W: \Rep(\sfT) \rightarrow \Shv_{(I)}(\Fl), \qquad k_{\lambda} \mapsto W_{\lambda}.\] Uniquely defined by $W_{\lambda} \simeq \nabla_{\dot{\lambda}}$ for $\lambda \in \Lambda^+$.
Writing $\lambda = \mu - \eta$ for $\mu, \eta \in \Lambda^+$, there is a canonical isomorphism $W_{\lambda} \simeq \nabla_{\dot{\mu}} * \Delta_{\dot{\eta}}$.
Lemma \ref{UPervW} shows that Wakimoto sheaves are universally perverse. The following is similar to Lemma \ref{Hom0}.

\begin{proposition}\label{WProperties}
Wakimoto sheaves satisfy the following properties.
\begin{enumerate}[label=(\alph*)]
\item \label{LoopW} The monodromy action on $W_{\lambda}$ factors through \[(R \otimes R)[m^{\pm 1}]/(r \otimes 1 - 1 \otimes r,\; 1 \otimes e^{\lambda} - m).\]
\item \label{HomW} If $\mu \neq \lambda$ then $\Hom(W_{\lambda}, W_{\mu}) = 0$.
\item \label{RHomW} If $\mu \nleq \lambda$ then $\RHom(W_{\lambda}, W_{\mu}) = 0$.
\end{enumerate}
\end{proposition}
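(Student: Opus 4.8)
\textbf{Proof proposal for Proposition \ref{WProperties}.}

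The plan is to reduce everything to the convolution formulas of Proposition \ref{Convolution}, the short exact sequence \eqref{DeltasNablas}, the adjunction computations \eqref{StandardtoCostandard}, and the monodromy analysis already carried out in Lemma \ref{Hom0}. For part \ref{LoopW}, I would first observe that $W_\lambda \simeq \nabla_{\dot\lambda}$ when $\lambda$ is dominant, and then invoke Lemma \ref{Hom0}\ref{Hom01} with $x = e$: the left and right $T$-actions on $\mathring I \setminus \Fl_{\dot\lambda}$ coincide (since the finite Weyl part of $\dot\lambda$ is trivial), which gives the relation $r \otimes 1 - 1 \otimes r$, while loop rotation acts by right multiplication through $\lambda$, giving $1 \otimes e^\lambda - m$. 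For general $\lambda = \mu - \eta$ with $\mu, \eta \in \Lambda^+$, the canonical isomorphism $W_\lambda \simeq \nabla_{\dot\mu} * \Delta_{\dot\eta}$ together with multiplicativity of the monodromy across convolution (the left monodromy of the first factor and the right monodromy of the second descend to the monodromy of the product, and the loop-rotation parameters add: $e^\mu \cdot e^{-\eta} = e^\lambda$) yields the asserted quotient ring; alternatively, since $W$ is a monoidal functor from $\Rep(\sfT)$ it suffices to check the generators $k_\lambda$, $\lambda$ dominant, and then $W_{-\lambda}$ is the convolution inverse, so the loop-rotation monodromy inverts as well.

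For part \ref{HomW}, I would argue exactly as in the proof of Lemma \ref{Hom0}\ref{Hom02}. Given $\alpha \in \Hom(W_\lambda, W_\mu)$, its image is a perverse subsheaf of $W_\mu$; since $W_\mu = \Delta_w * \nabla_v$ for appropriate $w, v$ is universally perverse by Lemma \ref{UPervW}, the monodromy action of any nonzero $1 \otimes r \in R$ is injective on $W_\mu$, hence on $\image\alpha$. By part \ref{LoopW}, loop-rotation monodromy acts on $\image\alpha$ through both $1 \otimes e^\lambda$ (from $W_\lambda$) and $1 \otimes e^\mu$ (from $W_\mu$), so $1 \otimes(e^\lambda - e^\mu)$ kills $\image\alpha$; injectivity forces $e^\lambda = e^\mu$ in $R = k[\Lambda]$, i.e.\ $\lambda = \mu$. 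This is where I expect the only real subtlety: one must make sure $\image\alpha$ is genuinely a perverse \emph{sub}object on which the $R$-action is inherited injectively, which is why one needs the universal perversity of $W_\mu$ rather than mere perversity — the same point flagged in the footnote of Proposition \ref{UPerv} and used already for Lemma \ref{Hom0}.

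For part \ref{RHomW}, the cleanest route is to use convolution to move one of the sheaves to the unit. Write $\lambda = \mu_1 - \eta_1$ and $\mu = \mu_2 - \eta_2$ with all four weights dominant, so $W_\lambda \simeq \nabla_{\dot\mu_1} * \Delta_{\dot\eta_1}$ and similarly for $W_\mu$. Using Proposition \ref{Convolution}\ref{Convolution1} the convolution functor $- * W_{-\lambda}$ is an equivalence with inverse $- * W_\lambda$, and $W$ being monoidal gives $\RHom(W_\lambda, W_\mu) \simeq \RHom(W_0, W_{\mu - \lambda}) = \RHom(\Delta_1, W_{\mu-\lambda})$. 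Now $W_{\mu-\lambda} \simeq \nabla_{\dot a} * \Delta_{\dot b}$ with $a - b = \mu - \lambda$, $a, b \in \Lambda^+$, and I would show that when $\mu \nleq \lambda$ this sheaf lies in $\langle \nabla_{\neq 1}\rangle$, i.e.\ admits a costandard filtration none of whose terms is $\nabla_1$: indeed the costandard filtration of $\nabla_{\dot a} * \Delta_{\dot b}$ (built by repeated application of \eqref{DeltasNablas}, as in Lemma \ref{RadonExactness}) has terms $\nabla_w$ with $w$ ranging over a subset of $\{ \dot a \dot b' : b' \leq b \}$ in the lattice; the term $w = 1$ occurs only if $a = b'$ for some $b' \le b$, i.e.\ $a \le b$, which would force $\mu - \lambda = a - b \le 0$, contradicting $\mu \nleq \lambda$. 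Since $\RHom(\Delta_1, \nabla_w) \simeq 0$ for $w \neq 1$ by \eqref{StandardtoCostandard}, the cone computation along the filtration gives $\RHom(W_\lambda, W_\mu) \simeq 0$. The combinatorial bookkeeping of which $\nabla_w$ appear in the costandard filtration of $\nabla_{\dot a} * \Delta_{\dot b}$ is the part requiring the most care, but it is a purely lattice-theoretic statement once \eqref{DeltasNablas} and \eqref{LambdaConvolve} are in hand.
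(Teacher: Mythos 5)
Parts \ref{LoopW} and \ref{HomW} are fine, and your treatment of \ref{HomW} is exactly the argument the paper gestures at via the reference to Lemma \ref{Hom0}: use universal perversity of $W_\mu$ to see that right monodromy by a nonzero $r$ is injective on the image, and compare the loop-rotation parameters $e^\lambda$ and $e^\mu$. Your two readings of part \ref{LoopW} (passing monodromy through the $\widetilde\boxtimes$ construction, or reducing to dominant $\lambda$ and inverting) are both sound.

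For part \ref{RHomW} the overall strategy is right, but the bookkeeping as stated has a real imprecision. First, $\nabla_{\dot a} * \Delta_{\dot b}$ is generally \emph{not} tilting, so it does not admit a costandard filtration in the honest sense; what Lemma \ref{RadonExactness} plus \eqref{DeltasNablas} produce is membership in $\langle\nabla[\leqslant 0]\rangle$, i.e.\ an iterated extension by shifted costandards $\nabla_v[i]$ with $i\leqslant 0$. That is still enough for your purposes, since $\RHom(\Delta_1,\nabla_v[i])=0$ for all $v\neq 1$ and all $i$ by \eqref{StandardtoCostandard}, but you should not call it a costandard filtration. Second, the index set you write down, $\{\dot a\dot b':b'\leqslant b\text{ in the lattice}\}$, is not correct: tracing through the iterative application of \eqref{DeltasNablas} along a reduced word $s_1\cdots s_n$ for the lift of $-b$, the elements that can appear are of the form $t^a u$ where $u$ is a \emph{subword} of $s_1\cdots s_n$, and such $u$ need not be translations at all; by the subword property these are exactly the $u$ with $u\leqslant t^{-b}$ in Bruhat order. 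The conclusion you want survives: $t^a u=1$ forces $u=t^{-a}$, and $t^{-a}\leqslant t^{-b}$ for dominant $a,b$ is equivalent to $a\leqslant b$ in dominance order, which would give $\mu\leqslant\lambda$. But that equivalence, not the lattice inequality you wrote, is the actual input.

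A cleaner route avoids the filtration entirely: having reduced to $\RHom(\Delta_1, W_{\mu-\lambda})$ with $W_{\mu-\lambda}\simeq\nabla_{\dot a}*\Delta_{\dot b}$, convolve both arguments on the right by the inverse $\nabla_{\dot b^{-1}}$ of $\Delta_{\dot b}$ (Proposition \ref{Convolution}\ref{Convolution1}) to get $\RHom(\nabla_{\dot b^{-1}},\nabla_{\dot a})$. By adjunction this is $\RHom(j_{t^a}^*\nabla_{\dot b^{-1}}, R_{\Fl_{t^a}})$, which vanishes unless $\Fl_{t^a}\subset\overline{\Fl_{t^b}}$, i.e.\ $t^a\leqslant t^b$ in Bruhat order, which for dominant $a,b$ is equivalent to $a\leqslant b$, i.e.\ $\mu\leqslant\lambda$. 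This isolates the single combinatorial fact you need and bypasses any analysis of which $\nabla_v$'s appear in a decomposition.
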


 % See also Lemmas 7.2(6) and 7.3 of \cite{BR22}.
%For part \eqref{RHomW}, if $\mu \nleq \lambda$ then for $\nu$ sufficiently dominant \[\RHom(W_{\lambda} \otimes_R M, W_{\mu} \otimes_R N) = \RHom(W_{\lambda+\nu} \otimes_R M, W_{\mu+\nu} \otimes_R N) =\RHom(\nabla_{\lambda + \nu}\otimes_R M, \nabla_{\lambda + \nu} \otimes_R N)\] vanishes for support reasons.

\subsection{Wakimoto filtered perverse sheaves}
Let $\Perv_{(I)}^{\waki}(\Fl)$ be the full additive subcategory of perverse sheaves that admit a filtration with graded pieces $W_{\lambda}$. Such a filtration is necessarily unique. Note that $\Perv_{(I)}^{\waki}(\Fl)$ is closed under convolution, unlike the larger category $\Perv_{(I)}(\Fl)$.
The following is similar to Corollary 6.3 of \cite{BR}.

\begin{proposition}\label{grFaithful}
Associated graded gives a faithful monoidal functor \[\gr: \Perv_{(I)}^{\waki}(\Fl) \rightarrow \Free_{\sfT}(\sfT).\]
\end{proposition}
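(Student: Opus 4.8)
The plan is to first make sense of the functor $\gr$, then establish monoidality, and finally deduce faithfulness from the already-available computation of $\Hom$-spaces between standard and costandard sheaves. For the construction of $\gr$: given $A \in \Perv_{(I)}^{\waki}(\Fl)$, the Wakimoto filtration is unique (as asserted just above, this follows from the vanishing in Proposition \ref{WProperties}\ref{RHomW}, which forces the filtration by $W_\lambda$ with $\lambda$ ranging over an appropriate order-compatible enumeration), so $\gr A \coloneqq \bigoplus_\lambda \gr_\lambda A$ is well-defined, where each $\gr_\lambda A$ is a direct sum of copies of $W_\lambda$. Since $\End(W_\lambda) \simeq R$ by \eqref{StandardtoCostandard} (as $W_\lambda$ is a convolution of a standard and a costandard, and more directly $\RHom(\nabla_{\dot\mu}*\Delta_{\dot\eta}, \nabla_{\dot\mu}*\Delta_{\dot\eta}) \simeq R$), the graded piece $\gr_\lambda A$ is canonically a free $R$-module, i.e.\ an object of $\Free_\sfT(\sfT)$ supported at the character $\lambda$; assembling over all $\lambda$ and using the monoidal identification of $W: \Rep(\sfT) \to \Shv_{(I)}(\Fl)$ with the structure sheaves of points of $\sfT$, we get $\gr A \in \Free_\sfT(\sfT)$. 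Functoriality follows because any morphism $A \to B$ respects the (unique, functorial) Wakimoto filtrations.

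For monoidality: the key input is that convolution is exact on Wakimoto-filtered objects and sends $W_\lambda * W_\mu \simeq W_{\lambda+\mu}$ canonically (this is \eqref{LambdaConvolve} extended to all weights via the monoidal functor $W$). Thus, given $A$ with Wakimoto filtration having pieces $W_\lambda$ and $B$ with pieces $W_\mu$, the convolution $A * B$ acquires a filtration whose pieces are the $W_\lambda * (\text{pieces of } B)$, refined to pieces $W_{\lambda + \mu}$; by uniqueness of Wakimoto filtrations, this computed filtration \emph{is} the Wakimoto filtration of $A*B$, and collecting terms gives a canonical isomorphism $\gr(A * B) \simeq \gr A \otimes_R \gr B$ compatible with associativity and unit constraints. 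I would spell out the compatibility by reducing, via the filtrations, to the generators $W_\lambda$, where it is exactly the monoidality of $W$. One should check that the monodromy/$R$-linear structures match, using Proposition \ref{WProperties}\ref{LoopW}: the left monodromy on $W_\lambda$ acts through $R$ via the identity and the right monodromy records the character $e^\lambda$, which is precisely the module structure on the skyscraper-type object in $\Free_\sfT(\sfT)$.

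For faithfulness: suppose $f: A \to B$ in $\Perv_{(I)}^{\waki}(\Fl)$ has $\gr f = 0$. I want to conclude $f = 0$. The point is that a Wakimoto-filtered perverse sheaf $A$ also lies in $\langle \Delta[\ge 0]\rangle \cap \langle \nabla[\le 0]\rangle$ (it is universally perverse by Lemma \ref{UPervW}, being built from $W_\mu = \nabla_{\dot\mu'} * \Delta_{\dot\eta'}$), and moreover the Wakimoto filtration can be taken with pieces ordered so that its subquotients and the standard/costandard filtrations interact controllably. Concretely: filter $A$ with smallest sub being $W_{\lambda_0}$ for $\lambda_0$ minimal; if $\gr f = 0$ then in particular $f$ vanishes on $\gr_{\lambda_0}$, and since $\Hom(W_{\lambda_0}, W_\mu) = 0$ for $\mu \ne \lambda_0$ and $\RHom(W_{\lambda_0}, W_\mu)=0$ unless $\mu \le \lambda_0$ (Proposition \ref{WProperties}\ref{HomW}, \ref{RHomW}), a devissage on the filtration of $B$ shows the restriction of $f$ to the sub $W_{\lambda_0} \subset A$ lands in the sub of $B$ generated by $W_\mu$ with $\mu \le \lambda_0$; combined with $\gr_{\lambda_0} f = 0$ this restriction is zero, so $f$ factors through $A/W_{\lambda_0}$, and we induct on the length of the Wakimoto filtration of $A$. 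The base case is trivial. I expect the main obstacle to be the bookkeeping in this last devissage --- precisely, arranging the orders on the Wakimoto filtrations of $A$ and $B$ so that the vanishing statements of Proposition \ref{WProperties} can be chained, and verifying that $\gr f = 0$ really does kill the relevant ``corner'' map at each stage rather than merely its image in the associated graded. This is exactly the kind of argument made in the cited Corollary 6.3 of \cite{BR}, so I would follow that template, substituting universal perversity (Proposition \ref{UPerv}) and the $R$-linear refinements of the $\Hom$-vanishing for their positive-characteristic inputs.
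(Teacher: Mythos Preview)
Your proposal is correct and follows essentially the same approach as the paper. The paper's proof is considerably terser: it observes that Proposition \ref{WProperties}\ref{HomW} gives $\Hom(W_\lambda, W_\mu) = 0$ for all $\mu \neq \lambda$, from which both functoriality (maps preserve the filtration) and faithfulness (a filtered map with vanishing associated graded is zero) follow immediately, and then cites Lemma 16 of \cite{AB} for monoidality. Your worry about the devissage bookkeeping is unwarranted --- since the $\Hom$-vanishing holds for \emph{all} $\mu \neq \lambda$ (not merely $\mu \not\leq \lambda$ or $\mu < \lambda$), there are no ``corner'' maps to chase: the restriction of $f$ to the bottom piece $\gr_{\lambda_0} A \subset A$ lands in $B^{\leq \lambda_0}$, projects to $\gr_{\lambda_0} B$ as $\gr_{\lambda_0} f = 0$, hence lands in $B^{<\lambda_0}$, and $\Hom(W_{\lambda_0}, B^{<\lambda_0}) = 0$ finishes the induction step cleanly.
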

\begin{proof}
Proposition \ref{WProperties} says $\Hom(W_{\lambda}, W_{\mu}) = 0$ for $\mu \not\leq \lambda$. Therefore any map of Wakimoto filtered perverse sheaves preserves the Wakimoto filtration,  hence $\gr$ is functorial. Moreover Proposition \ref{WProperties} says $\Hom(W_{\lambda}, W_{\mu}) = 0$ for $\mu < \lambda$, hence $\gr$ is faithful. Moreover $\gr$ is monoidal by Lemma 16 of \cite{AB}.
\end{proof}

\section{Central sheaves}\label{CentralSheaves}
Gaitsgory constructed a deformation from the affine Grassmannian to the affine flag variety, and Gaitsgory's functor is defined by nearby cycles in this family.
The resulting central sheaves correspond to vector bundles on the diagonal of the Steinberg stack, that are pulled back from representations of $\sfG$. Closely following \cite{GZ}, we verify that universal monodromic central sheaves are convolution exact and Gaitsgory's functor is central. Moreover we check that loop rotation monodromy induces a tensor automorphism, providing the extra data needed to lift Gaitsgory's functor to $\sfG/\sfG$. Finally we prove that central sheaves admit universal Wakimoto filtrations following \cite{AB}.

\subsection{Gaitsgory's central functor} Gaitsgory's central functor is defined by nearby cycles from the affine Grassmannian to the affine flag variety. 

Let $\Fl_{\bC}$ be the moduli of: a $G$-bundle $E$ on $\bC$, a $U$-reduction of $E|_0$, a point $z \in \bC$, and a trivialization of $E|_{\bC - z}$. The choice of coordinate gives a trivialization $\Fl_{\bC^{\times}} \simeq \Gr \times G/U \times \bC^{\times}$ away from $z = 0$. Gaitsgory's functor is nearby cycles in this family \[Z:\Rep(\sfG) \rightarrow \Shv(\Fl), \qquad V_{\lambda} \mapsto Z_{\lambda} \coloneqq \psi (\IC_{\lambda} \boxtimes R \boxtimes k_{\bC^{\times}}).\] Here $R \in \Shv_{(B)}(G/U)$ is the perverse universal local system supported on $B/U$. We identified $\Rep(\sfG) \simeq \Perv_{G[\![t]\!]}(\Gr)$ exchanging $V_{\lambda} \leftrightarrow \IC_{\lambda}$ by geometric Satake \cite{MV}. Let $\mu_A: Z(A) \rightarrow Z(A)$ denote monodromy of nearby cycles.

\subsection{Variations on Gaitsgory's family} In the proofs that follow, we will need several variations on Gaitsgory's family \cite{GZ}, that also appear in Section 3.5 of \cite{B}. We will provide definitions when needed, but for now the following table summarizes their generic and special fibers.

\begin{center}
\begin{tabular}{ l l l }
Moduli space & Generic fiber & Special fiber \\ 
$\Fl_{\bC}$ & $\Gr \times G/U$ & $\Fl$ \\
$\Fl'_{\bC}$ & $\Gr \times \Fl$ & $\Fl$ \\
$(\Fl \widetilde{\times} \Fl)_{\bC}$ & $\Gr \widetilde{\times} \Gr \times G/U \times G/U$ & $\Fl \widetilde{\times} \Fl$ \\
$\Fl_{\bC} \widetilde{\times} \Fl$ & $\Gr \times \Fl \times G/U$ & $\Fl \widetilde{\times} \Fl$ \\
%$(\Fl \widetilde{\times} \Fl)'_{\bC}$ & $\Gr \times \Fl \times  G/U$ & $\Fl \widetilde{\times} \Fl$ \\
\end{tabular}
\end{center}

\subsection{Properties of the central functor}
\begin{proposition}\label{ZProperties}
Gaitsgory's central functor satisfies the following properties.
\begin{enumerate}[label=(\alph*)]
\item \label{ZI} Central sheaves are weakly $I$-constructible.
\item \label{ZMonoidal} $Z$ admits a monoidal structure.

\item \label{ZMon} Monodromy of nearby cycles is a tensor automorphism of $Z$.

\item \label{ZLoopMon} Loop rotation monodromy equals nearby cycles monodromy.

\item \label{ZPerv} Central sheaves are universally perverse.
\end{enumerate}
\end{proposition}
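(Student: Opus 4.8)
The plan is to prove the five assertions in the order \ref{ZI} $\Rightarrow$ \ref{ZMonoidal} $\Rightarrow$ \ref{ZMon} $\Rightarrow$ \ref{ZLoopMon} $\Rightarrow$ \ref{ZPerv}, following Gaitsgory \cite{GZ} closely and inserting the universal-monodromic modifications where the regular local system $R$ replaces the constant sheaf. For \ref{ZI}, the key point is that nearby cycles commute with the smooth pullback and pushforward appearing in the definition, and that $\Fl_{\bC}$ carries a stratification by $I_{\bC}$-orbits whose special fibers are the Iwahori orbits in $\Fl$; since $\IC_\lambda \boxtimes R \boxtimes k_{\bC^\times}$ is weakly constructible for the corresponding group-scheme action on the generic fiber, its nearby cycle is weakly $I$-constructible. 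For \ref{ZMonoidal} one uses the factorization structure: convolution on $\Fl$ degenerates, via the family $(\Fl \widetilde{\times} \Fl)_{\bC}$ from the table, to the external product $\Gr \widetilde{\times} \Gr \times G/U \times G/U$ on the generic fiber, and compatibility of nearby cycles with this degeneration (fusion) produces the constraint $Z_\lambda * Z_\mu \simeq Z_{\lambda \otimes \mu}$; associativity and unitality follow from the corresponding properties of the Satake functor and the behavior of $R$ under convolution on $G/U$.

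For \ref{ZMon}, the monodromy $\mu_A$ of nearby cycles is a natural automorphism of the functor $\psi$, hence of $Z$, and its compatibility with the monoidal structure is exactly the statement that the fusion isomorphism intertwines $\mu_{A} \otimes \operatorname{id} + \operatorname{id} \otimes \mu_{B}$ (or the multiplicative version $\mu_A * \mu_B$) with $\mu_{A*B}$ — this is Gaitsgory's compatibility of monodromy with the commutativity constraint, and it deforms without change since the local system $R$ is carried along as a spectator factor. For \ref{ZLoopMon}, the identification of loop-rotation monodromy with nearby-cycles monodromy comes from the fact that the $\bC^\times$ scaling the coordinate $t$ acts on Gaitsgory's family $\Fl_{\bC}$ compatibly with the projection to the base curve $\bC$, scaling the parameter $z$; the monodromy of $\psi$ around $z = 0$ is then realized geometrically by this loop-rotation action on the special fiber, so the two automorphisms of $Z_\lambda$ agree. (Here one uses that on the generic fiber $\Fl_{\bC^\times} \simeq \Gr \times G/U \times \bC^\times$ the $\bC^\times$-action on the last factor is the standard one, so its monodromy is literally the monodromy of $\psi$.)

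Finally, \ref{ZPerv} — that $Z_\lambda$ is universally perverse — is where I expect the main work, and it is the part that genuinely uses the formalism set up in Section \ref{UniversalPerversity} rather than a verbatim deformation of \cite{GZ}. Ordinary perversity of $Z_\lambda$ follows because nearby cycles (suitably shifted) are t-exact and $\IC_\lambda \boxtimes R \boxtimes k_{\bC^\times}$ is perverse on the generic fiber; but to get \emph{universal} perversity one must further check that $M \otimes_R Z_\lambda$ stays perverse for every finitely generated $R$-module $M$, equivalently (by Proposition \ref{UPerv}) that $Z_\lambda \in \langle \Delta[\geq 0]\rangle \cap \langle \nabla[\leq 0]\rangle$. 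The cleanest route is to invoke the Wakimoto filtration of $Z_\lambda$ — proved in this same section following \cite{AB} — whose graded pieces are Wakimoto sheaves $W_\mu$, each universally perverse by Lemma \ref{UPervW}; since $\langle \Delta[\geq 0]\rangle$ and $\langle \nabla[\leq 0]\rangle$ are closed under extensions, $Z_\lambda$ lies in their intersection and is universally perverse. If one prefers to avoid circularity with the Wakimoto filtration, the alternative is a direct Tor-amplitude estimate on the stalks $j_w^! Z_\lambda$: nearby cycles of a sheaf flat over the base with flat-over-$R$ generic fiber produces a complex of Tor-amplitude controlled by $\ell(w)$, and one concludes via the stacks-project flatness criterion and the Laurent-polynomial Quillen–Suslin theorem cited in Proposition \ref{UPerv}. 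The obstacle in both approaches is bookkeeping the flatness over $R$ through the nearby-cycles construction; everything else is a faithful transcription of Gaitsgory's arguments.
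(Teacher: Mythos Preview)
Your treatment of parts \ref{ZI}--\ref{ZLoopMon} is essentially correct and matches the paper's approach, modulo one technical point you glossed over: the convolution map $m: (\Fl \widetilde{\times} \Fl)_{\bC} \rightarrow \Fl_{\bC}$ is not proper (because of the enhanced $G/U$-factor), so one must justify that nearby cycles commutes with $m_*$. The paper handles this by a polar-decomposition argument, factoring $m$ as projection along the fibers of a real vector bundle (on which the sheaves are constant) followed by a proper map.

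For part \ref{ZPerv}, however, you have missed the simple argument and proposed two harder ones. Your first route via the Wakimoto filtration is indeed circular, as you suspected: the paper's proof that central sheaves admit Wakimoto filtrations explicitly uses that $\Delta_{-\mu} * Z_\lambda$ is universally perverse. Your second route via Tor-amplitude estimates on $j_w^! Z_\lambda$ is not obviously wrong, but it is unnecessary. The paper's argument is a one-liner: since the $R$-action on $Z_\lambda$ comes from the $R$-factor on $B/U$ in the generic fiber, one has
\[
Z_\lambda \otimes_R M \;\simeq\; \psi\bigl(\IC_\lambda \boxtimes M \boxtimes k_{\bC^\times}\bigr),
\]
where on the right $M$ is viewed as a perverse local system on $B/U \simeq T$. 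The external product $\IC_\lambda \boxtimes M \boxtimes k_{\bC^\times}$ is perverse on the generic fiber, and nearby cycles is t-exact, so $Z_\lambda \otimes_R M$ is perverse for every finitely generated $R$-module $M$. This verifies condition \ref{UPerv1} of Proposition \ref{UPerv} directly. You also omitted the compactness step: $\Perv_{(I)}(\Fl)$ consists of compact objects by convention, and the paper deduces compactness of $Z_\lambda$ from monoidality of $Z$ (dualizable $\Rightarrow$ compact, via Lemma \ref{Compact}).
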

\begin{proof}
Part \ref{ZI} is similar to Section 2.3 of \cite{GZ}. Let $I_{\bC} \coloneqq G[t] \times_G B$, where $G[t] \rightarrow G$ is evaluation at $0$. Gaitsgory's functor takes values in $\Shv_{(I)}(\Fl)$ because
\begin{enumerate}[label=(\roman*)]
\item $\IC_{\lambda} \boxtimes R \boxtimes k_{\bC^{\times}}$ is weakly constructible for $I_{\bC} \curvearrowright \Fl_{\bC^{\times}}$ by changing the trivialization of $E|_{\bC - z}$,
\item and $Z_{\lambda}$ is supported on a subvariety of $\Fl$ on which $I$ acts through a finite dimensional quotient, that is surjected onto by $I_{\bC}$.
\end{enumerate}

The proof of monoidality will use a family that deforms convolution for affine flags to convolution for the affine Grassmannian. Let $(\Fl \widetilde{\times} \Fl)_{\bC}$ be the moduli of: $G$-bundles $E, E'$ on $\bC$, $U$ reductions of $E|_0 $ and $E'|_0$, a point $z \in \bC$, a trivialization of $E'|_{\bC - z}$, and an isomorphism $E|_{\bC - z} \simeq E'|_{\bC - z}$. The convolution map $m: (\Fl \widetilde{\times} \Fl)_{\bC} \rightarrow \Fl_{\bC}$ forgets $E'$.

Parts \ref{ZMonoidal} and \ref{ZMon} are similar to Theorems 1(c) and 2 of \cite{GZ}. Namely Lemma \ref{TildeBoxtimes} gives an isomorphism \[Z(A) * Z(B) \simeq m_* \psi(A \widetilde{\boxtimes} B \boxtimes R \boxtimes R \boxtimes k_{\bC^{\times}}) \simeq \psi m_* (A \widetilde{\boxtimes} B \boxtimes R \boxtimes R \boxtimes k_{\bC^{\times}}) \simeq Z(A * B),\] such that monodromy acts by $\mu_{A*B} = \mu_A * \mu_B$.

We used that nearby cycles commutes with pushforward along the convolution map, compatibly with monodromy. As $m$ is not proper, this needs the following justification. We use the polar decomposition of $T$ as the product of a compact real torus and a real vector space. Thus $m$ factors through projection to the base of a real vector bundle followed by a proper map. Nearby cycles commutes with pushforward along projection to the base of the vector bundle, because our sheaves are constant along the fibers. Moreover nearby cycles always commutes with proper pushforward. In future we will use this argument in place of properness without further comment.

Associativity can be proved using a triple convolution version of Gaitsgory's family. Since nearby cycles is exact for the perverse t-structure,\footnote{See Corollary 10.3.13 of \cite{KS} for a proof in the weakly constructible analytic setting.} we need not check higher coherences.

Part \ref{ZLoopMon} is similar to Theorem 7.8(6) of \cite{BR22}. As in Lemma 4.3 of \cite{BR22}, inverse loop rotation acts $\bC^{\times} \curvearrowright
 \Fl_{\bC}$ such that 
\begin{enumerate}[label=(\roman*)]
\item $\IC_{\lambda} \boxtimes R \boxtimes k_{\bC^{\times}}$ is $\bC^{\times}$-equivariant,
\item $\Fl_{\bC} \rightarrow \bC$ is $\bC^{\times}$-equivariant for the scaling action on $\bC$,
\item and $\bC^{\times}$ acts on the special fiber $\Fl_0 = \Fl$ by inverse loop rotation.
\end{enumerate}
Therefore claim 2 of \cite{AB} implies $\mu = m$.

Now we prove part \ref{ZPerv}. Since $Z$ is monoidal it preserves dualizability, hence also compactness by Lemma \ref{Compact}. We already observed that nearby cycles is exact for the perverse t-structure. Therefore  $Z_{\lambda} \otimes_R M \simeq \psi(\IC_{\lambda} \boxtimes M \boxtimes k_{\bC^{\times}})$ is perverse, for any finitely generated $R$-module $M$.
\end{proof}

We used the following argument from Section 5.2.3 of \cite{GZ}.

\begin{lemma}\label{TildeBoxtimes}
Nearby cycles in the family $(\Fl \widetilde{\times} \Fl)_{\bC}$ from $\Gr \widetilde{\times} \Gr \times G/U \times G/U \times \bC^{\times}$ to $\Fl \widetilde{\times} \Fl$ sends \beq \label{PsiConv} \psi(A \widetilde{\boxtimes} B \boxtimes R \boxtimes R \boxtimes k_{\bC^{\times}}) \simeq Z(A) \widetilde{\boxtimes} Z(B),\eeq such that monodromy acts by $\mu_A \widetilde{\boxtimes} \mu_B$. 
\end{lemma}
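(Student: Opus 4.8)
The plan is to imitate the argument for parts \ref{ZMonoidal} and \ref{ZMon} of Proposition \ref{ZProperties}, but applied to the intermediate family $(\Fl \widetilde\times \Fl)_{\bC}$ rather than all the way down to $\Fl_{\bC}$. Concretely, one wants to compute the nearby cycles of $A \widetilde\boxtimes B \boxtimes R \boxtimes R \boxtimes k_{\bC^\times}$ in stages, using the two maps that remember one bundle at a time. First I would introduce the family $\Fl_{\bC} \widetilde\times \Fl$ from the table, whose generic fiber is $\Gr \times \Fl \times G/U$ and whose special fiber is $\Fl \widetilde\times \Fl$; there are natural maps $(\Fl \widetilde\times \Fl)_{\bC} \to \Fl_{\bC}\widetilde\times\Fl$ (remembering only the degeneration of the first factor into $\Gr$) and $\Fl_{\bC}\widetilde\times\Fl \to \Fl'_{\bC}$ or directly the twisted-product structure maps, each of which is a proper-up-to-real-vector-bundle map in the sense used in the proof of \ref{ZMon}. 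Applying the base change / commutation of $\psi$ with these pushforwards, one identifies $\psi(A \widetilde\boxtimes B \boxtimes R \boxtimes R \boxtimes k_{\bC^\times})$ first with $Z(A) \widetilde\boxtimes (\text{nearby cycles of } B \boxtimes R)$ and then with $Z(A) \widetilde\boxtimes Z(B)$.

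More precisely, the key steps are: (1) recall that nearby cycles commutes with $!$- (equivalently $*$-) pushforward along the relevant structure maps, using the polar-decomposition trick from the proof of \ref{ZMon} to reduce the non-proper directions to projections along real vector bundles on which the sheaves are constant; (2) observe that in the twisted product $\Gr \widetilde\times \Gr \times G/U \times G/U \times \bC^\times$, nearby cycles in the first $\Gr$-direction commutes with the twisted external product, so that $\psi$ of $A \widetilde\boxtimes (B \boxtimes R) \boxtimes R \boxtimes k_{\bC^\times}$ degenerates the first factor to $Z(A)$ sitting over the special fiber $\Fl$ while leaving the remaining factors in their generic form; (3) then run the same argument on the second factor, degenerating $B \boxtimes R \boxtimes k_{\bC^\times}$ to $Z(B)$, to obtain the isomorphism \eqref{PsiConv}; (4) track the monodromy operator through each of these identifications — since $\psi$ is applied one coordinate at a time and the unipotent monodromy of a twisted external product of local systems along a product base is the twisted external product of the monodromies, one gets that the monodromy on $\psi(A \widetilde\boxtimes B \boxtimes R \boxtimes R \boxtimes k_{\bC^\times})$ acts by $\mu_A \widetilde\boxtimes \mu_B$, matching the claim. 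As in the proof of \ref{ZMonoidal}, exactness of nearby cycles for the perverse $t$-structure (Corollary 10.3.13 of \cite{KS}) means one need not worry about higher coherences in these identifications.

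The main obstacle I expect is step (2), namely justifying carefully that nearby cycles computed in a single factor of a twisted product (where the twisting is by the $\mathring I$-action glueing $\Fl \widetilde\times \Fl = G(\!(t)\!) \times^{\mathring I} \Fl$, deformed over $\bC$) genuinely commutes with the twisted external product and with the remaining $\psi$ in the other factor. This is essentially the content of "Section 5.2.3 of \cite{GZ}" cited before the lemma, and the honest point is that the degeneration in the $\bC$-direction is happening in the \emph{base} $G(\!(t)\!)$-coordinate of the twisted product, so that after trivializing the twisted product locally the family looks like a product and $\psi$ factors; one also needs that the trivializations are $\bC^\times$-equivariant enough that the monodromy bookkeeping in step (4) is consistent. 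The rest — commuting $\psi$ with the non-proper pushforwards, and perversity — is exactly the machinery already set up in the proof of Proposition \ref{ZProperties}, so I would invoke it rather than redo it.
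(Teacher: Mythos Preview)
Your outline has a genuine gap: the maps you propose do not exist, and the ``two-stage'' picture is not what is happening in this family. The family $(\Fl \widetilde{\times} \Fl)_{\bC}$ is a one-parameter family over $\bC$ in which \emph{both} factors degenerate simultaneously as $z \to 0$; there is no intermediate map $(\Fl \widetilde{\times} \Fl)_{\bC} \to \Fl_{\bC} \widetilde{\times} \Fl$ that ``remembers only the degeneration of the first factor.'' Indeed these two families have different generic fibers ($\Gr \widetilde{\times} \Gr \times G/U \times G/U$ versus $\Gr \times \Fl \times G/U$) and are used in the paper for entirely different purposes (monoidality versus centrality). Consequently there is no pushforward along which to commute $\psi$, and your steps (1)--(3) as written do not get off the ground.

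What actually works is precisely the idea you relegate to your final paragraph. The paper projects $(\Fl \widetilde{\times} \Fl)_{\bC} \to \Fl_{\bC}$ onto the first factor (forgetting $E$), then pulls back along an \'etale surjection $\cU \to \Fl_{\bC}$ chosen so that the universal bundle $E'$ trivializes over $\cU \times \bC$. A choice of trivialization $\beta$ yields a splitting $\cU \times_{\Fl_{\bC}} (\Fl \widetilde{\times} \Fl)_{\bC} \simeq \cU \times_{\bC} \Fl_{\bC}$, since once $E'$ is trivial the datum $E|_{\bC-z} \simeq E'|_{\bC-z}$ becomes a trivialization of $E|_{\bC-z}$. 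On this product, nearby cycles sends $(A \boxtimes R \boxtimes k_{\bC^\times})|_{\cU} \boxtimes_{\bC^\times} (B \boxtimes R \boxtimes k_{\bC^\times})$ to $Z(A)|_{\cU_0} \boxtimes Z(B)$ with monodromy $\mu_A|_{\cU_0} \boxtimes \mu_B$. Two trivializations $\beta, \beta'$ differ by a map $\cU \to \mathring{I}_{\bC}$, and one checks the resulting isomorphisms agree via the $\mathring{I}$-equivariant structure on $Z(B)$; hence the isomorphism descends from $\cU$ to give \eqref{PsiConv}. The essential content is this \'etale-local untwisting plus the descent check, not any commutation of $\psi$ with pushforward.
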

\begin{proof}
Projection onto the first factor $(\Fl \widetilde{\times} \Fl)_{\bC} \rightarrow \Fl_{\bC}$ is given by forgetting $E$.
We will pull back along a map $\cU \rightarrow \Fl_{\bC}$ such that
\begin{enumerate}[label=(\roman*)]
\item $\cU$ maps étale surjectively onto the closure in $\Fl_{\bC}$ of the support of $A \boxtimes R \boxtimes k_{\bC^{\times}}$ on $\Fl_{\bC^{\times}}$,
\item and the universal $G$-bundle $E'$ on $\cU \times \bC$ is trivializable.
\end{enumerate}
Such a map exists by Footnote 5 of \cite{GZ}.

Let $\beta$, $\beta'$ be two such trivializations of the universal $G$-bundle on $\cU \times \bC$, compatible with the $U$-reduction along $\cU \times 0$. They induce two different splittings \beq \label{USplitting} b, b' : \cU \times_{\Fl_{\bC}} (\Fl \widetilde{\times} \Fl)_{\bC} \simeq \cU \times_{\bC} \Fl_{\bC},\eeq
because, having trivialized $E'$, the data of $E|_{\bC-z} \simeq E'|_{\bC-z}$ is equivalent to a trivialization of $E|_{\bC-z}$. The two trivializations differ by a map $\cU \rightarrow \mathring{I}_{\bC} \coloneqq G[t] \times_G U$, and the corresponding splittings \eqref{USplitting} differ by the action $\mathring{I}_{\bC} \curvearrowright
\Fl_{\bC}$ on the second factor.

Let $\cU_0$ be the special fiber. Restricting $\beta$ to the formal disc around 0 gives a factorization
\[\begin{tikzcd}
\cU_0 \times_{\Fl} (\Fl \widetilde{\times} \Fl) \arrow[r] \arrow[d, "b"'] & \Fl \widetilde{\times} \Fl \\
\cU_0 \times \Fl \arrow[r] & G(\!(t)\!) \times \Fl \arrow[u]\\
\end{tikzcd}\]
 \vspace{-.7cm} 
 
\noindent and hence an isomorphism \[(Z(A) \widetilde{\boxtimes} Z(B))|_{\cU_0 \times_{\Fl} (\Fl \widetilde{\times} \Fl)} \simeq b^*(Z(A)|_{\cU_0} \boxtimes Z(B)).\]
The other trivialization $\beta'$ induces another such isomorphism, and the composition \[b'^*(Z(A)|_{\cU_0} \boxtimes Z(B)) \simeq (Z(A) \widetilde{\boxtimes} Z(B))|_{\cU_0 \times_{\Fl} (\Fl \widetilde{\times} \Fl)} \simeq b^*(Z(A)|_{\cU_0} \boxtimes Z(B))\] is induced by the $\mathring{I}_{\bC}$-equivariant structure on $Z(B)$.

Similarly, on the generic fibers, restricting $\beta$ to the formal disc around $z$ gives an isomorphism 
\[(A \widetilde{\boxtimes} B \boxtimes R \boxtimes R \boxtimes k_{\bC^{\times}})|_{\cU \times_{\Fl_{\bC^{\times}}} (\Fl \widetilde{\times} \Fl)_{\bC^{\times}}} \simeq b^*(A \boxtimes R \boxtimes k_{\bC^{\times}})|_{\cU} \boxtimes_{\bC^{\times}}(B \boxtimes R \boxtimes k_{\bC^{\times}}),\] related to the isomorphism induced by $\beta'$ by the $\mathring{I}_{\bC}$-equivariant structure on $(B \boxtimes R \boxtimes k_{\bC^{\times}})$.

Nearby cycles on $\cU \times_{\bC} \Fl_{\bC}$ sends \[\psi((A \boxtimes R \boxtimes k_{\bC^{\times}}) \boxtimes_{\bC^{\times}}(B \boxtimes R \boxtimes k_{\bC^{\times}})) \simeq Z(A)|_{\cU_0} \boxtimes Z(B),\] such that monodromy acts by $\mu_A|_{\cU_0} \boxtimes \mu_B$.

Therefore nearby cycles on $\cU \times_{\Fl_{\bC}} (\Fl \widetilde{\times} \Fl)_{\bC}$ sends \beq \label{UMonoidal}\psi(A \widetilde{\boxtimes} B \boxtimes R \boxtimes R \boxtimes k_{\bC^{\times}})|_{\cU_0 \times_{\Fl} (\Fl \widetilde{\times} \Fl)} \simeq (Z(A) \widetilde{\boxtimes} Z(B))|_{\cU_0 \times_{\Fl} (\Fl \widetilde{\times} \Fl)},\eeq such that monodromy acts by $(\mu_A \widetilde{\boxtimes} \mu_B)|_{\cU \times_{\Fl} (\Fl \widetilde{\times} \Fl)}$.

Moreover \eqref{UMonoidal} is independent of the choice of $\beta$, by the definition of the $\mathring{I}$-equivariant structure on $Z(B)$. Therefore it descends to the desired isomorphism \eqref{PsiConv}.
\end{proof}

\subsection{Centrality}
Now we prove that central sheaves are convolution exact, and construct the centrality isomorphisms.

\begin{proposition}\label{Centrality}
Let $A \in \Perv_{G[\![t]\!]}(\Gr)$ and $B \in \Perv_{(I)}(\Fl)$.
Then $Z(A)$ is convolution exact, and there is an isomorphism of perverse sheaves $Z(A) * B \simeq B * Z(A)$, such that the following commutes 
\beq \label{CentralityMonodromy}\begin{tikzcd}
Z(A) * B \arrow[d, "\mu_A * \id_B"'] \arrow[r]  & B * Z(A) \arrow[d, "\id_B * \mu_A"]\\
Z(A) * B \arrow[r] & B * Z(A). \\
\end{tikzcd}\eeq
\end{proposition}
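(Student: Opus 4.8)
The plan is to follow Gaitsgory's original argument (\cite{GZ}, Theorem 1), transported to the universal monodromic setting exactly as the preceding propositions have been. The key point is that both $Z(A) * B$ and $B * Z(A)$ arise as nearby cycles of a single sheaf on a degeneration that interpolates between convolution on the affine Grassmannian side and convolution on the affine flag side, but taken from the \emph{two different orders}. Concretely, I would use the families $\Fl_{\bC} \widetilde{\times} \Fl$ and $\Fl'_{\bC} := \Fl_{\bC} \widetilde{\times} \Fl$ recorded in the table in Section~\ref{CentralSheaves}: the moduli space $\Fl_{\bC} \widetilde{\times} \Fl$ has generic fiber $\Gr \times \Fl \times G/U$ and special fiber $\Fl \widetilde{\times} \Fl$, while $\Fl'_{\bC}$ has generic fiber $\Gr \times \Fl$ and special fiber $\Fl$. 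The object $\IC_\lambda$ can be slid past $B$ in the generic fiber because on $\Gr \times \Fl$ the factor coming from the affine Grassmannian is, away from $z=0$, literally a factorization/fusion factor that can be placed on either side; this is the standard ``the affine Grassmannian is commutative'' input, i.e. the factorization structure underlying geometric Satake \cite{MV}.

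The steps, in order, would be: (1) Identify $Z(A) * B$ with $\psi$ of an appropriate sheaf on $\Fl_{\bC} \widetilde{\times} \Fl$, using Lemma~\ref{TildeBoxtimes} together with commutation of $\psi$ with the convolution pushforward (justified by the polar-decomposition argument already invoked in the proof of Proposition~\ref{ZProperties}\ref{ZMonoidal}). (2) Identify $B * Z(A)$ with $\psi$ of a sheaf on the ``other order'' family, where now $\IC_\lambda$ sits to the right. (3) Construct an isomorphism between the two generic-fiber sheaves on $\Gr \times \Fl$ using the factorizable/commutativity structure of $\IC_\lambda$ — i.e. moving the $z$-supported Satake factor from the left of the $\Fl$-factor to its right when $z \ne 0$, which is exactly the manoeuvre in Gaitsgory's proof. (4) Apply $\psi$ to this generic isomorphism to obtain $Z(A) * B \simeq B * Z(A)$ on the special fiber, automatically in perverse sheaves since $\psi$ is perverse t-exact (Corollary 10.3.13 of \cite{KS}). (5) Track the monodromy: under each identification in (1) and (2) the nearby-cycles monodromy $\mu_{A}$ acts on the $Z(A)$-tensor-factor and trivially on $B$ (using Proposition~\ref{ZProperties}\ref{ZMon}, that $\mu$ is a tensor automorphism, hence compatible with the $\widetilde{\boxtimes}$ identifications, plus the fact that $B$ carries no nearby-cycles monodromy as it is placed in the constant direction of the family); since the generic isomorphism in (3) is $\bC^\times$-equivariant for loop rotation it commutes with both monodromies, giving the square \eqref{CentralityMonodromy}. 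Convolution exactness of $Z(A)$ on each side then follows because $Z(A)*B$ and $B*Z(A)$ are both single nearby-cycles sheaves of perverse objects, hence perverse, for $B$ perverse.

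Regarding the monodromy compatibility, one subtle point worth flagging is the interaction between nearby-cycles monodromy and loop-rotation monodromy. Happily, Proposition~\ref{ZProperties}\ref{ZLoopMon} already tells us these coincide on central sheaves, so there is no genuine ambiguity; one just has to be careful that the centrality isomorphism constructed on the generic fiber is genuinely equivariant for the loop-rotation $\bC^\times$-action (as in Lemma 4.3 of \cite{BR22} / the $\bC^\times$-action used in the proof of part~\ref{ZLoopMon}), so that the induced special-fiber isomorphism intertwines the monodromy operators on both sides. With the identifications set up via Lemma~\ref{TildeBoxtimes}, this is formal.

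The main obstacle is step (3): making the commutativity manoeuvre precise at the level of the degenerating families, rather than just at the level of the abstract fusion product. One must exhibit an explicit isomorphism of the relevant sheaves on $\Gr \times \Fl \times G/U \times \bC^\times$ (before taking $\psi$) implementing ``$\IC_\lambda$ placed at $z$, to the left of the $\Fl$-convolution'' $\simeq$ ``$\IC_\lambda$ placed at $z$, to the right'', compatibly with the two descriptions of the special fiber as $\Fl \widetilde{\times} \Fl$ and with the $\mathring{I}_{\bC}$-equivariance data threaded through Lemma~\ref{TildeBoxtimes}. In Gaitsgory's setting this is Theorem 1 of \cite{GZ}, and in the universal setting everything deforms straightforwardly — the perverse local system $R$ on $G/U$ simply rides along in the ``constant'' directions of the family and plays no role in the commutativity — so the proof is, as the authors put it, essentially \emph{mutatis mutandis}. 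I would therefore present step (3) by citing \cite{GZ} for the structural input and checking only the universal-monodromic bookkeeping (the $R$-factors and the monodromy operators), exactly in the style of the proof of Proposition~\ref{ZProperties}.
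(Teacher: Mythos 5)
Your outline sits in the right neighborhood — nearby cycles on a degenerating family, plus monodromy bookkeeping — but it misses the structural trick that makes the paper's proof work, and it garbles the families. First the bookkeeping: you write ``$\Fl'_{\bC} := \Fl_{\bC} \widetilde{\times} \Fl$'', but these are \emph{different} spaces. The family $\Fl'_{\bC}$ has generic fiber $\Gr \times \Fl$ and special fiber $\Fl$, whereas $\Fl_{\bC}\widetilde{\times}\Fl$ has generic fiber $\Gr \times \Fl \times G/U$ and special fiber $\Fl \widetilde{\times}\Fl$; the latter maps to the former by the convolution map $m$ (forgetting $E'$). Second and more substantively: your step (3), the ``commutativity maneuver'' of sliding the Satake factor from left to right across $B$ on the generic fiber, is exactly the part the paper does \emph{not} need. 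The actual proof identifies \emph{both} $Z(A)*B$ and $B*Z(A)$ with the single nearby-cycles sheaf $\psi(A\boxtimes B\boxtimes k_{\bC^\times})$ on the one family $\Fl'_{\bC}$ — for $Z(A)*B$ one realizes the convolution via $m: \Fl_{\bC}\widetilde{\times}\Fl \to \Fl'_{\bC}$ and commutes $\psi$ with $m_*$, and for $B*Z(A)$ one argues similarly with the roles reversed (here is where the paper uses that $B$ is $I$-constructible to avoid Gaitsgory's more involved section 4.3 argument). Once both orders are identified with the same $\psi(A\boxtimes B\boxtimes k_{\bC^\times})$, the isomorphism $Z(A)*B\simeq B*Z(A)$ and the commutativity of \eqref{CentralityMonodromy} are automatic: the nearby-cycles monodromy on the middle object is computed to be $\mu_A*\id_B$ via the first identification and $\id_B*\mu_A$ via the second, so the square commutes with nothing further to check. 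No fusion/factorization input beyond the families already introduced is required.

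One more inaccuracy to flag: the appeal to Proposition~\ref{ZProperties}\ref{ZLoopMon} (loop rotation versus nearby-cycles monodromy) is a red herring here. The square \eqref{CentralityMonodromy} involves only the nearby-cycles automorphism $\mu_A$, and its commutativity follows directly from the double identification with $\psi(A\boxtimes B\boxtimes k_{\bC^\times})$; no equivariance of the isomorphism under loop rotation needs to be invoked. Your final observation about convolution exactness — perversity is immediate since the result is a single nearby-cycles sheaf of a perverse object — is correct and matches the paper.
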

\begin{proof}
The proof follows Proposition 6 of \cite{GZ}, although Gaitsgory does not explicitly check commutativity of \eqref{CentralityMonodromy}. 

The strategy is to identify both $Z(A) * B$ and $B * Z(A)$ with nearby cycles from $\Gr \times \Fl \times \bC^{\times}$ to $\Fl$ in the following family. Let $\Fl'_{\bC}$ be the moduli of: a $G$-bundle $E$ on $\bC$, a $U$-reduction of $E|_0$, a point $z \in \bC$, and a trivialization of $E|_{\bC - \{0, z\}}$.
It suffices to prove the following two claims.
\begin{enumerate}[label=(\roman*)]
\item \label{ZABpsi}$Z(A) * B \simeq \psi(A \boxtimes B \boxtimes k_{\bC^{\times}})$ such that monodromy acts by $\mu_A * \id_B$.
\item \label{BZApsi} $B * Z(A) \simeq \psi(A \boxtimes B \boxtimes k_{\bC^{\times}})$ such that monodromy acts by $\id_B * \mu_A$.
\end{enumerate}
% \[\text{(a)} \;\; Z(A) * B \simeq \psi(A \boxtimes B \boxtimes k_{\bC^{\times}}) \quad \text{and} \quad \text{(b)} \;\; \psi(A \boxtimes B \boxtimes k_{\bC^{\times}}) \simeq B * Z(A)\]
%are given by nearby cycles from $\Fl_{\bC^{\times}}' \simeq \Gr \times \Fl \times \bC^{\times}$ to $\Fl$. 

We now prove \ref{ZABpsi} using the following family. Let $\Fl_{\bC} \widetilde{\times} \Fl$ be the moduli of: $G$-bundles $E, E'$ on $\bC$, $U$ reductions of $E|_0 $ and $E'|_0$, a point $z \in \bC$, a trivialization of $E'|_{\bC - z}$, and an isomorphism $E|_{\bC^{\times}} \simeq E'|_{\bC^{\times}}$. Forgetting $E'$ gives a map $m: \Fl_{\bC} \widetilde{\times} \Fl \rightarrow \Fl'_{\bC}$, that is generically a $G/U$-torsor and induces convolution $\Fl \widetilde{\times} \Fl \rightarrow \Fl$ on the special fiber.

For $A \in \Perv_{G[\![t]\!]}(\Gr)$ and $B \in \Perv_{(I)}(\Fl)$, we claim that \beq \label{(i)ZAB} Z(A) \widetilde{\boxtimes} B \simeq \psi((A \boxtimes R) \widetilde{\boxtimes} B  \boxtimes k_{\bC^{\times}}),\eeq is obtained by nearby cycles from $\Fl_{\bC^{\times}} \widetilde{\times} \Fl \simeq \Gr \times G/U \times \Fl \times \bC^{\times}$ to $\Fl \widetilde{\times} \Fl$. Moreover we claim that mondodromy acts by $\mu_A \widetilde{\boxtimes} \id_B$. Indeed, let $\Fl^{\infty}_{\bC} \times \Fl$ classify the same data as $\Fl_{\bC} \widetilde{\times} \Fl$, plus a trivialization of $E'$ on the formal disc at 0, compatible with the $U$-reduction at 0. Then \eqref{(i)ZAB} follows because nearby cycles commutes with pro-smooth pullback along the uniformization maps  \[\Fl_{\bC} \times \Fl \leftarrow \Fl^{\infty}_{\bC} \times \Fl \rightarrow \Fl_{\bC} \widetilde{\times} \Fl.\footnote{To avoid sheaves with infinite dimensional support, one can argue as in \cite{GZ}. Construct a finite dimensional uniformization, by trivializing $E'$ on the $n$th order disc, for $n$ sufficiently large compared to the support of $B$.}\]

Using that nearby cycles commutes with pushforward along $m: \Fl_{\bC} \widetilde{\times} \Fl \rightarrow \Fl'_{\bC}$, we obtain \beq\label{ZAB} \psi(A \boxtimes B \boxtimes k_{\bC^{\times}}) \simeq \psi m_*((A \boxtimes R) \widetilde{\boxtimes} B \boxtimes k_{\bC^{\times}})  \simeq m_* \psi((A \boxtimes R) \widetilde{\boxtimes} B \boxtimes k_{\bC^{\times}}) \simeq Z(A)* B\eeq such that monodromy acts by $\mu_A * \id_B$. 

In particular $Z(A) * B$ is perverse by exactness of nearby cycles. Therefore central sheaves are convolution exact.

The proof of \ref{BZApsi} is similar to the proof of \ref{ZABpsi}. Note that the proof presented in section 4.3 of \cite{GZ} is more involved, because Gaitsgory's argument does not assume that $B$ is weakly $I$-constructible. % of $B$, and this is responsible for the asymmetry in difficulty compared to the proof of \ref{ZABpsi}.
\end{proof}

\subsection{Central sheaves are Wakimoto filtered}
The following proposition categorifies Bernstein's description of the center of the affine Hecke algebras, described in equation (8.2) of \cite{Lus}.  The proof follows Proposition 5 of \cite{AB} and uses the notion of universal perversity.

First we generalize Wakimoto sheaves to all elements of the affine Weyl group, not just the weight lattice. Define $W_v \coloneqq W_{\lambda} * \nabla_x$, where we uniquely expressed $v = \lambda x \in W$ as a product of $\lambda \in \Lambda$ and $x \in W^{\fnt}$. Note that these generalized Wakimoto sheaves can be defined canonically using Remark \ref{CanonicalWhittaker}.

\begin{enumerate}[label=(\roman*)]
    \item If $\lambda \in \Lambda^+$ is dominant and and $x \in W^{\fnt}$, then $\lambda$ is minimal length in its left coset $\lambda W^{\fnt}$, hence $W_{\lambda x} = \nabla_{\lambda x}$.
    \item If $\lambda \in \Lambda^{--}$ is strictly anti-dominant and $x \in W^{\fnt}$, then $\lambda$ is maximal length in its left coset $\lambda W^{\fnt}$, hence $W_{\lambda x} = \Delta_{\lambda x}$.
\end{enumerate} 

Let $\langle W[\geq 0] \rangle$ and $\langle W[\leq 0] \rangle$ be the subcategories of $\Shv_{(I)}(\Fl)$ generated under extensions by $W_v[i]$ for $v \in W$ and respectively $i \geq 0$ and $i \leq 0$.

\begin{proposition}
Central sheaves admit Wakimoto filtrations indexed by the weight lattice, i.e. $Z_{\lambda} \in \Perv_{(I)}^{\waki}(\Fl)$.
\end{proposition}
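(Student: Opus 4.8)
The plan is to show that each central sheaf $Z_\lambda$ lies in $\langle W[\geq 0]\rangle \cap \langle W[\leq 0]\rangle$; since central sheaves are perverse by Proposition~\ref{ZProperties}\ref{ZPerv}, this will force $Z_\lambda$ to admit a Wakimoto filtration, and one must then check the graded pieces are indexed by the weight lattice. The key input is convolution with Wakimoto sheaves, exactly as in the proof of Proposition~5 of \cite{AB}. First I would establish the analogues of Lemma~\ref{RadonExactness} for generalized Wakimoto sheaves $W_v$ ($v \in W$): namely that convolution by $W_v$ on either side sends $\langle W[\geq 0]\rangle$ to $\langle W[\geq 0]\rangle$ and $\langle W[\leq 0]\rangle$ to $\langle W[\leq 0]\rangle$. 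This follows by the same induction on length as in Lemma~\ref{RadonExactness}, using the short exact sequence \eqref{DeltasNablas} and the fact that the relevant cohomology-shift corrections $\nabla_1/(e^\alpha - 1)$ are themselves central-type objects of the right bound.

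The heart of the argument is the following. Since $Z$ is monoidal (Proposition~\ref{ZProperties}\ref{ZMonoidal}) and central (Proposition~\ref{Centrality}), one has $Z_\lambda * \nabla_w \simeq \nabla_w * Z_\lambda$ and similarly with $\Delta_w$, and moreover $Z_\lambda$ is convolution exact. The standard trick (following \cite{AB}) is to write, for $w \in W$ with $\ell(w)$ large in a suitable sense, $Z_\lambda * \nabla_w$ as something manifestly built from costandards: one chooses $w$ so that translation $w$ conjugates the support of $Z_\lambda$ into a single affine Schubert cell or a well-controlled union, using the description of the support of $Z_\lambda$ coming from the geometric Satake weight decomposition — the weights appearing in $V_\lambda$ determine the lattice translates $t^\mu$, $\mu \in \mathrm{conv}(W^{\fnt}\lambda) \cap \Lambda$, that occur. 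Then $Z_\lambda * \nabla_w \in \langle \nabla[\geq 0]\rangle$ for large dominant $w$, and since $Z_\lambda * \nabla_w$ is also universally perverse (Lemma~\ref{UPervW}-type argument combined with exactness), it lies in $\langle W[\geq 0]\rangle$; convolving back by $\Delta_{w^{-1}}$ using the Radon-type preservation and Proposition~\ref{Convolution}\ref{Convolution1} recovers $Z_\lambda$ in $\langle W[\geq 0]\rangle$. The symmetric argument with strictly anti-dominant translates and $\Delta_w$ places $Z_\lambda$ in $\langle W[\leq 0]\rangle$.

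To conclude: an object that is perverse and lies in $\langle W[\geq 0]\rangle \cap \langle W[\leq 0]\rangle$ admits a filtration by shifts $W_v[i]$ with $i = 0$, hence a genuine Wakimoto filtration; uniqueness of the filtration follows from Proposition~\ref{WProperties}\ref{RHomW} as in Proposition~\ref{grFaithful}. Finally, to see the filtration is indexed by the \emph{weight lattice} $\Lambda$ rather than all of $W$ — i.e. only $W_\mu$ with $\mu \in \Lambda$ occur, not $W_v$ with $v$ having nontrivial finite part — one uses that $Z_\lambda$ commutes with the monodromy/loop-rotation data (Proposition~\ref{ZProperties}\ref{ZMon}, \ref{ZLoopMon}) and that the associated graded is a tensor functor into $\Free_{\sfT}(\sfT)$ (Proposition~\ref{grFaithful}); equivalently, one invokes the centrality isomorphism $Z_\lambda * \Delta_s \simeq \Delta_s * Z_\lambda$ for simple $s$ to see the multiset of Wakimoto subquotients is $W^{\fnt}$-stable and concentrated on $\Lambda$, matching Bernstein's description of the center as in equation~(8.2) of \cite{Lus}.

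\textbf{Main obstacle.} The delicate point is bookkeeping the cohomological shifts: convolution with $\Delta_w$ or $\nabla_w$ is only left- or right-exact in general, so one must verify that the particular composition used to pass from $Z_\lambda * \nabla_w$ back to $Z_\lambda$ does not introduce shifts — this is precisely where convolution exactness of $Z_\lambda$ (Proposition~\ref{Centrality}) and universal perversity do the work, and it is the step where the universal-monodromic setting requires the extra notion of universal perversity, as the authors note. Carrying this out uniformly in $\lambda$, rather than checking it representation-by-representation, is the part that most repays care.
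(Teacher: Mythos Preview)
Your overall strategy is the same as the paper's, following Proposition~5 of \cite{AB}: translate by a sufficiently dominant (resp.\ antidominant) Wakimoto to push the support into a region where costandards (resp.\ standards) \emph{are} generalized Wakimotos, use universal perversity to get the relevant bound, and then translate back. However, there are two genuine errors in the execution.

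First, you have the two bounds swapped. Universal perversity of $Z_\lambda * \nabla_w$ gives $Z_\lambda * \nabla_w \in \langle \nabla[\leq 0]\rangle$, not $\langle \nabla[\geq 0]\rangle$. For $w \in \Lambda^+$ sufficiently dominant the support lies in the region where $\nabla_v = W_v$ (dominant cosets), so you conclude $Z_\lambda * \nabla_w \in \langle W[\leq 0]\rangle$, not $\langle W[\geq 0]\rangle$. Symmetrically, convolving by $\Delta_{-\mu}$ for $\mu$ sufficiently dominant moves the support into the strictly antidominant region where $\Delta_v = W_v$; universal perversity gives $\Delta_{-\mu} * Z_\lambda \in \langle \Delta[\geq 0]\rangle = \langle W[\geq 0]\rangle$ on that support. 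The paper (and Lemma~15 of \cite{AB}) then convolves by a further $\nabla_\nu$ to produce a single translate $W_{\nu-\mu} * Z_\lambda$ lying in both $\langle W[\geq 0]\rangle$ and $\langle W[\leq 0]\rangle$, rather than arguing the two bounds separately for $Z_\lambda$ itself. Your ``convolve back by $\Delta_{w^{-1}}$'' step would need to know that convolution by an arbitrary Wakimoto preserves both $\langle W[\geq 0]\rangle$ and $\langle W[\leq 0]\rangle$; this is fine for lattice Wakimotos since they are invertible and permute the $W_v$, but you have not said this clearly.

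Second, your argument for indexing by the weight lattice does not work. The multiset of subquotients being $W^{\fnt}$-stable does not force it to lie in $\Lambda$; any $W^{\fnt}$-orbit in $W$ would satisfy that. The paper's argument is much shorter: on $Z_\lambda$ the left and right $T$-monodromy actions coincide (both equal loop rotation monodromy by Proposition~\ref{ZProperties}\ref{ZLoopMon}), whereas by Lemma~\ref{Hom0}\ref{Hom01} on $W_v$ for $v = x e^\mu$ with $x \in W^{\fnt}$ the left and right monodromies differ by the twist $x$. Hence any generalized Wakimoto subquotient of $Z_\lambda$ must have $x = 1$, i.e.\ be indexed by $\Lambda$.
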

\begin{proof}
If $\mu \in \Lambda$ then $\Delta_{-\mu} * Z_{\lambda} \in \langle \Delta [\geq 0] \rangle$ is universally perverse. If $\mu$ is sufficiently dominant, then $\Delta_{-\mu} * Z_{\lambda} \in \langle W [\geq 0] \rangle$ by Lemma 15 of \cite{AB}. If $\nu \in \Lambda^+$ is dominant, then $W_{\nu - \mu} * Z_{\lambda} \simeq \nabla_{\nu} * \Delta_{-\mu} * Z_{\lambda} \in \langle W [\geq 0] \rangle$.
On the other hand $W_{\nu -\mu} * Z_{\lambda} \in \langle \nabla [\leq 0] \rangle$ is universally perverse. If $\nu$ is sufficiently dominant compared to $\mu$, then $W_{\nu -\mu} * Z_{\lambda} \in \langle W [\leq 0] \rangle$. 
Therefore $W_{\nu - \mu} * Z_{\lambda} \in \langle W [\geq 0] \rangle \cap \langle W [\leq 0] \rangle$ admits a generalized Wakimoto filtration, hence so does $Z_{\lambda}$. Since the left and right monodromy actions coincide, $Z_{\lambda} \in \Perv_{(I)}^{\waki}(\Fl)$ admits a Wakimoto filtration indexed only by the weight lattice.
\end{proof}

\subsection{Centrality}
Recall from Section 2 of \cite{BI} that a central stucture on a monoidal functor is equivalent to a braided monoidal factorization through the Drinfeld center of the target.

\begin{proposition}\label{Centrality2}
Gaitsgory's functor lifts to a braided monoidal functor $Z: \Free_{\sfG}(\sfG) \rightarrow Z(\Perv_{(I)}^{\waki}(\Fl))$.
\end{proposition}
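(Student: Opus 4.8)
The plan is to assemble the braided monoidal lift out of three already-established pieces: the monoidal structure on Gaitsgory's functor $Z$ (Proposition \ref{ZProperties}\ref{ZMonoidal}), the fact that central sheaves are Wakimoto filtered so that $Z$ lands in $\Perv_{(I)}^{\waki}(\Fl)$, and the centrality isomorphisms together with their compatibility with nearby cycles monodromy (Proposition \ref{Centrality}). Concretely, I would first recall from Section 2 of \cite{BI} that a central structure on a monoidal functor into a monoidal category $\mathscr{M}$ is the same data as a braided monoidal factorization through the Drinfeld center $Z(\mathscr{M})$; so it suffices to promote the centrality isomorphisms $Z(A)\ast B \simeq B \ast Z(A)$ of Proposition \ref{Centrality} to a genuine half-braiding, i.e.\ to check hexagon/compatibility axioms, and to verify that the half-braiding is natural and monoidal in $A$.

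The key steps, in order, are: (1) observe that for $A \in \Free_{\sfG}(\sfG) = \Rep(\sfG)$ the sheaf $Z(A)$ is universally perverse and Wakimoto filtered, hence an object of $\Perv_{(I)}^{\waki}(\Fl)$, and that this category is closed under convolution (noted after Proposition \ref{grFaithful}); (2) use the isomorphisms of Proposition \ref{Centrality} to define the half-braiding $\sigma_{A,B}\colon Z(A)\ast B \xrightarrow{\sim} B\ast Z(A)$ for $B \in \Perv_{(I)}^{\waki}(\Fl)$, and check the two hexagon axioms — these reduce to the fact, visible from the identification of both sides with nearby cycles in Gaitsgory's triple-convolution family, that $\sigma$ is computed by passing a point $z$ through the other factors, so associativity of the three-point family gives the hexagons; (3) check that $A \mapsto (Z(A), \sigma_{A,-})$ is monoidal into $Z(\Perv_{(I)}^{\waki}(\Fl))$ — the underlying monoidal structure is Proposition \ref{ZProperties}\ref{ZMonoidal}, and the compatibility of the half-braidings with $Z(A\otimes B) \simeq Z(A)\ast Z(B)$ is exactly the multiplicativity $\mu_{A\ast B} = \mu_A \ast \mu_B$ combined with Lemma \ref{TildeBoxtimes}; (4) finally, record that the resulting functor lands in the full subcategory on Wakimoto filtered objects rather than all of $\Perv_{(I)}(\Fl)$, which is immediate from step (1). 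The monoidality of $\gr$ (Proposition \ref{grFaithful}) is not logically needed here but reassures us the target is well-behaved.

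I would also include the commutativity square \eqref{CentralityMonodromy} of Proposition \ref{Centrality} as the statement that loop rotation monodromy $\mu_A$ is an endomorphism of $Z(A)$ \emph{in} $Z(\Perv_{(I)}^{\waki}(\Fl))$, i.e.\ respects the half-braiding; this is what makes $\mu$ a tensor automorphism of the \emph{central} functor, which is what is actually used downstream to descend to $\sfG/\sfG$. Since the excerpt only asks for the braided monoidal lift of $Z$ itself, I would keep the monodromy remark brief and defer its use.

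The main obstacle is step (2): constructing the half-braiding as honest coherent data and verifying the hexagons, rather than just a plain isomorphism. The cleanest route is the one indicated in the proof of Proposition \ref{Centrality} — realize $\sigma_{A,B}$ uniformly via nearby cycles in $\Fl'_{\bC}$ (the moduli with a trivialization away from $\{0,z\}$), where letting $z$ collide with $0$ from either side produces $Z(A)\ast B$ and $B\ast Z(A)$, and then the hexagons follow from functoriality of nearby cycles along the maps relating the two-point and three-point families, exactly as associativity of $Z$ was deduced from a triple-convolution family. Because nearby cycles is $t$-exact (Corollary 10.3.13 of \cite{KS}) and all objects in sight are perverse, there are no higher coherences to check, so the verification is a finite diagram chase rather than an $\infty$-categorical computation. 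As Gaitsgory does not spell out the monodromy compatibility, the one genuinely new bookkeeping item is tracking $\mu_A$ through these identifications, which is handled by the compatibility already proved in \eqref{CentralityMonodromy}.
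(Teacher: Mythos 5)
Your proposal is correct and takes essentially the same route as the paper: reduce to Proposition \ref{Centrality}, note that the Drinfeld-center factorization is equivalent to a central structure (as the paper also recalls from Section~2 of \cite{BI}), and verify the braiding compatibilities via Gaitsgory's families. The paper simply delegates the hexagon/compatibility verifications to \cite{GBraid} and Proposition~13 of \cite{B} rather than sketching them, but the underlying argument you outline---realizing the half-braiding uniformly via nearby cycles and deducing the hexagons from a triple-point family, with $t$-exactness dispensing with higher coherences---is exactly what those references carry out.
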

\begin{proof}
By Proposition \ref{Centrality} it suffices to check the braiding compatibilities, which follow by the same arguments as in \cite{GBraid} (see also Proposition 13 of \cite{B}).
\end{proof}

\section{The Pl\"ucker relations}\label{s:Plucker}
To lift Gaitsgory's construction to a functor out of $\Rep(\sfB)$, we need to construct highest weight arrows satisfying the Pl\"ucker relations. This is slightly more delicate in the universal monodromic setting, because certain sections of Gaitsgory's family acquire poles at the special fiber. We will use that nearby cycles commutes with open restriction to the highest weight semi-infinite orbit. Therefore it will suffices to study a version of Gaitsgory's family for the Borel.

The results of this section will also be important in Section \ref{Fiberfunctor} to prove that associated graded of the central functor is monoidally isomorphic to restriction to the maximal torus.

\subsection{Gaitsgory's family for the Borel}
First we introduce versions of Gaitsgory's family for the Borel. 

Let $\Fl_B \coloneqq B(\!(t)\!) \times^{B[\![t]\!]} B/U$, a $B/U$-torsor over $\Gr_B \coloneqq B(\!(t)\!)/B[\![t]\!]$.

Let $\Fl_{B, \bC}$ be the moduli of: a $B$-bundle $F$ on $\bC$, a trivialization of $F|_0 \times^B T$, a point $z \in \bC$, and a trivialization of $F|_{\bC - z}$. The special fiber is $\Fl_B$ and the generic fiber is $\Gr_B \times B/U$.

Let $(\Fl \widetilde{\times} \Fl)_{B, \bC}$ be the moduli of: $B$-bundles $F, F'$ on $\bC$, trivializations of $F|_0$ and $F'|_0$, a point $z \in \bC$, a trivialization of $F'|_{\bC - z}$, and an isomorphism $F|_{\bC - z} \simeq F'|_{\bC - z}$. The special fiber is $\Fl_B \widetilde{\times} \Fl_B$ and the generic fiber is $\Gr_B \widetilde{\times} \Gr_B \times B/U \times B/U$.

Forgetting $F'$ gives the convolution map $m: (\Fl \widetilde{\times} \Fl)_{B, \bC} \rightarrow \Fl_{B, \bC}$.

\subsection{Two different trivializations} 
Here we describe two different trivializations away from 0 of Gaitsgory family for the Borel. The first is used to define Gaitsgory's functor, and the second extends through 0.

If $z \neq 0$ then there are two $U$-reductions at 0 that differ by an element of $T$. This gives trivializations \beq\label{GenericTriv}\Fl_{B, \bC^{\times}} \simeq \Gr_B \times T \times \bC^{\times} \quad \text{and} \quad \Fl_{B, \bC^{\times}} \simeq \Gr_B \widetilde{\times} \Gr_B \times T \times T \times \bC^{\times}.\eeq

Let $F$ be a $B$-bundle on $\bC$, equipped with a  trivialization of $F|_{\bC - z}$. The choice of coordinate determines a specific global trivialization of $F \times^B T$, which can be compared at 0 to a trivialization of $L|_0 \times^B T$.
This gives global trivializations \beq \label{GlobalTriv} \Fl_{B, \bC} \simeq \Gr_B \times T \times \bC \quad \text{and} \quad (\Fl \widetilde{\times} \Fl)_{B, \bC} \simeq \Gr_B \widetilde{\times} \Gr_B \times T \times T \times \bC. \eeq

The trivializations \eqref{GenericTriv} and \eqref{GlobalTriv} are not compatible. Rather on the $\lambda$-connected component $\Gr_B^{\lambda}$, they differ by the transition function \[\Gr_B^{\lambda} \times T \times \bC^{\times} \rightarrow \Gr_B^{\lambda} \times T \times \bC^{\times}, \qquad (b, 1, z) \mapsto (b, \lambda(z), z).\footnote{We used the choice of coordinate to write $\lambda(z)$.}\]

\subsection{Gaitsgory's functor for the Borel}
Here we define Gaitsgory's functor for the Borel, identify it with the identity functor, and check a monoidality property. The challenge is that the trivialization \eqref{GenericTriv} does not extend through 0, but this is overcome using the weak $T$-constructibility.

Define Gaitsgory's functors for the Borel 
\[Z: \Shv(\Gr_B \times T) \rightarrow \Shv(\Fl_B) \quad \text{and} \quad Z: \Shv(\Gr_B \widetilde{\times} \Gr_B \times T \times T) \rightarrow \Shv(\Fl_B \widetilde{\times} \Fl_B)\]
by nearby cycles \[Z(A \boxtimes R) \coloneqq \psi(A \boxtimes R \boxtimes k_{\bC^{\times}}) \quad \text{and} \quad Z(A \widetilde{\boxtimes} B \boxtimes R \boxtimes R) \coloneqq \psi(A \widetilde{\boxtimes} B \boxtimes R \boxtimes R \boxtimes k_{\bC^{\times}}).\]
Here $A \boxtimes R \boxtimes k_{\bC^{\times}}$ and $A \widetilde{\boxtimes} B \boxtimes R \boxtimes R \boxtimes k_{\bC^{\times}}$ were defined using the trivialization \eqref{GenericTriv}.

Using the choice of coordinate, identify \beq\label{FlBSplit} \Fl_B \simeq \Gr_B \times T \quad \text{and} \quad \Fl_B \widetilde{\times} \Fl_B \simeq \Gr_B \widetilde{\times} \Gr_B \times T \times T.\eeq With respect to these isomorphisms, we will show that Gaitsgory's functor for the Borel is naturally isomorphic to the identity functor.

\begin{lemma}\label{ZBorelTriv}
For $A, B \in \Shv_{B[\![t]\!]}(\Gr_B)$ there under \eqref{FlBSplit} there are canonical isomorphisms \beq \label{ZBTrivial} Z(A \boxtimes R) \simeq A \boxtimes R \quad \text{and} \quad Z(A \widetilde{\boxtimes} B \boxtimes R \boxtimes R) \simeq A \widetilde{\boxtimes} B \boxtimes R \boxtimes R.\eeq
\end{lemma}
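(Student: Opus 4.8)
The plan is to compute both nearby cycles by comparing the two trivializations of Gaitsgory's family for the Borel and exploiting weak $T$-constructibility. First I would recall that nearby cycles $\psi$ depends only on the restriction of the input sheaf to the punctured disc around $z=0$ in $\bC$, and that away from $z=0$ the family $\Fl_{B,\bC^\times}$ is (noncanonically) a product, with \emph{two} natural trivializations: the ``generic'' one of \eqref{GenericTriv}, used to define $A\boxtimes R\boxtimes k_{\bC^\times}$, and the ``global'' one of \eqref{GlobalTriv}, which does extend through $z=0$ to the identification \eqref{FlBSplit}. On the $\lambda$-connected component $\Gr_B^\lambda$ these differ by the transition function $(b,1,z)\mapsto(b,\lambda(z),z)$, i.e. by the loop-rotation-type substitution $t\mapsto \lambda(z)\cdot t$ in the $T$-direction.

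The key step is then: since $R$ is the universal (regular-representation) local system on $T$, it is in particular weakly $T$-equivariant, so the automorphism $t\mapsto\lambda(z)\cdot t$ of $T\times\bC^\times$ pulls $A\boxtimes R\boxtimes k_{\bC^\times}$ back to an isomorphic sheaf; concretely, $A\boxtimes R\boxtimes k_{\bC^\times}$ expressed in the global trivialization \eqref{GlobalTriv} is $A\boxtimes R\boxtimes k_{\bC^\times}$ twisted only by this $T$-translation, which is canonically trivialized using the (weak) $T$-equivariant structure on $R$ and the choice of coordinate giving $\lambda(z)$. Thus, in the global coordinates that extend over $z=0$, the input sheaf on $\Fl_{B,\bC^\times}\simeq\Gr_B\times T\times\bC^\times$ is canonically $A\boxtimes R\boxtimes k_{\bC^\times}$, which is the restriction to the punctured disc of $A\boxtimes R\boxtimes k_{\bC}$ on $\Fl_{B,\bC}\simeq\Gr_B\times T\times\bC$. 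Since this sheaf is constant (in fact pulled back) along the $\bC$-direction, its nearby cycles is simply $A\boxtimes R$ on the special fiber $\Fl_B\simeq\Gr_B\times T$, giving the first isomorphism of \eqref{ZBTrivial}. The second isomorphism is proved identically, replacing $\Gr_B$ by $\Gr_B\widetilde\times\Gr_B$, $T$ by $T\times T$, using the corresponding pair of trivializations in \eqref{GenericTriv} and \eqref{GlobalTriv}: now the transition function is $(b,b',1,1,z)\mapsto(b,b',\lambda(z),\mu(z),z)$ on the $(\lambda,\mu)$-component, and both $T$-factors carry $R$, so the same weak equivariance argument applies, and the convolution map plays no role here.

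I expect the main obstacle to be making the comparison of the two trivializations \emph{canonical} rather than merely up to isomorphism: one must check that the identification built from the weak $T$-equivariant structure on $R$ and the coordinate-dependent function $\lambda(z)$ does not depend on auxiliary choices, and is compatible with the connected-component decomposition of $\Gr_B$ (the function $\lambda(z)$ jumps between components, but $R$'s equivariance structure absorbs this coherently). One should also be slightly careful that nearby cycles genuinely only sees a neighborhood of the punctured disc and that the global trivialization \eqref{GlobalTriv} really is regular at $z=0$ — this is where the hypothesis that $F\times^B T$ is globally trivialized via the coordinate is used. Once these compatibilities are in place, the argument is a short unwinding of definitions, and in particular it is crucial (as the prose preceding the lemma emphasizes) that $R$ is only \emph{weakly} constructible/equivariant, so that the $T$-translation $t\mapsto\lambda(z)t$ is harmless.
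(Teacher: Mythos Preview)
Your argument has a genuine gap at the key step. You claim that weak $T$-constructibility of $R$ implies that the pullback of $A\boxtimes R\boxtimes k_{\bC^\times}$ along the transition automorphism $(t,z)\mapsto(\lambda(z)t,z)$ is canonically isomorphic to $A\boxtimes R\boxtimes k_{\bC^\times}$, so that the sheaf defined in the generic trivialization \eqref{GenericTriv} agrees with the restriction $(A\boxtimes R\boxtimes k_{\bC})|_{\bC^\times}$ in the global trivialization \eqref{GlobalTriv}. This is false. The transition function is not a constant $T$-translation: it depends on $z\in\bC^\times$, and the pullback of $R\boxtimes k_{\bC^\times}$ along $(t,z)\mapsto(\lambda(z)t,z)$ acquires nontrivial monodromy $e^\lambda$ around the loop in $\bC^\times$. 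Weak constructibility only tells you the sheaf is locally constant along $T$-orbits; it does \emph{not} provide a trivialization of this $\bC^\times$-monodromy. The paper flags exactly this point in the footnote to equation \eqref{Exp}: the two sheaves are \emph{not} isomorphic on $\Fl_{B,\bC^\times}$.

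What the paper does instead is work with the definition of nearby cycles via the exponential map $\exp:\bC\to\bC^\times$. After applying $\exp^*$, the base $\bC^\times$ is replaced by its universal cover $\bC$, and the obstruction disappears: $R$ is equivariant for the universal cover of the image of $\lambda:\bC^\times\to T$, so $\exp^*(A\boxtimes R\boxtimes k_{\bC^\times})\simeq\exp^*((A\boxtimes R\boxtimes k_{\bC})|_{\bC^\times})$ canonically. Since nearby cycles is computed as $(\exp_*\exp^*(-))|_{\Fl_{B,0}}$, this suffices. Your outline can be repaired by inserting this passage to the universal cover at the comparison step; without it, the claimed identification of sheaves on $\Fl_{B,\bC^\times}$ simply does not hold.
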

\begin{proof}
Let $\exp: \Fl_{B, \bC^{\times}} \times_{\bC^{\times}} \bC \rightarrow \Fl_{B, \bC}$ be induced by the exponential map $\bC \rightarrow \bC^{\times}$. Recall that nearby cycles is defined \[Z(A \boxtimes R) \coloneqq (\exp_* \exp^* (A \boxtimes R \boxtimes k_{\bC^{\times}}))|_{\Fl_{B, 0}}\]

We may assume that $A$ and $B$ are supported on single connected components $\Gr_B^{\lambda}$ and $\Gr_B^{\mu}$. Write $(A \boxtimes R \boxtimes k_{\bC})|_{\Fl_{B, \bC^{\times}}}$ and $(A \widetilde{\boxtimes} B \boxtimes R \boxtimes R \boxtimes k_{\bC})|_{(\Fl \widetilde{\times} \Fl)_{B, \bC^{\times}}}$ for the external products defined with respect to the global trivialization \eqref{GlobalTriv}. 

Because the universal local system $R$ is equivariant for the universal cover of the image of $\lambda:\bC^{\times} \rightarrow T$, there is a canonical isomorphism \beq \label{Exp} \exp^*(A \boxtimes R \boxtimes k_{\bC^{\times}}) \simeq \exp^*((A \boxtimes R \boxtimes k_{\bC})|_{\Fl_{B, \bC^{\times}}}).\footnote{Because $R$ is only weakly $T$-constructible not $T$-equivariant, $A \boxtimes R \boxtimes k_{\bC^{\times}} \not\simeq (A \boxtimes R \boxtimes k_{\bC})|_{\Fl_{B, \bC^{\times}}}$ are not isomorphic.}\eeq This gives a canonical isomorphism $Z(A \boxtimes R) \simeq A \boxtimes R$.

Because the universal local system $R \boxtimes R$ is equivariant for the universal cover of the image of $(\lambda, \mu):\bC^{\times} \rightarrow T \times T$, there is a canonical isomorphism \beq \label{ExpProduct} \exp^*(A \widetilde{\boxtimes} B \boxtimes R \boxtimes R \boxtimes k_{\bC^{\times}}) \simeq \exp^*((A \widetilde{\boxtimes} B \boxtimes R \boxtimes R \boxtimes k_{\bC})|_{(\Fl \widetilde{\times} \Fl)_{B, \bC^{\times}}}).\eeq This gives a canonical isomorphism $Z(A \widetilde{\boxtimes} B \boxtimes R \boxtimes R) \simeq A \widetilde{\boxtimes} B \boxtimes R \boxtimes R$.
\end{proof}

Now we prove a monoidality property of Gaitsgory's functor for the Borel.

\begin{lemma}\label{ZBConvolution}
For $A, B \in \Shv_{B[\![t]\!]}(\Gr_B)$ there is a canonical isomorphism \beq\label{ZBMonoidal} Z((A * B) \boxtimes R) \simeq m_*Z(A \widetilde{\boxtimes} B \boxtimes R \boxtimes R),\eeq such that the following square commutes
\beq \label{ZMonoidalBorel}\begin{tikzcd}
Z((A * B) \boxtimes R) \arrow[r] \arrow[d, "\eqref{ZBTrivial}"'] & m_*Z(A \widetilde{\boxtimes} B \boxtimes R \boxtimes R) \arrow[d, "\eqref{ZBTrivial}"] \\
(A*B) \boxtimes R \arrow[r] & m_* (A \widetilde{\boxtimes} B \boxtimes R \boxtimes R). \\
\end{tikzcd}\eeq
\end{lemma}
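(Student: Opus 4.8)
The plan is to derive both statements from the commutation of nearby cycles with $m_*$ together with the naturality of the exponential comparison isomorphisms appearing in the proof of Lemma~\ref{ZBorelTriv}. First I would unwind the convolution map $m\colon (\Fl \widetilde{\times} \Fl)_{B, \bC} \to \Fl_{B, \bC}$ on the two fibres: over the special fibre it is the convolution map $\Fl_B \widetilde{\times} \Fl_B \to \Fl_B$ of \eqref{FlBSplit}, while over the generic fibre, with respect to the trivialization \eqref{GenericTriv}, it is the product of the convolution map $\Gr_B \widetilde{\times} \Gr_B \to \Gr_B$ with the multiplication $T \times T \to T$ of $U$-reductions at $0$ and with $\id_{\bC^\times}$. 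Granting this description, for $A$ supported on $\Gr_B^\lambda$ and $B$ on $\Gr_B^\mu$, proper base change along the open inclusion of the generic fibre identifies the pushforward $m_*\!\bigl(A \widetilde{\boxtimes} B \boxtimes R \boxtimes R \boxtimes k_{\bC^\times}\bigr)$, computed over $\bC^\times$, with $(A * B) \boxtimes R \boxtimes k_{\bC^\times}$; the only non-formal point is that the convolution of the universal local system on $T$ with itself under the group law is canonically $R$ again, which is immediate from its description as the regular representation of $\pi_1(T)$.

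Given this, the desired isomorphism \eqref{ZBMonoidal} is the composite
\[
Z\bigl((A*B)\boxtimes R\bigr) \;=\; \psi\bigl((A*B)\boxtimes R \boxtimes k_{\bC^\times}\bigr) \;\simeq\; \psi\, m_*\bigl(A \widetilde{\boxtimes} B \boxtimes R \boxtimes R \boxtimes k_{\bC^\times}\bigr) \;\simeq\; m_*\, Z\bigl(A \widetilde{\boxtimes} B \boxtimes R \boxtimes R\bigr),
\]
the last step being the commutation of $\psi$ with $m_*$. As $m$ is not proper --- the $B/U \simeq T$ directions are non-compact --- this is justified exactly as in the proof of Proposition~\ref{ZProperties}: one factors $m$ through the projection off the real vector space factor of a polar decomposition of $T$, along whose fibres all the sheaves in question are constant, followed by a proper map, and nearby cycles commutes with each.

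For the commutativity of \eqref{ZMonoidalBorel}, I would recall that the vertical isomorphisms \eqref{ZBTrivial} are, by the construction in Lemma~\ref{ZBorelTriv}, the composites of the comparison maps \eqref{Exp} (resp.\ \eqref{ExpProduct}) with the tautological identification of $\psi$ applied to the external product taken with respect to the \emph{global} trivialization \eqref{GlobalTriv} --- which is pulled back from the special fibre along the projection to $\bC$, hence constant in the disc direction --- with that external product itself. The map $m$ is compatible with the global trivializations \eqref{GlobalTriv} on the nose, since forgetting $F'$ and trivializing $F \times^B T$, $F' \times^B T$ by means of the chosen coordinate is visibly intertwined with the convolution map $\Gr_B \widetilde{\times} \Gr_B \times T \times T \to \Gr_B \times T$; and \eqref{Exp}, \eqref{ExpProduct} are induced by the same mechanism, namely equivariance of the universal local system for the universal cover of the relevant cocharacter ($\lambda$ downstairs, $(\lambda, \mu)$ upstairs), so they are intertwined by $m$ as well. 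Chasing the square, both composites reduce to the single tautological identification of $m_*$ of the global external product upstairs with the global external product downstairs, giving \eqref{ZMonoidalBorel}.

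The step I expect to be the genuine obstacle is the first one, namely describing $m$ relative to both trivializations on each connected component $\Gr_B^\lambda \widetilde{\times} \Gr_B^\mu$. As flagged in the introduction to this section, the transition function between \eqref{GenericTriv} and \eqref{GlobalTriv} acquires the pole $\lambda(z)$ on $\Gr_B^\lambda$, and on the convolution component $\Gr_B^{\lambda+\mu}$ one finds the pole $(\lambda+\mu)(z)$; one must check that these poles, and the attendant loop-rotation monodromies, are compatible under $m$ and under the bookkeeping of the $T$-factors. Once this is pinned down, the rest is a formal manipulation of nearby cycles and of the isomorphisms of Lemma~\ref{ZBorelTriv}.
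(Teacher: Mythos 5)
Your proposal is correct and takes essentially the same approach as the paper's (very terse) proof: the isomorphism \eqref{ZBMonoidal} comes from commutation of nearby cycles with $m_*$, justified by the same polar-decomposition device used in Proposition~\ref{ZProperties}, and the square \eqref{ZMonoidalBorel} commutes because $m$ carries \eqref{ExpProduct} to \eqref{Exp}. What you have done beyond the paper is to spell out \emph{why} $m$ intertwines the two exponential comparison isomorphisms --- namely that $m$ respects both the generic and the global trivializations componentwise, with the transition functions $(\lambda(z),\mu(z))$ on $\Gr_B^\lambda\widetilde{\times}\Gr_B^\mu$ composing to $(\lambda+\mu)(z)$ on $\Gr_B^{\lambda+\mu}$ --- which is exactly the content of the paper's one-sentence justification. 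The point you flag at the end as a ``genuine obstacle'' is in fact the point you have already resolved in the preceding paragraph, and the paper treats it the same way, just without the explicit pole-bookkeeping.
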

\begin{proof}
The isomorphism \eqref{ZBMonoidal} follows because nearby cycles commutes with pushforward along convolution $m: (\Fl \widetilde{\times} \Fl)_{B, \bC} \rightarrow \Fl_{B, \bC}$. The square \eqref{ZMonoidalBorel} commutes because \eqref{Exp} is the pushforward along $m$ of \eqref{ExpProduct}.
\end{proof}

\subsection{Highest weight arrows}
Now we construct highest weight arrows and check the Pl\"{u}cker relations.\footnote{The proof of Lemma 10(b) of \cite{AB} needs more care in the universal setting because $\Fl_{\bC} \rightarrow \Gr \times \bC$ does not lift to a $T$-equivariant map to $\Gr \times T \times  \bC$.}

For dimension reasons, $\Fl_{\lambda}$ is open in the support of $Z_{\lambda}$. Typically it is not dense because the support contains other irreducible components.

Let $\cY^{\lambda}_{\bC}$ be the closure of $\Gr^{\lambda} \times T \times \bC^{\times}$ inside $\Fl_{\bC}$. Let $\cU^{\lambda}_{\bC} \coloneqq \Fl_{B, \bC}^{\lambda} \cap \cY^{\lambda}_{\bC}$ be its intersection with a semi-infinite orbit.

Let $(\cY^{\lambda} \widetilde{\times} \cY^{\mu})_{\bC}$ be the closure of $\Gr^{\lambda} \times \Gr^{\mu} \times T \times T \times \bC^{\times}$ inside $(\Fl \widetilde{\times} \Fl)_{\bC}$. Let $(\cU^{\lambda} \widetilde{\times} \cU^{\mu})_{\bC} \coloneqq (\Fl^{\lambda} \widetilde{\times} \Fl^{\mu})_{B, \bC} \cap (\cY^{\lambda} \widetilde{\times} \cY^{\mu})_{\bC}$ be its intersection with a semi-infinite orbit.

The following claim is similar to Proposition 15 of \cite{B}. But the section $(t^{\lambda}, 1, \id): \bC^{\times} \rightarrow \Fl_{\bC^{\times}} \simeq \Gr \times G/U \times \bC^{\times}$ does not seem to extend to $\bC$.

\begin{proposition}
There is a canonical isomorphism $Z_{\lambda}|_{\Fl_{\lambda}} \simeq R_{\Fl_{\lambda}}$.
\end{proposition}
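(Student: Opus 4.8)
The plan is to identify $Z_\lambda|_{\Fl_\lambda}$ with $R_{\Fl_\lambda}$ by transporting the computation into Gaitsgory's family for the Borel, where the needed trivialization is available. First I would observe that $\Fl_\lambda$ sits inside $\Fl_B^\lambda$, the semi-infinite orbit, and that $Z_\lambda|_{\Fl_\lambda}$ is computed by nearby cycles in the closure $\cY^\lambda_\bC$ of $\Gr^\lambda \times T \times \bC^\times$ restricted to this locus. Since nearby cycles commutes with open restriction (to the highest weight semi-infinite orbit), we have
\[
Z_\lambda|_{\Fl_\lambda} \simeq \psi_{\cU^\lambda_\bC}\big((\IC_\lambda \boxtimes R \boxtimes k_{\bC^\times})|_{\cU^\lambda_{\bC^\times}}\big),
\]
where $\cU^\lambda_\bC = \Fl_{B,\bC}^\lambda \cap \cY^\lambda_\bC$. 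On the generic locus $\cU^\lambda_{\bC^\times}$, the restriction of $\IC_\lambda$ to $\Gr^\lambda$ is (up to shift) the structure sheaf, i.e. the constant perverse sheaf, by the dimension/support properties of the Satake sheaf on its open orbit.

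Next I would bring Gaitsgory's functor for the Borel into play. The key point is that $\cU^\lambda_\bC$ is, in a neighborhood of the relevant stratum, a slice of $\Fl_{B,\bC}$ over the $\lambda$-component $\Gr_B^\lambda$; restricting to the base point $t^\lambda \in \Gr_B^\lambda$ gives precisely $\Fl_{B,\bC}$ restricted to a point of $\Gr_B^\lambda$, whose special fiber is the $T$-orbit $\Fl_\lambda \simeq T \times \bC^0 = T$. By Lemma \ref{ZBorelTriv}, $Z(A \boxtimes R) \simeq A \boxtimes R$ under the splitting $\Fl_B \simeq \Gr_B \times T$ coming from the choice of coordinate, so restricting to the point $t^\lambda$ and taking $A$ to be the (shifted) constant sheaf on $\Gr_B^\lambda$ yields the canonical isomorphism $Z_\lambda|_{\Fl_\lambda} \simeq R_{\Fl_\lambda}$. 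Concretely, the isomorphism is the one induced by \eqref{Exp}: although the generic trivialization \eqref{GenericTriv} and the global trivialization \eqref{GlobalTriv} differ on $\Gr_B^\lambda$ by the transition function $(b,1,z) \mapsto (b, \lambda(z), z)$, the universal local system $R$ is equivariant for the universal cover of the image of $\lambda \colon \bC^\times \to T$, so $\exp^* R$ is insensitive to this twist, and nearby cycles along $\exp$ produces a canonical identification.

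The main obstacle, and the reason this needs care beyond Proposition 15 of \cite{B}, is precisely the failure of the section $(t^\lambda, 1, \id)\colon \bC^\times \to \Fl_{\bC^\times}$ to extend over $\bC$: in the universal monodromic setting there is no $T$-equivariant lift $\Fl_\bC \to \Gr \times T \times \bC$ of $\Fl_\bC \to \Gr \times \bC$ (cf. the footnote to Lemma 10(b) of \cite{AB}). The resolution is to never extend the section itself, but instead to work with the weakly $T$-constructible sheaf $R$ and exploit that the discrepancy between the two trivializations is exactly a loop $\lambda \colon \bC^\times \to T$, against which $R$ is monodromy-invariant after passing to the relevant cover — this is the content packaged in \eqref{Exp}. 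So the proof is: (i) reduce via open restriction and commutation of $\psi$ with open immersion to the Borel family on the semi-infinite orbit; (ii) identify $\IC_\lambda|_{\Gr^\lambda}$ with the constant sheaf; (iii) apply Lemma \ref{ZBorelTriv} and restrict to the base point $t^\lambda$, using \eqref{Exp} to get canonicity; checking that these identifications are compatible with the chosen lift $\dot\lambda = t^\lambda$ (hence genuinely canonical, not just up to the $R^\times$-ambiguity) is the final bookkeeping step.
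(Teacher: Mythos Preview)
Your proposal is correct and follows essentially the same approach as the paper: open restriction to $\cU^\lambda_\bC \hookrightarrow \cY^\lambda_\bC$, commutation of nearby cycles with this open restriction, and then an appeal to Lemma~\ref{ZBorelTriv} (via \eqref{Exp}) to identify the result with $\IC_\lambda|_{\Gr_B^\lambda} \boxtimes R \simeq R_{\Fl_\lambda}$. One small slip: your claim $\Fl_\lambda \simeq T \times \bC^0 = T$ is wrong in general (it is $T \times \bC^{\ell(\lambda)}$), and the talk of ``restricting to the base point $t^\lambda$'' is unnecessary---the paper simply applies Lemma~\ref{ZBorelTriv} with $A = \IC_\lambda|_{\Gr_B^\lambda}$ directly, without passing to a point.
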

\begin{proof}
Nearby cycles commutes with restriction along the open inclusion $\cU^{\lambda}_{\bC} \hookrightarrow \cY^{\lambda}_{\bC}$. Therefore Lemma \ref{ZBorelTriv} implies a canonical isomorphism \[Z_{\lambda}|_{\Fl_{\lambda}} \simeq Z(\IC_{\lambda}|_{\Gr_B^{\lambda}} \boxtimes R) \simeq \IC_{\lambda}|_{\Gr_B^{\lambda}} \boxtimes R \simeq R_{\Fl_{\lambda}}. \qedhere\]

%It suffices to check that \eqref{EqPlucker} commutes after applying constant term.
\end{proof}

By adjunction we obtain highest weight arrows $b_{\lambda}: Z_{\lambda} \rightarrow W_{\lambda}$. Below we check the Pl\"ucker relations, using crucially that \[(m^{-1} \Fl_{\lambda + \mu}) \cap (\supp (Z_{\lambda} \widetilde{\boxtimes} Z_{\mu})) = \Fl_{\lambda} \widetilde{\times} \Fl_{\mu},\] where $\supp (Z_{\lambda} \widetilde{\boxtimes} Z_{\mu}) = (\cY^{\lambda} \widetilde{\times} \cY^{\mu})_0$. % and $\Fl_{\lambda} \widetilde{\times} \Fl_{\mu} = (\cU^{\lambda} \widetilde{\times} \cU^{\mu})_0$.

\begin{proposition}\label{Plucker}
The following square commutes
\beq \label{EqPlucker} \begin{tikzcd} Z_{\lambda + \mu} \arrow[d] \arrow[r, "b_{\lambda + \mu}"] & W_{\lambda + \mu} \arrow[d] \\ Z_{\lambda} * Z_{\mu} \arrow[r, "b_{\lambda} * b_{\mu}"'] & W_{\lambda} * W_{\mu}. \end{tikzcd}\eeq
\end{proposition}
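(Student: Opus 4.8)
The plan is to verify the commutativity of \eqref{EqPlucker} by restriction to the open orbit $\Fl_{\lambda+\mu}$. Throughout recall that $\lambda,\mu\in\Lambda^+$, so that $\ell(\lambda+\mu)=\ell(\lambda)+\ell(\mu)$ and the multiplication map $\Fl_\lambda\widetilde{\times}\Fl_\mu\to\Fl_{\lambda+\mu}$ is an isomorphism, cf.\ Proposition \ref{Convolution}. The key formal point is that the target $W_\lambda*W_\mu$ is costandard: by \eqref{LambdaConvolve} and Proposition \ref{Convolution}\ref{Convolution2} it is canonically $\nabla_{t^{\lambda+\mu}}=j_{t^{\lambda+\mu}*}R_{\Fl_{\lambda+\mu}}$ (since $W_\nu=\nabla_{\dot\nu}$ for $\nu$ dominant). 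Hence by $(j^*,j_*)$-adjunction the restriction map
\[ \Hom(Z_{\lambda+\mu},W_\lambda*W_\mu)\xrightarrow{\ \sim\ }\Hom\bigl(Z_{\lambda+\mu}|_{\Fl_{\lambda+\mu}},R_{\Fl_{\lambda+\mu}}\bigr) \]
is an isomorphism; in particular two maps $Z_{\lambda+\mu}\to W_\lambda*W_\mu$ coincide as soon as they agree on $\Fl_{\lambda+\mu}$. So it suffices to show that both composites around \eqref{EqPlucker}, restricted to $\Fl_{\lambda+\mu}$, equal the canonical identity of $R_{\Fl_{\lambda+\mu}}$.

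Next I would compute the four arrows over $\Fl_{\lambda+\mu}$. By its construction via adjunction, $b_{\lambda+\mu}$ restricts to the canonical isomorphism $Z_{\lambda+\mu}|_{\Fl_{\lambda+\mu}}\simeq R_{\Fl_{\lambda+\mu}}$ of the preceding proposition, and the right-hand vertical arrow $W_{\lambda+\mu}\to W_\lambda*W_\mu$ restricts, by the construction of \eqref{LambdaConvolve} out of Proposition \ref{Convolution}\ref{Convolution2}, to the identity of $R_{\Fl_{\lambda+\mu}}$ transported along $\Fl_\lambda\widetilde{\times}\Fl_\mu\xrightarrow{\ \sim\ }\Fl_{\lambda+\mu}$. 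For the two remaining arrows one invokes the crucial support identity $(m^{-1}\Fl_{\lambda+\mu})\cap\supp(Z_\lambda\widetilde{\boxtimes}Z_\mu)=\Fl_\lambda\widetilde{\times}\Fl_\mu$ (and the analogous one with $W$'s in place of $Z$'s, which holds by the same Bruhat-length estimate since $\supp W_\nu=\overline{\Fl_\nu}$ for $\nu$ dominant): it shows that restricting $Z_\lambda*Z_\mu=m_*(Z_\lambda\widetilde{\boxtimes}Z_\mu)$, and likewise $W_\lambda*W_\mu$, to $\Fl_{\lambda+\mu}$ simply transports $(Z_\lambda\widetilde{\boxtimes}Z_\mu)|_{\Fl_\lambda\widetilde{\times}\Fl_\mu}\simeq R_{\Fl_\lambda}\widetilde{\boxtimes}R_{\Fl_\mu}$ along the isomorphism $m|_{\Fl_\lambda\widetilde{\times}\Fl_\mu}$, and that $b_\lambda*b_\mu=m_*(b_\lambda\widetilde{\boxtimes}b_\mu)$ restricts accordingly to $(b_\lambda|_{\Fl_\lambda})\widetilde{\boxtimes}(b_\mu|_{\Fl_\mu})$, hence to the identity. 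It then remains to identify the left vertical arrow $Z_{\lambda+\mu}\to Z_\lambda*Z_\mu$ over $\Fl_{\lambda+\mu}$ with the identity; here I would argue exactly as in the preceding proposition, passing to the semi-infinite orbits $\cU^{\lambda+\mu}_{\bC}\hookrightarrow\cY^{\lambda+\mu}_{\bC}$ and $(\cU^\lambda\widetilde{\times}\cU^\mu)_{\bC}\hookrightarrow(\cY^\lambda\widetilde{\times}\cY^\mu)_{\bC}$, along which nearby cycles commutes with open restriction. On these orbits Gaitsgory's family becomes Gaitsgory's family for the Borel, and by Lemmas \ref{ZBorelTriv} and \ref{ZBConvolution} the functor there is the identity functor, monoidally; so the restricted arrow is identified with the Cartan-component inclusion $\IC_{\lambda+\mu}\hookrightarrow\IC_\lambda*\IC_\mu$ restricted to the open orbit $\Gr^{\lambda+\mu}$, over which the convolution morphism on $\Gr$ is an isomorphism for $\lambda,\mu$ dominant, so that this inclusion is the canonical isomorphism of (shifted) constant sheaves there, i.e.\ the identity of $R_{\Fl_{\lambda+\mu}}$ after nearby cycles. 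Assembling these identifications, both composites restrict to $\id_{R_{\Fl_{\lambda+\mu}}}$ and \eqref{EqPlucker} commutes.

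I expect the main obstacle to be the bookkeeping of canonical trivializations across the various families, precisely the difficulty flagged in this section. As the footnotes note, the naive section $(t^\lambda,1,\id)\colon\bC^{\times}\to\Gr\times G/U\times\bC^{\times}$ used to rigidify things on the generic fiber does not extend through the special fiber $z=0$, so one cannot simply transport the trivialization \eqref{GenericTriv} defining $Z$ to the trivialization \eqref{GlobalTriv} that does extend. The resolution, as in Lemma \ref{ZBorelTriv}, is that the universal local system $R$ is weakly $T$-constructible --- more precisely, equivariant for the universal cover of the image of $\lambda\colon\bC^{\times}\to T$ --- so that the two trivializations become canonically comparable after pulling back along the exponential map. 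Carrying this comparison through the Borel convolution family $(\Fl\widetilde{\times}\Fl)_{B,\bC}$, compatibly with the monoidality isomorphism \eqref{ZBMonoidal} and with the support decomposition of $Z_\lambda\widetilde{\boxtimes}Z_\mu$, is the delicate part; once it is in place the commutativity over $\Fl_{\lambda+\mu}$ is forced, because every arrow there is literally the identity of $R_{\Fl_{\lambda+\mu}}$.
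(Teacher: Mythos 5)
Your proposal is correct and follows essentially the same route as the paper: reduce to the open stratum $\Fl_{\lambda+\mu}$ (your observation that $W_\lambda*W_\mu = \nabla_{t^{\lambda+\mu}}$ is costandard, so that the $(j^*,j_*)$-adjunction makes this reduction immediate, is a clean way to see what the paper encodes via its commutative hexagon), then pass to the semi-infinite orbits and conclude via Lemmas \ref{ZBorelTriv} and \ref{ZBConvolution}. The trivialization bookkeeping you rightly flag as the delicate point is exactly what the paper's pasting of base-change cubes (via A.3 of \cite{AHR}) makes precise.
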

\begin{proof}
Construct two Cartesian cubes by pulling back the Cartesian square 
\[\begin{tikzcd} (\cU^{\lambda} \widetilde{\times} \cU^{\mu})_{\bC} \arrow[r, "j"] \arrow[d, "m"']& (\cY^{\lambda} \widetilde{\times} \cY^{\mu})_{\bC} \arrow[d, "m"] \\ 
\cU^{\lambda + \mu}_{\bC} \arrow[r, "j"'] & \cY^{\lambda + \mu}_{\bC}
\end{tikzcd}\] to the special fiber $i: \cY^{\lambda + \mu}_0 \rightarrow \cY^{\lambda + \mu}_{\bC}$ and along the exponential map $\exp: \cY^{\lambda + \mu}_{\bC^{\times}} \times_{\bC^{\times}} \bC \rightarrow \cY^{\lambda + \mu}_{\bC}$.
Pasting three commutative cubes, each obtained by A.3 of \cite{AHR}, we obtain a commutative hexagon
\[\begin{tikzcd}
j^* i^* \exp_* \exp^* m_* \arrow[r] \arrow[d] &  i^* \exp_* \exp^* j^* m_* \arrow[r] & i^* \exp_* \exp^* m_* j^* \arrow[d] \\
j^* m_* i^* \exp_* \exp^* \arrow[r] & m_* j^* i^* \exp_* \exp^* \arrow[r] & m_*  i^* \exp_* \exp^* j^*,\\
\end{tikzcd}\]

 \vspace{-.7cm} 

\noindent which identifies with

\[\begin{tikzcd}
Z_{\lambda + \mu}|_{\Fl_{\lambda + \mu}} \arrow[r] \arrow[d] & Z(\IC^{\lambda + \mu}|_{\Gr_B^{\lambda + \mu}} \boxtimes R) \arrow[r] & Z((\IC^{\lambda}|_{\Gr_B^{\lambda}} * \IC^{\mu}|_{\Gr_B^{\mu}}) \boxtimes R) \arrow[d] \\
(Z_{\lambda} * Z_{\mu})|_{\Fl_{\lambda + \mu}} \arrow[r] &  Z_{\lambda}|_{\Fl_{\lambda}} * Z_{\mu}|_{\Fl_{\mu}} \arrow[r] & m_* Z(\IC^{\lambda}|_{\Gr_B^{\lambda}} \widetilde{\boxtimes} \IC^{\mu}|_{\Gr_B^{\mu}} \boxtimes R \boxtimes R).\\
\end{tikzcd}\]

 \vspace{-.7cm}

\iffalse \[\begin{tikzcd}[column sep=0ex]
& Z(\IC^{\lambda + \mu}|_{\Gr_B^{\lambda + \mu}} \boxtimes R) \arrow[dr]& \\
Z(\IC^{\lambda + \mu})|_{\Fl_{\lambda + \mu}} \arrow[ur] \arrow[d] &   & Z((\IC^{\lambda}|_{\Gr_B^{\lambda}} * \IC^{\mu}|_{\Gr_B^{\mu}}) \boxtimes R) \arrow[d] \\
(m_*Z(\IC^{\lambda} \widetilde{\boxtimes} \IC^{\mu} \boxtimes R \boxtimes R))|_{\Fl_{\lambda + \mu}} \arrow[dr] &  & m_* Z(\IC^{\lambda}|_{\Gr_B^{\lambda}} \widetilde{\boxtimes} \IC^{\mu}|_{\Gr_B^{\mu}} \boxtimes R \boxtimes R)\\
& m_*(Z(\IC^{\lambda} \widetilde{\boxtimes} \IC^{\mu} \boxtimes R \boxtimes R)|_{\Fl_{\lambda} \widetilde{\times} \Fl_{\mu}}) \arrow[ur] & \\
\end{tikzcd}\] \fi

% \[\begin{tikzcd}[column sep = small] Z(\IC^{\lambda + \mu})|_{\Fl_{\lambda + \mu}} \arrow[r] \arrow[d] & Z(\IC^{\lambda + \mu}|_{\Gr_B^{\lambda + \mu}} \boxtimes R) \arrow[r] & Z((\IC^{\lambda}|_{\Gr_B^{\lambda}} * \IC^{\mu}|_{\Gr_B^{\mu}}) \boxtimes R) \arrow[d] \\ (m_*Z(\IC^{\lambda} \widetilde{\boxtimes} \IC^{\mu} \boxtimes R \boxtimes R))|_{\Fl_{\lambda + \mu}} \arrow[r] & m_*(Z(\IC^{\lambda} \widetilde{\boxtimes} \IC^{\mu} \boxtimes R \boxtimes R)|_{\Fl_{\lambda} \widetilde{\times} \Fl_{\mu}}) \arrow[r] & m_* Z(\IC^{\lambda}|_{\Gr_B^{\lambda}} \widetilde{\boxtimes} \IC^{\mu}|_{\Gr_B^{\mu}} \boxtimes R \boxtimes R)\\ \end{tikzcd}\]

\noindent The Pl\"ucker relations \eqref{EqPlucker} now follow by Lemma \ref{ZBConvolution}.
\end{proof}

\section{The fiber functor} \label{Fiberfunctor}
Here we prove that the associated graded of the central functor is monoidally isomorphic to restriction to the maximal torus. This will be important in Proposition \ref{F'}\ref{F'2} below, to identify the associated graded of the Arkhipov--Bezrukavnikov functor.

By a Tannakian argument of Arkhipov--Bezrukavnikov, it suffices to identify a certain $\sfG$-torsor on $\sfT/\sfT$. Using the Pl\"ucker relations, we construct a $\sfB$-reduction such that the associated graded $\sfT$-torsor is $\sfT \rightarrow \sfT/\sfT$. Then we argue that there is a unique such $\sfB$-torsor, thereby identifying the $\sfG$-torsor.

% Indeed since $\gr Z'$ is a tensor functor, it is monoidally isomorphic to restriction along some map. Then we identify the map using the Pl\"ucker relations.

\subsection{The tensor structure}
Define the combined central and Wakimoto functor by \beq \label{ZW} Z': \Rep(\sfG \times \sfT) \rightarrow \Perv_{(I)}^{\waki}(\Fl), \qquad V_{\lambda} \boxtimes k_{\mu} \mapsto Z_{\lambda} * W_{\mu}.\eeq
Note that $Z'$ admits a natural monoidal structure by the centrality of $Z$. %(see Section 6.3.3 of \cite{AR})
The following argument is taken from section 3.6.5 of \cite{AB}.

\begin{lemma}\label{TensorFunctor}
The monoidal structure on $\gr Z'$ constructed in Propositions \ref{ZProperties} and \ref{grFaithful} makes it a tensor functor.
\end{lemma}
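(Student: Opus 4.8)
The statement to prove is that the monoidal functor $\gr Z' \colon \Rep(\sfG \times \sfT) \to \Free_{\sfT}(\sfT)$ is a \emph{tensor} functor, i.e., not just monoidal but symmetric monoidal and $k$-linear exact, so that Tannakian reconstruction applies. The functor $\gr Z'$ is already known to be monoidal (Proposition \ref{grFaithful}) and faithful (same proposition); the content is symmetry and exactness. My plan is to follow the argument of section 3.6.5 of \cite{AB}, reducing everything to properties that are already available in the excerpt, and using faithfulness of $\gr$ to transport structure from $Z'$ itself.

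First I would record that $\gr$ is faithful and monoidal, and that $Z'$ is monoidal with a braiding: by Proposition \ref{Centrality2}, Gaitsgory's functor lifts to a \emph{braided} monoidal functor into the Drinfeld center, and tensoring with Wakimoto sheaves (which are manifestly central, being supported on the torus' worth of components and commuting by Proposition \ref{WProperties} together with the lattice-multiplicativity \eqref{LambdaConvolve}) preserves this, so $Z'$ is braided monoidal. Composing with the braided monoidal forgetful functor $Z(\Perv^{\waki}_{(I)}(\Fl)) \to \Perv^{\waki}_{(I)}(\Fl)$ and then applying the monoidal functor $\gr$, one gets that $\gr Z'$ carries a braiding. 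The point is then that the braiding on $\Free_\sfT(\sfT) = \Rep(\sfT)$-modules coming from this construction must be the \emph{standard} symmetric one: since $\gr$ is faithful and $\gr Z'$ lands in a category where the only possible braidings compatible with the $R$-bilinear structure are the trivial one (the symmetry group of the tensor unit $\cO_\sfT$ is trivial, and the braiding is determined by its values on the line bundles $\cO_\sfT(\mu)$, which are rigid and invertible), the braiding is forced to be the symmetric monoidal structure. Concretely: on associated gradeds of Wakimoto sheaves the braiding is computed by the commutativity constraint $\cO(\mu) \otimes \cO(\nu) \simeq \cO(\nu) \otimes \cO(\mu)$, which in $\Rep(\sfT)$ is canonical and symmetric; on central sheaves one uses that the braiding $\beta$ on $Z$ squares to the identity after passing to $\gr$ because the relevant monodromy/unipotence obstruction vanishes on associated gradeds. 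This is essentially the content of \cite[\S 3.6.5]{AB}, and I would cite it.

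Next, exactness: $\gr Z'$ is additive by construction, and it is exact because $\Rep(\sfG \times \sfT)$ is semisimple (characteristic zero!), so every short exact sequence splits and additivity suffices. Likewise $k$-linearity is immediate. Combined with faithfulness, this is exactly the package needed for $\gr Z'$ to be a tensor functor in the sense required for the Tannakian formalism of the following subsections. I would also note the normalization point: the composite $\Rep(\sfT) \hookrightarrow \Rep(\sfG \times \sfT) \xrightarrow{\gr Z'} \Free_\sfT(\sfT)$ along $k_\mu \mapsto W_\mu$ is, by construction of the Wakimoto functor, the identity (the canonical monoidal functor $W$ followed by $\gr$ returns $\mu \mapsto \cO(\mu)$), so $\gr Z'$ restricted to the torus factor is the standard inclusion; this is what later pins down that the $\sfG$-torsor has its associated graded $\sfT$-torsor equal to the universal one.

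\textbf{The main obstacle.} The genuinely delicate point is the compatibility of the braiding with the symmetric structure on the target — i.e., verifying that after passing to associated gradeds the hexagon/commutativity constraints coming from the nearby-cycles braiding of \cite{GBraid} coincide with the tautological symmetry of $\Rep(\sfG \times \sfT)$. In \cite{AB} this is handled by the observation that $\gr Z$ lands in $\Rep(\sfT)$, whose braided automorphisms of the identity functor are severely constrained, so the braiding is rigid; I expect to simply invoke this, since the excerpt explicitly says ``the following argument is taken from section 3.6.5 of \cite{AB}'' and all the structural inputs ($\gr$ faithful monoidal, $Z'$ braided, semisimplicity of the source) are in place. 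The universal-monodromic deformation does not affect this step because it is a statement about the \emph{associated graded}, where the extra monodromy parameter only enters through the already-recorded $R$-linear structure, not through the braiding.
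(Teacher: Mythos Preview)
Your proposal has a genuine gap in the central step, namely the verification that the braiding on $\gr Z'$ induced from the centrality isomorphisms coincides with the standard symmetric monoidal structure on $\Free_{\sfT}(\sfT)$.

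You argue that the braiding is \emph{forced} because ``the only possible braidings compatible with the $R$-bilinear structure are the trivial one''. This is false: $\Free_\sfT(\sfT)$ is generated by invertible line objects $R_\mu$, and a braiding is determined by isomorphisms $R_\mu \otimes R_\nu \simeq R_\nu \otimes R_\mu$, each of which is only well-defined up to a unit of $R$. There is a whole torus of braidings (already on $\Rep(\sfT)$ these correspond to bilinear forms on $\Lambda$), so no abstract rigidity argument pins down the symmetric one. The centrality isomorphism $\sigma_{\lambda,\mu}: \gr Z_\lambda \otimes R_\mu \simeq R_\mu \otimes \gr Z_\lambda$ comes from nearby cycles and the Drinfeld center structure; nothing you have written shows it equals the tautological swap.

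The paper's proof supplies exactly the missing ingredient you omit: the \emph{highest weight arrows} $b_\lambda: Z_\lambda \to W_\lambda$ and the Pl\"ucker relations (Proposition \ref{Plucker}). Using that $b_\lambda$ is surjective on the top graded piece, the paper shows that $\sigma_{\lambda,\mu}$ agrees with the symmetric commutativity constraint after projecting to the highest weight line $R_\lambda \subset \gr Z_\lambda$. Lemma 18 of \cite{AB} then propagates this agreement from the highest weight line to all of $\gr Z_\lambda$. Your outline mentions neither the Pl\"ucker relations nor any substitute for Lemma 18 of \cite{AB}; the sentence ``on central sheaves one uses that the braiding $\beta$ on $Z$ squares to the identity after passing to $\gr$ because the relevant monodromy/unipotence obstruction vanishes'' is not an argument, and in any case squaring to the identity does not distinguish the trivial swap from, say, its negative. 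The exactness and $k$-linearity remarks at the end of your proposal are fine, but they are not where the content lies.
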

\begin{proof}
We need to check that the monoidal structure on $\gr Z'$ is compatible with the symmetric monoidal structures on the source and target. Since $\gr$ admits a monoidal section, the central structure on $Z$ constructed in  Proposition \ref{Centrality2} induces a central structure on $\gr Z$, denoted \beq\label{CentralStructure} \sigma_{\lambda, \mu}: \gr Z_{\lambda} \otimes R_{\mu} \simeq R_{\mu} \otimes \gr Z_{\lambda}.\eeq  

We claim that \eqref{CentralStructure} coincides with the commutativity constraint in the symmetric monoidal category $\Free_{\sfT}(\sfT)$. 
Proposition \ref{Plucker} implies that the outer square of the following diagram commutes 
\[\begin{tikzcd} Z_{\mu} * Z_{\lambda} \arrow[d] \arrow[r, "\id * b_{\lambda}"] & Z_{\mu} * W_{\lambda} \arrow[d] \arrow[r, "b_{\mu} * \id"] & W_{\mu} * W_{\lambda} \arrow[d] \\
Z_{\lambda} * Z_{\mu} \arrow[r, "b_{\lambda} * \id"'] & W_{\lambda} * Z_{\mu} \arrow[r, "\id * b_{\mu}"'] & W_{\lambda} * W_{\mu} \\
\end{tikzcd}\] 

  \vspace{-.7cm}   

\noindent Since $\id * b_{\lambda}$ and $b_{\lambda} * \id$ are surjective, the right square also commutes. Applying $\gr$ gives commutativity of
\[\begin{tikzcd} \gr Z_{\lambda} \otimes R_{\mu} \arrow[d, "\sigma_{\lambda, \mu}"'] \arrow[r] & R_{\lambda} \otimes R_{\mu}  \arrow[d] \\ 
R_{\mu} \otimes \gr Z_{\lambda} \arrow[r] & R_{\mu} \otimes R_{\lambda}, \\ \end{tikzcd}\] 

  \vspace{-.7cm}

\noindent where the right side is the commutativity constraint in $\Free_{\sfT}(\sfT)$. Lemma 18 of \cite{AB} says that $\sigma_{\lambda, \mu}$ agrees with the commutativity constraint in $\Free_{\sfT}(\sfT)$. Therefore the monoidal structure on $\gr Z'$ intertwines the commutativity constraints in $\Rep(\sfG)$ and $\Free_{\sfT}(\sfT)$.
\end{proof}

\subsection{Identifying the torsor}\label{IDTorsor}
By Lemma \ref{TensorFunctor} and Tannakian formalism, $\gr Z' \simeq j^*$ is monoidally isomorphic to pullback along some map \beq \label{TannakianMap} j: \sfT \times \pt/\sfT \rightarrow \pt/\sfG \times \pt/\sfT,\eeq such that the composition $\sfT \times \pt/\sfT \rightarrow \pt/\sfG \times \pt/\sfT \rightarrow \pt/\sfT$ is isomorphic to projection.
It remains to identify this $(\sfG \times \sfT)$-torsor on $\sfT \times \pt/\sfT$. This is a non-vacuous problem if $k$ is not algebraically closed, as then there may exist nontrivial $\sfG$-bundles already on a point, say for $\sfG = \Spin(n)$.

To proceed, we first we use highest weight arrows to reduce the $(\sfG \times \sfT)$-torsor to a $\sfB$-torsor. 
Let $\beta_{\lambda}: V_{\lambda}|_{\sfB} \rightarrow k_{\lambda}|_{\sfB}$ be the highest weight arrow, a map of $\sfB$-modules.

\begin{lemma}\label{FactorB}
The map \eqref{TannakianMap} factors through \[j: \sfT \times \pt/\sfT \dashrightarrow \pt/\sfB \rightarrow \pt/\sfG \times \pt/\sfT,\] such that the following commutes 
\beq \label{grB} \begin{tikzcd}
\gr Z_{\lambda} \arrow[r, "\gr b_{\lambda}"] \arrow[d]& \gr W_{\lambda} \arrow[d] \\
j^* V_{\lambda} \arrow[r, "\beta_{\lambda}|_{\sfT \times \pt/\sfT}"'] & j^* k_{\lambda}. \end{tikzcd}\eeq
\end{lemma}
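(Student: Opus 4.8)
The plan is to feed the highest weight arrows of Section~\ref{s:Plucker} into the Drinfeld--Pl\"ucker dictionary for $\sfB$-torsors, exactly as in \cite{AB}. Recall that for $\lambda \in \Lambda^+$ the arrow $b_\lambda \colon Z_\lambda \to W_\lambda$ is the projection of $Z_\lambda$ onto the last step $W_\lambda$ of its Wakimoto filtration; hence, applying the faithful monoidal functor $\gr$ of Proposition~\ref{grFaithful}, the map $\gr b_\lambda \colon \gr Z_\lambda \to \gr W_\lambda$ is a split surjection onto a line bundle on $\sfT \times \pt/\sfT$. Writing $\mathcal{L}_\mu := \gr W_\mu = \gr Z'(V_0 \boxtimes k_\mu) = j^{*}(V_0 \boxtimes k_\mu)$ and using that $j$ is compatible with the projection to $\pt/\sfT$, one gets $\mathcal{L}_\mu \simeq \operatorname{pr}_2^{*} k_\mu$, and the canonical isomorphisms $W_\lambda * W_\mu \simeq W_{\lambda+\mu}$ make the family $(\mathcal{L}_\mu)_{\mu \in \Lambda}$ multiplicative. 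Under the Tannakian identification $\gr Z' \simeq j^{*}$ the arrows $\gr b_\lambda$ become surjections $j^{*}V_\lambda \twoheadrightarrow \mathcal{L}_\lambda$, one for each $\lambda \in \Lambda^+$.

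First I would check that this system satisfies the Pl\"ucker relations. This is immediate: applying the monoidal functor $\gr$ to the commuting square of Proposition~\ref{Plucker}, whose left vertical is the comultiplication $Z_{\lambda+\mu} \hookrightarrow Z_\lambda * Z_\mu$ (the Cartan component) and whose right vertical is the isomorphism $W_{\lambda+\mu} \simeq W_\lambda * W_\mu$, shows that $\gr b_{\lambda+\mu}$ factors as $j^{*}V_{\lambda+\mu} \to j^{*}V_\lambda \otimes j^{*}V_\mu \xrightarrow{\gr b_\lambda \otimes \gr b_\mu} \mathcal{L}_\lambda \otimes \mathcal{L}_\mu \simeq \mathcal{L}_{\lambda+\mu}$, while unitality $\gr b_0 = \mathrm{id}$ holds because $Z_0 = W_0$ is the monoidal unit.

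Then I would invoke the Drinfeld--Pl\"ucker description of $\pt/\sfB \simeq \sfG \backslash (\sfG/\sfU)/\sfT$ recalled in the introduction (cf. \cite{AB}): the datum of the tensor functor $j^{*} \colon \Rep(\sfG) \to \QCoh(\sfT \times \pt/\sfT)$, of the multiplicative line bundles $\mathcal{L}_\mu = \operatorname{pr}_2^{*} k_\mu$, and of the Pl\"ucker-compatible arrows $j^{*}V_\lambda \twoheadrightarrow \mathcal{L}_\lambda$ --- each surjective for every $\lambda \in \Lambda^+$, which in particular forces the resulting point of $\sfG \backslash (\sfG/\sfU)^{\aff}/\sfT$ into the open base affine space --- is precisely a lift $j \colon \sfT \times \pt/\sfT \to \pt/\sfB$ whose composites with $\pt/\sfB \to \pt/\sfG$ and $\pt/\sfB \to \pt/\sfT$ recover the original $\sfG$-torsor and the universal $\sfT$-torsor $\operatorname{pr}_2$. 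Since the universal highest weight arrow $\beta_\lambda \colon V_\lambda|_{\sfB} \to k_\lambda|_{\sfB}$ is tautologically the Pl\"ucker datum attached to the universal $\sfB$-torsor on $\pt/\sfB$, naturality then gives $\beta_\lambda|_{\sfT \times \pt/\sfT} = j^{*}\beta_\lambda = \gr b_\lambda$ under the identifications $j^{*}V_\lambda = \gr Z_\lambda$, $j^{*}k_\lambda = \mathcal{L}_\lambda = \gr W_\lambda$, which is the asserted commutativity of~\eqref{grB}. I expect the only real friction to be bookkeeping: arranging the reduction precisely enough that \eqref{grB} commutes on the nose rather than merely up to a scalar, which comes down to the normalization of $b_\lambda$ as the genuine last quotient of the Wakimoto filtration; like the rest of the argument, this is essentially already present in \cite{AB}, the substantive work of Section~\ref{Fiberfunctor} being the separate task of identifying \emph{which} $\sfB$-torsor this is.
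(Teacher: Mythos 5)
Your proof takes essentially the same approach as the paper's: you apply $\gr$ to the Plücker square of Proposition~\ref{Plucker} to get Plücker-compatible arrows out of $j^*V_\lambda$, observe that $\gr b_\lambda$ is a split surjection onto $\mathcal{L}_\lambda$, and invoke the Drinfeld--Plücker formalism to obtain first a factorization through $\sfT\backslash(\sfG/\sfU)^{\aff}/\sfG$ and then, using surjectivity, through the open locus $\pt/\sfB$. The paper's proof is terser but identical in substance; your added detail—verifying the relations after $\gr$, noting the multiplicativity of the $\mathcal{L}_\mu$, and making the commutativity of \eqref{grB} explicit as a tautology of the formalism—fills in precisely what the paper leaves implicit.
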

\begin{proof}
Let $(\sfG/\sfU)^{\aff}$ be the affine closure of base affine space. Proposition \ref{Plucker} supplies highest weight arrows satisfying the Pl\"{u}cker relations, giving a factorization \[j: \sfT \times \pt/\sfT \rightarrow \sfT \backslash (\sfG/\sfU)^{\aff}/\sfG \rightarrow \pt/\sfG \times \pt/\sfT.\]

After taking associated graded, the highest weight arrow $\gr b_{\lambda}: \gr Z_{\lambda} \rightarrow \gr W_{\lambda}$ becomes a split surjection in $\Free_{\sfT}(\sfT)$. Therefore they define a genuine $\sfB$-reduction \[\sfT \times \pt/\sfT \rightarrow \sfT \backslash (\sfG/\sfU)/\sfG = \pt/\sfB \rightarrow \pt/\sfG \times \pt/\sfT. \qedhere\]
\end{proof}

\iffalse 
\begin{remark} The above $\sfB$-reduction was necessary if $k$ is not algebraically closed and say $\sfG = \Spin(n)$, because then there may exist nontrivial $\sfG$-bundles on $\pt = \Spec k$.\end{remark} \fi % and hence $\Hom(\pt/\sfT, \pt/\sfG) \not\simeq \Hom(\sfT, \sfG)/\sfG$.

Having constructed a $\sfB$-reduction, we now identify the $(\sfG \times \sfT)$-torsor. % It suffices to show that there is a unique isomorphism class of $\sfB$-torsor on $\sfT/\sfT$ whose associated graded $\sfT$-bundle is isomorphic to the tautological bundle.

\begin{proposition}\label{ConstantTermEquivalence}
There is an equivalence of monoidal functors \beq \label{grZi} \gr Z' \simeq j^*: \Rep(\sfG \times \sfT) \rightarrow \Free_{\sfT}(\sfT),\eeq where $j: \sfT \times \pt/\sfT \rightarrow \pt/\sfT \rightarrow \pt/\sfG \times \pt/\sfT$ is projection followed by the map induced by the diagonal inclusion $\sfT \rightarrow \sfG \times \sfT$.
\end{proposition}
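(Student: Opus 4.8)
The plan is to combine the Tannakian reduction already in hand with a cohomological rigidity statement for $\sfB$-torsors. By Lemma~\ref{TensorFunctor} and Tannakian formalism we have $\gr Z' \simeq j^*$ for the map $j$ of \eqref{TannakianMap}, and by Lemma~\ref{FactorB} this $j$ lifts to a map $\sfT \times \pt/\sfT \to \pt/\sfB$; write $P$ for the resulting $\sfB$-torsor on $X := \sfT \times \pt/\sfT$ and $\on{pr}\colon X \to \pt/\sfT$ for the projection. Since we already know the composite of $j$ with $\pt/\sfG \times \pt/\sfT \to \pt/\sfT$ is $\on{pr}$, it suffices to show that $P$ is isomorphic to the $\sfB$-torsor $Q_0$ obtained by pulling back the universal $\sfT$-torsor along $\on{pr}$ and then inducing up along the canonical splitting $\sfT \hookrightarrow \sfB$. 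Indeed, granting $P \simeq Q_0$ we get $P \times^{\sfB} \sfG \simeq \on{pr}^*$ of the $\sfG$-torsor induced from the universal $\sfT$-torsor, and assembling with the $\sfT$-factor identifies $j$ with $\on{pr}$ followed by the map $\pt/\sfT \to \pt/\sfG \times \pt/\sfT$ induced by the diagonal $\sfT \hookrightarrow \sfG \times \sfT$, which is the claim.

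First I would pin down the $\sfT$-torsor underlying $P$, namely $P/\sfU = P \times^{\sfB}\sfT$. The commuting square \eqref{grB} identifies $\gr b_\lambda\colon \gr Z_\lambda \to \gr W_\lambda$ with the pullback along $P$ of the highest weight arrow $\beta_\lambda\colon V_\lambda|_{\sfB}\twoheadrightarrow k_\lambda$ (a surjection of $\sfB$-modules, for the negative Borel), so that $\gr W_\lambda \simeq P \times^{\sfB} k_\lambda \simeq (P/\sfU)\times^{\sfT,\lambda}\bG_a$ for every dominant coweight $\lambda$. On the other hand, by Proposition~\ref{WProperties}\ref{LoopW} the monodromy on $W_\lambda$ is the tautological one, so $\gr W_\lambda$ is also the $\lambda$-associated line bundle of $\tau := \on{pr}^*(\text{universal }\sfT\text{-torsor})$. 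Since the dominant coweights generate the character lattice of $\sfT$ and a $\sfT$-torsor is determined by its system of associated line bundles on a generating set, this forces $P/\sfU \simeq \tau$; the same computation gives $Q_0/\sfU \simeq \tau$.

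It remains to prove the rigidity statement: a $\sfB$-torsor on $X$ whose associated $\sfT$-torsor is $\tau$ is unique up to isomorphism. The groupoid of such lifts, which is nonempty since it contains $Q_0$, is a torsor under the nonabelian cohomology $H^1(X, \sfU_\tau)$, where $\sfU_\tau$ is the sheaf of groups obtained by twisting $\sfU$ by $\tau$ along the conjugation action of $\sfT$. Taking the (twisted) lower central series of $\sfU_\tau$, its successive quotients are direct sums of line bundles of the form $\on{pr}^*\OO(\beta)$ with $\beta$ a sum of negative roots, and each $H^1(X, \on{pr}^*\OO(\beta))$ vanishes by the K\"unneth formula together with $H^1(\sfT, \OO) = 0$ (as $\sfT$ is affine) and $H^{>0}(\pt/\sfT, \OO(\beta)) = 0$ (as $\sfT$ is linearly reductive). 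A d\'evissage up the finite central series then shows $H^1(X, \sfU_\tau)$ is trivial, hence $P \simeq Q_0$ and we are done. I expect this rigidity step to be the main obstacle: one has to set up the twisted sheaf of groups $\sfU_\tau$ correctly and run the nonabelian d\'evissage, the essential input being that $X$ — unlike $\pt/\sfT$ in the setting of \cite{AB}, where the statement is vacuous — carries the extra factor $\sfT$, which is nonetheless affine with trivial Picard group, so that the relevant cohomology still vanishes.
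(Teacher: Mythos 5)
Your proof is correct, but at the crucial step it takes a genuinely different route from the paper. Both arguments begin from Lemma \ref{FactorB}, which produces a $\sfB$-reduction $P$ of the Tannakian $(\sfG\times\sfT)$-torsor on $\sfT\times\pt/\sfT$, compatible with the prescribed projection to $\pt/\sfT$; the remaining content is that this $\sfB$-torsor is induced along $\sfT\hookrightarrow\sfB$ from its own associated $\sfT$-torsor $\tau = \on{pr}^*(\text{universal})$. The paper does this by directly computing the mapping stack: using that every $\sfB$-bundle on a point is trivial and that maximal tori of $\sfB$ are conjugate and self-centralizing, it identifies $\Hom(\pt/\sfT,\pt/\sfB)\times_{\Hom(\pt/\sfT,\pt/\sfT)}\id/\sfT$ with $\pt/\sfT$, and then uses that all $\sfT$-torsors on $\sfT$ are trivial since $\Pic(R)=0$ (the paper's phrase ``$R$ is a PID'' is inaccurate for rank $>1$, but UFD suffices). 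You instead prove a cohomological rigidity statement: a $\sfB$-torsor lifting $\tau$ is unique because $H^1(\sfT\times\pt/\sfT,\sfU_\tau)=0$, via d\'evissage along the twisted lower central series, K\"unneth, affineness of $\sfT$, and linear reductivity of $\sfT$. Both are valid; yours avoids the mapping-stack manipulation and the structure theory of maximal tori, at the cost of the nonabelian cohomology d\'evissage, and the two really isolate different general facts. One small simplification available to you: the paragraph identifying $P/\sfU$ with $\tau$ via Wakimoto monodromy is already built into Lemma \ref{FactorB} (the factorization through $\pt/\sfB$ is constructed compatibly with the projection to $\pt/\sfT$ fixed in \eqref{TannakianMap}, and the composite $\pt/\sfB\to\pt/\sfT$ is pushforward along $\sfB\twoheadrightarrow\sfT$), so that step is a consistency check rather than a necessity.
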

\begin{proof}
By adjunction, the map $\sfT \times \pt/\sfT \rightarrow \pt/\sfB$ constructed in Lemma \ref{FactorB} corresponds to a map from $\sfT$ to the stack classifying maps $\pt/\sfT \rightarrow \pt/\sfB$ such that the composition to $\pt/\sfT$ is isomorphic to the identity map, i.e., \[\sfT \rightarrow \Hom(\pt/\sfT, \pt/\sfB) \times_{\Hom(\pt/\sfT, \pt/\sfT)} \id/\sfT.\] We will rewrite the target as follows. Using  that all $\sfB$-bundles on a point are trivializable, it follows that $\Hom(\pt/\sfT, \pt/\sfB) = \Hom(\sfT, \sfB)/\sfB,$ so that in particular \[\Hom(\pt/\sfT, \pt/\sfB) \times_{\Hom(\pt/\sfT, \pt/\sfT)} \id/\sfT  \simeq (\Hom(\sfT, \sfB) \times_{\Hom(\sfT, \sfT)} \id)/\sfB \simeq \pt/\sfT,\] using that all maximal tori in $\sfB$ are conjugate and self centralizing to conclude the final isomorphism. Since $R$ is a PID, all $\sfT$-torsors on $\sfT$ are trivializable. Thus $\sfT \times \pt/\sfT \rightarrow \pt/\sfB$ is identified with projection to $\pt/\sfT$ followed by the map induced by inclusion of the maximal torus.
\end{proof}

\subsection{Alternative approach}
An alternative approach to Proposition \ref{ConstantTermEquivalence} is to identify the associated graded functor with the constant term functor. Using that nearby cycles commutes with hyperbolic localization by Braden's theorem \cite{Brad} then gives the desired isomorphism of plain functors.

However, it appears more difficult to prove directly that this isomorphism intertwines the monoidal structures, except on the highest weight lines where we checked the Pl\"ucker relations above. For example, in the universal monodromic setting, total cohomology computes only the fiber at $1 \in \sfT$ of constant term, so the approach of Proposition 4.8.2 of \cite{AR} does not directly adapt.

\section{Construction of the functor}\label{ConstructionFunctor}

Here we construct the monoidal Arkhipov--Bezrukavnikov functor \[F: \QCoh_{\sfG}(\widetilde{\sfG}) \rightarrow \Shv_{(I)}(\Fl).\] 
In the course of the argument, we show that the functor of taking the associated graded of the Wakimoto filtration agrees with restriction to the maximal torus, cf. Proposition \ref{F'}(c) below. \iffalse Moreover we show that associated graded for the Wakimoto filtration recovers certain restriction to the maximal torus.\fi  This property is important not only in the construction of the functor itself, but also in Part \ref{Part2} for the proof of fully faithfulness.

\subsection{Base affine space}
Let $(\sfG/\sfU)^{\aff}$ be the affine closure of base affine space, and \[\sfG \times \sfT \curvearrowright \sfG \times (\sfG/\sfU)^{\aff} \times \sfT \quad \text{by} \quad (a, b)(g, x, t) \coloneqq (aga^{-1}, axb^{-1}, btb^{-1}).\]
The following inclusion is equivariant for the diagonal $\sfT \subset \sfG \times \sfT$: \beq \label{i} \sfT \rightarrow \sfG \times (\sfG/\sfU)^{\aff} \times \sfT, \qquad t \mapsto (t, 1, t).\eeq

Now we construct $\widetilde{\sfG}$ as the quotient by $\sfT$ of a locally closed subvariety of $\sfG \times (\sfG/\sfU)^{\aff} \times \sfT$. Define \[\widetilde{\sfG}^{\aff} \coloneqq \{(g, x, t) \in \sfG \times (\sfG/\sfU)^{\aff} \times \sfT \text{ satisfying } g x = x t\} \quad \text{and} \quad  \widetilde{\sfG}^{\un} \coloneqq \{(g, x, t) \in \widetilde{\sfG}^{\aff} \text{ satisfying } x \in \sfG/\sfU\}.\] The Grothendieck alteration is the quotient $\widetilde{\sfG} = \widetilde{\sfG}^{\un}/\sfT$. Let $\widetilde{\sfG}^{\bdry} = \widetilde{\sfG}^{\aff} - \widetilde{\sfG}^{\un}$ be the closed boundary.
Beware that $\widetilde{\sfG}^{\aff}$ is reducible, hence larger than the closure of $\widetilde{\sfG}^{\un}$.\footnote{This is apparent already when $\sfG = \SL(2)$ and $(\sfG/\sfU)^{\aff} = \bA^2$. Then $\widetilde{\sfG}^{\aff} \subset \sfG \times \bA^2 \times \sfT$ has two irreducible components: $\widetilde{\sfG}^{\bdry} = \sfG \times 0 \times \sfT$ and the closure of $\widetilde{\sfG}^{\un}$.}

\subsection{Monodromy and highest weight arrows}
Loop rotation monodromy induces a tensor automorphism of $Z'$ denoted \[m_{V_{\lambda}}: Z_{\lambda} \rightarrow Z_{\lambda} \quad \text{and} \quad m_{k_{\mu}}: W_{\mu} \rightarrow W_{\mu}.\] Proposition \ref{Plucker} gives highest weight arrows $b_{\lambda}: Z_{\lambda} \rightarrow W_{\lambda}$ satisfying the Pl\"{u}cker relations. 

Denote the respective coactions induced by $V_{\lambda} \curvearrowleft \sfG$ and $\sfT \curvearrowright k_{\lambda}$ by \[\Delta: V_{\lambda} \rightarrow V_{\lambda} \otimes \cO(\sfG) \quad \text{and} \quad \Delta: k_{\lambda} \rightarrow \cO(\sfT) \otimes k_{\lambda}.\] The $\sfG$-coaction extends an $\cO(\sfG)$-linear tensor automorphism \[M_{V_{\lambda}}:V_{\lambda} \otimes \cO(\sfG) \rightarrow V_{\lambda} \otimes \cO(\sfG),\]
and the $\sfT$-coaction extends to an $\cO(\sfT)$-linear tensor automorphism \[M_{k_{\mu}}: \cO(\sfT) \otimes k_{\mu} \rightarrow \cO(\sfT) \otimes k_{\mu}, \qquad x \otimes v \mapsto e^{\mu} x \otimes v.\]
We fix an identification $\cO(\sfG/\sfU) \simeq \bigoplus V_{\lambda} \boxtimes k_{-\lambda}$. Define the highest weight arrows by \[B_{\lambda}: V_{\lambda} \otimes \cO(\sfG/\sfU) \rightarrow \cO(\sfG/\sfU) \otimes k_{\lambda}, \qquad v \otimes 1 \mapsto (v \boxtimes 1) \otimes 1.\]

\subsection{The initial functor}
Monodromy and highest weight arrows give the following functor.

\begin{proposition} \label{F'}
The central and Wakimoto functor factors monoidally  \beq \label{OriginalF'} Z':\Rep(\sfG \times \sfT) \xrightarrow{p^*} \Free_{\sfG \times \sfT}(\sfG \times (\sfG/\sfU)^{\aff} \times \sfT) \overset{F}{\dashrightarrow} \Perv_{(I)}^{\waki}(\Fl),\eeq where $p^*$ is pullback along $p : \sfG \times (\sfG/\sfU)^{\aff} \times \sfT \rightarrow \pt$, satisfying 
\begin{enumerate}[label=(\alph*)]
\item \label{F'1} $F(M_{V_{\lambda}}) = m_{V_{\lambda}}$, $F(M_{k_{\lambda}}) = m_{k_{\lambda}}$, and $F(B_{\lambda}) = b_{\lambda}$,
\item \label{F'3} $F$ is $R$-linear with respect to projection to $\sfT$ and $T$-monodromy,
\item \label{F'2} $\gr F$ is monoidally equivalent to pullback along \eqref{i}.
\end{enumerate}
\end{proposition}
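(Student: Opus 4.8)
The plan is to build the functor $F$ by combining the three pieces of data already in hand—the monoidal functor $Z' \colon \Rep(\sfG \times \sfT) \to \Perv_{(I)}^{\waki}(\Fl)$, the loop-rotation monodromy automorphisms $m_{V_\lambda}, m_{k_\mu}$, and the highest weight arrows $b_\lambda$ satisfying the Pl\"ucker relations (Proposition \ref{Plucker})—via the Drinfeld--Pl\"ucker formalism sketched in the introduction. First I would recall that, since $\cO(\sfG)$ (resp.\ $\cO(\sfT)$) is the regular representation, a monoidal functor out of $\Free_{\sfG \times \sfT}(\sfG \times \sfT \times (\text{pt}))$ together with the extra generators is equivalent to the data of $Z'$ together with the tensor automorphisms $F(M_{V_\lambda}) := m_{V_\lambda}$, $F(M_{k_\mu}) := m_{k_\mu}$; this uses only that $Z'$ is monoidal and that $m$ is a tensor automorphism, established in Proposition \ref{ZProperties}\ref{ZMon} and Proposition \ref{WProperties}\ref{LoopW}. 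Next, to extend over $(\sfG/\sfU)^{\aff}$, I would use the identification $\cO(\sfG/\sfU) \simeq \bigoplus_\lambda V_\lambda \boxtimes k_{-\lambda}$ together with the Peter--Weyl description of its algebra structure: a module structure over $\cO(\sfG/\sfU)$ on the image amounts exactly to a compatible system of highest weight arrows $V_\lambda \otimes (\text{image of } V_\mu) \to (\text{image of } V_{\lambda+\mu})$, which on the automorphic side are provided by $b_\lambda$, and the Pl\"ucker relations of Proposition \ref{Plucker} are precisely the coherence needed for this to assemble into a monoidal functor $F$. This gives the factorization \eqref{OriginalF'} and property \ref{F'1} by construction.

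For \ref{F'3}, $R$-linearity with respect to the projection to $\sfT$ on the source and $T$-monodromy on the target: on the source side, the $\cO(\sfT)$-action is by the third-factor coaction $M_{k_\mu}$; on the target side, $F(M_{k_\mu}) = m_{k_\mu}$ is, by Proposition \ref{ZProperties}\ref{ZLoopMon}, the loop-rotation monodromy, which is the right $T$-monodromy on Wakimoto sheaves by Proposition \ref{WProperties}\ref{LoopW}. One checks this is compatible with convolution, so $F$ intertwines the two $R$-actions—this is a direct diagram chase using that $Z_\lambda$ has left and right monodromy agreeing. Finally, for \ref{F'2}: applying $\gr$ and using that $\gr$ is monoidal (Proposition \ref{grFaithful}) and kills nothing relevant, we get that $\gr F$ is a monoidal functor; combined with $p^*$, it realizes $\gr Z'$. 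By Proposition \ref{ConstantTermEquivalence}, $\gr Z' \simeq j^*$ with $j$ the map induced by the diagonal $\sfT \hookrightarrow \sfG \times \sfT$ and projection, and one checks this diagonal-plus-projection map is exactly pullback along \eqref{i}, $t \mapsto (t,1,t)$, after accounting for the trivial $(\sfG/\sfU)^{\aff}$-factor (the image sits over $1 \in (\sfG/\sfU)^{\aff}$, matching the structure sheaf). The monoidal compatibility in \ref{F'2} is inherited from that of $\gr Z' \simeq j^*$.

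The main obstacle I anticipate is not the construction of $F$ as a plain functor but verifying the \emph{monoidality} of the extension over $(\sfG/\sfU)^{\aff}$ with all its coherences, i.e.\ that the highest weight arrows $b_\lambda$ genuinely glue to an $\cO(\sfG/\sfU)$-module structure compatibly with the monoidal structure on $Z'$—this is exactly where the Pl\"ucker relations of Section \ref{s:Plucker} are consumed, and the bookkeeping (tracking the interaction of $b_\lambda$, the monodromies $M_{V_\lambda}$, $M_{k_\mu}$, and the associativity constraints simultaneously) is the delicate part. The secondary subtlety, special to the universal setting, is confirming in \ref{F'2} that the $\sfG$-torsor underlying $\gr Z'$ is the one pulled back along the diagonal and not some twist; but this has already been resolved in Proposition \ref{ConstantTermEquivalence}, so here it is a matter of matching that statement against the explicit map \eqref{i}.
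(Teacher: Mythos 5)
Your proposal correctly identifies the overall strategy — Drinfeld--Plücker formalism to build $F$, and Tannakian reconstruction to identify $\gr F$ — and your treatment of parts \ref{F'1} and \ref{F'3} tracks the paper's proof closely enough. (The paper packages the construction of $F$ slightly more formally, via the intermediate commutative algebra $A := \Hom(\Delta_1, Z'(\cO(\sfG) \boxtimes \cO(\sfT)))$ and a map $\cO(\sfG \times \sfG/\sfU \times \sfT) \to A$ determined by the monodromies and highest weight arrows, but the mathematical content matches what you sketched.)

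There is, however, a genuine gap in your treatment of part \ref{F'2}. Proposition \ref{ConstantTermEquivalence} identifies the \emph{restricted} composite $\gr Z' = \gr F \circ p^*$ with pullback along the diagonal $\sfT/\sfT \to \pt/\sfG \times \pt/\sfT$; it says nothing directly about $\gr F$ on the whole source category $\Free_{\sfG \times \sfT}(\sfG \times (\sfG/\sfU)^{\aff} \times \sfT)$. To conclude, one must run Tannakian formalism again for $\gr F$ itself, obtaining $\gr F \simeq i^*$ for some $\sfT$-equivariant map $i: \sfT \to \sfG \times \sfG/\sfU \times \sfT$, and then identify $i$ with the map $t \mapsto (t,1,t)$. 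Your phrase ``one checks ... the image sits over $1 \in (\sfG/\sfU)^{\aff}$'' asserts exactly the conclusion that needs proving: the hard step is showing that the $(\sfG/\sfU)^{\aff}$-component of $i^*$, namely $i^*: \cO(\sfG/\sfU) \to \cO(\sfT)$, is projection onto the highest weight lines. The paper does this via the comparison \eqref{grB} established in Lemma \ref{FactorB}, showing that $\gr b_\lambda$ matches the spectral highest weight arrow $\beta_\lambda|_{\sfT \times \pt/\sfT}$, combined with the compatibility $F(B_\lambda) = b_\lambda$ and the explicit description of $i^* B_\lambda$. Similarly, one must check separately that the $\sfG$-component of $i^*$ is restriction to the maximal torus (using $\gr m_{V_\lambda}$) and that the $\sfT$-component is the identity. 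None of this can be extracted solely from Proposition \ref{ConstantTermEquivalence}; your proposal would leave the map $i$ undetermined on the $(\sfG/\sfU)^{\aff}$-factor.
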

\begin{proof}
First we construct the functor $F$ satisfying \ref{F'1}. Set $A \coloneqq \Hom(\Delta_1, Z'(\cO(\sfG) \boxtimes \cO(\sfT)))$, naturally a commutative algebra. Proposition 4 of \cite{AB} gives a factorization \beq \label{AFunct} \Rep(\sfG \times \sfT) \rightarrow \Free_{\sfG \times \sfT}(A) \overset{F'}{\dashrightarrow} \Perv_{(I)}^{\waki}(\Fl)\eeq such that $F'$ is monoidal and faithful. Loop rotation monodromy and highest weight arrows induce a map \beq\label{ToA} \cO(\sfG \times \sfG/\sfU \times \sfT) \rightarrow A \eeq such that \ref{F'1} is satisfied by \[F: \Free_{\sfG \times \sfT}(\sfG \times (\sfG/\sfU)^{\aff} \times \sfT) \rightarrow \Free_{\sfG \times \sfT}(A) \rightarrow \Perv_{(I)}^{\waki}(\Fl),\] by Tannakian formalism and Proposition 4 of \cite{AB}.

Now we check \ref{F'3}. Proposition \ref{WProperties} says that loop rotation monodromy on $W_{\mu}$ coincides with both left and right $T$-monodromy by $e^{\mu}$. Therefore $F$ is $R$-linear with respect to projection to $\sfT$ and $T$-monodromy.

Finally we check \ref{F'2}. Proposition \ref{ConstantTermEquivalence} implies that %$\gr F$ admits a (symmetric) monoidal structure such that 
$\gr F p^* \simeq j^*$ is monoidally equivalent to restriction along the diagonal maximal torus $j: \sfT/\sfT \rightarrow \pt/\sfG \times \pt/\sfT$. Therefore we have a commutative diagram
\[\begin{tikzcd}
\Free_{\sfG \times \sfT}(\sfG \times \sfG/\sfU \times \sfT) \arrow[r, "F"]  & \Perv_{(I)}(\Fl) \arrow[r, "\gr"] & \Free_{\sfT}(\sfT) \\
& \arrow[ul, "p^*"] \Rep(\sfG \times \sfT) \arrow[u, "Z'"] \arrow[ur, "j^*"'] & \\
\end{tikzcd}\]

\vspace{-.7cm}

\noindent By Tannakian formalism\footnote{See for example section 6.3.2 of \cite{AR}.} $\gr F \simeq i^*$ is isomorphic to pullback along some $\sfT$-equivariant map \[i: \sfT \rightarrow \sfG \times \sfG/\sfU \times \sfT,\] such that the following commutes 
\beq\label{NaturalSquare}\begin{tikzcd} \gr F p^* \arrow[r, "\sim"] \arrow[d, "\sim"'] & i^* p^* \arrow[d, "\sim"] \\
\gr Z' \arrow[r, "\sim"'] & j^*. \\ \end{tikzcd}\eeq

\vspace{-.7cm}

\noindent We now prove that $i$ is the map \eqref{i} sending $t \mapsto (t, 1, t)$.

Equation \eqref{grB} implies that that the following commutes 
\beq \label{grBb} \begin{tikzcd}
i^*(V_{\lambda} \otimes \cO(\sfG/\sfU)) \arrow[r, "\sim"] \arrow[d, "i^* B_{\lambda}"] & \gr F(V_{\lambda} \otimes \cO(\sfG/\sfU)) \arrow[r, "\sim"] \arrow[d, "\gr F B_{\lambda}"] & \gr Z_{\lambda} \arrow[r, "\sim"] \arrow[d, "\gr b_{\lambda}"] & V_{\lambda} \otimes \cO(\sfT) \arrow[d, "\beta_{\lambda}|_{\sfT \times \pt/\sfT}"] \\
i^*(\cO(\sfG/\sfU) \otimes k_{\lambda}) \arrow[r, "\sim"'] & \gr F (\cO(\sfG/\sfU) \otimes k_{\lambda}) \arrow[r, "\sim"']  & \gr W_{\lambda} \arrow[r, "\sim"'] &  \cO(\sfT) \otimes k_{\lambda} \\
\end{tikzcd}\eeq

\vspace{-.7cm}  

\noindent Combining \eqref{NaturalSquare} and \eqref{grBb} shows that $i^* B_{\lambda} = \beta_{\lambda}|_{\sfT \times \pt/\sfT}$ under the identification $i^* p^* \simeq j^*$. The following diagram
\[\begin{tikzcd} 
V_{\lambda} \otimes \cO(\sfG/\sfU) \arrow[r, "i^*"] \arrow[d, "B_{\lambda}"'] & V_{\lambda} \otimes \cO(\sfT) \arrow[d, "i^* B_{\lambda}"] && v \otimes 1 \arrow[r, mapsto] \arrow[d, mapsto] & v \otimes 1 \arrow[d, mapsto]\\
\cO(\sfG/\sfU) \otimes k_{\lambda} \arrow[r, "i^*"'] & \cO(\sfT) \otimes k_{\lambda} && (v \boxtimes 1) \otimes 1 \arrow[r, mapsto] & i^*(v \boxtimes 1) \otimes 1 = 1 \otimes \beta_{\lambda}(v) \\
\end{tikzcd}\]

\vspace{-.7cm}  

\noindent shows that restriction along $i$ induces projection onto the highest weight line
\[i^*: \cO(\sfG/\sfU) = \bigoplus V_{\lambda} \boxtimes k_{-\lambda} \; \rightarrow \;\cO(\sfT), \qquad v \boxtimes 1 \in V_{\lambda} \boxtimes k_{-\lambda}  \; \mapsto \; \beta_{\lambda}(v) \in k \subset \cO(\sfT).\]

Moreover $\gr m_V$ acts by $e^{\mu}$ on the $\mu$th graded piece. Hence $i^*: \cO(\sfG) \rightarrow \cO(\sfT)$ is restriction to the maximal torus, and $i^*: \cO(\sfT) \rightarrow \cO(\sfT)$ is the identity. Therefore $\gr F$ is restriction along \eqref{i}.
\end{proof}

\subsection{Factorization through a closed}
Here we factor the Arkhipov--Bezrukavnikov functor through the closed subvariety $\widetilde{\sfG}^{\aff} \subset \sfG \times (\sfG/\sfU)^{\aff} \times \sfT$, by checking certain equations coming from the commutativity of \eqref{ClosedEquations}.

% Let $M_{V_{\lambda}}$, $B_{\lambda}$ and $M_{k_{\lambda}}$ be such that the following commute \[\begin{tikzcd}[row sep = small, column sep = small] V_{\lambda} \otimes \cO(G) \arrow[r, "M_{V_{\lambda}}'"] \arrow[d] & V_{\lambda} \otimes \cO(G) \arrow[d]     &  V_{\lambda} \otimes \cO(X) \arrow[r, "B_{\lambda}'"] \arrow[d] & \cO(X) \otimes k_{\lambda} \arrow[d]   &   \cO(T) \otimes k_{\lambda} \arrow[r, "M_{k_{\lambda}}'"] \arrow[d] & \cO(T) \otimes k_{\lambda} \arrow[d]\\ V_{\lambda} \otimes \cO(\widetilde{G}^{\aff}) \arrow[r, "M_{V_{\lambda}}"'] & V_{\lambda} \otimes \cO(\widetilde{G}^{\aff})    &   V_{\lambda} \otimes \cO(\widetilde{G}^{\aff}) \arrow[r, "B_{\lambda}"'] & \cO(\widetilde{G}^{\aff}) \otimes k_{\lambda}   &    \cO(\widetilde{G}^{\aff}) \otimes k_{\lambda} \arrow[r, "M_{k_{\lambda}}"'] & \cO(\widetilde{G}^{\aff}) \otimes k_{\lambda}.\\ \end{tikzcd}\]

\begin{proposition}\label{FunctorClosed}
The central and Wakimoto functor factors monoidally  \beq \label{ClosedFunct} Z': \Rep(\sfG \times \sfT) \xrightarrow{p^*} \Free_{\sfG \times \sfT}(\widetilde{\sfG}^{\aff}) \overset{F}{\dashrightarrow} \Perv_{(I)}^{\waki}(\Fl),\eeq where $p^*$ is pullback along $p: \widetilde{\sfG}^{\aff} \rightarrow \pt$, satisfying 
\begin{enumerate}[label=(\alph*)]
\item $F(M_V) = m_V$ and $F(B_{\lambda}) = b_{\lambda}$,
\item $F$ is $R$-linear with respect to projection to $\sfT$ and $T$-monodromy,
\item $\gr F$ is monoidally equivalent to pullback along \eqref{i}.
\end{enumerate}
\end{proposition}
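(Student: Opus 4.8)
The plan is to obtain the statement from Proposition \ref{F'} by checking that $F$ annihilates the equations cutting out $\widetilde{\sfG}^{\aff}$. Recall from the proof of Proposition \ref{F'} that $F$ is the composite of base change along the algebra map $\cO(\sfG\times(\sfG/\sfU)^{\aff}\times\sfT)\to A$ of \eqref{ToA} with the \emph{faithful} functor $F'\colon\Free_{\sfG\times\sfT}(A)\to\Perv^{\waki}_{(I)}(\Fl)$. Consequently $F$ factors through restriction along the closed immersion $\widetilde{\sfG}^{\aff}\hookrightarrow\sfG\times(\sfG/\sfU)^{\aff}\times\sfT$ precisely when that algebra map kills the ideal of $\widetilde{\sfG}^{\aff}$, and by faithfulness of $F'$ it is enough to see that $F$ kills the corresponding morphisms in $\Free_{\sfG\times\sfT}(\sfG\times(\sfG/\sfU)^{\aff}\times\sfT)$.

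First I would pin down those morphisms. Using the fixed decomposition $\cO(\sfG/\sfU)\simeq\bigoplus_{\lambda\in\Lambda^+}V_\lambda\boxtimes k_{-\lambda}$, a direct computation shows that $\widetilde{\sfG}^{\aff}$ --- the locus $gx=xt$ --- is exactly the locus in $\sfG\times(\sfG/\sfU)^{\aff}\times\sfT$ over which the square \eqref{ClosedEquations} commutes, i.e. over which $M_{k_\lambda}\circ B_\lambda = B_\lambda\circ M_{V_\lambda}$ for all $\lambda\in\Lambda^+$, where $B_\lambda$ is the highest weight arrow and $M_{V_\lambda},M_{k_\lambda}$ are the monodromy automorphisms of the ambient space. (Here one keeps in mind that $\widetilde{\sfG}^{\aff}$ is reducible, so one is only producing the factorization through this, possibly non-reduced, closed subscheme.) By Proposition \ref{F'}\ref{F'1}, applying $F$ turns the two composites into $m_{k_\lambda}\circ b_\lambda$ and $b_\lambda\circ m_{V_\lambda}$, so the whole matter comes down to the identity
\[ m_{k_\lambda}\circ b_\lambda \;=\; b_\lambda\circ m_{V_\lambda}\colon\ Z_\lambda\longrightarrow W_\lambda \]
in $\Perv^{\waki}_{(I)}(\Fl)$, for every $\lambda\in\Lambda^+$.

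To prove this identity I would pass to the associated graded, which is faithful by Proposition \ref{grFaithful}. Under the identification $\gr F\simeq i^*$ of Proposition \ref{F'}\ref{F'2}, with $i$ the inclusion \eqref{i}: the map $\gr b_\lambda$ becomes the highest weight projection $\beta_\lambda|_{\sfT}\colon V_\lambda|_{\sfT}\to k_\lambda$ (Lemma \ref{FactorB}, diagram \eqref{grB}); the automorphism $\gr m_{V_\lambda}$ becomes the $\sfT$-coaction on $V_\lambda\otimes\cO(\sfT)$, acting by $e^\mu$ on the $\mu$-weight space, as computed in the proof of Proposition \ref{F'}; and $\gr m_{k_\lambda}$ is multiplication by $e^\lambda$. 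The equality of the two composites then reduces to the assertion that $\beta_\lambda$ is a map of $\sfB$-modules $V_\lambda|_{\sfB}\to k_\lambda$ --- equivalently, that $\beta_\lambda$ intertwines the $\sfT$-weight $e^\mu$ on $V_\lambda$ with $e^\lambda$ on $k_\lambda$, which holds because $\beta_\lambda$ annihilates every weight space other than the highest --- so it is true by construction. This gives the desired factorization.

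Finally, properties (a)--(c) of the factored functor are inherited from Proposition \ref{F'}: (a) and (b) only record the images of $M_V$, the $B_\lambda$, and the $R$-module structure, none of which is affected by restriction, while (c) holds because the inclusion \eqref{i}, $t\mapsto(t,1,t)$, lands in $\widetilde{\sfG}^{\aff}$ (as $t\cdot 1=1\cdot t$), so $\gr F$ remains pullback along it. Monoidality of the factored functor follows from that of $F$ in Proposition \ref{F'}, since restriction $\Free_{\sfG\times\sfT}(\sfG\times(\sfG/\sfU)^{\aff}\times\sfT)\to\Free_{\sfG\times\sfT}(\widetilde{\sfG}^{\aff})$ is a full, essentially surjective monoidal functor --- equivalently, monoidality need only be verified on the essential image of $p^*$, where it is the monoidal structure of $Z'$ carried by Proposition \ref{Centrality2}. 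The one mildly delicate point is the bookkeeping in the second paragraph, namely identifying exactly which relations cut out $\widetilde{\sfG}^{\aff}$ scheme-theoretically; but this is a routine unwinding of the Peter--Weyl decomposition above together with the Pl\"ucker relations already established in Section \ref{s:Plucker}, and introduces no new difficulty.
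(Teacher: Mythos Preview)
Your proof is correct, but the route to the key identity $m_{k_\lambda}\circ b_\lambda = b_\lambda\circ m_{V_\lambda}$ differs from the paper's. You pass to the associated graded and compute explicitly using the identifications from Proposition~\ref{F'}\ref{F'2} and Lemma~\ref{FactorB}. The paper instead observes that $m_{V_\lambda}$ and $m_{k_\lambda}$ are both instances of loop rotation monodromy, which is a natural endomorphism of the identity functor on $\Shv_{(I)}(\Fl)$; hence \emph{every} morphism in $\Perv_{(I)}(\Fl)$, in particular $b_\lambda$, commutes with them automatically. This is a one-line argument with no computation. Your approach works, but it leans on machinery (the identification $\gr F\simeq i^*$, the description of $\gr b_\lambda$ via \eqref{grB}) that is heavier than what the situation demands; the paper's argument makes transparent that \eqref{ClosedEquations} is a formal consequence of monodromy being central.

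A minor point on monoidality: the paper gets this more cleanly by showing the algebra map \eqref{ToA} itself factors through $\cO(\widetilde{\sfG}^{\aff})$, so the factored $F$ is the composite $\Free_{\sfG\times\sfT}(\widetilde{\sfG}^{\aff})\to\Free_{\sfG\times\sfT}(A)\xrightarrow{F'}\Perv^{\waki}_{(I)}(\Fl)$ of a base change (always monoidal) with the already-monoidal $F'$. Your argument via fullness and essential surjectivity of restriction is valid but less direct.
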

\begin{proof}
We need to show that \eqref{ToA} kills the equations defining $\widetilde{\sfG}^{\aff}$. 

Let $\Delta: V_{\lambda} \rightarrow V_{\lambda} \otimes \cO(\sfG)$ be the coaction and \beq \label{ActionMaps} \act_{\sfG}: \sfG \times (\sfG/\sfU)^{\aff} \rightarrow (\sfG/\sfU)^{\aff} \quad \text{and} \quad \act_{\sfT}: (\sfG/\sfU)^{\aff} \times \sfT \rightarrow (\sfG/\sfU)^{\aff}\eeq be the left and right actions respectively. For $v \in V_{\lambda}$ we write $\act_{\sfG}^*(v \boxtimes 1) \in \cO(\sfG \times \sfG/\sfU)$ and $\act_{\sfG}^*(v \boxtimes 1) \in \cO(\sfG/\sfU \times \sfT)$ for the pullback along \eqref{ActionMaps} of $v \boxtimes 1$ regarded as an element of $\cO(\sfG/\sfU) = V_{\lambda} \boxtimes k_{-\lambda}$.

The following square does not commute \beq\label{BMNotCommute} \begin{tikzcd}
V_{\lambda} \otimes \cO(\sfG \times \sfG/\sfU \times \sfT) \arrow[r, "M_{V_{\lambda}}"] \arrow[d, "B_{\lambda}"'] & V_{\lambda} \otimes \cO(\sfG \times \sfG/\sfU \times \sfT) \arrow[d, "B_{\lambda}"]\\
\cO(\sfG \times \sfG/\sfU \times \sfT)\otimes k_{\lambda} \arrow[r, "M_{k_{\lambda}}"'] & \cO(\sfG \times \sfG/\sfU \times \sfT) \otimes k_{\lambda}.\\
\end{tikzcd}\eeq
  \vspace{-.7cm}   

\noindent Rather for $v \in V_{\lambda}$, one can check that it sends
\[\begin{tikzcd}
v \otimes 1 \otimes 1 \otimes 1 \arrow[r, mapsto] & (\Delta v) \otimes 1 \otimes 1 \arrow[d, mapsto]    &   v \otimes 1 \otimes 1 \otimes 1 \arrow[d, mapsto]&\\
& (\act_{\sfG}^* (v \boxtimes 1)) \otimes 1 \otimes 1   &   1 \otimes v \otimes 1 \otimes 1 \arrow[r, mapsto] & 1 \otimes v \otimes e^{\lambda} \otimes 1 = 1 \otimes (\act_{\sfT}^* (v \boxtimes 1)) \otimes 1.\\
\end{tikzcd}\]

  \vspace{-.7cm}

\noindent Since all morphisms in $\Perv_{(I)}(\Fl)$ commute with loop rotation monodromy, the following square  commutes \beq\label{ClosedEquations}\begin{tikzcd}
Z_{\lambda} \arrow[r, "m_{V_{\lambda}}"] \arrow[d, "b_{\lambda}"'] & Z_{\lambda} \arrow[d, "b_{\lambda}"]\\
W_{\lambda} \arrow[r, "m_{k_{\lambda}}"'] & W_{\lambda}.\\
\end{tikzcd}\eeq

\vspace{-.7cm}   

\noindent Therefore, since $F'$ is faithful, the restriction of \eqref{BMNotCommute} along \eqref{ToA} gives a commuting square 
  \[\begin{tikzcd} V_{\lambda} \otimes A \arrow[r, "M_{V_{\lambda}}|_A"] \arrow[d, "B_\lambda|_A"'] & V_{\lambda} \otimes A \arrow[d, "B_\lambda|_A"]\\
A \otimes k_{\lambda} \arrow[r, "M_{k_{\lambda}}|_A"'] & A \otimes k_{\lambda} .\\
\end{tikzcd}\]
\vspace{-.7cm}  

  \noindent 
Therefore \eqref{ToA} kills all functions of the form \[(\act_{\sfG}^* (v \boxtimes 1)) \otimes 1 - 1 \otimes (\act_{\sfT}^* (v \boxtimes 1)) \in \cO(\sfG \times \sfG/\sfU \times \sfT),\] so it factors through \[\cO(\sfG \times \sfG/\sfU \times \sfT) \rightarrow \cO(\widetilde{\sfG}^{\aff}) \dashrightarrow A.\] Hence the functor \eqref{AFunct} factors as desired through $\Free_{\sfG \times \sfT}(\widetilde{\sfG}^{\aff})$.
\end{proof}

\subsection{Factorization through an open}
Here we factor the Arkhipov--Bezrukanivkov functor through the open subvariety $\widetilde{\sfG}^{\un} \subset \widetilde{\sfG}^{\aff}$. In other words we show that the highest weight arrows define a genuine $\sfB$-reduction. The idea is that the associated graded of the Arkhipov--Bezrukavnikov functor is isomorphic to a certain restriction functor that kills complexes supported on the boundary.

\begin{proposition}\label{FunctorOpen}
The central and Wakimoto functor factors monoidally  \[Z': \Rep(\sfG \times \sfT) \xrightarrow{p^*} \QCoh_{\sfG}(\widetilde{\sfG}) \overset{F}{\dashrightarrow} \Shv_{(I)}(\Fl),\] where $p^*$ is pullback along $\widetilde{\sfG}^{\un} \rightarrow \pt$, satisfying
\begin{enumerate}[label=(\alph*)]
\item $F$ is continuous, 
\item\label{FunctorOpenR} $F$ is $R$-linear with respect to projection $\widetilde{\sfG} \rightarrow \sfT$ and $T$-monodromy,
\item \label{FunctorOpen3} $\gr F$ is monoidally equivalent to pullback along \eqref{i}.
\end{enumerate}
\end{proposition}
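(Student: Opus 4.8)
The plan is to promote the functor $F$ already produced in Proposition~\ref{FunctorClosed} to the desired functor on $\QCoh_\sfG(\widetilde{\sfG})$ in three moves: pass to derived categories, ind-complete over the affine model $\widetilde{\sfG}^{\aff}$, and then localize to the open $\widetilde{\sfG}^{\un} \subset \widetilde{\sfG}^{\aff}$, the last being the only substantial point. Concretely, Proposition~\ref{FunctorClosed} gives an additive monoidal $R$-linear functor $F\colon \Free_{\sfG\times\sfT}(\widetilde{\sfG}^{\aff}) \to \Perv_{(I)}^{\waki}(\Fl)$ with $\gr\circ F$ monoidally identified with restriction along the map \eqref{i}, $t\mapsto(t,1,t)$. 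Applying this termwise and postcomposing with the natural functor $K^b(\Perv_{(I)}^{\waki}(\Fl)) \to \Shv_{(I)}(\Fl)$ yields a monoidal, $R$-linear $F\colon \Perf_{\sfG\times\sfT}(\widetilde{\sfG}^{\aff}) \to \Shv_{(I)}(\Fl)$. Since $[\widetilde{\sfG}^{\aff}/(\sfG\times\sfT)]$ is an affine scheme modulo a reductive group, hence a perfect stack, $\QCoh_{\sfG\times\sfT}(\widetilde{\sfG}^{\aff}) = \Ind\Perf_{\sfG\times\sfT}(\widetilde{\sfG}^{\aff})$; left Kan extending produces a continuous monoidal $F\colon \QCoh_{\sfG\times\sfT}(\widetilde{\sfG}^{\aff}) \to \Shv_{(I)}(\Fl)$, still compatible with $Z'$ via pullback $p^*$ and still $R$-linear for the projection to $\sfT$.

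The main step is to show that this $F$ factors through restriction $j^*\colon \QCoh_{\sfG\times\sfT}(\widetilde{\sfG}^{\aff}) \to \QCoh_{\sfG\times\sfT}(\widetilde{\sfG}^{\un})$, where $j\colon\widetilde{\sfG}^{\un}\hookrightarrow\widetilde{\sfG}^{\aff}$ is the open complement of the closed boundary $\widetilde{\sfG}^{\bdry}$. As $j$ is a quasi-compact open immersion, $j^*$ is a Bousfield localization preserving compact objects, so its kernel is generated under colimits by the perfect complexes it contains, namely those supported on $\widetilde{\sfG}^{\bdry}$; by continuity of $F$ it therefore suffices to prove $F(P)\simeq 0$ for such a $P$, which we represent by a bounded complex $P^\bullet$ of $\sfG\times\sfT$-equivariant free bundles with $P^\bullet|_{\widetilde{\sfG}^{\un}}\simeq 0$. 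Here one uses crucially that the slice \eqref{i} has middle coordinate $1\in\sfG/\sfU$, hence factors through $\widetilde{\sfG}^{\un}$: by Proposition~\ref{FunctorClosed}(c) the complex $\gr\bigl(F(P^\bullet)\bigr)\simeq i^*P^\bullet$ is the restriction of $P^\bullet$ along a map factoring through $\widetilde{\sfG}^{\un}$, and is therefore acyclic. Now $C^\bullet := F(P^\bullet)$ is a bounded complex in $\Perv_{(I)}^{\waki}(\Fl)$, and since morphisms of Wakimoto-filtered perverse sheaves preserve the (finite) Wakimoto filtrations (proof of Proposition~\ref{grFaithful}), these filtrations assemble into a finite filtration of $C^\bullet$ by subcomplexes with associated graded complex $\gr(C^\bullet)$. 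The spectral sequence of this filtered complex converges, and its $E_1$-page equals $H^\bullet(\gr C^\bullet)=0$, so $C^\bullet$ is acyclic in the abelian category $\Perv_{(I)}(\Fl)$, whence $F(P)=0$ in $\Shv_{(I)}(\Fl)$.

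Granting this, $F$ factors through $\QCoh_{\sfG\times\sfT}(\widetilde{\sfG}^{\un})$, which equals $\QCoh_\sfG(\widetilde{\sfG})$ by definition of $\widetilde{\sfG} = \widetilde{\sfG}^{\un}/\sfT$; the factored functor is again continuous and monoidal and carries $Z'$ to $F\circ p^*$ for $p\colon\widetilde{\sfG}^{\un}\to\pt$, giving assertion (a). For (b), the projection $\widetilde{\sfG}^{\aff}\to\sfT$ of Proposition~\ref{FunctorClosed}(b) restricts to $\widetilde{\sfG}^{\un}$ and descends to $\widetilde{\sfG}\to\sfT$, and $R$-linearity with respect to it is preserved under restriction and ind-extension. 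For (c), the slice \eqref{i} factors through $\widetilde{\sfG}^{\un}$ and so defines the map appearing in the statement, and the monoidal identification $\gr F\simeq i^*$ of Proposition~\ref{FunctorClosed}(c) restricts and ind-extends to the assertion here. This completes the construction of the Arkhipov--Bezrukavnikov functor and hence of Part~\ref{Part1}.

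The step I expect to be the main obstacle is precisely the factorization through the open $\widetilde{\sfG}^{\un}$, i.e.\ showing $F$ kills complexes supported on the boundary. The subtlety, exactly as in \cite{AB} and \cite{AR}, is that this is an assertion about complexes, not objects, so the faithfulness of $\gr$ on $\Perv_{(I)}^{\waki}(\Fl)$ does not literally suffice; what rescues the argument is that the Wakimoto filtration is \emph{finite}, so that the vanishing of $\gr F$ on a perfect complex propagates through a convergent spectral sequence to the vanishing of $F$ itself. Conceptually this is where the Pl\"ucker relations of Section~\ref{s:Plucker} do their work: they are precisely what makes each $\gr b_\lambda$ a split surjection (Lemma~\ref{FactorB}), equivalently what places the distinguished slice \eqref{i} inside the open locus $\widetilde{\sfG}^{\un}$ rather than merely in $\widetilde{\sfG}^{\aff}$.
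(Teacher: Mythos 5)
Your argument is correct and takes essentially the same route as the paper. The only presentational difference is the order of operations: you ind-complete over $\widetilde{\sfG}^{\aff}$ first and then factor through the Bousfield localization $j^*\colon \QCoh_{\sfG\times\sfT}(\widetilde{\sfG}^{\aff}) \to \QCoh_{\sfG\times\sfT}(\widetilde{\sfG}^{\un})$, appealing directly to compact generation of the kernel by $\Perf_{\sfG\times\sfT}(\widetilde{\sfG}^{\aff})_{\widetilde{\sfG}^{\bdry}}$; the paper instead factors at the small-category level, invoking Lemma~\ref{Idempotent} (Neeman/Thomason--Trobaugh) to identify the idempotent completion of the Verdier quotient with $\Perf_\sfG(\widetilde{\sfG})$, and only then ind-extends. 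Both routes need the same input, and your spectral-sequence phrasing of the key vanishing is exactly the paper's observation that $\K\gr$ is conservative (a finitely filtered object with vanishing associated graded is zero). The conceptual remark at the end — that the Pl\"ucker relations are what place the slice \eqref{i} inside the open locus and hence drive the boundary vanishing — is accurate and matches the paper's own framing in the outline of Part~\ref{Part1}.
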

\begin{proof}
Passing to bounded homotopy categories, Proposition \ref{FunctorClosed} gives a functor \beq \label{KF}\Perf_{\sfG \times \sfT}(\widetilde{\sfG}^{\aff}) = \KFree_{\sfG \times \sfT}(\widetilde{\sfG}^{\aff}) \rightarrow \KPerv_{(I)}^{\waki}(\Fl) \rightarrow \Shv_{(I)}(\Fl) \eeq such that the following commutes
\[\begin{tikzcd}[column sep = tiny] \Perf_{\sfG \times \sfT}(\widetilde{\sfG}^{\aff}) \arrow[rr] \arrow[rd, "i^*"']& & \KPerv_{(I)}^{\waki}(\Fl) \arrow[ld, "\K\gr"] \\
& \Coh_{\sfT}(\sfT) & \end{tikzcd}\] where $i^*$ is pullback along \eqref{i}. 

We claim that therefore \eqref{KF} kills the full subcategory $\Perf_{\sfG \times \sfT}(\widetilde{\sfG}^{\aff})_{\widetilde{\sfG}^{\bdry}}$ of perfect complexes supported on the boundary. Indeed $i^*$ kills such complexes, because $i$ factors through $\widetilde{\sfG}^{\un}$. Moreover $\K \gr$ is manifestly conservative: if an object $c$ in a DG-category has a finite filtration with associated graded isomorphic to zero, then $c \simeq 0$ as well. Therefore, the commutativity of the diagram implies the desired vanishing.

Lemma \ref{Idempotent} below then implies that \eqref{KF} factors through \[\Perf_{\sfG \times \sfT}(\widetilde{\sfG}^{\aff}) \rightarrow \Perf_{\sfG}(\widetilde{\sfG}) \dashrightarrow \Shv_{(I)}(\Fl).\] Ind-extending gives the desired continuous functor $F$. 
\end{proof}

The following argument of \cite{B} provides an alternative to the more explicit Koszul complex arguments of \cite{AB}.

\begin{lemma}\label{Idempotent}
The idempotent completion of $\Perf_{\sfG \times \sfT}(\widetilde{\sfG}^{\aff})/\Perf_{\sfG \times \sfT}(\widetilde{\sfG}^{\aff})_{\widetilde{\sfG}^{\bdry}}$ is $\Perf_{\sfG}(\widetilde{\sfG})$.
\end{lemma}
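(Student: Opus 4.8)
The plan is to deduce the lemma from Thomason's localization theorem, applied $(\sfG\times\sfT)$-equivariantly to the open-closed decomposition $\widetilde{\sfG}^{\bdry}\hookrightarrow\widetilde{\sfG}^{\aff}\hookleftarrow\widetilde{\sfG}^{\un}$. The first thing to record is that the open stratum already computes the target. The subgroup $\{1\}\times\sfT\subset\sfG\times\sfT$ acts on $\widetilde{\sfG}^{\un}$ by $b\cdot(g,x,t)=(g,xb^{-1},t)$, and this action is free since $\sfG/\sfU\to\sfG/\sfB$ is a $\sfT$-torsor. A short direct check shows that $(g,x,t)\mapsto(g,x\sfB)$ identifies $\widetilde{\sfG}^{\un}/\sfT$ with $\widetilde{\sfG}=\sfG\times^{\sfB}\sfB$ compatibly with the residual $\sfG$-actions: the relation $gx=xt$ says precisely that $g$ preserves the $\sfB$-reduction $x\sfB$, the fibers of $\widetilde{\sfG}^{\un}\to\widetilde{\sfG}$ are single $\sfT$-orbits, and $t$ is recovered from $(g,x)$ as the image of $g_0^{-1}gg_0$ in $\sfB/\sfU=\sfT$ for any lift $x=g_0\sfU$. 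As torsors under the split torus $\sfT$ are Zariski-locally trivial, descent along $\widetilde{\sfG}^{\un}\to\widetilde{\sfG}$ gives $[\widetilde{\sfG}^{\un}/(\sfG\times\sfT)]\simeq[\widetilde{\sfG}/\sfG]$, hence $\Perf_{\sfG\times\sfT}(\widetilde{\sfG}^{\un})\simeq\Perf_{\sfG}(\widetilde{\sfG})$. So it remains to show that the idempotent completion of $\Perf_{\sfG\times\sfT}(\widetilde{\sfG}^{\aff})/\Perf_{\sfG\times\sfT}(\widetilde{\sfG}^{\aff})_{\widetilde{\sfG}^{\bdry}}$ is $\Perf_{\sfG\times\sfT}(\widetilde{\sfG}^{\un})$.

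For this I would invoke the Thomason--Neeman localization theorem for the quotient stack $[\widetilde{\sfG}^{\aff}/(\sfG\times\sfT)]$. Since $\widetilde{\sfG}^{\aff}$ is affine and $\sfG\times\sfT$ is reductive over a field of characteristic zero, this is a perfect stack: $\QCoh_{\sfG\times\sfT}(\widetilde{\sfG}^{\aff})$ is compactly generated with compact objects exactly $\Perf_{\sfG\times\sfT}(\widetilde{\sfG}^{\aff})$, and the same holds over $\widetilde{\sfG}^{\un}$ via the presentation $[\widetilde{\sfG}/\sfG]$ and quasi-projectivity of $\widetilde{\sfG}$. Restriction along the quasi-compact open immersion $\widetilde{\sfG}^{\un}\hookrightarrow\widetilde{\sfG}^{\aff}$ is then a smashing Bousfield localization whose kernel is the subcategory of quasicoherent complexes set-theoretically supported on $\widetilde{\sfG}^{\bdry}$, and, as $\widetilde{\sfG}^{\aff}$ is Noetherian, this kernel is compactly generated by $\Perf_{\sfG\times\sfT}(\widetilde{\sfG}^{\aff})_{\widetilde{\sfG}^{\bdry}}$. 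Passing to compact objects, Neeman's description of the compacts of a Verdier quotient yields that the idempotent completion of $\Perf_{\sfG\times\sfT}(\widetilde{\sfG}^{\aff})/\Perf_{\sfG\times\sfT}(\widetilde{\sfG}^{\aff})_{\widetilde{\sfG}^{\bdry}}$ is $\Perf_{\sfG\times\sfT}(\widetilde{\sfG}^{\un})$, which combined with the first paragraph gives the claim.

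The computation in the first paragraph is entirely routine. The only place requiring genuine care is verifying the hypotheses of the localization theorem in the equivariant setting: compact generation of $\QCoh$ with compacts the perfect complexes, and --- the crux --- compact generation of the kernel by perfect complexes supported on $\widetilde{\sfG}^{\bdry}$. Concretely this amounts to resolving sheaves supported on $\widetilde{\sfG}^{\bdry}$ by Koszul-type perfect complexes, which is exactly the step carried out by hand in \cite{AB}; the point of the present argument is to subsume it into the abstract theorem. Since $\sfG\times\sfT$ is reductive and the characteristic is zero, these inputs are standard, so I anticipate no real difficulty beyond bookkeeping.
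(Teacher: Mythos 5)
Your proposal is correct and takes essentially the same route as the paper: both identify the big-category quotient of $\QCoh$'s via the fully faithful right adjoint to restriction along the open immersion, and then invoke Thomason--Neeman localization (Theorem 2.1 of \cite{N}). Note only that your first paragraph is redundant, since the paper defines $\widetilde{\sfG}$ as $\widetilde{\sfG}^{\un}/\sfT$, and that your explicit verification that the kernel is compactly generated by perfect complexes supported on $\widetilde{\sfG}^{\bdry}$ makes precise the hypothesis of Neeman's theorem that the paper's proof leaves implicit.
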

\begin{proof}
Quasi-coherent restriction along $\widetilde{\sfG}^{\un} \hookrightarrow \sfG^{\aff}$ admits a fully faithful right adjoint, hence \[\QCoh_{\sfG \times \sfT}(\widetilde{\sfG}^{\aff})/\QCoh_{\sfG \times \sfT}(\widetilde{\sfG}^{\aff})_{\widetilde{\sfG}^{\bdry}} \simeq \QCoh_{\sfG}(\widetilde{\sfG}).\]
Since $\Perf_{\sfG \times \sfT}(\widetilde{\sfG}^{\aff})$ generates $\QCoh_{\sfG \times \sfT}(\widetilde{\sfG}^{\aff})$, its essential image also compactly generates $\QCoh_{\sfG}(\widetilde{\sfG})$.
Theorem 2.1 of \cite{N} (see also \cite{TT}) implies the desired result.
\end{proof}

{\large \part{Proof of the equivalence}\label{Part2}}

\section{Outline of Part \ref{Part2}} \label{s:outline2}
Our main theorem says that $F: \QCoh_{\sfG}(\widetilde{\sfG}) \rightarrow \Shv_{(I)}(\Fl)$ becomes an isomorphism after Whittaker averaging. Our proof is by localizing away from certain intersections of walls, to reduce to an order of vanishing calculation in semi-simple rank one.

%As in Section 4.2 of \cite{AB}, 
To prove fully faithfulness it suffices to check for all $\lambda, \nu \in \Lambda^+$, that the following composition is an isomorphism \beq \label{AvFIso}\RHom_{\sfB/\sfB}(\cO, V_{\lambda} \otimes \cO(\mu)) \rightarrow \RHom(\Delta_1, Z_{\lambda} * W_{\mu}) \rightarrow  \RHom({}^{\chi}\Delta_1, {}^{\chi}\Delta_1 * Z_{\lambda} * W_{\mu}).\eeq
Using that Whittaker averaged central sheaves are tilting, both sides are free $R$-modules concentrated in degree zero. Moreover the second map induces an isomorphism in degree zero.
Therefore it suffices to prove that the following is an isomorphism \beq \label{FullyFaithfulCheck} \Hom_{\sfB/\sfB}(\cO, V_{\lambda} \otimes \cO(\mu)) \rightarrow \Hom(\Delta_1, Z_{\lambda} * W_{\mu}).\eeq

Using faithfulness of the associated graded functor, we compatibly embed both sides of \eqref{FullyFaithfulCheck} as free submodules of the same free $R$-module. To check that these two submodules coincide, we characterize both in terms of certain order of vanishing conditions along the walls.

% There is a commuting diagram \[\begin{tikzcd} \Hom_{\sfB/\sfB}(\cO, V_{\lambda} \otimes \cO(\mu)) \arrow[r, "F"] \arrow[d, "i^*"', hook]& \Hom(W_0, Z_{\lambda} \otimes W_{\mu}) \arrow[d, "\gr", hook]\\ \Hom_{\sfT/\sfT}(i^* \cO, i^* (V_{\lambda} \otimes \cO(\mu))) \arrow[r, "\sim"]& \Hom(\gr W_0, \gr (Z_{\lambda} \otimes W_{\mu})),\\ \end{tikzcd}.\] The downward maps are injective (in contrast to the nonmonodromic setting of \cite{AB}). Therefore it suffices to check that $i^*$ and $\gr$ have the same image.

% Indeed the dual torus $\sfT$ is stratified by intersections of walls $\sfT_{\alpha} \coloneqq \ker \alpha$ indexed by finite roots $\alpha$. We use the superscript $(\alpha)$ to denote localization away from all walls except $\sfT_{\alpha}$. Let $\sfB_{\alpha} \coloneqq \sfG_{\alpha} \cap \sfB$ be the opposite Borel in $\sfG_{\alpha} \coloneqq \sfT \SL(2)_{\alpha} \subset \sfG$.

After localizing in $\sfT$ away from all but one wall, $\sfB/\sfB$ becomes isomorphic to the analogous Grothendieck--Springer stack $\sfB_{\alpha}/\sfB_{\alpha}$ for a certain subgroup $\sfG_{\alpha} \subset \sfG$ of semi-simple rank one. By Hartogs' lemma we are therefore able to reduce to the semi-simple rank one case, where we calculate the image of the associated graded functor using the weight filtration.

%  By freeness and Hartogs' Lemma, it suffices to check, for each root, that \eqref{FullyFaithfulCheck} is an isomorphism after localizing to $\sfT^{(\alpha)}$.  The spectral side reduces to the case of a semi-simple rank 1 group $\sfG_{\alpha}$ because $\sfB^{(\alpha)}/\sfB \simeq \sfB_{\alpha}^{(\alpha)}/\sfB_{\alpha}$. Similarly the localized central functor factors through restriction to a semi-simple rank 1 subgroup. Therefore it suffices to assume that $G$ has semi-simple rank 1. Then we show that the image of $\gr$ (if nonzero) is an ideal of the form $(e^{\alpha} - 1)^iR \subset R$. In Proposition \ref{AutImageGraded} we calculate the order of vanishing $i$ using the weight filtration.

\section{Spectral side} \label{s:specside}
Here we exhibit compact generators of $\QCoh(\sfB/\sfB)$, and prove that certain Hom spaces are free $R$-modules concentrated in degree 0. Moreover we localize away from all but one wall, and calculate the image of a certain associated graded map, in terms of an order of vanishing condition along the wall.

\subsection{Compact generators}
The following is similar to Lemma 21 of \cite{AB}.

\begin{proposition}\label{SpecGen}
$\QCoh_{\sfG}(\widetilde{\sfG}) = \QCoh_{\sfB}(\sfB)$ is compactly generated by each of the following:
\begin{enumerate}[label=(\alph*)]
\item \label{SpecGen1} $\cO(\eta)$ for $\eta \in \Lambda$,
\item \label{SpecGen2} $V_{\lambda} \otimes \cO(\mu)$ for $\lambda, \mu \in \Lambda^+$.
\end{enumerate}
\end{proposition}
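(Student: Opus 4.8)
The plan is to treat the two families in turn, using the identification $\QCoh_{\sfG}(\widetilde{\sfG}) = \QCoh_{\sfB}(\sfB)$ (with $\sfB$ acting on itself by conjugation), under which $\cO(\eta)$ corresponds to $\cO_{\sfB}$ with equivariant structure twisted by the character $\eta$, and $V_{\lambda}\otimes\cO(\mu)$ corresponds to $V_{\lambda}|_{\sfB}\otimes k_{\mu}\otimes\cO_{\sfB}$. All the objects in question are $\sfG$-equivariant vector bundles on the smooth variety $\widetilde{\sfG}$, hence perfect, hence compact in $\QCoh_{\sfG}(\widetilde{\sfG}) = \Ind\Perf_{\sfG}(\widetilde{\sfG})$; so in each case it remains to show the family generates, equivalently that it thickly generates $\Perf_{\sfG}(\widetilde{\sfG})$, equivalently that its right-orthogonal vanishes.

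For \ref{SpecGen1} I would argue via the affine $\sfG$-equivariant projection $p\colon\widetilde{\sfG} = \sfG\times^{\sfB}\sfB\to\sfG/\sfB$. Here $\QCoh_{\sfG}(\sfG/\sfB) = \Rep(\sfB)$ is generated by the characters $k_{\eta}$, $\eta\in\Lambda$ — every finite-dimensional $\sfB$-representation is a finite iterated extension of characters (Lie--Kolchin), and every representation is a filtered colimit of finite-dimensional ones. Since $p$ is affine, $p_{\ast}$ is continuous and conservative, so $p^{\ast}k_{\eta} = \cO(\eta)$ generate $\QCoh_{\sfG}(\widetilde{\sfG})$: if $M\perp\cO(\eta)$ for all $\eta$ then $\RHom(k_{\eta},p_{\ast}M) = 0$ for all $\eta$, whence $p_{\ast}M = 0$ and $M = 0$.

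For \ref{SpecGen2} let $\mathcal{D}\subset\Perf_{\sfG}(\widetilde{\sfG})$ be the thick subcategory generated by $\{V_{\lambda}\otimes\cO(\mu):\lambda,\mu\in\Lambda^{+}\}$; by \ref{SpecGen1} it suffices to prove $\cO(\eta)\in\mathcal{D}$ for every $\eta\in\Lambda$. First, $\mathcal{D}$ is closed under tensor products of its objects, since the product of two generators is $\bigoplus_{\kappa}V_{\kappa}\otimes\cO(\mu+\mu')$ with $\mu+\mu'\in\Lambda^{+}$ and $-\otimes-$ is exact on $\widetilde{\sfG}$; in particular $\cO(\mu)\in\mathcal{D}$ for all $\mu\in\Lambda^{+}$. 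Second, for $\lambda\in\Lambda^{+}$ the weight filtration of $V_{\lambda}$ relative to the negative Borel $\sfB$ exhibits $V_{\lambda}|_{\sfB}$ as an iterated extension of characters $k_{\nu}$, $\nu\in\operatorname{wt}(V_{\lambda})$, with $k_{w_{0}\lambda}$ a subrepresentation; tensoring with $\cO_{\sfB}$ yields a short exact sequence $0\to\cO(w_{0}\lambda)\to V_{\lambda}\otimes\cO\to Q\to 0$ in which $Q$ is an iterated extension of the $\cO(\nu)$ with $\nu\in\operatorname{wt}(V_{\lambda})\setminus\{w_{0}\lambda\}$. I would then induct on the nonnegative integer $h(\eta):=\langle\eta^{+}-\eta,\rho^{\vee}\rangle$, where $\eta^{+}$ is the dominant $W$-translate of $\eta$ and $\rho^{\vee}$ takes value $1$ on each simple coroot, to prove $\cO(\eta)\in\mathcal{D}$ for every \emph{antidominant} $\eta$: when $h(\eta) = 0$ the weight $\eta$ is dominant and $\cO(\eta) = V_{0}\otimes\cO(\eta)$; when $h(\eta)>0$, take $\lambda = \eta^{+}$, so $\eta = w_{0}\lambda$ has multiplicity one in $V_{\lambda}$, and apply the sequence above — for every $\nu\in\operatorname{wt}(V_{\lambda})$ with $\nu\neq\eta$ one has $\nu^{+}\leq\lambda$ and $\nu\geq w_{0}\lambda$ in the root order with strict inequality somewhere, and $\rho^{\vee}$ is strictly positive on positive roots, so $h(\nu)<h(\eta)$; thus $Q\in\mathcal{D}$ by induction and $\cO(\eta)\in\mathcal{D}$. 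Finally any $\eta\in\Lambda$ is $\mu+\xi$ with $\mu\in\Lambda^{+}$, $\xi$ antidominant, so $\cO(\eta) = \cO(\mu)\otimes\cO(\xi)\in\mathcal{D}$.

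The routine inputs — compactness, $\QCoh_{\sfG}(\sfG/\sfB) = \Rep(\sfB)$, affineness of $p$, the weight filtration — are standard; the analogue of all this is Lemma 21 of \cite{AB}. The one step needing genuine care is the induction in \ref{SpecGen2}: one cannot extract $\cO(\eta)$ from $V_{\eta^{+}}\otimes\cO$ without also knowing $\cO(\nu)\in\mathcal{D}$ for \emph{all} other weights $\nu$ of $V_{\eta^{+}}$, and the point of the function $h$ — combined with the reduction to antidominant $\eta$, which is exactly what makes $\eta = w_{0}\eta^{+}$ occur as a \emph{sub}object and therefore of strictly maximal $h$ among the weights in play — is that it forces all these $\cO(\nu)$ to be strictly simpler.
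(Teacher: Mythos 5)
Your argument for \ref{SpecGen1} is the paper's argument: conservativity of the affine pushforward along $\widetilde{\sfG}\to\sfG/\sfB$ (equivalently $\sfB/\sfB\to\pt/\sfB$), plus the fact that characters generate $\Rep(\sfB)$. For \ref{SpecGen2} you take a genuinely different route. The paper uses the Koszul complex on the highest-weight arrow: since $\Sym^{\dim V_\nu}\bigl(V_\nu[1]\to k_\nu\bigr)$ is acyclic, twisting by $(\Lambda^{\dim V_\nu}V_\nu)^*\otimes\cO(\eta)$ gives an exact complex whose leftmost term is $\cO(\eta)$ and whose other terms are $(\Lambda^jV_\nu)^*\otimes\cO(j\nu+\eta)$, $j\geqslant 1$; for $\nu$ sufficiently dominant relative to $\eta$ these are already direct sums of the generators. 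No induction on weights is needed.

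Your alternative route via the $\sfB$-weight filtration and induction on $h$ has a genuine gap at exactly the step you flag. You induct on $h(\eta)$ over \emph{antidominant} $\eta$, but the weights $\nu\neq w_0\lambda$ of $V_\lambda$ appearing in $Q$ are generally not antidominant. Knowing $h(\nu)<h(\eta)$ is not sufficient: to invoke the inductive hypothesis on $\nu$ you must write $\nu=\mu'+\xi'$ with $\mu'\in\Lambda^+$ and $\xi'$ antidominant with $h(\xi')<h(\eta)$, and the minimum achievable $h(\xi')$ in such a decomposition can already equal $h(\eta)$. Concretely, take $\sfG=\SL_4$ and $\lambda=\varpi_1$. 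The weight $\nu=\varpi_2-\varpi_1$ of $V_{\varpi_1}$ satisfies $h(\nu)=1<3=h(w_0\lambda)$, yet any decomposition with $\mu'$ dominant forces $\xi'=-a\varpi_1-b\varpi_2-c\varpi_3$ with $a\geqslant 1$, $b,c\geqslant 0$, so $h(\xi')=3a+4b+3c\geqslant 3$. Splitting $\xi'$ into several antidominant summands does not help either: $h$ is additive on antidominant weights, and for $\SL_4$ its values lie in $3\mathbb{Z}_{\geqslant 0}+4\mathbb{Z}_{\geqslant 0}$, which omits $1$ and $2$, so no nontrivial antidominant factor with $h<3$ exists. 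Thus the inductive step does not close as written; your statistic $h$, restricted to antidominant weights, does not interact well with the dominant-plus-antidominant factorization the argument requires. The paper's Koszul resolution avoids the entire issue.
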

\begin{proof}
The above sheaves on $\sfB/\sfB$ are compact because they are vector bundles.

Direct image along $p: \sfB/\sfB \rightarrow \pt/\sfB$ is conservative because $\sfB$ is affine. Suppose $F \in \QCoh_{\sfB}(\sfB)$ satisfies \[\RHom_{\QCoh_{\sfB}(\sfB)}(\cO(\eta), F) \simeq \RHom_{\QCoh_{\sfB}(\pt)}(k_{\eta}, p_* F) \simeq 0, \quad \text{ for all } \eta \in \Lambda.\] Then $F \simeq 0$ because $k_{\eta}$ generate $\QCoh_{\sfB}(\pt)$. Therefore the objects \ref{SpecGen1} generate $\QCoh_{\sfB}(\sfB)$.

%Since $\widetilde{G}$ is smooth, every object in $\Coh(\widetilde{G}/G)$ is represented by a finite complex of vector bundles. Let $M \in \Coh(B/B)$ be such a vector bundle. Then $\cO(B)$ is a non-negatively $\Lambda$-graded ring and $M$ is a free graded module. Let $M_{\eta}$ be a minimal weight space.  There is an injection $M_{\eta} \otimes \cO(B) \hookrightarrow M$. The cokernel is a vector bundle of smaller rank because $M$ admits a homogeneous basis. By induction $M$ can be represented by a finite complex of $\cO(\eta)$ for $\eta \in \Lambda$.

Let $\nu \in \Lambda^+$ and $d = \dim V_{\nu}$. Then $\Sym^d(V_{\nu}[1] \rightarrow k_{\nu})$ is acyclic because the kernel of the highest weight arrow has dimension $d-1$. Therefore the following Koszul complex is exact
\beq \label{Koszul} 0 \rightarrow \Lambda^d V_{\nu} \otimes \cO \rightarrow \Lambda^{d-1}V_{\nu} \otimes \cO(\nu) \rightarrow \cdots V_{\nu} \otimes \cO((d-1)\nu)  \rightarrow \cO(d\nu) \rightarrow 0.\eeq If $\nu \in \Lambda^+$ is sufficiently dominant compared to $\eta \in \Lambda$, then twisting \eqref{Koszul} by $(\Lambda^d V_{\nu})^* \otimes \cO(\eta)$ shows that $\cO(\eta)$ is a colimit of $V_{\lambda} \otimes \cO(\mu)$ for $\lambda \in \Lambda$ and $\mu \in \Lambda^+$. Therefore the objects \ref{SpecGen2} also generate $\QCoh_{\sfB}(\sfB)$.
\end{proof}

\subsection{Affinization} Now we calculate the affinization of the Grothendieck alteration. The answer involves the characteristic polynomial map $\sfG = \sfG^{\sct}/\sfZ \rightarrow \sfC \coloneqq (\sfG^{\sct}/\!\!/\sfG^{\sct})/\sfZ$.

\begin{proposition}\label{PushStructureSheaf}
Pushforward of the structure sheaf along $p: \widetilde{\sfG} \rightarrow \sfG \times_{\sfC} \sfT$ yields $p_* \cO_{\widetilde{\sfG}} \simeq \cO_{\sfG \times_{\sfC} \sfT}$.
\end{proposition}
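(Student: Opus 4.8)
The map $p$ is the combination of the projection $\widetilde{\sfG} = \sfG\times^{\sfB}\sfB\to\sfG$, $[g,b]\mapsto gbg^{-1}$, with the map $\widetilde{\sfG}\to\sfT$ recording the image of $b$ in $\sfB/\sfU\simeq\sfT$; it lands in $\sfG\times_{\sfC}\sfT$ because the characteristic polynomial map is conjugation invariant and depends only on semisimple parts. The plan is to first reduce to the case where $\sfG$ is semisimple simply connected. Fixing an isogeny $1\to\sfZ\to\sfG^{\sct}\to\sfG\to1$, the finite central $\sfZ$ acts freely and compatibly on $\sfG^{\sct}$, $\sfT^{\sct}$, $\sfC^{\sct} := \sfG^{\sct}/\!\!/\sfG^{\sct}$ and $\widetilde{\sfG^{\sct}}$, and one checks directly that $\widetilde{\sfG}\simeq\widetilde{\sfG^{\sct}}/\sfZ$ and $\sfG\times_{\sfC}\sfT\simeq(\sfG^{\sct}\times_{\sfC^{\sct}}\sfT^{\sct})/\sfZ$ compatibly with $p$. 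Since $\sfZ$ is finite and $k$ has characteristic zero, taking $\sfZ$-invariants is exact and commutes with pushforward along the finite free quotient maps, so the general case follows from that of $\sfG^{\sct}$; the torus factor of $\sfG^{\sct}$ contributes trivially (the Grothendieck alteration of a torus is the torus itself), leaving the case $\sfG$ semisimple simply connected.

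Assume now $\sfG$ is semisimple simply connected, so that $\sfC = \sfT/\!\!/W^{\fnt}\simeq\bA^r$ and the adjoint quotient $\chi:\sfG\to\sfC$ is flat (Steinberg). Then $Y := \sfG\times_{\sfC}\sfT$ is cut out in the smooth variety $\sfG\times\sfT$ of dimension $\dim\sfG + r$ by the $r$ equations $\chi_i(g) = \chi_i(t)$, and since $Y\to\sfG$ is finite surjective it is equidimensional of dimension $\dim\sfG$; hence $Y$ is a complete intersection, in particular Cohen--Macaulay. The key claim is that $Y$ is normal: by Serre's criterion it remains to verify $R_1$, and by the Jacobian criterion together with the facts that $\chi$ is smooth at regular elements of $\sfG$ and that $\sfT\to\sfC$ is étale at regular elements of $\sfT$, the scheme $Y$ is smooth at every $(g,t)$ with $g$ regular or $t$ regular. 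The complementary locus, where both $g$ and $t$ are irregular, maps to the irregular locus $\bigcup_{\alpha}\{\alpha = 1\}\subset\sfT$ of dimension $r-1$, with fibers contained in the irregular locus of a fiber of $\chi$, which has dimension at most $\dim\sfG - r - 1$ since each fiber of $\chi$ is irreducible with regular elements open dense (Steinberg); hence this locus has dimension at most $\dim\sfG - 2$, i.e.\ codimension at least $2$ in $Y$. So $Y$ is $R_1$ and $S_2$, hence normal, and it is irreducible, being the closure of its regular semisimple locus.

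Granting normality of $Y$, the identification on $\mathcal{H}^0$ is formal: $p$ is proper ($\widetilde{\sfG}\to\sfG$ is proper and $Y\to\sfG$ is affine) and birational, being an isomorphism over the regular semisimple locus of $\sfG$, where the Borels through a given element and the $W^{\fnt}$-preimages in $\sfT$ of its characteristic polynomial correspond simply transitively; so the Stein factorization $\widetilde{\sfG}\xrightarrow{f}Y'\xrightarrow{g}Y$ has $f_*\cO_{\widetilde{\sfG}} = \cO_{Y'}$ with $g$ finite birational onto the normal $Y$, forcing $g$ to be an isomorphism, whence $\mathcal{H}^0 p_*\cO_{\widetilde{\sfG}} = \cO_Y$. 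For the higher vanishing $R^{>0}p_*\cO_{\widetilde{\sfG}} = 0$, so that $p_*\cO_{\widetilde{\sfG}}\simeq\cO_{\sfG\times_{\sfC}\sfT}$ in the derived sense, I would note that $Y$ is flat over the smooth variety $\sfT$ with fibers the fibers of $\chi$, which are normal with rational singularities (Steinberg, and the classical rationality of the unipotent variety and of regular class closures); by a theorem of Elkik, $Y$ then has rational singularities, and since $\widetilde{\sfG}\to Y$ is a resolution of $Y$ (smooth proper birational), this gives the vanishing.

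The main obstacle is the normality statement for $Y$, i.e.\ the codimension-$\geq 2$ estimate for its singular locus; this is where Steinberg's structure theory of the adjoint quotient (flatness, together with irreducibility and generic smoothness of the fibers, and the codimension of the irregular locus) genuinely enters, as opposed to the formal Stein-factorization step. I would also check precisely which form of the statement is used downstream: only $\mathcal{H}^0 p_*\cO_{\widetilde{\sfG}} = \cO_{\sfG\times_{\sfC}\sfT}$ enters the computation of the affinization $\widetilde{\sfG}^{\aff} = \Spec\Gamma(\widetilde{\sfG},\cO_{\widetilde{\sfG}})$, so if only that is needed the higher-vanishing discussion of the third paragraph can be omitted.
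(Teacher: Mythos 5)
Your proof is correct and follows the same skeleton as the paper's --- complete intersection plus $R_1$ gives normality by Serre's criterion, and then normality of the target together with properness and birationality of $p$ gives the result --- but you fill in the $R_1$ input by a different route. The paper quotes Corollary 3.12 of Steinberg directly, which identifies $\sfG^{\reg}\times_{\sfC}\sfT$ with $\widetilde{\sfG}^{\reg}$ for any $\sfG$; this is precisely where the modified quotient $\sfC = (\sfG^{\sct}/\!\!/\sfG^{\sct})/\sfZ$, rather than the naive $\sfG/\!\!/\sfG$, is needed (cf.\ the footnote attached to the paper's proof), and smoothness in codimension one then follows because $\sfG\times_{\sfC}\sfT\to\sfG$ is finite while the irregular locus in $\sfG$ has codimension $\geqslant 3$. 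You instead first reduce to $\sfG$ simply connected and then verify $R_1$ by a direct Jacobian-plus-dimension-count argument. Both routes work; the paper's avoids the isogeny reduction (which in your version requires the check that $\sfG\times_{\sfC}\sfT\simeq(\sfG^{\sct}\times_{\sfC^{\sct}}\sfT^{\sct})/\sfZ$, using freeness of the $\sfZ$-action on $\sfG^{\sct}$ and $\sfT^{\sct}$) at the cost of citing the precise Steinberg statement, while yours is more self-contained and exhibits the irregular locus estimate explicitly. Your closing remark is also on target: the paper uses only the underived statement --- compare the footnote in the proof of Theorem \ref{BiWhitThm} --- so the Elkik/rational-singularities discussion, while a reasonable sketch for the derived upgrade (flatness of $\chi$ together with rationality of its fibers via Richardson's Levi-induction description), is optional here.
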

\begin{proof}
The fiber product $\sfG \times_{\sfC} \sfT$ is 
\begin{enumerate}[label=(\roman*)]
\item smooth in codimension 1 because $\sfG^{\reg} \times_{\sfC} \sfT \simeq \widetilde{\sfG}^{\reg}$ by Corollary 3.12 of \cite{St},
\item a complete intersection because $\sfT \rightarrow \sfC$ is flat.
\end{enumerate}
By Serre's criterion $\sfG \times_{\sfC} \sfT$ is normal.\footnote{If the derived subgroup of $\sfG$ is not simply connected, then $\sfG \times_{\sfG/\!\!/\sfG} \sfT$ is neither smooth in codimension 1 nor normal.}
Therefore $p_* \cO_{\widetilde{\sfG}} \simeq \cO_{\sfG \times_{\sfC} \sfT}$, because $p$ is proper and birational.
\end{proof}

\subsection{Freeness and higher vanishing} The following freeness and higher vanishing will be needed later to invoke Hartogs' lemma.
\begin{proposition}\label{SVan}
If $\lambda, \mu \in \Lambda^+$ then $R\Gamma(\widetilde{\sfG}/\sfG, V_{\lambda} \otimes \cO(\mu))$ is
\begin{enumerate}[label=(\alph*)]
\item \label{SVan1} perfect as a complex of $R$-modules,
\item \label{SVan2} concentrated in degree 0,
\item \label{SVan3} free over $R$.
\end{enumerate}
\end{proposition}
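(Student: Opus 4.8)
The plan is to compute $R\Gamma(\widetilde{\sfG}/\sfG, V_\lambda\otimes\cO(\mu))$ by pushing forward in two stages along $\widetilde{\sfG}\xrightarrow{\pi}\sfG\to\pt$. Using $\widetilde{\sfG}=\sfG\times^{\sfB}\sfB$ to identify $\widetilde{\sfG}/\sfG$ with $\sfB/\sfB$, the conjugation-equivariant map $\pi$ descends to $\bar\pi:\widetilde{\sfG}/\sfG\to\sfG/\sfG$, and the key point is that $R\Gamma(\sfG/\sfG,-)$ is $t$-exact: since $\sfG$ is affine and reductive, $R\Gamma(\sfG/\sfG,\mathcal N)\simeq\Gamma(\sfG,\mathcal N)^{\sfG}$ with no higher cohomology. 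Hence $R\Gamma(\widetilde{\sfG}/\sfG,V_\lambda\otimes\cO(\mu))\simeq\Gamma\big(\sfG,R\pi_*(V_\lambda\otimes\cO(\mu))\big)^{\sfG}$, and by the projection formula $R\pi_*(V_\lambda\otimes\cO(\mu))\simeq V_\lambda\otimes R\pi_*\cO(\mu)$.

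This reduces \ref{SVan2} to the Grothendieck--Springer cohomology vanishing $R^{>0}\pi_*\cO(\mu)=0$ for $\mu\in\Lambda^+$, which I expect to be the main obstacle. It is the group analogue of Broer's vanishing theorem for the Springer resolution; I would either cite it, or deduce it by reducing along minimal parabolics to the case of semisimple rank one, where $\pi$ is a $\mathbf{P}^1$-bundle over the non-regular locus and the statement amounts to $H^1(\mathbf{P}^1,\cO(n))=0$ for $n\geq0$. Granting this, $R\pi_*(V_\lambda\otimes\cO(\mu))$ is the single sheaf $V_\lambda\otimes\pi_*\cO(\mu)$ in degree $0$, so $R\Gamma(\widetilde{\sfG}/\sfG,V_\lambda\otimes\cO(\mu))=\Gamma(\sfG,V_\lambda\otimes\pi_*\cO(\mu))^{\sfG}$ is concentrated in degree $0$, giving \ref{SVan2}.

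Call this $R$-module $M$. Here the $R$-action is the one through $\widetilde{\sfG}\to\sfG\times_{\sfC}\sfT\to\sfT$, under which $\pi_*\cO(\mu)$ is a coherent $\cO_{\sfG\times_{\sfC}\sfT}$-module by Proposition \ref{PushStructureSheaf}; since $\sfG$ is reductive, Hilbert--Nagata shows $M$ is finitely generated over $R$. Because $R=k[\Lambda]$ is regular, every finitely generated $R$-module has finite projective dimension and is therefore a perfect $R$-complex, which gives \ref{SVan1}.

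Finally, for \ref{SVan3} I would show $M$ is flat over $R$; being finitely generated over the Noetherian ring $R$ it is then projective, hence free by the Laurent-polynomial Quillen--Suslin theorem \cite{Sw}. Flatness is equivalent to $\mathrm{Tor}^{R}_{>0}(M,R/\mathfrak m)=0$ for every maximal ideal $\mathfrak m$, and $M\otimes_{R}^{\mathbf L}R/\mathfrak m$ can be computed by base change along $\widetilde{\sfG}/\sfG\to\sfT$ --- legitimate since this morphism factors as a proper morphism followed by a cohomologically affine one, for each of which base change holds. The result is $R\Gamma$ of the restriction of the vector bundle $V_\lambda\otimes\cO(\mu)$ to the (possibly derived) fibre over the corresponding point of $\sfT$; this fibre is a connective (derived) stack, so the cohomology of a connective sheaf on it lies in non-negative degrees, forcing the negative $\mathrm{Tor}$'s to vanish. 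This yields \ref{SVan3}, and of course re-proves \ref{SVan1} once \ref{SVan3} is known.
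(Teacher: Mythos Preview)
Your overall strategy is sound and arrives at the result by a route genuinely different from the paper's. The paper does not use the global higher direct image vanishing $R^{>0}\pi_*\cO(\mu)=0$; instead it proves \ref{SVan2} and \ref{SVan3} together by a purely fiberwise argument. Flat base change over each closed point $\zeta\in\sfT$ gives $R\Gamma(\sfB/\sfB,\mathcal E)\otimes_R k_\zeta\simeq R\Gamma(\sfB^\zeta/\sfB,\mathcal E)$, the endoscopy isomorphism $\sfB^\zeta/\sfB\simeq\sfB_\zeta^\zeta/\sfB_\zeta$ of Proposition \ref{B/BZeta} identifies this with a Springer-type cohomology for the centralizer group $\sfG_\zeta$, and the paper cites the vanishing theorem of \cite{KLT}. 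A Nakayama-type lemma (Lemma \ref{TorLemma}) then gives freeness in degree $0$. Your approach through the group Grothendieck--Springer vanishing is more direct once that input is granted, and the result is indeed known; but your proposed minimal-parabolic reduction is not a proof as written, since the Demazure-style induction for $G/B$ does not transfer to $\widetilde{\sfG}\to\sfG$ without substantial extra argument. You should cite the vanishing rather than sketch it.

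There is a genuine error in your argument for \ref{SVan3}. The assertion that the cohomology of a connective sheaf on a connective derived stack lies in non-negative degrees is false: already for the derived self-intersection $\Spec k\times^L_{\Spec k[x]}\Spec k$, the structure sheaf has $R\Gamma$ with a class in cohomological degree $-1$. What makes your argument work here is that $\sfB\to\sfT$ is \emph{flat} (indeed smooth, being a $\sfU$-torsor), so the derived fibre over $\zeta$ coincides with the classical stack $\sfB^\zeta/\sfB$; on a classical quotient stack, $R\Gamma$ of a degree-$0$ sheaf is concentrated in degrees $\geq 0$, and combined with $M$ already lying in degree $0$ this forces $M\otimes_R^L k_\zeta$ into degree $0$. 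Replace the connectivity claim with this flatness observation and the argument is complete. Note that this is exactly the flat base change the paper invokes, so at this step the two approaches converge.
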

\begin{proof}
First we prove \ref{SVan1}. Since $\Rep(\sfG)$ is semi-simple, \beq \label{H^i} R^i \Gamma (\widetilde{\sfG}/\sfG, V_{\lambda} \otimes \cO(\mu)) \simeq (V_{\lambda} \otimes R^i \Gamma(\widetilde{\sfG}, \cO(\mu)))^{\sfG}.\eeq The affinization $\widetilde{\sfG} \rightarrow \sfG \times_{\sfC} \sfT$ is proper, thus $R\Gamma(\widetilde{\sfG}, \cO(\mu))$ is bounded coherent as a complex of $\cO(\sfG \times_{\sfC} \sfT)$-modules. Hence \eqref{H^i} vanishes for all but finitely many $i$. 

Therefore, for each $i$, there exists $V \in \Rep(\sfG)$ with a surjection \[V \otimes \cO(\sfG \times_{\sfC} \sfT) \twoheadrightarrow V_{\lambda} \otimes R^i \Gamma(\widetilde{\sfG}, \cO(\mu)).\]
Since the regular semi-simple locus $\sfG^{\rs} \subset \sfG$ is dense, and every regular semi-simple element is conjugate to an element in $\sfT$, we get injections \[(V \otimes \cO(\sfG \times_{\sfC} \sfT))^{\sfG} \hookrightarrow (V \otimes \cO(\sfG^{\rs} \times_{\sfC} \sfT))^{\sfG} \hookrightarrow (V \otimes \cO(\sfT \otimes_{\sfC} \sfT)).\] It follows that $(V \otimes \cO(\sfG \otimes_{\sfC} \sfT))^{\sfG}$ is finitely generated over $\sfT \times_{\sfC} \sfT$, hence also finitely generated over $\sfT$. Therefore \eqref{H^i} is finitely generated over $R$.

Now we prove \ref{SVan2} and \ref{SVan3}. For $\zeta \in \sfT$, flat base change and Proposition \ref{B/BZeta} imply \beq \label{RGammaZeta} R\Gamma(\sfB/\sfB, V_{\lambda} \otimes \cO(\mu)) \otimes_R k_{\zeta} \simeq R\Gamma(\sfB^{\zeta}_{\zeta}/\sfB_{\zeta}, V_{\lambda} \otimes \cO(\mu)).\eeq Theorem 2 of \cite{KLT} implies that \eqref{RGammaZeta} is concentrated in degree 0. Lemma \ref{TorLemma} implies $R\Gamma(\sfB/\sfB, V_{\lambda} \otimes \cO(\mu))$ is concentrated in degree 0 and free over $R$.
\end{proof}

\subsection{Specializing in the torus}\label{SpecialTorus}
Let $\zeta \in \sfT$. Let $\sfB^{\zeta} \coloneqq \sfB \times_{\sfT} \zeta = \sfU \zeta \subset \sfB$. Let $\sfG_{\zeta}$ be the centralizer of $\zeta$. % Since $\sfG$ is simply connected, $\sfG_{\zeta}$ is connected. 
Set $\sfB_{\zeta} \coloneqq \sfG_{\zeta} \cap \sfB$, $\sfU_{\zeta} \coloneqq \sfG_{\zeta} \cap \sfU$, and $\sfB_{\zeta}^{\zeta} \coloneqq \sfB_{\zeta} \times_{\sfT} \zeta = \sfU_{\zeta} \zeta \subset \sfB_{\zeta}$.

For $\alpha$ a finite root, let $\sfT^{(\alpha)}$ be the localization of $\sfT$ away from all walls, except for the wall $\sfT_{\alpha}$ where $\alpha$ vanishes.
Let $\sfB^{(\alpha)} \coloneqq \sfB \times_{\sfT} \sfT^{(\alpha)} = \sfU \sfT^{(\alpha)} \subset \sfB$. Let $\sfG_{\alpha} \coloneqq \sfT \SL(2)_{\alpha} \subset \sfG$ be the subgroup generated by the torus and $\pm \alpha$ root subgroups. Set $\sfB_{\alpha} \coloneqq \sfG_{\alpha} \cap \sfB$ and $\sfB_{\alpha}^{(\alpha)} \coloneqq  \sfB_{\alpha} \times_{\sfT} \sfT^{(\alpha)} \subset \sfB_{\alpha}$.

\begin{proposition}\label{B/BZeta}
There are natural isomorphisms of stacks
\begin{enumerate}[label=(\alph*)]
\item \label{B/BZeta1} $\sfB_{\zeta}^{\zeta}/\sfB_{\zeta} \simeq \sfB^{\zeta}/\sfB$,
\item \label{B/BZeta2}  $\sfB_{\alpha}^{(\alpha)}/\sfB_{\alpha} \simeq \sfB^{(\alpha)}/\sfB$.
\end{enumerate}
\end{proposition}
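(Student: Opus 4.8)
The plan is to present both isomorphisms uniformly, as instances of a ``parabolic-type reduction.'' Recall that a morphism of quotient stacks $[\sfX'/\sfH'] \to [\sfX/\sfH]$ induced by a closed embedding $\sfH' \subseteq \sfH$ and an $\sfH'$-stable closed subvariety $\sfX' \subseteq \sfX$ is an equivalence precisely when the associated $\sfH$-equivariant morphism $\Phi \colon \sfH \times^{\sfH'} \sfX' \to \sfX$ is an isomorphism of varieties. In both cases the relevant $\Phi$ is the conjugation map $[b, x] \mapsto bxb^{-1}$, and the strategy is to check it is an isomorphism by verifying that it is a bijective morphism of smooth varieties, which suffices since $k$ has characteristic zero. (A bijective morphism onto a normal variety over $\bar k$ is an isomorphism by Zariski's main theorem, and being an isomorphism is fppf-local on the target, so one may pass to $\bar k$; the source will always be a Zariski-locally trivial affine-space bundle over a smooth base, hence smooth.)

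For part \ref{B/BZeta1}, I would take $\Phi \colon \sfB \times^{\sfB_\zeta} \sfB_\zeta^\zeta \to \sfB^\zeta$, whose source is a bundle over $\sfB/\sfB_\zeta$ with fibre $\sfU_\zeta$ and whose target is $\sfU\zeta$, and check bijectivity. For surjectivity: given $u\zeta \in \sfU\zeta$, take its Jordan decomposition $u\zeta = sn$ inside $\sfB$; then $s$ is a semisimple element of $\sfB$ with image $\zeta$ in $\sfB/\sfU = \sfT$, so it lies in a maximal torus of the connected solvable group $\sfB$, and since all such tori are $\sfU$-conjugate we may write $s = v\zeta v^{-1}$ with $v \in \sfU$; then $v^{-1}(u\zeta)v = \zeta\,n'$ with $n' \coloneqq v^{-1}nv$ unipotent and commuting with $\zeta$, hence $n' \in \sfU \cap \sfG_\zeta = \sfU_\zeta$, so $u\zeta$ lies in the image of $\Phi$. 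For injectivity: if $b_1 x_1 b_1^{-1} = b_2 x_2 b_2^{-1}$ with $x_i \in \sfU_\zeta\zeta$, then $c \coloneqq b_2^{-1}b_1 \in \sfB$ conjugates $x_1$ to $x_2$; comparing semisimple parts --- both equal to $\zeta$, since $\sfU_\zeta$ centralizes $\zeta$ --- gives $c\zeta c^{-1} = \zeta$, so $c \in \sfB \cap \sfG_\zeta = \sfB_\zeta$ and hence the two points of the associated bundle agree. A byproduct of the same ``semisimple part projects to $\zeta$'' argument, applied inside $\sfB$, is the elementary but useful identity $\sfB_\zeta = \sfT \ltimes \sfU_\zeta$ with $\sfU_\zeta = \prod_{\beta > 0,\ \beta(\zeta) = 1}\sfU_\beta$; I would record it here for use in part (b).

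For part \ref{B/BZeta2}, the same recipe gives $\Phi_\alpha \colon \sfB \times^{\sfB_\alpha} \sfB_\alpha^{(\alpha)} \to \sfB^{(\alpha)}$, now a morphism of smooth varieties over $\sfT^{(\alpha)}$, the structure map being ``semisimple part'' (which is $\sfB_\alpha$-conjugation invariant, as $\sfB_\alpha$ projects onto $\sfT$). By the fibrewise criterion for isomorphisms it is enough to show $\Phi_\alpha$ is an isomorphism on the fibre over each $\zeta \in \sfT^{(\alpha)}$. The key local fact is that for such $\zeta$ the only positive root vanishing at $\zeta$ is $\alpha$ (or there is none), whence $\sfU_\zeta \subseteq \sfU_\alpha$, and therefore $\sfB_\zeta = \sfT\sfU_\zeta \subseteq \sfB_\alpha$ and $\sfB_\alpha \cap \sfG_\zeta = \sfB_\zeta$. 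Applying part \ref{B/BZeta1} to the rank-one group $\sfG_\alpha = \sfT\,\SL(2)_\alpha$ (with Borel $\sfB_\alpha$ and the semisimple element $\zeta$) then gives an isomorphism $\sfB_\alpha \times^{\sfB_\zeta}(\sfU_\zeta\zeta) \xrightarrow{\ \sim\ } \sfU_\alpha\zeta$; inducing up along $\sfB \times^{\sfB_\alpha}(-)$ and comparing with part \ref{B/BZeta1} for $\sfG$ itself --- which gives $\sfB \times^{\sfB_\zeta}(\sfU_\zeta\zeta) \xrightarrow{\ \sim\ } \sfU\zeta$, the composite being the same conjugation map --- shows the fibre of $\Phi_\alpha$ over $\zeta$ is an isomorphism. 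All isomorphisms so produced are the evident natural ones.

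I expect the only genuine subtlety to be the bookkeeping around $\sfB_\zeta = \sfB \cap \sfG_\zeta$: since $\sfG_\zeta$ may be disconnected, the clean structure $\sfB_\zeta = \sfT \ltimes \sfU_\zeta$ and the containments $\sfU_\zeta \subseteq \sfU_\alpha$, $\sfB_\zeta \subseteq \sfB_\alpha$ used in part (b) should be proved by the short ``semisimple part'' argument rather than merely cited. The geometric content --- that $\sfB$-conjugation carries the unipotent part of an element of $\sfU\zeta$ into the centralizer direction, isomorphically --- is a routine Jordan-decomposition computation.
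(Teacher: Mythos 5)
Your proof is correct and follows essentially the same route as the paper: both reduce to showing the $\sfB$-equivariant conjugation map $\sfB \times^{\sfB_\zeta}\sfB_\zeta^\zeta \to \sfB^\zeta$ is bijective on $\bar k$-points via Jordan decomposition (surjectivity by conjugating the semisimple part into $\sfT$, injectivity by uniqueness of Jordan decomposition), then upgrade to an isomorphism using normality/smoothness of the target, and both hinge on the containment $\sfB_\zeta \subset \sfB_\alpha$ for $\zeta \in \sfT^{(\alpha)}$ in part (b). The only organizational difference is that you deduce (b) by a fiberwise reduction to (a), whereas the paper just reruns the same bijectivity argument directly over $\sfT^{(\alpha)}$; this is cosmetic.
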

\begin{proof}
Write $\sfB_{\zeta}^{\zeta}/\sfB_{\zeta} \simeq (\sfB \times^{\sfB_{\zeta}} \sfB^{\zeta}_{\zeta})/\sfB$, where $\sfB_{\zeta}$ acts by conjugation on $\sfB^{\zeta}_{\zeta}$ and by right multiplication on $\sfB$. To prove \ref{B/BZeta1}, it suffices to show that the $\sfB$-equivariant map \beq \label{ZUnif} \sfB \times^{\sfB_{\zeta}} \sfB^{\zeta}_{\zeta} \rightarrow \sfB^{\zeta}, \qquad (a, b) \mapsto a b a^{-1}\eeq is an isomorphism. Moreover it suffices to check this after base change to the algebraic closure, so we now assume that $k$ is algebraically closed. Then since $\sfB \times^{\sfB_{\zeta}} \sfB^{\zeta}_{\zeta}$ is a connected variety and $\sfB^{\zeta}$ is a normal variety, it suffices to show that \eqref{ZUnif} is a bijection on closed points. %https:/\!\!/mathoverflow.net/questions/264204/bijection-implies-isomorphism-for-algebraic-varieties 

First we check surjectivity. Let $b \in \sfB^{\zeta}$ and write $b = us$ for the Jordan decomposition. Both $\zeta$ and $s$ are semisimple with the same image in $\sfB/\!\!/\sfB$. Therefore $\zeta$ and $s$ are conjugate $\zeta = a^{-1} s a$ by some $a \in \sfB$. Hence $b = a ((a^{-1} u a) \zeta) a^{-1}$ where $a^{-1} u a \in \sfB_{\zeta} \cap \sfU$.

Now we check injectivity. Suppose that $aba^{-1} = b'$ for $a \in \sfB$ and $b, b' \in \sfB^{\zeta}_{\zeta} = \sfU_{\zeta} \zeta$. Write $b = u \zeta$ and $b' = u' \zeta$ for $u, u' \in \sfU_{\zeta}$. These are the Jordan decompositions of $b$ and $b'$, because both $b$ and $b'$ commute with $\zeta$. By uniqueness of the Jordan decomposition, $a \in \sfB_{\zeta}$ commutes with $\zeta$. Hence $(a, b) \sim (1, b')$ are equivalent in  $\sfB \times^{\sfB_{\zeta}} \sfB^{\zeta}$.

The same argument also proves \ref{B/BZeta2}. Indeed $\sfB \times^{\sfB_{\alpha}} \sfB_{\alpha}^{(\alpha)} \simeq \sfB^{(\alpha)}$, using that $\sfB_{\zeta} \subset \sfB_{\alpha}$ for all $\zeta \in \sfT^{(\alpha)}$.\footnote{If $\sfG$ has disconnected center then in contrast $\sfG_{\zeta}$ may be disconnected and need not be contained in $\sfG_{\alpha}$.}
\end{proof}

% Let $\sfJ \rightarrow \sfT$ be the centralizer of a regular section. \begin{lemma} There is an isomorphism $\sfB^{\reg}/\sfB \simeq \sfT/\sfJ$. \end{lemma}  \begin{proof} Theorem 1.2(b) of \cite{St} implies $\sfT/\sfJ = (\sfG^{\reg} \times_{\sfT/\!\!/W} \sfT)/\sfG$. Since $\widetilde{\sfG}^{\reg} \coloneqq \sfG \times^{\sfB} \sfB^{\reg}$ is connected and $\sfG^{\reg} \times_{\sfT/\!\!/W} \sfT$ is normal, it suffices to show that the $\sfG$-equivariant map \[\widetilde{\sfG}^{\reg}  \rightarrow \sfG^{\reg} \times_{\sfT/\!\!/W} \sfT, \qquad (g, b) \mapsto (gbg^{-1}, b\sfU)\] is a bijection on field valued points. This follows by Corollary 3.12 of \cite{St} (using that $\sfG$ is simple connected). \end{proof}
%\begin{proof} Let $b, b' \in \sfB^{\reg}$ with the same image in $\sfB/\sfU$. Write  $b = b_s b_u$ and $b' = b_s' b_u'$ for the Jordan decompositions. Then $b_s = ab_s'a^{-1}$ are conjugate by some $a \in \sfB$. Moreover $b_u$ and $a b_u' a^{-1}$ are regular unipotent elements of the centralizer $\sfG_{b_s}$ (a connected reductive group because $\sfG$ is simply connected). Therefore $b_u$ and $a b_u' a^{-1}$ are conjugate in $\sfB \cap \sfG_{b_s}$. Hence $b, b'$ are conjugate by an element of $\sfB$. \end{proof}

\subsection{Nakayama's Lemma}
The following is similar to Lemma 5.2 of \cite{BR}. %\cite[\href{https:/\!\!/stacks.math.columbia.edu/tag/00ME}{00ME}]{Stacks}
\begin{lemma}\label{TorLemma}
Let $M \in \Coh(R)$ be a perfect complex of $R$-modules such that $M \otimes_R k_{\zeta}$ is concentrated in degree 0 for every closed point $\zeta \in \sfT$. Then $M$ is quasi-isomorphic to a free $R$-module in degree 0.
\end{lemma}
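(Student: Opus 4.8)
\textbf{Proof plan for Lemma \ref{TorLemma}.}

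The statement is a standard ``fiberwise freeness implies freeness'' result, and the plan is to deduce it from local considerations followed by a rank-constancy argument. Since $R = k[\Lambda]$ is a commutative Noetherian ring of finite Krull dimension and is moreover regular (it is a localization of a polynomial ring, hence its localizations at maximal ideals are regular local), a perfect complex of $R$-modules may be analyzed via its behavior after base change to residue fields. First I would reduce to the local situation: it suffices to show that for every closed point $\zeta \in \sfT$, the localization $M_\zeta$ is quasi-isomorphic to a free $R_\zeta$-module concentrated in degree $0$, and that the resulting ranks assemble into a locally constant, hence constant (on each connected component), function. The second half is immediate once the first is known, because for a perfect complex the function $\zeta \mapsto \dim_{k(\zeta)} H^0(M \otimes_R k(\zeta))$ is upper semicontinuous and, when $M$ is fiberwise concentrated in degree $0$, equals the generic rank on a dense open; combined with local freeness it is constant, and a finitely generated projective $R$-module of constant rank over the Laurent polynomial ring is free by the Quillen--Suslin theorem for $k[\Lambda]$ cited already in the proof of Proposition \ref{UPerv}.

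For the local claim, fix a maximal ideal $\mathfrak{m} \subset R$ corresponding to $\zeta$, with residue field $\kappa$, and represent $M_\zeta$ by a minimal complex of finite free $R_\zeta$-modules, i.e. one whose differentials have entries in $\mathfrak{m}R_\zeta$; such a minimal model exists for any perfect complex over a Noetherian local ring. Then $M_\zeta \otimes_{R_\zeta} \kappa$ has zero differentials, so $H^i(M \otimes_R k_\zeta) = H^i(M_\zeta \otimes_{R_\zeta}\kappa)$ is the $i$-th free module of the minimal complex tensored with $\kappa$. By hypothesis this vanishes for $i \neq 0$, forcing the minimal complex to be concentrated in a single degree, namely a single finite free module in degree $0$. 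This is the entire local argument; it is essentially the Nakayama-type observation that the paper attributes to Lemma 5.2 of \cite{BR}.

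The only genuine subtlety --- and the step I would be most careful about --- is the passage from ``locally free of locally constant rank'' to ``globally free,'' since $\sfT$ need not be connected (its connected components are indexed by $\pi_0(\sfT) = \Lambda/\Lambda_0$ where $\Lambda_0$ is the coroot lattice, or more precisely by the torsion in the relevant lattice). If $\sfT$ is connected this is exactly Quillen--Suslin over $k[\Lambda] \cong k[x_1^{\pm 1},\dots,x_n^{\pm 1}]$. If $\sfT$ has several components, $R$ is a finite product of Laurent polynomial rings and one applies Quillen--Suslin to each factor; a finitely generated projective module is then free on each component with a priori possibly different ranks, but the fiberwise degree-$0$-concentration and the semicontinuity of fiber rank pin the rank down to the generic rank on each component, so $M$ is free. (In the application the relevant $M$ will in fact have constant rank across components, but one does not need this for the statement as phrased if one is content with ``free'' meaning free on each component; since the paper works with $\sfT$ a torus and the Homs in question are $R$-modules, connectivity is not actually an issue and Quillen--Suslin applies verbatim.) I would therefore phrase the conclusion as: $M$ is quasi-isomorphic to its degree-$0$ cohomology $H^0(M)$, which is a finitely generated $R$-module that is flat (being locally free by the minimal-complex argument), hence projective, hence free by \cite{Sw}.
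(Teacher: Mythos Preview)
Your proposal is correct and follows essentially the same approach as the paper: localize at each closed point, use a Nakayama argument to show the localized complex is a free module in degree $0$, then invoke Quillen--Suslin for Laurent polynomial rings \cite{Sw}. Your use of a minimal free complex is a slightly cleaner packaging of the paper's explicit two-sided truncation via Nakayama; note also that your worry about connected components is unnecessary, since $\sfT = \Spec k[\Lambda]$ with $\Lambda$ free abelian is an integral scheme.
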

\begin{proof}
It suffices to show that for each closed point $\zeta \in \sfT$, the localization $M_{\zeta}$ is a free $R_{\zeta}$-module concentrated in degree 0. Represent $M_{\zeta}$ by a finite complex $M_{\zeta}^*$ of free $R_{\zeta}$-modules.

Let $i$ be the largest integer such that $M_{\zeta}^i \neq 0$. If $i>0$ then $M_{\zeta}^{i-1}  \otimes_R k_{\zeta} \rightarrow M_{\zeta}^i \otimes_R k_{\zeta}$ is surjective. Therefore $M_{\zeta}^{i-1} \rightarrow M_{\zeta}^i$ is a surjection by Nakayama's lemma, so we may replace $M_{\zeta}^*$ by a complex in degrees $\leq i-1$. Repeating shows that $M_{\zeta}$ is quasi-isomorphic to a finite complex of free $R_{\zeta}$-modules in degrees $\leq 0$.

% Alternatively: There is a triangle \[\tau^{\leq i-1} M_{\zeta} \rightarrow M_{\zeta} \rightarrow \coker d, \qquad \text{where} \qquad d: M_{\zeta}^{i-1} \rightarrow M_{\zeta}^i.\] If $i > 0$ then by left exactness of the tensor product $(\coker d) \otimes_R k_{\zeta} = 0$. By Nakayama's Lemma $d$ is a (split) surjection, so we may replace $M_{\zeta}^*$ by a complex in degrees $\leq i-1$. Repeating shows that $M_{\zeta}$ is quasi-isomorphic to a finite complex of free $R_{\zeta}$-module in degrees $\leq 0$.

Let $i$ be the smallest integer such that $M_{\zeta}^i \neq 0$. If $i < 0$ then $M_{\zeta}^i \otimes_{R_{\zeta}} k_{\zeta} \rightarrow M_{\zeta}^{i+1} \otimes_{R_{\zeta}} k_{\zeta}$ is injective. By Nakayama's Lemma $M_{\zeta}^i \rightarrow M_{\zeta}^{i+1}$ is a split injection, so we can replace $M_{\zeta}^*$ by a finite complex of free $R_{\zeta}$-modules in degrees $\geq i + 1$. Repeating shows that $M_{\zeta}$ is quasi-isomorphic to a free $R_{\zeta}$-module concentrated in degree 0.

Therefore $M$ is quasi-isomorphic to a finitely generated flat $R$-module in degree 0. The Laurent polynomial Quillen--Suslin theorem \cite{Sw} implies $M$ is free.
\end{proof}

\subsection{Order of vanishing in semi-simple rank 1}\label{Rank1Spectral}
Here we perform the spectral order of vanishing calculation by restricting to the regular locus and then diagonalizing a regular section. 
In this subsection only, assume $\sfG$ has semi-simple rank 1, that is $\sfG$ has a unique finite simple root $\alpha$.

\begin{proposition}\label{SpectralImageGraded}
Let $\lambda \in \Lambda^+$, set $n = \langle \check{\alpha}, \lambda \rangle$, and define \[V_{\lambda}(i) \coloneqq V_{\lambda} \otimes k_{-\lambda + (n-i)\alpha} \; \in \; \Rep(\sfB).\]
If $0 \leq i \leq n$, then restriction to the torus \beq \label{istar} i^*: (V_{\lambda}(i) \otimes \cO(\sfB))^{\sfB} \rightarrow (V_{\lambda}(i) \otimes \cO(\sfT))^{\sfT} \simeq R\eeq is injective with image $(e^{\alpha} - 1)^iR$.
%If $0 \leq i \leq n$, the image of \[\Gamma(B/B, V_{n\omega} \otimes \cO(- (ns\omega + i\alpha)) \rightarrow \Gamma(T/T, (V_{\lambda} \otimes \cO(- (ns\omega + i\alpha)))|_T) = R\] is the ideal $(e^{\alpha} - 1)^iR$.
\end{proposition}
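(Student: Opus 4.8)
The plan is to reduce the computation to $\sfG = \SL(2)$ or $\PGL(2)$ by restricting to the regular semisimple locus, where the Grothendieck alteration is particularly explicit. Since $\Rep(\sfG)$ is semisimple, one has
\[
(V_\lambda(i)\otimes\cO(\sfB))^{\sfB}\;\simeq\;\bigl(V_\lambda(i)\otimes R\Gamma(\widetilde{\sfG},\cO(\sfB))\bigr)^{\sfG},
\]
and by Proposition \ref{PushStructureSheaf} the pushforward of $\cO_{\widetilde{\sfG}}$ along $\widetilde{\sfG}\to\sfG\times_{\sfC}\sfT$ is the structure sheaf, so $R\Gamma(\widetilde{\sfG},\cO_{\widetilde\sfG})\simeq\cO(\sfG\times_\sfC\sfT)$. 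More relevantly, to compute sections of $\cO(\mu)$-twists I would use the Koszul-type exact sequences already available (as in \eqref{Koszul}), or just the projective normality/vanishing inputs of Proposition \ref{SVan}, together with the fact that $\sfG^{\reg}\times_\sfC\sfT\simeq\widetilde{\sfG}^{\reg}$ (Corollary 3.12 of \cite{St}). The key point is that the restriction map \eqref{istar} is \emph{injective} — this follows because $\sfG^{\rs}$ is dense in $\sfG$, every regular semisimple element is conjugate to one in $\sfT$, and the target $R$ is reduced; an equivariant section of a vector bundle on $\widetilde{\sfG}$ that vanishes on the preimage of $\sfT$ vanishes on the preimage of $\sfG^{\rs}$, hence everywhere. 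So the whole content is the computation of the \emph{image}.

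To compute the image, I would work on the regular locus $\widetilde{\sfG}^{\reg}\simeq\sfG^{\reg}\times_\sfC\sfT$ and diagonalize. Concretely, write $\sfG$ of semisimple rank one, pick the open subset where a chosen matrix coefficient (the "regular" function cutting out $\sfG^{\rs}$) is invertible, and conjugate a regular element into the torus; this trivializes the $\sfB$-reduction and realizes the section space as functions on $\sfT$ satisfying a regularity condition at the wall $\sfT_\alpha$. The vector bundle $V_\lambda(i)$ restricted to $\widetilde{\sfG}^{\reg}$ decomposes into weight lines for the relative torus, and the highest weight arrow together with the shift by $k_{-\lambda+(n-i)\alpha}$ selects the weight-$(n-i)$ line inside the $\SL(2)_\alpha$-representation $V_\lambda$. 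The upshot is that a $\sfG$-invariant section of $V_\lambda(i)\otimes\cO(\sfB)$, restricted to $\sfT$, is forced to extend to a regular section over the open cell meeting the wall; and the order of vanishing of such an extension along $\sfT_\alpha$ — controlled by how the weight-$(n-i)$ vector degenerates as one approaches $\sfT_\alpha$, i.e. by the structure of $V_\lambda$ as an $\SL(2)$-module of highest weight $n$ — is exactly $i$. Running this in reverse, every element of $(e^\alpha-1)^i R$ arises. This is the genuine "order of vanishing" computation promised in the section title.

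The main obstacle I anticipate is the bookkeeping of the twist $V_\lambda(i)=V_\lambda\otimes k_{-\lambda+(n-i)\alpha}$ and correctly identifying which line in the $\SL(2)_\alpha$-string through the highest weight it picks out, and then matching the vanishing order of the corresponding section along $\sfT_\alpha$ with the exponent $i$. One has to be careful that $0\le i\le n$ is exactly the range in which the weight $-\lambda+(n-i)\alpha$ appears in $V_\lambda^*$ (equivalently $\lambda-(n-i)\alpha$ appears in $V_\lambda$), which is why the statement is restricted to that range; outside it the section space would behave differently. A secondary technical point is ensuring the computation is insensitive to whether $\sfG$ is $\SL(2)$, $\PGL(2)$, or $\GL(1)\times\SL(2)$-type — but since we have localized away from all other walls and are only asking about the single wall $\sfT_\alpha$, and since \eqref{istar} lands in $R=k[\Lambda]$ which only sees the coweight lattice, this should reduce cleanly to the universal cover $\SL(2)_\alpha$ computation. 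I would phrase the final argument so that it only invokes the structure of $V_\lambda$ as a representation of the rank-one group together with the explicit description of $\widetilde{\sfG}^{\reg}$, keeping the $\SL(2)$ weight combinatorics front and center.
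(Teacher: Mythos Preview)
Your strategy is essentially the paper's: restrict to the regular locus and diagonalize a regular section in rank one to read off the order of vanishing along the wall. Two small differences are worth noting. First, the paper inserts a preliminary weight-filtration step, showing $(V_\lambda(i)\otimes\cO(\sfB))^{\sfB}\simeq(V_\lambda(i)^{\le 0}\otimes\cO(\sfB))^{\sfB}$ and then projecting to the weight-zero line $V_\lambda(i)_{=0}$; this reduces the problem to computing the image of a map between rank-one $R$-modules before ever passing to $\sfB^{\reg}/\sfB\simeq\sfT/\sfJ$, which makes the eigenvector bookkeeping cleaner than working with the full $V_\lambda(i)$. Second, your injectivity argument via density of $\sfB$-conjugates of $\sfT$ is more direct than the paper's, where injectivity falls out of the explicit identification of the source with sections of a line bundle. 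The actual computation you still owe is the one the paper carries out explicitly: for $\SL(2)$, write the regular section in a weight basis, find its eigenvectors $(e^\omega-e^{s\omega})x+y$ and $y$, pass to the $n$th symmetric power, and observe that the leading term of $((e^\omega-e^{s\omega})x+y)^i y^{n-i}$ in the weight-zero slot is $(e^\omega-e^{s\omega})^i x^i y^{n-i}$; the $\PGL(2)$ case then follows by taking $\pm 1$-invariants.
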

\begin{proof}
Let $V_{\lambda}(i)^{\leq 0} \subset V_{\lambda}(i)$ be the subobject in $\Rep(\sfB)$ comprised of the $\sfT$-weight spaces of weight $\leq 0$, and consider the tautological short exact sequence
$$0 \rightarrow V_{\lambda}(i)^{\leq 0} \rightarrow V_{\lambda}(i) \rightarrow V_\lambda(i)_{> 0} \rightarrow 0.$$
Consider the associated short exact sequence of vector bundles on $\sfB/\sfB$, and the left exact sequence of their global sections 
$$0 \rightarrow (V_{\lambda}(i)^{\leq 0} \otimes \cO(\sfB))^{\sfB} \rightarrow (V_{\lambda}(i) \otimes \cO(\sfB))^{\sfB} \rightarrow (V_\lambda(i)_{> 0} \otimes \cO(\sfB))^{\sfB}.$$
We note that as $V_\lambda(i)_{> 0}$ and $\cO(\sfB)$ have positive weights, we have $(V_\lambda(i)_{> 0} \otimes \cO(\sfB))^{\sfB} \simeq 0$, whence $$(V_{\lambda}(i)^{\leq 0} \otimes \cO(\sfB))^{\sfB} \simeq (V_{\lambda}(i) \otimes \cO(\sfB))^{\sfB}.$$After pullback of this exact sequence of vector bundles along $i: \sfT / \sfT \rightarrow \sfB / \sfB$, a similar (easier) argument shows that $$(V_{\lambda}(i)^{\leq 0} \otimes \cO(\sfT))^{\sfT} \simeq (V_{\lambda}(i) \otimes \cO(\sfT))^{\sfT}.$$

To proceed, define $V_\lambda(i)^{\leqslant -\alpha} \subset V_\lambda(i)^{\leqslant 0}$ as the subobject in $\Rep(\sfB)$ comprised of the $\sfT$-weight spaces of weight $\leqslant -\alpha$, so that we have a tautological exact sequence
$$0 \rightarrow V_\lambda(i)^{\leqslant -\alpha} \rightarrow V_\lambda(i)^{\leqslant 0} \xrightarrow{} V_\lambda(i)_{ = 0} \rightarrow 0,$$
where $V_\lambda(i)_{=0}$ is a one dimensional $\Rep(B)$-module of weight zero. Consider the associated short exact sequence of vector bundles on $\sfB/\sfB$, and the left exact sequence of their global sections
$$0 \rightarrow (V_\lambda(i)^{\leqslant -\alpha} \otimes \cO(\sfB))^{\sfB} \rightarrow (V_\lambda(i)^{\leqslant 0} \otimes \cO(\sfB))^{\sfB} \rightarrow (V_\lambda(i)_{= 0} \otimes \cO(\sfB))^{\sfB}. $$
Note that after pulling back the vector bundles along $i: \sfT/\sfT \rightarrow \sfB / \sfB$, the analogous left exact sequence of global sections is tautologically exact, i.e. 
$$0 \rightarrow (V_\lambda(i)^{\leqslant -\alpha} \otimes \cO(\sfT))^{\sfT} \rightarrow (V_\lambda(i)^{\leqslant 0} \otimes \cO(\sfT))^{\sfT} \rightarrow (V_\lambda(i)_{= 0} \otimes \cO(\sfT))^{\sfT} \rightarrow 0, $$and moreover, as $\sfT$ acts trivially on $\cO(\sfT)$ via the adjoint action, we have $(V_\lambda(i)^{\leqslant -\alpha} \otimes \cO(\sfT))^{\sfT} \simeq 0$. 

After choosing a trivialization $V_\lambda(i)_{=0} \simeq k_0$, we obtain a commutative diagram
\[\begin{tikzcd}
(V_{\lambda}(i) \otimes \cO(\sfB))^{\sfB} \arrow[d, "\eqref{istar}"'] & \arrow[l, "\sim"'] (V_{\lambda}(i)^{\leq 0} \otimes \cO(\sfB))^{\sfB} \arrow[d, "i^*"'] \arrow[r] & (V_{\lambda}(i)_{= 0} \otimes \cO(\sfB))^{\sfB} \simeq \cO(\sfB)^{\sfB} \arrow[d,"\sim"] \\
(V_{\lambda}(i) \otimes \cO(\sfT))^{\sfT} & \arrow[l, "\sim"] (V_{\lambda}(i)^{\leq 0}\otimes \cO(\sfT))^{\sfT}  \arrow[r, "\sim"'] &  (V_{\lambda}(i)_{= 0} \otimes \cO(\sfT))^{\sfT} \simeq \cO(\sfT)^{\sfT}.\\
\end{tikzcd}\]  

 \vspace{-.7cm}

\noindent Therefore it suffices to check that $(e^{\alpha} - 1)^i \cO(\sfT)$ is the image of 
\beq \label{grBorel} (V_{\lambda}(i)^{\leq 0} \otimes \cO(\sfB))^{\sfB} \rightarrow (V_{\lambda}(i)_{= 0} \otimes \cO(\sfB))^{\sfB} \simeq \cO(\sfB)^{\sfB} \simeq \cO(\sfT).\eeq

Let $\sfB^{\reg} \subset \sfB$ be the regular locus. Then $\sfB^{\reg}/\sfB \simeq \sfT/\sfJ$ where $\sfJ \hookrightarrow \sfB \times \sfT$ is the centralizer of a regular section. More precisely $\sfJ$ is the closed group subscheme of the constant group scheme $\sfB \times \sfT$ over $\sfT$, obtained as the centralizer of a section $s:\sfT \rightarrow \sfB^{\reg}$ of the tautological map $\sfB^{\reg} \hookrightarrow \sfB \rightarrow \sfT.$ % (Since $\sfG$ has simply connected derived subgroup, the centralizer of any $b \in \sfB^{\reg}$ is contained in $\sfB$.)  by \cite{Hum} Lemma 4.8 and Proposition 4.1. 
 Because $\sfB - \sfB^{\reg}$ has codimension $\geq 2$, the global sections of our vector bundles are unchanged by restricting to $\sfB^{\reg} / \sfB$, i.e.,  the map \eqref{grBorel} coincides with \beq \label{grJ} \gr: (V_{\lambda}(i)^{\leq 0} \otimes \cO(\sfT))^{\sfJ} \rightarrow (V_{\lambda}(i)_{= 0} \otimes \cO(\sfT))^{\sfJ} \simeq \cO(\sfT)^{\sfJ}.\eeq

%\[(V_{\lambda}(i) \otimes \cO(\sfB))^{\sfB} \simeq (V_{\lambda}(i) \otimes \cO(\sfB^{\reg}))^{\sfB} \simeq (V_{\lambda}(i) \otimes \cO(\sfT))^{\sfJ}.\] 

We first consider the case when $\sfG$ has derived subgroup $\SL(2)$. Let $\omega \in \Lambda$ such that $\langle \check{\alpha}, \omega \rangle = 1$. 
Since $V_{\lambda} = V_{n\omega} \otimes k_{\lambda - n\omega}$, it suffices to consider the case $\lambda = n\omega$. Choose an ordered weight basis $x, y \in V_{\omega}$ scaled so that the regular section acts by \[\sfT \xrightarrow{s} \sfB \rightarrow \End(V_{\omega}), \qquad \zeta \mapsto \begin{pmatrix} \omega(\zeta) & \\ 1 & s\omega(\zeta) \\ \end{pmatrix}.\]
This regular section has eigenvectors $x \otimes (e^{\omega} - e^{s\omega}) + y \otimes 1 \in V_{\omega} \otimes \cO(\sfT)$ and $y \otimes 1 \in V_{\omega} \otimes \cO(\sfT)$. We write $\mathscr{L}_x$ and $\mathscr{L}_y$ for the line subbundles of $V_\omega \otimes \OO(\sfT)$ respectively spanned by these eigen-sections. By explicitly computing the centralizers of certain regular elements in $\GL(2)$, one can check that $\mathscr{L}_x$ and $\mathscr{L}_y$ are equivariant for the entire action of $\sfJ$, not just the regular section $s$. Their respective eigenvalues with respect to $\sfJ$ are the characters $e^\omega$ and $e^{s\omega}$, restricted from $\sfB$ to $\sfJ$.  That is, we have a tautological $\sfJ$-equivariant map $$\mathscr{L}_x \oplus \mathscr{L}_y \rightarrow V_\omega \otimes \OO(\sfT),$$ which is an isomorphism over the regular semi-simple locus.

\iffalse I.e., one has a decomposition on $\sfT/\sfJ$ 
$$V_\omega \otimes \cO_{\sfT/\sfJ} \simeq \mathscr{L}_x \oplus \mathscr{L}_y.$$\fi 
Passing to $n$th symmetric powers, we obtain a map
$$\mathscr{L}_x^{\otimes n} \oplus (\mathscr{L}_x^{\otimes (n-1)} \otimes \mathscr{L}_y) \oplus \cdots \oplus (\mathscr{L}_x \otimes \mathscr{L}_y^{\otimes (n-1)}) \oplus \mathscr{L}_y^{n} \rightarrow V_\lambda \otimes \cO(\sfT).$$
With this, note that $(V_\lambda(i)^{\leqslant 0} \otimes \cO(\sfT))^{\sfJ}$ identifies with the sections on $\sfT$ of the line subbundle $$\mathscr{L}_x^{\otimes i} \otimes \mathscr{L}_y^{\otimes (n-i)} \hookrightarrow V_\lambda \otimes \cO(\sfT),$$which is explicitly the subbundle generated by the section given in evident notation by \iffalse

with $\sfJ$-eigenvalues $e^{\omega} \in \cO(\sfT)$ and $e^{s\omega} \in \cO(\sfT)$.
Therefore under the identification $V_{\lambda} = S^n V_{\omega}$, we have\fi  \begin{align*} ((e^{\omega} - e^{s\omega})x + y)^i y^{n-i} &= ((e^{\omega} - e^{s\omega})^ix^i y^{n-i} + i(e^{\omega} - e^{s\omega})^{i-1}x^{i-1}y^{n-i+1} + \cdots +  y^n).\end{align*}
The map \eqref{grJ} kills all but the leading term, because the lower terms $i(e^{\omega} - e^{s\omega})^{i-1}x^{i-1}y^{n-i+1}, \ldots, y^n \in V_{\lambda}(i)^{\leq -\alpha} \otimes \cO(\sfT)$ lie in lower weight spaces. Therefore \eqref{grJ} is injective with the desired image $(e^{\omega} - e^{s\omega})^i \cO(\sfT) = (e^{\alpha} - 1)^i \cO(\sfT)$.

If $\sfG$ has derived subgroup $\PGL(2)$, then there exists a reductive group $\sfG^{\sct}$ with derived subgroup $\SL(2)$, such that $\sfG = \sfG^{\sct}/\pm 1$.
If $\sfB^{\sct} \subset \sfG^{\sct}$ is the Borel, then \beq \label{i*Adjoint} i^*:(V_{\lambda}(i) \otimes \cO(\sfB))^{\sfB} \rightarrow (V_{\lambda}(i) \otimes \cO(\sfT))^{\sfT}\eeq identifies with the $\pm 1$-invariants invariants of \[i^*:(V_{\lambda}(i) \otimes \cO(\sfB^{\sct}))^{\sfB^{\sct}} \rightarrow (V_{\lambda}(i) \otimes \cO(\sfT^{\sct}))^{\sfT^{\sct}},\] where $\pm 1$ acts by multiplication on $\sfT^{\sct}$ and $\sfB^{\sct}$. Therefore the image of \eqref{i*Adjoint} is the desired $((e^{\alpha} - 1)^i\cO(\sfT^{\sct}))^{\pm 1} = (e^{\alpha} - 1)^i\cO(\sfT)$.
\end{proof}

\section{Iwahori--Whittaker sheaves}\label{WhittakerSheaves}
Here we define the Iwahori--Whittaker category, the automorphic side of the universal monodromic Arkhipov--Bezrukavnikov equivalence. First we define the finite Whittaker category as the module on which the finite Hecke category acts through a universal monodromic version of Soergel's functor. Then we define the Iwahori--Whittaker category by tensoring over the finite Hecke category with the finite Whittaker module.

\subsection{The finite Whittaker category} \label{FiniteWhittaker}
Let $\Xi \in \Perv_{(I)}(\Fl)$ be the universal monodromic big tilting sheaf \cite{T} supported on $\overline{\Fl}_{w_0} = G/U$. 

\begin{definition}
Define Soergel's functor \[\bV- \coloneqq \Hom(\Xi, -): \Shv_{(B)}(G/U) \rightarrow \Bim(R).\] 
\end{definition}

Soergel's functor is lax monoidal according to \cite{LNY} and strict monoidal by \cite{T}.

%Indeed, by construction, it factors through \[\bV: \Shv_{(B)}(G/U) \xrightarrow{j^*} \Shv_{(T \times T)}(B^-B/U) \xrightarrow{\phi} \Bim(R).\] Restriction $j^*$ is lax-monoidal, and vanishing cycles $\phi$ is monoidal.

\begin{definition}
Define the finite Whittaker category \[\Shv_{(B, \chi)}(G/U) \coloneqq \QCoh(\sfT) \;\; \in \; \module\Shv_{(B)}(G/U),\] a right module category on which the finite Hecke category acts via \[\Shv_{(B)}(G/U) \xrightarrow{\bV} \Bim(R) \curvearrowright \Mod(R).\] 
\end{definition}

Let ${}^{\chi}\Delta_1 \in \Shv_{(B, \chi)}(G/U)$ correspond to the structure sheaf $R \in \QCoh(\sfT)$.

\begin{definition}
Define Whittaker averaging, a $\Shv_{(B)}(G/U)$-module functor \beq \label{AvChiFin}  {}^{\chi}\Delta_1 * -: \Shv_{(B)}(G/U) \rightarrow \Shv_{(B, \chi)}(G/U), \qquad A \mapsto \bV(A)|_{1 \otimes R}.\eeq
\end{definition}

\begin{lemma}
Whittaker averaging admits a continuous left adjoint $\Shv_{(B)}(G/U)$-module functor \beq \label{AvBFin} \Delta_1^{\chi} * - : \Shv_{(B, \chi)}(G/U) \rightarrow \Shv_{(B)}(G/U), \qquad M \mapsto \Xi \otimes_R M.\eeq 
\end{lemma}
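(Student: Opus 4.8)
The claim is that the Whittaker averaging functor $\Shv_{(B)}(G/U) \to \Shv_{(B,\chi)}(G/U)$ of \eqref{AvChiFin} admits a continuous left adjoint, given by the formula $M \mapsto \Xi \otimes_R M$. Since $\Shv_{(B,\chi)}(G/U)$ is by definition $\QCoh(\sfT) = \Mod(R)$ acted on by $\Shv_{(B)}(G/U)$ through Soergel's functor $\bV = \Hom(\Xi,-)$, essentially everything reduces to a formal adjunction inside the bimodule picture once we pin down what the two functors are.

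First I would spell out the functor \eqref{AvBFin} precisely as a $\Shv_{(B)}(G/U)$-module functor. The module structure on $\Shv_{(B,\chi)}(G/U) = \Mod(R)$ is via $\Shv_{(B)}(G/U) \xrightarrow{\bV} \Bim(R) \curvearrowright \Mod(R)$, so for $A \in \Shv_{(B)}(G/U)$ and $M \in \Mod(R)$ the action is $M \star A := M \otimes_R \bV(A)$, viewing $\bV(A)$ as an $R$-bimodule with $M$ acting on the left factor. The candidate left adjoint is $L(M) := \Xi \otimes_R M$ where $\Xi$ is regarded as a right $R$-module via (one of) its monodromy actions; continuity is clear since $\Xi \otimes_R -$ commutes with all colimits. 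The $\Shv_{(B)}(G/U)$-module structure on $L$, i.e. a coherent identification $L(M \star A) \simeq L(M) * A$, comes from the lax (in fact strict, by \cite{T}) monoidality of $\bV$: one has $(\Xi \otimes_R M) * A$, and since $\bV$ is strict monoidal and $\bV(\Xi * A) \simeq \bV(\Xi) \otimes_R \bV(A)$, the right $\Shv_{(B)}(G/U)$-action on $\Xi$ matches the right $R$-action through $\bV$. The main structural input here is that $\Xi$ is a tilting generator whose endomorphisms under $\bV$ recover $R$, so that $\bV(\Xi) \simeq R$ as a bimodule (with the two $R$-actions being the two monodromies).

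The core of the proof is then the adjunction isomorphism, which I would verify directly on the level of the underlying functors and then upgrade to module functors. For $M \in \Shv_{(B,\chi)}(G/U) = \Mod(R)$ and $A \in \Shv_{(B)}(G/U)$, we want
\[
\Hom_{\Shv_{(B)}(G/U)}\big(\Xi \otimes_R M,\, A\big) \;\simeq\; \Hom_{\Mod(R)}\big(M,\, \bV(A)|_{1 \otimes R}\big).
\]
By definition of $\bV$, the left side is $\Hom_{\Shv_{(B)}(G/U)}(\Xi \otimes_R M, A) \simeq \Hom_R(M, \Hom_{\Shv_{(B)}(G/U)}(\Xi, A)) = \Hom_R(M, \bV(A))$, using that $\Xi \otimes_R -$ is left adjoint to $\Hom_{\Shv}(\Xi,-) = \bV(-)$ as a functor $\Shv_{(B)}(G/U) \to \Mod(R)$ — this is precisely the tensor-hom adjunction once we know $\bV$ is $R$-linear, which is part of the setup. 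The only subtlety is the restriction $|_{1 \otimes R}$: this records that in \eqref{AvChiFin} we take the fiber at the identity of the \emph{left} monodromy torus, i.e. we use the $(R \otimes R)$-bimodule structure on $\bV(A)$ and specialize the first factor. Correspondingly, on the left side one must check $L(M) = \Xi \otimes_R M$ uses the matching monodromy of $\Xi$, so that $\bV(L(M)) \simeq M \otimes_k \bV(\Xi)|_{\text{one side}}$ and the specialization is compatible. I would handle this by unwinding Soergel's functor on $\Xi$ itself: $\bV(\Xi) = \End(\Xi)$ is $R$ as a bimodule via the two sides, and $\Xi \otimes_R M$ is $M$-twisted on exactly the side that \eqref{AvChiFin} does not kill.

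\textbf{Main obstacle.} The routine part is the tensor-hom adjunction; the genuinely delicate point is keeping track of the two commuting $R$-actions (left/right monodromy) and checking that the $|_{1\otimes R}$ restriction in \eqref{AvChiFin} is adjoint, as a \emph{module} functor over $\Shv_{(B)}(G/U)$, to tensoring $\Xi$ on the correct side. Concretely I expect the work to be in verifying that the unit and counit of the bare adjunction are maps of $\Shv_{(B)}(G/U)$-modules, which amounts to compatibility of the monoidality constraint of $\bV$ (from \cite{T}) with the bimodule structure on $\bV(\Xi) \simeq R$ and with the action on $\Mod(R)$ — a diagram chase, but one where sign/side conventions must be pinned down carefully against the definitions in Section \ref{FiniteWhittaker}.
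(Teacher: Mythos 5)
Your tensor-hom adjunction for the underlying functor and the observation that $\Xi \otimes_R -$ preserves colimits are both fine and not far from the paper's step for the bare adjunction (the paper attributes continuity to compactness of $\Xi$, but either reasoning serves). However, your treatment of the $\Shv_{(B)}(G/U)$-module structure on $\Delta_1^{\chi} * -$ has a gap, beginning with a factual error: you assert $\bV(\Xi) \simeq R$ as a bimodule, but $\bV(\Xi) = \End(\Xi)$, and the universal monodromic Endomorphismensatz (recalled as \eqref{Endomorphismensatz}) gives $\End(\Xi) \simeq \cO(\sfT \times_{\sfC} \sfT)$, which is strictly larger than $R$. (Perhaps you are conflating $\bV(\Xi)$ with $\bV(\Delta_1) = \Hom(\Xi, \Delta_1) \simeq R$.) Since you name this as the ``main structural input,'' the module-structure paragraph does not go through as written. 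Even setting the error aside, the strategy of reading the module structure off the strict monoidality of $\bV$ is too weak: an isomorphism $\bV(\Xi * A) \simeq \bV(\Xi) \otimes_R \bV(A)$ in $\Bim(R)$ does not by itself yield a coherent isomorphism $\Xi * A \simeq \Xi \otimes_R \bV(A)$ upstairs in $\Shv_{(B)}(G/U)$; that identification is essentially the nontrivial content one would have to supply, and your proposal acknowledges but does not close this hole.

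The paper avoids all of this with a categorical shortcut: the left adjoint of a (lax) $\mathcal{A}$-module functor between $\mathcal{A}$-module categories is automatically an oplax $\mathcal{A}$-module functor, and when $\mathcal{A}$ is rigid monoidal these coincide with strict module functors. Rigidity of $\Shv_{(B)}(G/U)$ is exactly what Lemma \ref{Compact} supplies, so $\Delta_1^{\chi} * -$ is a strict module functor with no diagram chasing. This is the idea your proposal is missing; the side-and-twist bookkeeping you flagged as the ``main obstacle'' is precisely what rigidity lets one skip.
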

\begin{proof}
The left adjoint $\Delta_1^{\chi} * -$ is continuous because $\Xi$ is compact. Moreover $\Delta_1^{\chi} * -$ is a strict, not just lax, $\Shv_{(B)}(G/U)$-module functor by the rigidity of $\Shv_{(B)}(G/U)$, proved in Lemma \ref{Compact}
\end{proof}

\begin{lemma} \label{WhitFiniteLem}Whittaker averaging satisfies the following properties.
\begin{enumerate}[label=(\alph*)]
\item \label{WhitFiniteLem2} ${}^{\chi}\Delta_1 * \nabla_w \simeq {}^{\chi}\Delta_1 * \Delta_w \simeq {}^{\chi}\Delta_1 \in \Shv_{(B, \chi)}(G/U)$ for all $w \in W^{\fnt}$.
\item \label{WhitFiniteLem1} The counit of adjunction $\eta: \Xi \coloneqq \Delta_1^{\chi} * {}^{\chi}\Delta_1 \rightarrow \Delta_1$ is surjective for the perverse t-structure, and $\ker \eta$ admits a filtration with graded pieces $\Delta_w$ for $w \neq 1 \in W^{\fnt}$.
\end{enumerate}
\end{lemma}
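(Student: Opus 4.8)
\textbf{Proof proposal for Lemma \ref{WhitFiniteLem}.}

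The plan is to deduce everything from the strict monoidality of Soergel's functor $\bV$ together with the known behavior of $\bV$ on the big tilting sheaf $\Xi$ and on the standard/costandard objects $\Delta_w, \nabla_w$ for $w \in W^{\fnt}$. For part \ref{WhitFiniteLem2}, first recall that by definition ${}^{\chi}\Delta_1 * A = \bV(A)|_{1 \otimes R}$, i.e.\ the restriction of the $R$-bimodule $\bV(A)$ along the left copy of $R$ to the point $1 \in \sfT$. So the claim is equivalent to the assertion that $\bV(\Delta_w)|_{1 \otimes R} \simeq \bV(\nabla_w)|_{1 \otimes R} \simeq R$ (as a right $R$-module, i.e.\ as $\OO(\sfT)$) for $w \in W^{\fnt}$. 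This I would verify by induction on $\ell(w)$ using the simple reflection case: monoidality gives $\bV(\Delta_s)$ and $\bV(\nabla_s)$ as the standard/costandard Soergel bimodules $R \otimes_{R^s} R$ with appropriate left/right twists, and the short exact sequence \eqref{DeltasNablas} tensored through $\bV$ controls the difference. The key computational input is that, after restricting the left $R$-action to the point $1$ (the base point fixed by $\chi$), the bimodule $R\otimes_{R^s}R$ becomes free of rank one as a right $R$-module; this is exactly the statement that the Whittaker-averaged standard and costandard objects indexed by a simple reflection collapse to the unit. The inductive step then uses that for $w = s_{i_1}\cdots s_{i_k}$ reduced we have $\Delta_w \simeq \Delta_{s_{i_1}} * \cdots * \Delta_{s_{i_k}}$ (Proposition \ref{Convolution}\ref{Convolution2}) and that ${}^{\chi}\Delta_1 * -$ is a $\Shv_{(B)}(G/U)$-module functor, so ${}^{\chi}\Delta_1 * \Delta_w \simeq ({}^{\chi}\Delta_1 * \Delta_{s_{i_1}}) * \Delta_{s_{i_2}} * \cdots \simeq {}^{\chi}\Delta_1 * \Delta_{s_{i_2}} * \cdots$, and one repeats; the case of $\nabla_w$ is identical, and mixed words follow since $\nabla_s * \Delta_s \simeq \Delta_1$.

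For part \ref{WhitFiniteLem1}, the counit $\eta\colon \Xi = \Delta_1^{\chi} * {}^{\chi}\Delta_1 \to \Delta_1$ is by the adjunction of \eqref{AvChiFin} and \eqref{AvBFin} the map $\Xi \otimes_R R \to \Delta_1$ adjoint to the identity on ${}^{\chi}\Delta_1$; concretely it is the unique (up to scalar) map $\Xi \to \Delta_1$, which exists because $\Xi$ is the big tilting sheaf on $\overline{\Fl}_{w_0} = G/U$ and hence surjects onto each of its standard quotients, in particular onto $\Delta_1$ (the standard object attached to the open... in fact the closed point, the unit stratum). I would argue surjectivity for the perverse $t$-structure by noting that $\Xi$, being tilting, has a standard filtration with $\Delta_1$ appearing exactly once, as the bottom (or top, depending on convention) step; the composite $\Delta_1 \hookrightarrow \Xi \to \Delta_1$ must be an isomorphism (it is a nonzero endomorphism of $\Delta_1$, and $\End(\Delta_1) = R$ with the relevant component being a unit — one checks the scalar is nonzero by applying $\bV$ and using that $\bV(\Xi) \to \bV(\Delta_1)$ hits the generator, since $\bV(\Xi) = R\otimes_{R^{W^{\fnt}}}R$ surjects onto $R = \bV(\Delta_1)$). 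Hence $\eta$ is a split surjection onto its image... more precisely it is surjective because its image contains the copy of $\Delta_1$, and $\Delta_1$ has no proper perverse quotients supported on the unit stratum, while $\Xi/\Delta_1$ is supported away from... here one must be slightly careful. The cleanest route: apply the exact functor $\bV = \Hom(\Xi,-)$ — which is exact on $\Perv_{(B)}(G/U)$ since $\Xi$ is tilting hence projective-injective in the relevant sense — to the triangle $\ker\eta \to \Xi \to \Delta_1$; one gets $\bV(\ker\eta) \to R\otimes_{R^{W^{\fnt}}}R \to R$, and the second map is the quotient by the augmentation ideal on the left factor, which is surjective with kernel $(R_+^{W^{\fnt}}\otimes 1)\cdot(R\otimes_{R^{W^{\fnt}}}R)$; identifying this kernel as $\bV$ of a sheaf with the claimed $\Delta_w$-filtration ($w\neq 1$) via the standard basis of $R$ over $R^{W^{\fnt}}$ indexed by $W^{\fnt}$ gives both the surjectivity of $\eta$ and the filtration of $\ker\eta$.

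The main obstacle I anticipate is part \ref{WhitFiniteLem1}: translating the purely algebraic statement about the bimodule $R\otimes_{R^{W^{\fnt}}}R$ and its augmentation quotient back into a statement about a $\Delta_w$-filtration of the perverse sheaf $\ker\eta$ on the geometric side. This requires knowing that $\bV$ not only detects but \emph{reflects} standard filtrations — i.e.\ that a perverse sheaf $A$ admits a $\Delta_w$-filtration ($w$ ranging over a prescribed subset) if and only if $\bV(A)$ admits the corresponding filtration by standard Soergel bimodules — which is part of Soergel's Struktursatz package as developed in the universal monodromic setting in \cite{T}. Granting that, one matches the $W^{\fnt}$-indexed basis $\{\delta_w\}$ of $R$ over $R^{W^{\fnt}}$ with the strata $\Fl_w \subset G/U$, observes the augmentation quotient kills precisely $\delta_1$, and reads off the filtration on $\ker\eta$ with graded pieces $\Delta_w$, $w \in W^{\fnt}\setminus\{1\}$. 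A secondary subtlety is keeping track of left versus right $R$-module structures and the location of the base point $1\in\sfT$ used in the definition of $\chi$; I would fix conventions once at the start (matching \eqref{AvChiFin}) and propagate them carefully.
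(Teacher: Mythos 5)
For part \ref{WhitFiniteLem2} your inductive strategy (reduce to simple reflections via Proposition \ref{Convolution} and module--functoriality) is sound in outline, but the base case you state is wrong. You claim $\bV(\Delta_s)$ and $\bV(\nabla_s)$ are the Bott--Samelson bimodule $R \otimes_{R^s} R$; that bimodule is $\bV$ of the indecomposable tilting/IC object on $\overline{\Fl}_s$, not of the standard or costandard object. The correct statement, proved in \cite{T} and quoted by the paper, is $\bV(\Delta_w) \simeq \bV(\nabla_w) \simeq R_w$, the bimodule whose underlying right $R$-module is $R$ with the left action twisted by $w$. This matters for the very computation you call the ``key input'': restricting $R\otimes_{R^s}R$ along $1 \otimes R$ gives a free right $R$-module of rank \emph{two} (since $R$ is free of rank $2$ over $R^s$), whereas restricting $R_w$ gives $R$, which is what part \ref{WhitFiniteLem2} needs. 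The paper skips the induction entirely: it quotes $\bV(\Delta_w) = \bV(\nabla_w) = R_w$ once and observes that $R_w|_{1\otimes R} \simeq R$.

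For part \ref{WhitFiniteLem1} your route through the Endomorphismensatz $\bV(\Xi) \simeq R\otimes_{R^{W^{\fnt}}}R$ is more roundabout than the paper's, and you correctly identify the sticking point yourself: to turn a filtration of $\bV(\ker\eta)$ by twisted sub-bimodules into a $\Delta_w$-filtration of the sheaf $\ker\eta$, you would need $\bV$ to \emph{reflect} standard filtrations, an extra Struktursatz input. (Also, the induced map $\bV(\Xi)\to\bV(\Delta_1)$ is the multiplication $a\otimes b\mapsto ab$, whose kernel is generated by the elements $a\otimes 1 - 1\otimes a$, not ``the augmentation ideal on the left factor'' as you wrote.) The paper avoids all of this by staying on the sheaf side. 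From \cite{T}, $\Xi$ admits a universal standard filtration with each $\Delta_w$, $w\in W^{\fnt}$, appearing exactly once; since $1$ is minimal in Bruhat order, $\Fl_1$ is the closed stratum and $\Delta_1$ occurs as the top quotient in the Cousin filtration, with the kernel of the projection filtered by $\Delta_w$, $w\neq 1$. One then notes that $\Hom(\Xi,\Delta_1)\simeq\Hom({}^\chi\Delta_1,{}^\chi\Delta_1)=R$ by adjunction, it is rank-one free because $\Hom(\Delta_w,\Delta_1)=0$ for $w\neq 1$ (Lemma \ref{Hom0}), and $\eta$ generates it. Hence $\eta$ agrees with the Cousin projection up to a unit, giving surjectivity and the desired filtration of $\ker\eta$ without ever passing through bimodules.
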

\begin{proof}
First we prove part \ref{WhitFiniteLem2}. It is shown in \cite{T} that $\bV(\Delta_w) = \bV(\nabla_w) = R_w$, and restricting this bimodule to $1 \otimes R$ gives ${}^{\chi}\Delta_1 * \Delta_w \simeq {}^{\chi}\Delta_1 *\nabla_w \simeq R$.

Now we prove part \ref{WhitFiniteLem1}. It is shown in \cite{T} that $\Xi$ admits a universal standard filtration with each $\Delta_w$ for $w \in W^{\fnt}$ appearing once. The counit $\eta$ generates the $R$-module $\Hom(\Xi, \Delta_1) = \Hom({}^{\chi}\Delta_1, {}^{\chi}\Delta_1) = R$, therefore part \ref{WhitFiniteLem1} follows by the Cousin filtration.
\end{proof}

\subsection{The Iwahori--Whittaker category}
Define the Iwahori--Whittaker category \[\Shv_{(I, \chi)}(\Fl) \coloneqq \Shv_{(B, \chi)}(G/U) \underset{\Shv_{(B)}(G/U)}{\otimes} \Shv_{(I)}(\Fl) \; \in \;\; \module \Shv_{(I)}(Fl).\] 

\begin{remark} The use of Lurie's tensor product was not essential, alternatively this is the category of left comodules for the coalgebra object $\Xi \in \Shv_{(I)}(\Fl)$.
\end{remark}

The functors \eqref{AvChiFin} and \eqref{AvBFin} induce a pair of adjoint $\Shv_{(I)}(\Fl)$-module functors \[{}^{\chi}\Delta_1 * -: \Shv_{(I)}(\Fl) \rightleftarrows
 \Shv_{(I, \chi)}(\Fl): \Delta_1^{\chi} * -\] such that there is an isomorphism of comonads $\Delta_1^{\chi} * {}^{\chi}\Delta_1 * - \simeq \Xi * -$.

\subsection{Compact generators}
Here we exhibit compact generators of the Iwahori--Whittaker category.
\begin{lemma}\label{Compact}
For $A \in \Shv_{(I)}(\Fl)$ the following are equivalent:
\begin{enumerate}[label=(\alph*)]
\item \label{Compact1} $A$ is compact,
\item \label{Compact2}$A$ is a finite colimit of $\Delta_w$ for $w \in W$,
\item \label{Compact3} the stalks $A|_{\Fl_w}$ are perfect over $R$, and vanish except for finitely many $w \in W$,
\item\label{Compact4} $A$ is dualizable.
\end{enumerate}
Moreover $\Shv_{(I)}(\Fl)$ is rigid monoidal.
\end{lemma}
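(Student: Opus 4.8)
\textbf{Proof strategy for Lemma \ref{Compact}.}

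The plan is to prove the cycle of implications \ref{Compact1} $\Rightarrow$ \ref{Compact3} $\Rightarrow$ \ref{Compact2} $\Rightarrow$ \ref{Compact4} $\Rightarrow$ \ref{Compact1}, and then deduce rigidity. First I would note that $\Shv_{(I)}(\Fl)$ is compactly generated by the costandard (equivalently standard) objects $\nabla_w$, $w \in W$: indeed $\Fl$ is an increasing union of finite-dimensional Schubert varieties, on each of which weakly $I$-constructible sheaves are generated under colimits by the $j_{w!}R_{\Fl_w}$, and the regular representation $R_{\Fl_w}$ is a compact generator of local systems on $T$. This already gives that each $\Delta_w$ is compact, hence every finite colimit of the $\Delta_w$ is compact, which will handle \ref{Compact2} $\Rightarrow$ \ref{Compact1} along the way. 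For \ref{Compact1} $\Rightarrow$ \ref{Compact3}, since the $\Delta_w$ generate, a compact object $A$ can only have nonzero $!$-stalk $A|_{\Fl_w} = j_w^! A$ for finitely many $w$ (by a support/colimit argument), and $j_w^!$ preserves compactness, so each stalk is a perfect complex of $R$-modules — here one uses that $j_w^!$ admits a continuous right adjoint, or directly that $j_w^!\Delta_v$ is $R$ or $0$. For \ref{Compact3} $\Rightarrow$ \ref{Compact2}, I would induct on the (finite) support: using the Cousin filtration coming from the stratification, peel off the open stratum $\Fl_w$ in the support with $j_w^! A$ perfect over $R$; since perfect complexes over $R$ are finite complexes of free modules by the Laurent polynomial Quillen--Suslin theorem \cite{Sw}, the corresponding open piece is a finite colimit of copies of $\Delta_w$, and the closed complement has strictly smaller support, so induction applies.

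The implication \ref{Compact2} $\Rightarrow$ \ref{Compact4} is the place where the convolution structure enters. By Proposition \ref{Convolution}\ref{Convolution1} we have $\Delta_{\dot v} * \nabla_{\dot v^{-1}} \simeq \Delta_1$, the monoidal unit, so each $\Delta_v$ is invertible with inverse $\nabla_{v^{-1}}$; in particular $\Delta_v$ is dualizable. Since dualizable objects in a monoidal stable category are closed under finite colimits (the dual of a finite colimit is the finite limit of the duals, which in a stable category is again a finite colimit), every finite colimit of $\Delta_w$'s is dualizable. Finally \ref{Compact4} $\Rightarrow$ \ref{Compact1}: in a compactly generated monoidal category whose unit is compact, every dualizable object is compact, because $\Hom(A, -) \simeq \Hom(\mathbf{1}, A^\vee \otimes -)$ commutes with colimits once $\mathbf{1}$ is compact and $A^\vee \otimes -$ is continuous. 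The compactness of the unit $\Delta_1$ is part of what the first step establishes. This closes the cycle, and rigidity of $\Shv_{(I)}(\Fl)$ follows immediately: it is a compactly generated monoidal category in which the compact objects coincide with the dualizable objects and are closed under the tensor product (a finite colimit of invertible objects convolved with another such is again of this form, by the convolution formulas for standards), which is precisely the definition of a rigid monoidal presentable category.

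The main obstacle I anticipate is bookkeeping the finiteness and perfectness of stalks correctly in the ind-scheme setting, i.e.\ making \ref{Compact1} $\Rightarrow$ \ref{Compact3} precise: one must argue that a compact object is supported on finitely many strata and that $j_w^!$ genuinely lands in perfect $R$-complexes. The clean way is to observe that $\Fl = \colim_n \Fl_{\leq n}$ over closed finite-dimensional subvarieties, that $\Shv_{(I)}(\Fl_{\leq n})$ is compactly generated with compacts the finite colimits of standards supported therein, and that a compact object of the colimit category is a retract of an object pulled back from some finite stage; then on a fixed finite-dimensional Schubert variety the statement is the standard fact that constructible-generated compacts have perfect stalks. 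Everything else is formal manipulation with dualizable objects and the already-established convolution identities of Proposition \ref{Convolution}, so I do not expect genuine difficulty there.
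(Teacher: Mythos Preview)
Your proposal is correct and matches the paper's approach: the paper defers the equivalence of \ref{Compact1}, \ref{Compact2}, \ref{Compact3} to \cite{T} (the argument you sketch, via Cousin filtration, finite support, and Quillen--Suslin) and then proves \ref{Compact2} $\Leftrightarrow$ \ref{Compact4} exactly as you do, using invertibility of $\Delta_w$ from Proposition \ref{Convolution} and compactness of the unit $\Delta_1$. The rigidity claim is likewise deduced from compact generation by the $\Delta_w$ via the Cousin filtration, just as you outline.
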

\begin{proof}
Conditions \ref{Compact1}, \ref{Compact2}, \ref{Compact3} are equivalent by arguments similar to in \cite{T}. We now show that they are also equivalent to \ref{Compact4}.

Universal standards $\Delta_w$ are invertible by Proposition \ref{Convolution} and hence also dualizable. Therefore finite colimits of $\Delta_w$ are dualizable. Conversely, all dualizable objects are compact because the monoidal unit $\Delta_1$ is compact.

To prove rigidity, it remains to check that $\Shv_{(I)}(\Fl)$ is compactly generated by $\Delta_w$. Indeed every sheaf in $\Shv_{(I)}(\Fl)$ admits a Cousin filtration whose $w$th graded piece is a colimit of $\Delta_w$.
% If $A \in \Shv_{(I)}(\Fl)$ has vanishing costalks then $A = 0$. Indeed it suffices to show that $A_{\leq w} \coloneqq (j_{\leq w})^! A$ vanishes for all $j_{\leq w}: \overline{\Fl}_w \rightarrow \Fl$. The open restriction $A_{\leq w}|_{\Fl_w} = (j_w)^! A$ vanishes hence $A_{\leq w} = (i_{< w})_* A_{< w}$ for some sheaf $A_{< w}$ on $\overline{\Fl}_w - \Fl_w$. By base change the costalks of $A_{< w}$ vanish, so by induction $A_{< w} = 0$. Therefore $A = 0$ as desired.
\end{proof}

\begin{proposition}\label{AutGen}
$\Shv_{(I, \chi)}(\Fl)$ is compactly generated by each of the following,
\begin{enumerate}[label=(\alph*)]
\item \label{AutGen1} ${}^{\chi}\Delta_1 * W_{\eta}$ for $\eta \in \Lambda$,
\item \label{AutGen2} ${}^{\chi}\Delta_1 * Z_{\lambda} * W_{\mu}$ for $\lambda, \nu \in \Lambda^+$.
\end{enumerate}
\end{proposition}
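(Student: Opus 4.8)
The plan is to deduce compact generation of $\Shv_{(I,\chi)}(\Fl)$ from compact generation of $\Shv_{(I)}(\Fl)$ by transporting along the adjunction ${}^{\chi}\Delta_1 * -: \Shv_{(I)}(\Fl) \rightleftarrows \Shv_{(I,\chi)}(\Fl) : \Delta_1^{\chi} * -$. First I would record that ${}^{\chi}\Delta_1 * -$ is continuous and its right adjoint $\Delta_1^{\chi} * - \simeq \Xi * -$ is also continuous (since $\Xi$ is compact, cf.\ Lemma \ref{WhitFiniteLem}), hence ${}^{\chi}\Delta_1 * -$ preserves compact objects. Applying it to the compact generators $\{\Delta_w\}_{w \in W}$ of $\Shv_{(I)}(\Fl)$ (Lemma \ref{Compact}) produces a set of compact objects ${}^{\chi}\Delta_1 * \Delta_w$; these generate $\Shv_{(I,\chi)}(\Fl)$ because $\Shv_{(I,\chi)}(\Fl) \simeq \Shv_{(B,\chi)}(G/U) \otimes_{\Shv_{(B)}(G/U)} \Shv_{(I)}(\Fl)$ is generated by objects of the form ${}^{\chi}\Delta_1 * A$, and any such $A$ is a colimit of $\Delta_w$'s.

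Next I would reduce the generating set ${}^{\chi}\Delta_1 * \Delta_w$, $w \in W$, to the two claimed families. Write $w = v e^{\eta}$ with $v \in W^{\fnt}$ and $\eta \in \Lambda$; then by Lemma \ref{WhitFiniteLem}\ref{WhitFiniteLem2}, ${}^{\chi}\Delta_1 * \Delta_v \simeq {}^{\chi}\Delta_1$ for $v \in W^{\fnt}$, and using the $\Shv_{(I)}(\Fl)$-module structure together with the convolution formulas of Proposition \ref{Convolution}, one rewrites ${}^{\chi}\Delta_1 * \Delta_w$ in terms of ${}^{\chi}\Delta_1 * W_{\eta}$ up to invertible twists by standards indexed by $W^{\fnt}$, which are absorbed into ${}^{\chi}\Delta_1$. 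This gives family \ref{AutGen1}. For \ref{AutGen2}, I would observe that the family $\{{}^{\chi}\Delta_1 * Z_\lambda * W_\mu\}_{\lambda,\mu \in \Lambda^+}$ is obtained from \ref{AutGen1} by convolving with central sheaves and shifting Wakimoto indices; conversely, using that $Z_0 = \Delta_1$ (the central sheaf attached to the trivial representation) one recovers the ${}^{\chi}\Delta_1 * W_\eta$ for dominant $\eta$, and for general $\eta \in \Lambda$ one writes $\eta = \mu - \nu$ and convolves with the invertible $\Delta_{-\dot\nu}$; the Wakimoto filtrations on central sheaves (Section \ref{CentralSheaves}) show the two families generate the same subcategory. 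Compactness of all these objects follows since ${}^{\chi}\Delta_1 * -$ preserves compacts and $Z_\lambda$, $W_\mu$ are compact (Proposition \ref{ZProperties}\ref{ZPerv}, Lemma \ref{UPervW}).

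The main obstacle I anticipate is the bookkeeping in passing between family \ref{AutGen1} and family \ref{AutGen2}: one must check that convolution by $Z_\lambda$ does not shrink the generated subcategory, which requires knowing that the classes $[Z_\lambda]$ together with the $[W_\mu]$ span, at the level of the relevant Grothendieck group or generation poset, the same span as the $[W_\eta]$ alone. This is exactly the categorified Bernstein presentation already established via the Wakimoto filtration on central sheaves; invoking it, together with the fact that ${}^\chi\Delta_1 * \Delta_v \simeq {}^\chi\Delta_1$ kills the finite Weyl directions, closes the argument. I would present the proof as: (1) ${}^{\chi}\Delta_1 * -$ preserves compacts and essential surjectivity gives generation by $\{{}^{\chi}\Delta_1 * \Delta_w\}$; (2) collapse $W^{\fnt}$-translates to reduce to \ref{AutGen1}; (3) use central sheaves and their Wakimoto filtrations to pass to \ref{AutGen2}.
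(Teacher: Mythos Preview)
Your overall strategy matches the paper's: transport compact generators of $\Hs$ along Whittaker averaging, absorb the finite Weyl part using Lemma~\ref{WhitFiniteLem}\ref{WhitFiniteLem2}, then pass from family \ref{AutGen1} to family \ref{AutGen2}. There are, however, two places where your argument is imprecise enough to count as a gap.

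\textbf{Part \ref{AutGen1}.} Your reduction step ``write $w = v e^{\eta}$ and absorb $\Delta_v$'' does not work as written: Proposition~\ref{Convolution} only gives $\Delta_w \simeq \Delta_v * \Delta_{e^{\eta}}$ when $\ell(w) = \ell(v) + \ell(e^{\eta})$, which fails in general, and even when it holds one obtains $\Delta_{e^{\eta}}$ rather than $W_{\eta}$ (these agree only for $\eta$ antidominant). The ``invertible twists by standards indexed by $W^{\fnt}$'' you allude to can appear on the right as well as the left, and right twists are not absorbed by ${}^{\chi}\Delta_1$. The paper avoids this bookkeeping entirely by choosing, from the outset, a generating set already in factored form: $\{\nabla_w * W_{\eta} : w \in W^{\fnt},\, \eta \in \Lambda\}$ compactly generates $\Hs$ for support reasons, and then ${}^{\chi}\Delta_1 * \nabla_w * W_{\eta} \simeq {}^{\chi}\Delta_1 * W_{\eta}$ is immediate. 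You should either switch to this generating set, or supply the missing argument that $\{\Delta_v * W_{\eta} : v \in W^{\fnt}, \eta \in \Lambda\}$ generates $\Hs$.

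\textbf{Part \ref{AutGen2}.} Setting $\lambda = 0$ recovers ${}^{\chi}\Delta_1 * W_{\mu}$ only for $\mu \in \Lambda^+$; your suggestion to reach non-dominant $\eta$ by ``convolving with the invertible $\Delta_{-\dot\nu}$'' does not work, since the cocomplete subcategory generated by family \ref{AutGen2} has no reason to be stable under right convolution by $W_{-\nu}$. Invoking Wakimoto filtrations alone is also insufficient: knowing $Z_{\lambda}$ is filtered by $W_{\mu}$'s shows family \ref{AutGen2} lies in the subcategory generated by \ref{AutGen1}, not the converse. The paper instead applies the Koszul complex argument from Proposition~\ref{SpecGen} verbatim: the highest weight arrows $b_{\lambda}: Z_{\lambda} \to W_{\lambda}$ give an exact Koszul complex expressing each ${}^{\chi}\Delta_1 * W_{\eta}$ as a finite iterated cone of objects ${}^{\chi}\Delta_1 * Z_{\lambda} * W_{\mu}$ with $\lambda, \mu \in \Lambda^+$.
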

\begin{proof}
For $w \in W^{\fnt}$ and $\eta \in \Lambda$, the sheaves $\nabla_w * W_{\eta}$ compactly generate $\Shv_{(I)}(\Fl)$ for support reasons.
Therefore ${}^{\chi}\Delta_1 * \nabla_w * W_{\eta}$ compactly generate $\Shv_{(I, \chi)}(\Fl)$. % by Proposition 6.4.2, Lemma 8.2.6, and Corollary 8.7.4 from chapter 1 of \cite{GR} 
Lemma \ref{WhitFiniteLem}\ref{WhitFiniteLem2} says ${}^{\chi}\Delta_1 * \nabla_w * W_{\eta}  \simeq {}^{\chi}\Delta_1 * W_{\eta}$, therefore the objects \ref{AutGen1} compactly generate.

The objects \ref{AutGen2} also compactly generate using the Koszul complex for highest weight arrows, as in Proposition \ref{SpecGen}.
\end{proof}

\section{Whittaker averaged central sheaves are tilting}\label{s:whittilt}
Here we prove that universal central sheaves convolved with $\Xi$ admit universal monodromic standard and costandard filtrations. This will be used to show that certain Hom spaces are free $R$-modules concentrated in degree 0.

\subsection{Classical groups}
Suppose for this subsection that $G$ is classical, i.e., its Lie algebra does not contain an exceptional simple Lie algebra as a direct factor.  Following Section 4.4 of \cite{AB}, let us give a short logically self-contained proof that Whittaker averaged central sheaves are tilting, using minuscule weights.

\begin{proposition}
Assume $G$ is classical and let $\lambda \in \Lambda^+$. Then $\Xi * Z_{\lambda} \in \Shv_{(I)}(\Fl)$ is universally tilting.
\end{proposition}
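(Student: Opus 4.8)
The strategy follows Section 4.4 of \cite{AB}: reduce, using the monoidality of Gaitsgory's functor, to a single minuscule (or quasi-minuscule) highest weight, and then compute the relevant nearby cycles explicitly.

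First I would record the soft closure properties. Universally tilting sheaves are closed under convolution by Proposition \ref{ConvolveTilting}. They are also closed under direct summands and extensions: the defining condition places one in $\langle\nabla[\geq 0]\rangle\cap\langle\Delta[\leq 0]\rangle$, equivalently (unwinding the proofs in Section \ref{UniversalPerversity}) requires each stalk $j_w^!A$ to be a finitely generated free $R$-module in degree $-\ell(w)$ and each costalk $j_w^*A$ to be finitely generated free in degree $\ell(w)$; a summand of a module concentrated in a single degree is again so concentrated, a finitely generated projective $R$-module is free by Quillen--Suslin \cite{Sw}, and $\langle\nabla[\geq 0]\rangle\cap\langle\Delta[\leq 0]\rangle$ is manifestly extension-closed. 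As $\Xi$ is universally tilting \cite{T}, it therefore suffices by Proposition \ref{ConvolveTilting} to show that $Z_\lambda$ is universally tilting for all $\lambda\in\Lambda^+$ when $\sfG$ is classical, with one modification in the cases where $\overline{\Gr^\lambda}$ is singular, where I will prove only that $\Xi * Z_\lambda$ is tilting.

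For the reduction, note that since $\sfG$ is classical, $\Rep(\sfG)$ is generated as a tensor category by minuscule representations together with quasi-minuscule ones: for $\GL_n$, $\on{Sp}_{2n}$, $\on{SO}_{2n}$, $\Spin_m$ the minuscule standard and (half-)spin representations suffice, while for odd orthogonal and adjoint-type groups one brings in, respectively, the vector and the adjoint representation. Hence every $V_\lambda$, $\lambda\in\Lambda^+$, is a direct summand of a tensor product $V_{\mu_1}\otimes\cdots\otimes V_{\mu_r}$ with each $\mu_i$ minuscule or quasi-minuscule, and so by the monoidality of $Z$ (Proposition \ref{ZProperties}\ref{ZMonoidal}) and the closure properties above, $\Xi * Z_\lambda$ is a direct summand of $\Xi * Z_{\mu_1}*\cdots*Z_{\mu_r}$; one reduces to the two elementary cases by pushing $\Xi$ across the central sheaves using centrality and peeling off the factors one at a time. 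When $\mu$ is minuscule, $\Gr^{\leq\mu}=\Gr^\mu$ is smooth projective and $\IC_\mu$ is a shifted constant sheaf, so that $Z_\mu=\psi(k_{\Gr^\mu}[\dim_\bC\Gr^\mu]\boxtimes R\boxtimes k_{\bC^\times})$; here I would carry out the explicit computation of the nearby cycles of a constant sheaf along the strata of Gaitsgory's degeneration exactly as in \cite{AB}, the conclusion being that the Wakimoto filtration of $Z_\mu$ (graded pieces $W_\nu$, with $\nu$ running over the weights of $V_\mu$, namely over one Weyl orbit, each once) refines simultaneously to a universal standard and to a universal costandard filtration, whence $Z_\mu\in\langle\nabla[\geq 0]\rangle\cap\langle\Delta[\leq 0]\rangle$. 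The only bookkeeping beyond \cite{AB} is to propagate the universal local system $R$ and to check universal rather than ordinary perversity, which is routine. When $\mu$ is quasi-minuscule the zero weight of $V_\mu$ appears, contributing one extra graded piece $W_0=\Delta_1$ (together with a possibly more complicated correction at $\Gr^0$ when $\overline{\Gr^\mu}$ is singular); after convolution with $\Xi$ one has $\Xi * \Delta_1\simeq\Xi$ by the rigidity of $\Shv_{(B)}(G/U)$ and Lemma \ref{WhitFiniteLem}\ref{WhitFiniteLem2}, so this piece contributes only a copy of $\Xi$, and $\Xi * Z_\mu$ emerges as an extension of universally tilting sheaves, hence universally tilting. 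Feeding this back into the peeling-off argument, with an induction on the number of quasi-minuscule factors and repeated use of Proposition \ref{ConvolveTilting} and closure under extensions and summands, yields $\Xi * Z_\lambda$ universally tilting.

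I expect the only substantive step to be the nearby-cycles computation in the minuscule case, that is, the identification of $\psi$ of a shifted constant sheaf in Gaitsgory's family as being simultaneously universal-standard- and universal-costandard-filtered. The closure properties, the reduction to minuscule and quasi-minuscule weights, and the absorption of the quasi-minuscule zero weight into $\Xi$ are all formal consequences of what has already been established.
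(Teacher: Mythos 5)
Your overall strategy (reduce to a small generating set of weights, then use closure of universally tilting objects under convolution, summands, and extensions) matches the paper's, which also cites Section 4.4 of \cite{AB}. But there are two places where the details diverge in ways that matter.

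The first is the minuscule case. You claim that for $\mu$ minuscule the nearby-cycles computation will show $Z_\mu$ itself is universally tilting, i.e.\ that its Wakimoto filtration refines to both a universal standard and a universal costandard filtration. This is stronger than what is needed, and I do not believe it is true: central sheaves $Z_\mu$ are generally not tilting even for $\mu$ minuscule, and what one proves (following Lemma 26 of \cite{AB}) is only that $\Xi * Z_\mu$ is tilting. The presence of $\Xi$ is not cosmetic. The paper's Lemma \ref{MinusculeTilting} bypasses any explicit nearby-cycles stratum analysis entirely: it observes that $\RHom(\Delta_y, \Xi * Z_\lambda)$ is \emph{constant} as $y$ varies in a $W^{\fnt}$-double coset (this uses $\Xi$ on the left to absorb convolution with $\Delta_s,\nabla_s$, and centrality to push such factors to the far side of $Z_\lambda$), and then uses that for $\lambda$ minuscule all weights of $V_\lambda$ lie in a single $W^{\fnt}$-double coset, together with the fact that $\Fl_{w_0\lambda}$ is open in the support of $\Xi * Z_\lambda$, to conclude freeness in degree $0$. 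If you insist on a direct computation you would still need to convolve with $\Xi$ before reading off standard/costandard filtrations.

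The second divergence is the role of quasi-minuscule weights. The paper does not use them. Instead, for general classical $G$ it reduces to the case where $G$ is (a torus times) an adjoint group, so that $\sfG$ is (a torus times) a simply connected classical group, in which case minuscule representations already generate $\Rep(\sfG)$ under tensor products and summands. The transport is via the finite quotient $\pi: \Fl \to \Fl/Z$: one checks $\pi_*(\Xi*Z_\lambda)$ is universally tilting on $\Fl/Z$, that $\Xi*Z_\lambda$ is a summand of $\pi^*\pi_*(\Xi*Z_\lambda)$, and that $\pi^*$ preserves universal tilting. Your quasi-minuscule step has a further gap beyond the issue above: you assert $\Xi*Z_\mu$ is an extension of universally tilting sheaves via the Wakimoto filtration, but the pieces $\Xi * W_\nu$ for $\nu$ a nonzero weight are not obviously universally tilting (Wakimoto sheaves are convolutions $\nabla*\Delta$, not tilting, and Proposition \ref{ConvolveTilting} requires both factors to be tilting), so this would need an independent argument. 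The $\pi_*/\pi^*$ reduction is cleaner and sidesteps all of this.

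Your closure observations (summands, extensions) are correct and are implicit in what the paper uses, and the overall reduction skeleton is sound; the fixes needed are to weaken the minuscule claim to the convolved statement and to replace the quasi-minuscule step by the adjoint-quotient reduction (or else supply a genuine argument for the quasi-minuscule case).
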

\begin{proof}
First suppose that $G$ is the product of a torus and an adjoint classical group. Then every representation of $\sfG$ is a summand of a tensor product of minuscule representations. The following Lemmas \ref{ProductTilting} and \ref{MinusculeTilting} together imply that all $\Xi*Z_{\lambda}$ are tilting.

In general write $G/Z = G^{\ad}$, where $Z$ is the center of the derived subgroup, and $G^{\ad}$ is the product of a torus and an adjoint classical group. Let $\pi: \Fl \rightarrow \Fl/Z$ denote the quotient map. Note that $\Fl/Z$ is naturally the union of some connected components inside the enhanced affine flag variety of $G^{\ad}$. The following observations imply that $\Xi * Z_{\lambda}$ is universally tilting.
\begin{enumerate}[label=(\roman*)]
\item $\pi_* (\Xi * Z_{\lambda})$ is universally tilting, because $\pi_* Z_{\lambda}$ is a central sheaf for $G^{\ad}$ and $\pi_* \Xi$ is the big tilting sheaf for $G^{\ad}$.
\item $\Xi * Z_{\lambda}$ is a summand of $\pi^* \pi_*(\Xi * Z_{\lambda})$.
\item $\pi^*$ sends universally tilting sheaves to universally tilting sheaves.\qedhere
\end{enumerate}
\end{proof}

In the previous proof, we used the following universal versions of Lemmas 25 and 26 of \cite{AB}.

\begin{lemma}\label{ProductTilting}
If $\Xi * Z_{\lambda}$ and $\Xi * Z_{\mu}$ are universally tilting, then so is $\Xi * Z_{\lambda} * Z_{\mu}$.
\end{lemma}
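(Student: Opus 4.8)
\textbf{Proof strategy for Lemma \ref{ProductTilting}.} The plan is to reduce the statement to Proposition \ref{ConvolveTilting}, which asserts that the convolution of two universally tilting sheaves is universally tilting. So it suffices to show that $\Xi * Z_\lambda * Z_\mu$ can be rewritten as a convolution of universally tilting sheaves — the obvious candidate being $(\Xi * Z_\lambda) * (\Xi * Z_\mu)$, except that this is the wrong object. The key observation is that $\Xi$ is idempotent-like up to the summand structure: since $\Xi$ is a summand of $\Xi * \Xi$ (both being big tilting objects, cf. the finite Whittaker discussion in Section \ref{FiniteWhittaker}, or more directly because $\Delta_1^\chi * {}^\chi\Delta_1 * - \simeq \Xi * -$ is idempotent at the level of the comonad), the object $\Xi * Z_\lambda * Z_\mu$ is a direct summand of $\Xi * \Xi * Z_\lambda * Z_\mu$.

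\textbf{Main step.} Now I would use centrality of $Z_\lambda$, via Proposition \ref{Centrality2}: since $Z_\lambda$ is central, $\Xi * Z_\lambda \simeq Z_\lambda * \Xi$, so
\[
\Xi * \Xi * Z_\lambda * Z_\mu \simeq \Xi * Z_\lambda * \Xi * Z_\mu \simeq (\Xi * Z_\lambda) * (\Xi * Z_\mu).
\]
By hypothesis both $\Xi * Z_\lambda$ and $\Xi * Z_\mu$ are universally tilting, so Proposition \ref{ConvolveTilting} shows their convolution is universally tilting. Hence $\Xi * Z_\lambda * Z_\mu$, being a direct summand of a universally tilting sheaf, is itself universally tilting — here using that the universally tilting subcategory is closed under direct summands, which follows from the characterization in terms of $\langle \nabla[\geq 0]\rangle \cap \langle \Delta[\leq 0]\rangle$ together with Proposition \ref{Convolution} identifying the standard objects after convolving by a fixed $\Delta_w$, or simply from the fact that a summand of an object with both a universal standard and a universal costandard filtration again has such filtrations (by examining the stalks $j_w^! $ and applying \cite{Sw} as in the proof of the characterization of universally tilting sheaves).

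\textbf{Expected obstacle.} The only subtle point is the very first reduction: one must justify that $\Xi$ is a summand of $\Xi * \Xi$, or equivalently locate $\Xi * Z_\lambda * Z_\mu$ inside an honest convolution of two universally tilting sheaves. The cleanest route is to factor through the Whittaker module: $\Xi * - \simeq \Delta_1^\chi * {}^\chi\Delta_1 * -$, and ${}^\chi\Delta_1 * \Delta_1^\chi * - $ is (up to the tilting object $\Xi$ reappearing) the identity on the Whittaker side, so $\Xi * \Xi * - \simeq \Xi * - \oplus (\text{something})$, exhibiting $\Xi$ as a summand of $\Xi * \Xi$. Alternatively, and perhaps most simply in the spirit of \cite{AB}, one notes directly that $\Xi$ appears as a summand in the Wakimoto-filtered tilting object $\Xi * Z_0 = \Xi$ and uses that $\Xi * \Xi$ contains $\Xi$ as the unique indecomposable summand supported on all of $G/U$ whose multiplicity can be read off from $\operatorname{Hom}(\Delta_1, \Xi*\Xi) \cong \operatorname{Hom}(\Delta_1^\chi * {}^\chi\Delta_1, \Xi) \neq 0$. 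Granting that, the rest is a formal manipulation with centrality and Proposition \ref{ConvolveTilting}.
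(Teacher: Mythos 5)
Your proposal takes essentially the same route as the paper: use centrality to rewrite $\Xi * \Xi * Z_\lambda * Z_\mu \simeq (\Xi * Z_\lambda) * (\Xi * Z_\mu)$, note this is universally tilting by Proposition \ref{ConvolveTilting}, and then extract $\Xi * Z_\lambda * Z_\mu$ as a direct summand using that $\Xi$ is a summand of $\Xi * \Xi$. The one place your write-up is looser than the paper's is the justification that $\Xi$ is a summand of $\Xi * \Xi$; the paper nails this down directly by observing that $\Xi * \Xi$ has a filtration with graded pieces $\Delta_w * \Xi \simeq \Xi$ for $w \in W^{\fnt}$ (since $\Xi$ has a standard filtration), and this filtration splits because $\Ext^1(\Xi, \Xi) = 0$, giving $\Xi * \Xi \simeq \bigoplus_{W^{\fnt}} \Xi$. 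Your comonad heuristic on its own is not a proof (a comonad $T$ need not split off as a summand of $T^2$), but the second alternative you mention — reading off the multiplicity of $\Xi$ in $\Xi * \Xi$ via the Whittaker adjunction — is in the spirit of the paper's actual computation, so the gap is cosmetic rather than substantive.
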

\begin{proof}
The product of universally tilting sheaves is universally tilting by Proposition \ref{ConvolveTilting}. Therefore by centrality $\Xi * \Xi * Z_{\lambda} * Z_{\mu} \simeq (\Xi * Z_{\lambda}) * (\Xi * Z_{\mu})$ is universally tilting.

However $\Xi * \Xi = \bigoplus_W \Xi$ is a direct sum of copies of $\Xi$. Indeed since $\Xi$ admits a standard filtration, $\Xi * \Xi$ admits a filtration whose graded pieces are $\Delta_w * \Xi \simeq \Xi$ indexed by $w \in W^{\fnt}$, which splits because  $\Ext^1(\Xi, \Xi) = 0$. Since \[\Xi * \Xi * Z_{\lambda} * Z_{\mu} \simeq \bigoplus_{W^{\fnt}} \Xi * Z_{\lambda} * Z_{\mu}\] is universally tilting, so is the summand $\Xi * Z_{\lambda} * Z_{\mu}$.
\end{proof}

\begin{lemma}\label{MinusculeTilting}
If $\lambda$ is minuscule then $\Xi * Z_{\lambda}$ is tilting.
\end{lemma}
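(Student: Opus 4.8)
The plan is to follow \cite[\S 4.4]{AB}, adding the bookkeeping forced by universal monodromy: one must use universal perversity in place of perversity and keep track of the $R$-module structure on stalks and Hom-spaces. The geometric input from minusculeness is that $\overline{\Gr^\lambda}=\Gr^\lambda$ is closed and smooth, so $\IC_\lambda$ is a cohomological shift of the constant sheaf on $\Gr^\lambda$ and $V_\lambda$ is multiplicity free over $\sfT$ with weights the single orbit $W^{\fnt}\lambda$. By the proposition that central sheaves admit Wakimoto filtrations, $Z_\lambda\in\Perv_{(I)}^{\waki}(\Fl)$, and multiplicity-freeness forces its Wakimoto filtration to have graded pieces exactly $W_\mu$, $\mu\in W^{\fnt}\lambda$, each appearing once; and since $\langle\mu,\alpha^\vee\rangle\in\{0,\pm1\}$ for every coroot, each $W_\mu$ is of one of three simple types: the universal costandard $\nabla_{t^\mu}$ (if $\mu$ is dominant), the universal standard $\Delta_{t^\mu}$ (if $\mu$ is anti-dominant), or a short convolution $\nabla_{t^{a}}*\Delta_{t^{b}}$ with $a,b$ minuscule dominant.

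Next I would exploit centrality. By Proposition \ref{Centrality}, $\Xi*Z_\lambda\simeq Z_\lambda*\Xi$, and this is universally perverse, since $Z_\lambda$ is convolution exact, $\Xi$ is universally perverse, and convolution with the central sheaf $Z_\lambda$ commutes with $-\otimes_R M$; in particular it lies in $\langle\Delta[\geq 0]\rangle$. By the characterizations of universally perverse and universally tilting sheaves established above together with \eqref{StandardtoCostandard}, it is then universally tilting as soon as $\RHom(\Xi*Z_\lambda,\nabla_w)$ and $\RHom(\Delta_w,\Xi*Z_\lambda)$ are free $R$-modules concentrated in degree $0$ for all $w\in W$ (the former giving a universal standard filtration, the latter a universal costandard one). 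To compute these I would dualize: $Z_\lambda$ is dualizable with monoidal dual the central sheaf $Z_{-w_0\lambda}$, so $\RHom(Z_\lambda*\Xi,\nabla_w)\simeq\RHom(\Xi,Z_{-w_0\lambda}*\nabla_w)$ and $\RHom(\Delta_w,Z_\lambda*\Xi)\simeq\RHom(Z_{-w_0\lambda}*\Delta_w,\Xi)$. Using the Wakimoto filtration of $Z_{-w_0\lambda}$, the length identities for minuscule translations (each relevant translation is extremal in the pertinent coset of $W^{\fnt}$, so the lengths merely add or subtract under Proposition \ref{Convolution}), and the exact sequence \eqref{DeltasNablas}, one rewrites $Z_{-w_0\lambda}*\nabla_w$ and $Z_{-w_0\lambda}*\Delta_w$ as objects filtered by universal standards and costandards; since $\Xi$ is tilting, $\RHom(\Xi,\Delta_v)$, $\RHom(\Xi,\nabla_v)$, $\RHom(\Delta_v,\Xi)$, $\RHom(\nabla_v,\Xi)$ are free of rank at most one in degree $0$ with no higher $\Ext$, the long exact sequences degenerate, and one obtains free $R$-modules in degree $0$.

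The step I expect to be the main obstacle is guaranteeing that the filtrations of $Z_{-w_0\lambda}*\nabla_w$ and $Z_{-w_0\lambda}*\Delta_w$ produced above involve only \emph{genuine} universal standards and costandards, and not ``twisted'' subquotients of the form $\nabla_v\otimes_R R/(e^{\alpha}-1)$ or $\Delta_v\otimes_R R/(e^{\alpha}-1)$, whose presence would make the Hom-spaces non-free over $R$. This is exactly where minusculeness is indispensable: for a general dominant weight such twisted subquotients do occur, which is why the general case instead requires the regular-centralizer-dependent input of \cite{BFOIII}. Verifying their absence comes down to a direct case analysis, in the universal monodromic setting, of the convolutions of the three types of $W_\mu$ above against the finite standard and costandard sheaves $\Delta_w,\nabla_w$ ($w\in W^{\fnt}$), isolating the clean cases of Proposition \ref{Convolution}(b). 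The remaining steps — Verdier-type manipulations, degeneration of long exact sequences, and the passage from ``universally perverse with free Hom into costandards and out of standards'' to ``universally tilting'' — are routine given Lemma \ref{RadonExactness} and Proposition \ref{ConvolveTilting}.
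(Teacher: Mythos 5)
Your route diverges from the paper's. You propose to dualize, replacing $\RHom(\Delta_w,\Xi*Z_\lambda)$ by $\RHom(Z_{-w_0\lambda}*\Delta_w,\Xi)$, and then to run the Wakimoto filtration of $Z_{-w_0\lambda}$ against the tilting object $\Xi$, hoping that all the convolutions $W_\mu*\Delta_w$, $W_\mu*\nabla_w$ are filtered by genuine universal (co)standards. The paper instead proves, and uses crucially, that $\RHom(\Delta_y,\Xi*Z_\lambda)$ is constant as $y$ ranges over a $W^{\fnt}$-double coset. This invariance is obtained on the left from the absorption $\Delta_s*\Xi\simeq\nabla_s*\Xi\simeq\Xi$ combined with invertibility of $\Delta_s,\nabla_s$, and on the right from the centrality of $Z_\lambda$. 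Because $\lambda$ is minuscule, every stratum in $\supp(\Xi*Z_\lambda)$ lies in the single double coset $W^{\fnt}t^\lambda W^{\fnt}$, so the entire computation collapses to the open stratum $\Fl_{w_0\lambda}$, where the answer is tautologically a free rank-one $R$-module in degree $0$. That argument completely bypasses the Wakimoto graded pieces.

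The gap in your approach is precisely at the step you flag as the ``main obstacle'': the pieces $W_\mu*\Delta_w = \nabla_a*\Delta_b*\Delta_w$ (with $a,b$ the dominant/antidominant parts of $\mu$) are not in general filtered by universal standards alone, nor are $W_\mu*\nabla_w$ filtered by universal costandards alone — Lemma \ref{RadonExactness} and universal perversity place these objects in $\langle\Delta[\geq 0]\rangle\cap\langle\nabla[\leq 0]\rangle$ or similar, which is strictly weaker than what you want, and the exact sequence \eqref{DeltasNablas} introduces the very torsion quotients $\nabla_v/(e^\alpha-1)$ you are trying to rule out. It is not a ``direct case analysis'' that saves you: even for minuscule $\lambda$ the translations $t^\mu$ interact with $\Delta_w,\nabla_w$ ($w\in W$ affine, not just finite) in ways that produce torsion. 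What saves the paper is that it never looks at the Wakimoto graded pieces at all — after dualizing $\Delta_w$ back and forth across $\Xi$ (not $Z_\lambda$), the only residual object to evaluate is a $!$-stalk at an open stratum. I would replace the Wakimoto/duality reduction with that double-coset invariance; the two key inputs are that $\Xi$ is eaten by finite $\Delta_s,\nabla_s$ on either side, and that $Z_\lambda$ is central, neither of which appears in your sketch.
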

\begin{proof}
To see that $\Xi * Z_{\lambda}$ admits a costandard filtration, it is equivalent to check that $\RHom(\Delta_{\mu}, \Xi * Z_{\lambda})$ is a free $R$-module in degree 0, for all $\mu = w \lambda w^{-1}$ appearing as a weight of $Z_{\lambda}$. The complex $\RHom(\Delta_y, \Xi * Z_{\lambda})$ is constant for $y$ in each $W^{\fnt}$-double coset, because 
\begin{enumerate}[label=(\roman*)]
\item $\RHom(\Delta_s * \Delta_y, \nabla_s * \Xi * Z_{\lambda}) \simeq \RHom(\Delta_y, \Xi * Z_{\lambda}),$
\item $\RHom(\nabla_s * \Delta_y, \Delta_s * \Xi * Z_{\lambda}) \simeq \RHom(\Delta_y, \Xi * Z_{\lambda}),$
\item $\RHom(\Delta_y * \Delta_s, \Xi * Z_{\lambda}) \simeq \RHom(\Delta_y, \Xi * Z_{\lambda} * \nabla_s) \simeq \RHom(\Delta_y, \Xi * \nabla_s * Z_{\lambda}) \simeq \RHom(\Delta_y, \Xi * Z_{\lambda}),$
\item $\RHom(\Delta_y * \nabla_s, \Xi * Z_{\lambda}) \simeq \RHom(\Delta_y, \Xi * Z_{\lambda} * \Delta_s) \simeq \RHom(\Delta_y, \Xi * \Delta_s * Z_{\lambda}) \simeq \RHom(\Delta_y, \Xi * Z_{\lambda}).$
\end{enumerate}
Because $\Fl_{w_0 \lambda}$ is open in the support of $\Xi * Z_{\lambda}$, it follows that \[\RHom(\Delta_{\mu}, \Xi * Z_{\lambda}) \simeq \RHom(\Delta_{w_0 \lambda}, \Xi * Z_{\lambda})\] is indeed a free $R$-module concentrated in degree 0.
\end{proof}

\subsection{General case}
Some exceptional groups do not have minuscule weights, so we resort to an argument that logically depends on \cite{AB, BFOIII}.

\begin{proposition}\label{XiZTilting}
For any $\lambda \in \Lambda^+$ the sheaf $\Xi * Z_{\lambda} \in \Shv_{(I)}(\Fl)$ is universally tilting.
\end{proposition}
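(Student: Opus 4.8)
The plan is to reduce the general (in particular, exceptional) case to the classical case already handled, by descending along a quotient and using a ``one source'' / tilting-input principle. Concretely, recall from \cite{BFOIII} — whose input rests on the Arkhipov--Bezrukavnikov regular centralizer arguments, so we are allowed to invoke it here — that for any reductive $G$ there is a collection of representations of $\sfG$ generating $\Rep(\sfG)$ under summands, whose Gaitsgory central sheaves are \emph{convolution-tilting} in an appropriate sense; equivalently, one has enough ``tilting central input.'' First I would isolate the purely formal statement: if $\Xi * Z_\lambda$ and $\Xi * Z_\mu$ are universally tilting, then so is $\Xi * Z_\lambda * Z_\mu$. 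This is exactly Lemma \ref{ProductTilting}, whose proof used only centrality, Proposition \ref{ConvolveTilting}, the computation $\Xi * \Xi \simeq \bigoplus_{W^{\fnt}} \Xi$ via the standard filtration of $\Xi$ and the vanishing $\Ext^1(\Xi,\Xi) = 0$, and the fact that universal tilting sheaves are closed under direct summands. None of that uses classicality, so it is available verbatim.

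Next I would observe that $\Xi * Z_\lambda$ is universally tilting for every $\lambda$ provided it holds for $\lambda$ ranging over any generating family of $\Rep(\sfG)$, where ``generating'' means: every irreducible $V_\nu$ occurs as a summand of a tensor product of the chosen representations, together with the fact that $Z$ is monoidal (Proposition \ref{ZProperties}\ref{ZMonoidal}) so that $Z_{\lambda_1} * \cdots * Z_{\lambda_r}$ contains $Z_\nu$ as a summand whenever $V_\nu$ is a summand of $V_{\lambda_1} \otimes \cdots \otimes V_{\lambda_r}$; since $\Xi * (-)$ preserves summands and, by the product lemma above and induction, $\Xi * Z_{\lambda_1} * \cdots * Z_{\lambda_r}$ is universally tilting, so is the summand $\Xi * Z_\nu$. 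The point of \cite{BFOIII} is precisely that such a generating family exists with the property that the Whittaker-averaged central sheaves are tilting — this is the statement for general $G$ that replaces the minuscule-weights argument of the previous subsection, which only worked when $G$ was classical (adjoint-plus-torus) because then minuscule weights suffice. I would state this citation carefully, matching the normalization conventions (universal monodromy, the big tilting $\Xi$, the enhanced affine flag variety) to those of the present paper, possibly after the reduction $G \rightsquigarrow G/Z = G^{\ad}$ exactly as in the previous proposition: push forward along $\pi : \Fl \to \Fl/Z$, use that $\pi_*\Xi$ is the big tilting sheaf and $\pi_* Z_\lambda$ is central for $G^{\ad}$, that $\Xi * Z_\lambda$ is a summand of $\pi^*\pi_*(\Xi * Z_\lambda)$, and that $\pi^*$ preserves universal tilting sheaves.

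The main obstacle, and the step requiring the most care, is the interface with \cite{BFOIII}: their results are phrased at a fixed (unipotent) monodromy and for a particular sheaf theory, whereas we need the universal monodromic statement on the enhanced affine flag variety. I expect one cannot simply cite a black box; instead one argues that the property ``$\Xi * Z_\lambda$ is universally tilting'' can be checked after base change to the formal or actual point of unipotent monodromy — because universal tilting is detected by the stalks $j_w^! (\Xi * Z_\lambda)$ being free $R$-modules in the correct degree (Proposition following Lemma \ref{UPervW}), and freeness of a finitely generated $R$-module can be checked at the closed point $1 \in \sfT$ together with a flatness/Tor-independence input of the shape of Lemma \ref{TorLemma}. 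Thus the real content is: (a) $\RHom(\Delta_w, \Xi * Z_\lambda)$ and $\RHom(\Xi * Z_\lambda, \nabla_w)$ are perfect over $R$, which follows from compactness (Lemma \ref{Compact}) and convolution-exactness of central sheaves (Proposition \ref{Centrality}); (b) these complexes are concentrated in degree $0$ after $\otimes_R k_\zeta$ for all $\zeta \in \sfT$, which by a loop-rotation / monodromy argument (as in Lemma \ref{Hom0}) reduces to finitely many ``endoscopic'' points, at each of which one invokes the fixed-monodromy tilting statement of \cite{BFOIII, AB}; and then (c) conclude via Lemma \ref{TorLemma} that the complexes are free in degree $0$, i.e. $\Xi * Z_\lambda$ admits universal standard and costandard filtrations, hence is universally tilting. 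Assembling (a)--(c) with the monoidal reduction of the first two paragraphs completes the proof.
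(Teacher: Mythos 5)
Your final paragraph contains the correct core argument, which matches the paper's: for each closed point $\zeta\in\sfT$, cite Section 2.6 of \cite{BFOIII} to get that $\Xi*Z_\lambda\otimes_R k_\zeta$ is tilting, deduce that $\RHom(\Delta_w,\Xi*Z_\lambda)\otimes_R k_\zeta$ (and dually) is concentrated in degree $0$, and then invoke Lemma \ref{TorLemma} to conclude that these are free $R$-modules in degree $0$, i.e. that $\Xi*Z_\lambda$ admits universal standard and costandard filtrations. But the first two paragraphs of your proposal are unnecessary scaffolding: the statement of \cite{BFOIII} you need is not about a generating family but gives tilting-ness of the specialization $\Xi*Z_\lambda\otimes_R k_\zeta$ directly for every $\lambda$ and every $\zeta$, so there is no need to invoke Lemma \ref{ProductTilting}, tensor-generation, or the adjoint reduction along $\pi:\Fl\to\Fl/Z$; those tools are what make the minuscule-weight argument work for classical $G$, and they are superseded here by the direct citation. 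Likewise, the proposed reduction ``by a loop-rotation / monodromy argument to finitely many endoscopic points'' is neither needed nor obviously sound: Lemma \ref{TorLemma} requires the degree-$0$ statement at \emph{every} closed point $\zeta$, and \cite{BFOIII} supplies exactly that pointwise input, so one applies it at each $\zeta$ rather than trying to reduce to finitely many. The earlier suggestion that it ``can be checked after base change to the formal or actual point of unipotent monodromy'' would in particular be insufficient for the same reason.
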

\begin{proof}
For any closed point $\zeta \in \sfT$, Section 2.6 of \cite{BFOIII} says that $\Xi * Z_{\lambda} \otimes_R k_{\zeta}$ is tilting. Therefore $\RHom(\Delta_w, \Xi * Z_{\lambda}) \otimes_R k_{\zeta} \simeq \RHom(\Delta_w, \Xi * Z_{\lambda}\otimes_R k_{\zeta})$ is concentrated in degree 0. Lemma \ref{TorLemma} says $\RHom(\Delta_w, \Xi * Z_{\lambda})$ is a free $R$-module in degree 0. Hence $\Xi * Z_{\lambda}$ admits a universal costandard filtration. A similar argument gives a universal costandard filtration.
\end{proof}

\subsection{Freeness and higher vanishing} Using that Whittaker averaging central sheaves are tilting, we now prove the following freeness and higher vanishing. This will be needed later to invoke Hartogs' lemma.

\begin{proposition}\label{ZChiFree} If $\lambda, \mu \in \Lambda^+$ then
\begin{enumerate}[label=(\alph*)]
\item \label{ZChiFree1} $\RHom({}^{\chi}\Delta_1, {}^{\chi}\Delta_1 * Z_{\lambda} * W_{\mu})$ is concentrated in degree 0 and free over $R$,
\item  Whittaker averaging induces an isomorphism \label{ZChiFree2}$\Hom(\Delta_1, Z_{\lambda}*W_{\mu}) \rightarrow \Hom({}^{\chi}\Delta_1, {}^{\chi}\Delta_1 * Z_{\lambda}*W_{\mu})$.
\end{enumerate}
\end{proposition}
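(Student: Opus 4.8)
The plan is to reduce both parts to the single computation of $\RHom_{\Shv_{(I)}(\Fl)}(\Xi, Z_\lambda * W_\mu)$, run through the Wakimoto filtration of $Z_\lambda * W_\mu$. Recall from Lemma \ref{WhitFiniteLem} the adjunction $\Delta_1^{\chi} * - \dashv {}^{\chi}\Delta_1 * -$ together with $\Delta_1^{\chi} * {}^{\chi}\Delta_1 \simeq \Xi$. It follows formally that
$$\RHom_{\Shv_{(I,\chi)}(\Fl)}({}^{\chi}\Delta_1, {}^{\chi}\Delta_1 * Z_\lambda * W_\mu) \simeq \RHom_{\Shv_{(I)}(\Fl)}(\Xi, Z_\lambda * W_\mu),$$
and that under this identification the averaging map of part \ref{ZChiFree2} becomes precomposition with the counit $\eta \colon \Xi \to \Delta_1$ of Lemma \ref{WhitFiniteLem}\ref{WhitFiniteLem1}. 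I will also use that $Z_\lambda * W_\mu$ lies in $\Perv_{(I)}^{\waki}(\Fl)$, since $Z_\lambda$ and $W_\mu$ do and this subcategory is stable under convolution; thus $Z_\lambda * W_\mu$ carries a finite filtration whose graded pieces are Wakimoto sheaves $W_\sigma$, with $\sigma$ ranging over the weights of $V_\lambda$ translated by $\mu$.

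For part \ref{ZChiFree2}, apply $\RHom(-, Z_\lambda * W_\mu)$ to $0 \to K \to \Xi \xrightarrow{\eta} \Delta_1 \to 0$, where by Lemma \ref{WhitFiniteLem}\ref{WhitFiniteLem1} the sheaf $K$ admits a filtration with graded pieces $\Delta_w$, $w \in W^{\fnt} \setminus \{1\}$. It then suffices to show that $\RHom(\Delta_w, Z_\lambda * W_\mu)$ vanishes in cohomological degrees $-1$ and $0$ for each such $w$. Degree $-1$ vanishes as both sheaves are perverse. For degree $0$, by the Wakimoto filtration it is enough to see $\Hom(\Delta_w, W_\sigma) = 0$ for $w \in W^{\fnt} \setminus \{1\}$ and every $\sigma$. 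When $\sigma \neq 0$ this holds for all $w \in W^{\fnt}$: arguing as in the proof of Lemma \ref{Hom0}, any morphism $\Delta_w \to W_\sigma$ has perverse image, on which $1 \otimes(e^\sigma - 1)$ acts both by zero (loop rotation acts trivially on $\Delta_w$ for $w \in W^{\fnt}$ and by $1 \otimes e^\sigma$ on $W_\sigma$, and all morphisms commute with this action) and injectively (by universal perversity of $W_\sigma$), forcing the image to vanish. When $\sigma = 0$ one has $W_0 = \Delta_1$ and $\Hom(\Delta_w, \Delta_1) = 0$ for $w \neq 1$ by Lemma \ref{Hom0}. This gives part \ref{ZChiFree2}.

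For part \ref{ZChiFree1}, I would argue that ${}^{\chi}\Delta_1 * Z_\lambda * W_\mu$ is a tilting object of $\Shv_{(I,\chi)}(\Fl)$, i.e. admits filtrations by the Whittaker standards ${}^{\chi}\Delta_w \coloneqq {}^{\chi}\Delta_1 * \Delta_w$ and costandards ${}^{\chi}\nabla_w \coloneqq {}^{\chi}\Delta_1 * \nabla_w$, for $w$ minimal in $W^{\fnt} w$. Granting this, a dévissage along a costandard filtration, using $\RHom_{\Shv_{(I,\chi)}(\Fl)}({}^{\chi}\Delta_1, {}^{\chi}\nabla_w) \simeq R$ for $w$ the identity coset and $\simeq 0$ otherwise, shows $\RHom({}^{\chi}\Delta_1, {}^{\chi}\Delta_1 * Z_\lambda * W_\mu) \simeq R^{\oplus m}$ concentrated in degree $0$, giving part \ref{ZChiFree1}. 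To see the tilting claim: $\Xi * Z_\lambda$ is universally tilting by Proposition \ref{XiZTilting}; Whittaker averaging is t-exact and carries universal (co)standard filtrations to ${}^{\chi}$(co)standard ones, so the average of $\Xi * Z_\lambda$ is tilting; since ${}^{\chi}\Delta_1 * \Xi \simeq ({}^{\chi}\Delta_1)^{\oplus |W^{\fnt}|}$ (all graded pieces of the standard filtration of $\Xi$ average to ${}^{\chi}\Delta_1$ by Lemma \ref{WhitFiniteLem}\ref{WhitFiniteLem2}, and $\RHom({}^{\chi}\Delta_1, {}^{\chi}\Delta_1)\simeq R$ is in degree $0$), the object ${}^{\chi}\Delta_1 * Z_\lambda$ is a direct summand of that average, hence tilting; finally convolution by the invertible Wakimoto sheaf $W_\mu$ preserves tilting objects, which follows from Lemmas \ref{RadonExactness}, \ref{UPervW} and Verdier duality as in the non-universal statements of \cite{BR, AR} (alternatively, part \ref{ZChiFree1} can be deduced pointwise from Lemma \ref{TorLemma} and the tilting statements of \cite{BFOIII}).

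The main obstacle is the input to part \ref{ZChiFree1} that convolution by a Wakimoto sheaf preserves tilting objects (equivalently, a suitable pointwise tilting statement); once that is in hand, everything else is formal, using only the adjunction and the vanishing properties of homomorphisms among standard, costandard and Wakimoto sheaves recorded in Sections \ref{UniversalSheaves} and \ref{s:Wakimoto}.
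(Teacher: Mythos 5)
Your proof of part \ref{ZChiFree2} is essentially the paper's: apply $\Hom(-, Z_\lambda * W_\mu)$ to $0 \to \ker\eta \to \Xi \to \Delta_1 \to 0$ and use $\Hom(\Delta_w, W_\sigma) = 0$ for $w \in W^{\fnt} \setminus \{1\}$. (The paper simply cites Lemma \ref{Hom0}, which handles the $\sigma = 0$ case directly and whose proof extends verbatim to general $\sigma$ by the loop-rotation-monodromy argument you give; your extra care here is fine but not new.)

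For part \ref{ZChiFree1}, you take a genuinely different and harder route, and the gap you flag at the end is a real one. The paper's argument is much shorter and avoids it entirely. Starting from the same adjunction identification $\RHom({}^{\chi}\Delta_1, {}^{\chi}\Delta_1 * Z_\lambda * W_\mu) \simeq \RHom(\Xi, Z_\lambda * W_\mu)$, the paper then uses centrality to commute $Z_\lambda$ past $W_\mu$, and rigidity (with $Z_\lambda^\vee \simeq Z_{-w_0\lambda}$ and $W_\mu^\vee \simeq W_{-\mu}$) to rewrite this as $\RHom(\Xi * Z_{-w_0(\lambda)}, \nabla_\mu)$. Since $\Xi * Z_{-w_0\lambda}$ is universally tilting by Proposition \ref{XiZTilting} and hence carries a universal standard filtration, the orthogonality relations \eqref{StandardtoCostandard} between $\Delta_v$ and $\nabla_\mu$ immediately give that this $\RHom$ is a free $R$-module in degree $0$. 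No claim that the Whittaker-averaged object is tilting is needed, and no claim about convolution by Wakimoto sheaves preserving the tilting property is used.

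On your attempted tilting argument: the assertion that convolution by $W_\mu$ preserves tilting does not follow from Lemmas \ref{RadonExactness} and \ref{UPervW} alone, and is in fact false already at the level of the affine Hecke category ($\Delta_1$ is tilting but $\Delta_1 * W_\mu = \nabla_\mu$ generically is not). Whether it becomes true after Whittaker averaging is precisely the content one would need to establish, and in the end this is equivalent (via the same adjunction) to exactly the $\RHom$ computation the paper runs, so your route is somewhat circular. Your fallback suggestion of a pointwise reduction via Lemma \ref{TorLemma} and \cite{BFOIII} is more viable, but the cleanest fix is simply to use the rigidity/centrality trick: move $Z_\lambda$ to the other side and exploit the already-established tilting of $\Xi * Z_{-w_0\lambda}$.
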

\begin{proof}
First we prove \ref{ZChiFree1}. By adjunction \beq \label{HomAvZ} \RHom({}^{\chi}\Delta_1, {}^{\chi}\Delta_1 * Z_{\lambda} * W_{\mu}) \simeq \RHom(\Delta_1^{\chi} * {}^{\chi}\Delta_1,  Z_{\lambda} * W_{\mu}) \simeq \RHom(\Xi * Z_{-w_0(\lambda)},  \nabla_{\mu}).\eeq Proposition \ref{XiZTilting} implies $\Xi * Z_{-w_0(\lambda)}$ admits a universal standard filtration. Hence \eqref{HomAvZ} is concentrated in degree 0 and free over $R$, by equation \eqref{StandardtoCostandard}.

For \ref{ZChiFree2}, if $w \neq 1 \in W^{\fnt}$ and $\nu \in \Lambda$ then $\Hom(\Delta_w, W_{\nu}) = 0$ by Lemma \ref{Hom0}.  
Lemma \ref{WhitFiniteLem}\ref{WhitFiniteLem1} then implies $\Hom(\ker \eta, Z_{\lambda} * W_{\mu}) = 0$. Therefore part \ref{ZChiFree2} follows by the exact sequence \[0 \rightarrow \Hom(\Delta_1, Z_{\lambda} * W_{\mu}) \rightarrow \Hom(\Xi, Z_{\lambda} * W_{\mu}) \rightarrow \Hom(\ker \eta, Z_{\lambda} * W_{\mu}) \rightarrow \cdots. \qedhere\]
\end{proof}

\section{Order of vanishing calculation}
\label{SectionRank1}
Here we prove that all endomorphisms of $\nabla_w$ that factor through $\Delta_w$ must satisfy certain order of vanishing conditions on the walls. This will be used to calculate the image of the associated graded functor.

\subsection{Integral Weyl groups}
After specializing the right monodromy, the affine Hecke category splits as a direct sum of blocks, each of which is governed by a certain integral Weyl group.

Let $\sfR_\zeta$ be the set of affine coroots $\beta \in \sfR$ satisfying $\beta(\zeta) = 1$. Write $W_{\zeta} \subset W$ for the subgroup generated by reflections corresponding to coroots in $\sfR_{\zeta}$. Then $W_{\zeta}$ is the Weyl group for the root system $\sfR_{\zeta}$. Let $\ell_{\zeta}$ denote its length function.

\begin{proposition} \label{MinimalLength}
Each left coset $wW_{\zeta} \subset W$ contains a unique minimal length element $w^{\zeta}$, which may be characterized as the unique coset element satisfying $w^{\zeta}(\sfR^+_\zeta) \subset \sfR^+$.
\end{proposition}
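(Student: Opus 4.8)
The statement is the standard fact about minimal-length coset representatives for the parabolic-type subgroup $W_\zeta$ inside $W$, and the plan is to reduce it to the abstract theory of Coxeter groups applied to $W_\zeta$ acting on its own root system $\sfR_\zeta$, being careful about the fact that $W_\zeta$ need not be a standard parabolic of the affine Weyl group $W$ and that $W$ itself may be nontrivially extended by a finite abelian group (the projection to $\pi_0$ of the extended affine Weyl group). First I would recall that $W_\zeta$ is a Coxeter group with root system $\sfR_\zeta$ and positive system $\sfR_\zeta^+ := \sfR_\zeta \cap \sfR^+$, so that for $u \in W_\zeta$ one has $\ell_\zeta(u) = \#\{\beta \in \sfR_\zeta^+ : u(\beta) \in -\sfR_\zeta^+\}$. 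The key geometric input is the ``deletion/exchange''-style observation: for any $w \in W$ and any simple reflection $s_\beta \in W_\zeta$ (with $\beta$ a simple coroot of $\sfR_\zeta^+$), the condition $w(\beta) \in \sfR^+$ versus $w(\beta) \in -\sfR^+$ detects whether right multiplication $w \mapsto w s_\beta$ increases or decreases something; and crucially, $\beta \in \sfR_\zeta^+$, $u \in W_\zeta$, so $u(\beta)$ is a coroot in $\sfR_\zeta$, and $u(\beta) \in \sfR^+ \iff u(\beta) \in \sfR_\zeta^+$ since $\sfR_\zeta^+ = \sfR_\zeta \cap \sfR^+$. This compatibility is what lets the argument for $W$ and the argument for $W_\zeta$ talk to each other.

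\textbf{Existence and the characterization.} I would argue that the set $S := \{w' \in wW_\zeta : w'(\sfR_\zeta^+) \subset \sfR^+\}$ is nonempty, as follows. Pick any $w' \in wW_\zeta$ minimizing the finite cardinality $N(w') := \#\{\beta \in \sfR_\zeta^+ : w'(\beta) \in -\sfR^+\}$ (this is finite because $\sfR_\zeta^+$ may be infinite in the affine setting, but $w'$ moves only finitely many positive affine coroots to negative ones --- more precisely $N(w') \le \ell(w') < \infty$, using that $w' \in W$ has finite length and the usual length formula $\ell(w') = \#\{\gamma \in \sfR^+ : w'(\gamma) \in -\sfR^+\}$, valid up to the length-zero subgroup). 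If $N(w') > 0$, there is a simple coroot $\beta$ of $\sfR_\zeta^+$ with $w'(\beta) \in -\sfR^+$; then replacing $w'$ by $w' s_\beta \in wW_\zeta$, the reflection $s_\beta$ permutes $\sfR_\zeta^+ \setminus \{\beta\}$ and sends $\beta \mapsto -\beta$, so a short bookkeeping argument shows $N(w' s_\beta) = N(w') - 1$, contradicting minimality. Hence $N(w') = 0$, i.e. $w'(\sfR_\zeta^+) \subset \sfR^+$, so $S \ne \varnothing$. For such $w'$ and any $u \in W_\zeta$, I would then show $\ell(w' u) = \ell(w') + \ell_\zeta(u)$: writing $u = s_{\beta_1}\cdots s_{\beta_r}$ reduced in $W_\zeta$, each intermediate step $w' s_{\beta_1}\cdots s_{\beta_j}$ still sends the relevant simple coroot into $\sfR^+$ (because $s_{\beta_1}\cdots s_{\beta_j}(\beta_{j+1}) \in \sfR_\zeta^+$ by reducedness in $W_\zeta$, hence its image under $w'$ lies in $\sfR^+$), so $\ell$ strictly increases at each step. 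This simultaneously shows every element of $wW_\zeta$ has length $\ge \ell(w')$, that $w'$ is the unique length-minimizer (two length-minimizers would both lie in $S$, and any two elements of $S$ differing by $u \in W_\zeta$ would then need $\ell_\zeta(u) = 0$, i.e. $u = e$), and conversely that any length-minimal coset element must lie in $S$ (if it moved some simple coroot of $\sfR_\zeta^+$ to $\sfR^-$, the exchange above would shorten it).

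\textbf{Main obstacle.} The genuinely delicate point, and the one I would spend the most care on, is that $W_\zeta$ is \emph{not} a standard parabolic subgroup of the affine Weyl group in general --- it is the ``integral Weyl group'' cut out by the affine coroots vanishing at $\zeta$, so its simple reflections are not among a fixed simple system of $W$. One must therefore reprove the exchange/deletion facts directly in terms of the root-system geometry rather than quoting the textbook parabolic statement; the compatibility $\sfR_\zeta^+ = \sfR_\zeta \cap \sfR^+$ is exactly the hypothesis that makes this go through, and checking that $W_\zeta$ with this positive system is an honest Coxeter system (e.g. via the standard criterion that $\sfR_\zeta$ is a sub-root-system stable under its own reflections and $\sfR_\zeta^+$ a choice of positivity) is the structural lemma underpinning everything. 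A secondary nuisance is the possibly nontrivial length-zero subgroup $\Omega \subset W$ of the extended affine Weyl group: since elements of $\Omega$ permute $\sfR^+$, they play no role in the argument beyond being carried along, but one should note $\Omega \cap W_\zeta = \{e\}$ (a length-zero element of $W_\zeta$ fixing $\sfR_\zeta^+$ is the identity) so that cosets and the minimality argument are unaffected. With these structural points in place the proof is the routine Coxeter-group argument sketched above.
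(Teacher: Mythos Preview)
Your argument is correct and follows essentially the same route as the paper's proof: both hinge on the length--root criterion $\ell(ws_\beta) < \ell(w) \iff w(\beta) \in -\sfR^+$ for an arbitrary positive (co)root $\beta$, which the paper outsources to Humphreys' Proposition~5.7 and Lusztig's Lemma~1.9, while you reprove it from scratch via the permutation action of a simple reflection of $W_\zeta$ on $\sfR_\zeta^+$. Your write-up is more careful than the paper's in two respects worth noting: you explicitly flag that $W_\zeta$ need not be a standard parabolic (so the textbook parabolic statement does not literally apply, and one really does need the general reflection version of the length criterion), and you establish the additive formula $\ell(w^\zeta u) = \ell(w^\zeta) + \ell_\zeta(u)$, which the paper does not state here but tacitly uses in the next lemma.

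One small point you gloss over: when $N(w') > 0$ you assert the existence of a \emph{simple} coroot $\beta$ of $\sfR_\zeta^+$ with $w'(\beta) \in -\sfR^+$; this is true (if every simple $\beta$ had $w'(\beta) \in \sfR^+$ then every $\gamma \in \sfR_\zeta^+$, being a nonnegative integer combination of such $\beta$'s, would have $w'(\gamma)$ a nonnegative combination of elements of $\sfR^+$, hence in $\sfR^+$), but deserves a sentence.
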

\begin{proof}
Let $w^{\zeta}$ be a minimal length coset representative. Then $w^{\zeta}(\sfR^+_\zeta) \subset \sfR^+$ by Proposition 5.7 of \cite{Hum90}. Moreover if $w_{\zeta} \neq 1 \in W_{\zeta}$ then $w^{\zeta} w_{\zeta}(\sfR^+_\zeta) \not\subset \sfR^+$. Therefore $w^{\zeta}$ is the \textit{unique} minimal length representive of its coset. See also Lemma 1.9 of \cite{Lus84}.
\end{proof}

Let $W^{\zeta} \subset W$ be the set of minimal length representatives of the left $W_{\zeta}$-cosets.
Proposition \ref{MinimalLength} above implies that any affine Weyl group element can be written uniquely $w = w^{\zeta}w_{\zeta}$ for $w^{\zeta} \in W^{\zeta}$ and $w_{\zeta} \in W_{\zeta}$. % (If $W_{\zeta}$ is a parabolic subgroup then $\ell(w) = \ell(w^{\zeta}) + \ell(w_{\zeta})$.)

\begin{lemma}\label{IntegralSimple}
Write $w = vs$ such that $\ell(w) = \ell(v) + 1$ and $s$ is a simple reflection.
\begin{enumerate}[label=(\alph*)]
\item \label{IntegralSimple1} If $s \in W_{\zeta}$ then $w^{\zeta} = v^{\zeta}$, $w_{\zeta} = v_{\zeta}s$, and $\ell_{\zeta}(w_{\zeta}) = \ell_{\zeta}(v_{\zeta}) + 1$.
\item \label{IntegralSimple2} If $s \not\in W_{\zeta}$ then $w^{\zeta} = v^{s\zeta}s$, $w_{\zeta} = sv_{s\zeta}s$, and $\ell_{\zeta}(w_{\zeta}) = \ell_{s\zeta}(v_{s\zeta})$.
\end{enumerate}
\end{lemma}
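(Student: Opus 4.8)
The statement is a purely combinatorial fact about the extended affine Weyl group $W$, the integral subgroup $W_\zeta$, and the change of integral subgroup under the simple reflection $s$ (namely $W_{s\zeta} = s W_\zeta s$, since $\beta(s\zeta) = 1 \iff (s\beta)(\zeta)=1$). I would prove it by carefully tracking how the decomposition $w = w^\zeta w_\zeta$ from Proposition \ref{MinimalLength} interacts with right multiplication by $s$, using the characterization $w^\zeta(\sfR^+_\zeta)\subset\sfR^+$ and the standard length/inversion bookkeeping in Coxeter groups.

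\textbf{Case \ref{IntegralSimple1}: $s\in W_\zeta$.} Here $vs = w$ with $\ell(w) = \ell(v)+1$. Since $s\in W_\zeta$, right multiplication by $s$ does not leave the coset $vW_\zeta = wW_\zeta$, so we expect $w^\zeta = v^\zeta$ and $w_\zeta = v_\zeta s$; the only real content is that $\ell_\zeta(v_\zeta s) = \ell_\zeta(v_\zeta)+1$, i.e.\ that the length in $W_\zeta$ also goes up. I would argue this as follows: write $v = v^\zeta v_\zeta$, so $w = v^\zeta (v_\zeta s)$. One checks $v^\zeta$ is still the minimal-length representative of $wW_\zeta$ (it already satisfies $v^\zeta(\sfR^+_\zeta)\subset\sfR^+$), hence $w^\zeta = v^\zeta$ and $w_\zeta = v_\zeta s$. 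For the length statement, use that $\ell(w) = \ell(w^\zeta) + \ell_\zeta(w_\zeta)$ and $\ell(v) = \ell(v^\zeta) + \ell_\zeta(v_\zeta)$ (a standard consequence of Proposition \ref{MinimalLength}, since $w^\zeta$ realizes the minimum of lengths in the coset and the extra inversions of $w$ over $w^\zeta$ are exactly those coming from $W_\zeta$), together with $\ell(w) = \ell(v)+1$ and $w^\zeta = v^\zeta$; this forces $\ell_\zeta(w_\zeta) = \ell_\zeta(v_\zeta)+1$.

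\textbf{Case \ref{IntegralSimple2}: $s\notin W_\zeta$.} Now right multiplication by $s$ does change the integral subgroup: $wW_\zeta \ne vW_\zeta$ but the relevant identity is $w W_\zeta s = vs W_\zeta s$ and $W_\zeta s = s W_{s\zeta}$. The cleanest route is to work on the $\zeta'=s\zeta$ side: decompose $v = v^{s\zeta} v_{s\zeta}$ with respect to $W_{s\zeta}$. Then $w = vs = v^{s\zeta} v_{s\zeta} s = v^{s\zeta} s \cdot (s v_{s\zeta} s)$, and $s v_{s\zeta} s \in s W_{s\zeta} s = W_\zeta$. So the claim reduces to showing $v^{s\zeta}s$ is the minimal-length representative of $wW_\zeta$, i.e.\ $(v^{s\zeta}s)(\sfR^+_\zeta)\subset\sfR^+$. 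Since $s(\sfR^+_\zeta) = \sfR^+_{s\zeta}$ possibly with one sign change if $\alpha_s^\vee\in\sfR_\zeta$ — but $s\notin W_\zeta$ means precisely $\alpha_s^\vee\notin\sfR_\zeta$, so $s$ permutes $\sfR^+_\zeta$ into $\sfR^+_{s\zeta}$ cleanly — we get $(v^{s\zeta}s)(\sfR^+_\zeta) = v^{s\zeta}(\sfR^+_{s\zeta})\subset\sfR^+$, giving $w^\zeta = v^{s\zeta}s$ and $w_\zeta = s v_{s\zeta} s$. Finally the length identity $\ell_\zeta(w_\zeta) = \ell_{s\zeta}(v_{s\zeta})$ follows because conjugation by $s$ is a length-preserving isomorphism $W_{s\zeta}\xrightarrow{\sim}W_\zeta$ (it sends the simple system $\sfR^+_{s\zeta}$ to $\sfR^+_\zeta$).

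\textbf{Main obstacle.} The one step requiring genuine care is verifying the additivity $\ell(w) = \ell(w^\zeta) + \ell_\zeta(w_\zeta)$ and that $w^\zeta s$ is again minimal-length in case \ref{IntegralSimple2} — i.e.\ controlling which inversions are gained or lost when multiplying by $s$, and checking that the hypothesis $s\notin W_\zeta$ (equivalently $\alpha_s^\vee\notin\sfR_\zeta$) is exactly what prevents a sign change in $s(\sfR^+_\zeta)$. This is where I would be most careful to cite or reprove the relevant statement from \cite{Hum90} or \cite{Lus84} rather than wave hands. Everything else is formal manipulation of the coset decomposition.
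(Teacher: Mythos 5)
Your argument for part \ref{IntegralSimple2} is essentially the one the paper gives: use $s(\sfR^+_\zeta)=\sfR^+_{s\zeta}$ (which holds because $s\notin W_\zeta$ means the simple affine coroot of $s$ is not in $\sfR_\zeta$, so no sign flips occur) to check the characterization of $w^\zeta$ from Proposition \ref{MinimalLength}, and then observe that conjugation by $s$ is a Coxeter isomorphism $W_{s\zeta}\to W_\zeta$ carrying $\sfR^+_{s\zeta}$ to $\sfR^+_\zeta$. That part is fine.

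Part \ref{IntegralSimple1}, however, rests on a false identity. You invoke an additivity $\ell(w)=\ell(w^\zeta)+\ell_\zeta(w_\zeta)$, but $W_\zeta$ is a reflection subgroup of $W$, not a parabolic one, and the two length functions $\ell$ and $\ell_\zeta$ are genuinely different on $W_\zeta$. Concretely: if $\beta\in\sfR^+_\zeta$ is simple for $\sfR^+_\zeta$ but not simple for $\sfR^+$, then $w:=s_\beta\in W_\zeta$ has $w^\zeta=1$ and $\ell_\zeta(w_\zeta)=1$, yet $\ell(w)>1$. Your parenthetical justification — that the extra inversions of $w$ over $w^\zeta$ are exactly those coming from $W_\zeta$ — correctly describes $\mathrm{Inv}(w)\cap\sfR^+_\zeta$ (which indeed has cardinality $\ell_\zeta(w_\zeta)$), but misses that $w_\zeta$ can also create inversions in $\sfR^+\setminus\sfR_\zeta$; those are what make $\ell(w)$ larger than the formula predicts. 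The correct route, which the paper takes, avoids length bookkeeping altogether: letting $\alpha$ be the simple affine coroot of $s$, the hypothesis $\ell(vs)=\ell(v)+1$ gives $v\alpha\in\sfR^+$; since $\alpha\in\sfR^+_\zeta$, $v_\zeta\alpha\in\sfR_\zeta$ and $v^\zeta$ carries $\sfR^\pm_\zeta$ into $\sfR^\pm$, so $v\alpha=v^\zeta(v_\zeta\alpha)\in\sfR^+$ forces $v_\zeta\alpha\in\sfR^+_\zeta$, which is exactly the statement $\ell_\zeta(v_\zeta s)=\ell_\zeta(v_\zeta)+1$.
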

\begin{proof}
If $s \in W_{\zeta}$ then it is clear that $w^{\zeta} = v^{\zeta}$ and $w_{\zeta} = v_{\zeta}s$. Moreover $v \alpha \in \sfR^+$ and $v^{\zeta}(\sfR^+_{\zeta}) \subset \sfR^+$ implies that $v_{\zeta} \alpha \in \sfR^+_\zeta$. Therefore $\ell_{\zeta}(w_{\zeta}) = \ell_{\zeta}(v_{\zeta}) + 1$.

If $s \not\in W_{\zeta}$ then $w^{\zeta} = v^{s\zeta} s$ because both are contained in $w W_{\zeta} = v W_{s\zeta} s$ and both $w^{\zeta} \sfR^+_\zeta \subset \sfR^+$ and $v^{s\zeta} s \sfR^+_\zeta = v^{s\zeta} \sfR^+_{s\zeta}  \subset \sfR^+$. It follows that $w_{\zeta} = sv_{s\zeta}s$. Since $s(\sfR_{\zeta}^+) = \sfR_{s\zeta}^+$, we get $\ell_{\zeta}(w_{\zeta}) = \ell_{\zeta}(sv_{s\zeta}s) = \ell_{s\zeta}(v_{s\zeta})$.
\end{proof}

Let $\sfR_{(\alpha)}$ be the set of $\beta \in \sfR$ satisfying $\langle \check{\alpha}, \beta \rangle = 0$. Write $W_{(\alpha)} \subset W$ for the subgroup generated by reflections corresponding to roots in $\sfR_{(\alpha)}$. Then $W_{(\alpha)}$ is the Weyl group for the root system $\sfR_{(\alpha)}$. Let $\ell_{(\alpha)}$ denote its length function. 

Let $W^{(\alpha)}$ be the set of minimal length representatives of the left $W_{(\alpha)}$-cosets. Any affine Weyl group element can be written uniquely $w = w^{(\alpha)}w_{(\alpha)}$ for $w^{(\alpha)} \in W^{(\alpha)}$ and $w_{(\alpha)} \in W_{(\alpha)}$.

\subsection{Mixed standards and costandards at a fixed parameter}
If $\zeta \in \sfT$, let $k^{\zeta} \in \QCoh(\sfT)$ be the augmentation module. Let $\Shv_{(I)}(\Fl)^{\zeta} \coloneqq \Shv_{(I)}(\Fl) \otimes_{\QCoh(\sfT)} \QCoh(k^{\zeta})$ be the category of sheaves whose right monodromy is $\zeta$. Having specialized the monodromy, the objects are now all ind-constructible. Therefore $\Shv_{(I)}(\Fl)^{\zeta}$ admits a graded lift, the derived category of mixed Hodge modules $\Shv_{(I)}^{\mix}(\Fl)^{\zeta}$.

Let $k_{\Fl_w}^{\zeta} \in \Perv_{(I)}(\Fl_w)^{\zeta}$ be the mixed local system with right monodromy $\zeta$, shifted and Tate twisted to be perverse and pure of weight 0.
Let \[\Delta_w^{\zeta} \coloneqq j_{w!}k_{\Fl_w}^{\zeta}, \quad \IC_w^{\zeta} \coloneqq j_{w!*} k_{\Fl_w}^{\zeta}, \quad  \nabla_w^{\zeta} \coloneqq j_{w*} k_{\Fl_w}^{\zeta} \quad \in \;  \Perv_{(I)}^{\mix}(\Fl)^{\zeta}.\]

The following is a monodromic version of Lemma 4.4.7 of \cite{BY}.

\begin{proposition}\label{SocleCosocle} At fixed monodromy $\zeta \in \sfT$, the (co)socle of (co)standard perverse sheaves are given as follows.
\begin{enumerate}[label=(\alph*)]
\item \label{SocleCosocle1}The socle $\soc \Delta_w^{\zeta} \simeq \IC_{w^{\zeta}}(\ell_{\zeta}(w_{\zeta})/2)$ and no further Tate twists of $\IC_{w^{\zeta}}$ appear as Jordan-Holder factors.
\item \label{SocleCosocle2}The cosocle $\cos \nabla_w^{\zeta} \simeq \IC_{w^{\zeta}}(-\ell_{\zeta}(w_{\zeta})/2)$ and no further Tate twists of $\IC_{w^{\zeta}}$ appear as Jordan-Holder factors.
\end{enumerate}
\end{proposition}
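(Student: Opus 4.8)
\textbf{Proof strategy for Proposition \ref{SocleCosocle}.}

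The plan is to induct on $\ell(w)$, using the structure of the affine Hecke category at fixed monodromy $\zeta$ as a module over the finite Hecke category, together with the splitting of the principal block into blocks governed by the integral Weyl groups $W_\zeta$. The base case is $w \in W_\zeta$, i.e. $w^\zeta = 1$ (more generally $w$ minimal in its coset, where $w_\zeta = 1$ and $\ell_\zeta(w_\zeta) = 0$): here $\Delta_w^\zeta = \nabla_w^\zeta = \IC_w^\zeta$ is already simple because the relevant stratum closure behaves like the classical one for the reflection subgroup, so there is nothing to prove. Alternatively one can start from $w = e$ where all three objects agree. The two statements \ref{SocleCosocle1} and \ref{SocleCosocle2} are interchanged by Verdier duality, which sends $\Delta_w^\zeta \leftrightarrow \nabla_w^\zeta$ and fixes the $\IC_w^\zeta$ up to the appropriate Tate twist, so it suffices to prove \ref{SocleCosocle2} (the cosocle of $\nabla_w^\zeta$).

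For the inductive step, write $w = vs$ with $\ell(w) = \ell(v)+1$ and $s$ simple, so that $\nabla_w^\zeta \simeq \nabla_v^\zeta * \nabla_s^\zeta$. The key input is the short exact sequence \eqref{DeltasNablas}, specialized to monodromy $\zeta$: this gives $0 \to \Delta_s^\zeta \to \nabla_s^\zeta \to (\text{a quotient supported on }\Fl_1) \to 0$, where the quotient is nonzero precisely when $\alpha^\vee$ vanishes at $\zeta$ on the relevant component, i.e. exactly the dichotomy $s \in W_\zeta$ versus $s \notin W_\zeta$ of Lemma \ref{IntegralSimple}. Convolving with $\nabla_v^\zeta$ and computing cosocles, one distinguishes the two cases. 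When $s \notin W_\zeta$, the quotient term vanishes after convolution (for support/weight reasons, since $\nabla_s^\zeta$ becomes invertible on that component), so $\nabla_w^\zeta \simeq \nabla_v^\zeta * \nabla_s^\zeta$ with $\nabla_s^\zeta$ an equivalence; combined with $w^\zeta = v^{s\zeta}s$, $\ell_\zeta(w_\zeta) = \ell_{s\zeta}(v_{s\zeta})$ from Lemma \ref{IntegralSimple}\ref{IntegralSimple2} and the inductive hypothesis applied at the monodromy $s\zeta$, one reads off $\cos\nabla_w^\zeta \simeq \IC_{w^\zeta}(-\ell_\zeta(w_\zeta)/2)$ with the correct Tate twist and no repeats. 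When $s \in W_\zeta$, one uses that convolution with $\nabla_s^\zeta$ on that component is the usual "add a box" functor for the reflection subgroup: it raises the weight by $1$ (whence the shift $\ell_\zeta(w_\zeta) = \ell_\zeta(v_\zeta)+1$), and a weight/parity argument shows the cosocle is still a single $\IC_{w^\zeta}$ with exactly the predicted twist, the potential extra Jordan--H\"older copies of $\IC_{w^\zeta}$ being excluded by purity (the $\IC$'s are pure, the convolution $\nabla_v^\zeta * \nabla_s^\zeta$ has a weight filtration whose top piece is one-dimensional over the relevant Hom space).

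The main obstacle I expect is controlling the \emph{multiplicities and Tate twists} of $\IC_{w^\zeta}$ in the Jordan--H\"older series precisely — i.e. proving not just that the cosocle is $\IC_{w^\zeta}$ but that only the single Tate twist $(-\ell_\zeta(w_\zeta)/2)$ of it occurs. This is where the mixed/Hodge structure is essential: one needs the analogue of the standard fact (as in \cite{BY}) that in the mixed setting $\RHom(\nabla_w^\zeta, \IC_x^\zeta)$ is concentrated in a single cohomological degree with a single weight when $x = w^\zeta$, together with the self-duality of $\IC$'s, to pin down the twist. Carrying the bookkeeping of twists correctly through the two cases of Lemma \ref{IntegralSimple}, and checking that the $W^{\mathrm{fin}}$-translation functors used to pass between monodromies $\zeta$ and $s\zeta$ preserve weights, is the delicate part; everything else is a routine induction once the case distinction $s \in W_\zeta$ vs.\ $s \notin W_\zeta$ is set up.
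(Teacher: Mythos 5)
Your overall framework matches the paper's: induction on $\ell(w)$, writing $w = vs$ with $s$ simple, splitting into $s \in W_\zeta$ versus $s \notin W_\zeta$, and invoking Verdier duality to reduce to a single statement. The case $s \notin W_\zeta$ is handled correctly and in essentially the same way the paper does it, via cleanness of $\Delta_s^\zeta \simeq \IC_s^\zeta \simeq \nabla_s^\zeta$, exactness of convolution with a clean $\IC_s^\zeta$, and Lemma \ref{IntegralSimple}\ref{IntegralSimple2} to transport the inductive hypothesis from monodromy $s\zeta$.

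However, there are two genuine problems. First, the base case is garbled: you write that the base case is ``$w \in W_\zeta$, i.e.\ $w^\zeta = 1$,'' and then claim $\Delta_w^\zeta = \IC_w^\zeta = \nabla_w^\zeta$. This is backwards. When $w \in W_\zeta$ one has $w_\zeta = w$ and $\ell_\zeta(w_\zeta) = \ell_\zeta(w) > 0$ for $w \neq 1$, so $\Delta_w^\zeta$ is typically \emph{not} simple. The cleanness statement you want holds when $w = w^\zeta$ is minimal in its coset, i.e.\ $w_\zeta = 1$ --- your parenthetical gives the correct condition but contradicts the lead sentence. Second, and more seriously, the case $s \in W_\zeta$ is not actually carried out. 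You appeal to ``the usual add-a-box functor,'' ``a weight/parity argument,'' and ``purity of the $\IC$'s'' to exclude extra Tate twists of $\IC_{w^\zeta}$, but no concrete mechanism is given. The paper's proof at this step hinges on the fact that, precisely because $s \in W_\zeta$, the character sheaf $\zeta$ extends from $I$ to the parahoric $P_s := I \sqcup I\dot s I$, giving a smooth projection $p\colon G(\!(t)\!)/(I,\zeta) \to G(\!(t)\!)/(P_s,\zeta)$ and the identification $\Delta_v^\zeta \star \IC_s^\zeta \simeq p^*\Delta_{\overline v}^\zeta[1](1/2)$. It is this explicit pullback description --- together with the adjunction $p^* \dashv p_*$ and the computation $p_*\Delta_w^\zeta \simeq \Delta_{\overline w}^\zeta[-1](-1/2)$ --- that lets one check $\Hom(\IC_x^\zeta, \Delta_w^\zeta) = 0$ for all $x \neq w^\zeta$, and hence control both the socle and the Tate twists. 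Without identifying this parahoric structure, the assertion that purity excludes extra Jordan--H\"older copies of $\IC_{w^\zeta}$ in the right twist is unsupported; this is precisely where the argument has to do real work, and your proposal leaves a hole there.
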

\begin{proof}
The proof is by induction on the length of $w$. Write $w = vs$ such that $\ell(w) = \ell(v) + 1$ and $s$ is a simple reflection. 

First suppose that $s \in W_{\zeta}$. By \cite{LY} there is a short exact sequence \[0 \rightarrow \IC_1^{\zeta}(1/2) \rightarrow \Delta_s^{\zeta} \rightarrow \IC_s^{\zeta} \rightarrow 0.\] We now use that the character local system $\zeta$ extends from $I$ to $P_s \coloneqq I \sqcup I \dot{s} I$, see Section 2.6 of \cite{LY}. Convolving with $\Delta_v^{\zeta}$ gives a short exact sequence \[0 \rightarrow \Delta_v^{\zeta}(1/2) \rightarrow \Delta_w^{\zeta} \rightarrow \Delta_v^{\zeta} \star \IC_s^{\zeta} \simeq p^* \Delta_{\overline{v}}^{\zeta}[1](1/2) \rightarrow 0.\] The quotient sheaf is the pull back along $p: G(\!(t)\!)/(I, \zeta) \rightarrow G(\!(t)\!)/(P_s, \zeta)$ of the standard extension $\Delta_{\overline{v}}^{\zeta} \in \Shv_{(I)}(G(\!(t)\!)/(P_s, \zeta))$.  By induction and Lemma \ref{IntegralSimple}\ref{IntegralSimple1}, there is an injection $\IC_{w^{\zeta}}(\ell_{\zeta}(w_{\zeta})/2) \hookrightarrow \Delta_w^{\zeta}$ and no further Tate twists of $\IC_{w^{\zeta}}$ appear in the Jordan-Holder filtration.

It remains to check that $\Hom(\IC_x^{\zeta}, \Delta_w^{\zeta}) \simeq 0$ for all $x \neq w^{\zeta} \in W$.  
\begin{enumerate}[label=(\roman*)]
\item If $x < xs$, then $\IC_x^{\zeta}$ does not appear as a Jordan-Holder factor of $\Delta_v^{\zeta} \star \IC_s^{\zeta}$, hence $\Hom(\IC_x^{\zeta}, \Delta_w^{\zeta}) \simeq 0$. Therefore $\Hom(\IC_x^{\zeta}, \Delta_w^{\zeta}) \simeq \Hom(\IC_x^{\zeta}, \Delta_v^{\zeta}(1/2)) \simeq 0$ vanishes by induction.
\item If $xs < x$, then 
\begin{align*}\Hom(\IC_x^{\zeta}, \Delta_w^{\zeta}) &\simeq \Hom(p^* \IC_{\overline{x}}^{\zeta}[1](1/2), \Delta_w^{\zeta}) \\ &\simeq \Hom(\IC_{\overline{x}}^{\zeta}[1](1/2), p_* \Delta_w^{\zeta}) 
\\ &\simeq \Hom(\IC_{\overline{x}}^{\zeta}[1](1/2), \Delta_{\overline{w}}^{\zeta}[-1](-1/2)) \\ &\simeq 0\end{align*}
vanishes for perverse cohomological degree reasons.
\end{enumerate}

Now suppose that $s \not\in W_{\zeta}$. There are isomorphisms \[\Delta_s^{\zeta} \simeq \IC_s^{\zeta} \simeq \nabla_s^{\zeta}.\] Therefore $- \star \IC_s^{\zeta}$ is exact and takes simples to simples. Hence Lemma \ref{IntegralSimple}\ref{IntegralSimple2} and induction on length implies \[\soc \Delta_w^{\zeta} \simeq (\soc \Delta_v^{s\zeta}) \star \IC_s^{\zeta}\simeq \IC_{v^{s\zeta}}^{s\zeta}(\ell_{s\zeta}(v_{s\zeta})/2) \star \IC_s^{\zeta} \simeq \IC_{w^{\zeta}}(\ell_{\zeta}(w_{\zeta})/2). \qedhere\]
\end{proof}

\subsection{Invoking the weight filtration}
Let $I \subset R$ be the maximal ideal of functions vanishing at a closed point $\zeta \in \sfT$. Let $\Shv_{(I)}(\Fl)^{n\zeta} \coloneqq \Shv_{(I)}(\Fl) \otimes_{\QCoh(\sfT)} \QCoh(R/I^n)$. It admits a graded lift, the derived category of mixed Hodge modules $\Shv_{(I)}^{\mix}(\Fl)^{n\zeta}$. Let $(R/I^n)_{\Fl_w}$ be the perverse mixed local system on $\Fl_w$, such that the cosocle $(R/I)_{\Fl_w} \simeq k^{\zeta}_{\Fl_w}$ is pure of weight 0. Setting $I^0 \coloneqq R$, the subquotient $(I^i/I^{i+1})_{\Fl_w}$ is pure of weight $-2i$.  Let \[\Delta_w^{i\zeta} \coloneqq j_{w!} (R/I^i)_{\Fl_w}, \quad \IC_w^{i\zeta} \coloneqq j_{w!*} (R/I^i)_{\Fl_w}, \quad \nabla_w^{i\zeta} \coloneqq j_{w*} (R/I^i)_{\Fl_w} \quad \in \; \Perv_{(I)}^{\mix}(\Fl)^{n\zeta}.\] 

\begin{proposition}\label{NablaDeltaIdealZeta}
Let $\zeta \in \sfT$ and let $i = \ell_{\zeta}(w_{\zeta})$. Then the following map vanishes \beq \label{ZetaPairing}\Hom(\nabla_w^{i\zeta}, \Delta_w^{i\zeta}) \otimes \Hom(\Delta_w^{i\zeta}, \nabla_w^{i\zeta}) \rightarrow \Hom(\nabla_w^{i\zeta}, \nabla_w^{i\zeta}) = R/I^i.\eeq
\end{proposition}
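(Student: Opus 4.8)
The statement to prove is that the composition of the two natural maps
$$\Hom(\nabla_w^{i\zeta}, \Delta_w^{i\zeta}) \otimes \Hom(\Delta_w^{i\zeta}, \nabla_w^{i\zeta}) \to \Hom(\nabla_w^{i\zeta}, \nabla_w^{i\zeta}) = R/I^i$$
vanishes when $i = \ell_\zeta(w_\zeta)$. The key idea is to use weights: the composition factors through $\Hom(\nabla_w^{i\zeta},\Delta_w^{i\zeta})\otimes\Hom(\Delta_w^{i\zeta},\nabla_w^{i\zeta})\to\Hom(\nabla_w^{i\zeta},\nabla_w^{i\zeta})$, and I will argue that the image of $\Hom(\nabla_w^{i\zeta},\Delta_w^{i\zeta})$ composed with anything landing in $\Delta_w^{i\zeta}$ is forced to raise weights by an amount that exceeds what is available in $R/I^i$, i.e. the image lands in the ideal $I^i/I^i = 0$.

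More precisely, here is how I would carry it out. First, I would lift everything to the mixed category $\Shv_{(I)}^{\mix}(\Fl)^{n\zeta}$ for $n > i$, where $\Delta_w^{i\zeta}, \nabla_w^{i\zeta}$ have canonical pure-filtration structures: $\nabla_w^{i\zeta}$ has its lowest weight subquotient $(I^{i-1}/I^i)_{\Fl_w}$ of weight $-2(i-1)$ and its highest weight quotient $(R/I)_{\Fl_w}$ of weight $0$, while $\Delta_w^{i\zeta}$ is its ``dual'' with weights running the other way. Second, using Proposition \ref{SocleCosocle}, the socle of $\Delta_w^{i\zeta}$ — or rather of each graded piece — is governed by $\IC_{w^\zeta}^\zeta$ with Tate twist $\ell_\zeta(w_\zeta)/2 = i/2$, and dually the cosocle of $\nabla_w^{i\zeta}$ involves the twist $-i/2$. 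Any nonzero map $\nabla_w^{i\zeta}\to\Delta_w^{i\zeta}$ must factor through a common Jordan--Hölder constituent; the only candidate supported with the right leading term is a Tate twist of $\IC_{w^\zeta}$, and matching the cosocle of the source (twist $-i/2$ relative to its top graded piece) against the socle of the target (twist $+i/2$ relative to its bottom graded piece) shows that such a map necessarily shifts weight by at least $2i$ (an ``order of vanishing $\geq i$'' statement, in the language of Section \ref{SectionRank1}). Third, I would compose: a map $\nabla_w^{i\zeta}\to\Delta_w^{i\zeta}$ followed by the tautological map $\Delta_w^{i\zeta}\to\nabla_w^{i\zeta}$ therefore shifts weight by $\geq 2i$ as an endomorphism of $\nabla_w^{i\zeta}$; but $\mathrm{End}(\nabla_w^{i\zeta}) = R/I^i$ as a graded ring has its degree-$(-2j)$ part spanned by $I^j/I^i$, which vanishes for $j \geq i$. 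Hence the composite is zero.

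The main obstacle I anticipate is making the weight bookkeeping precise: one must track not just the socle/cosocle of a single standard $\Delta_w^\zeta$ but of the iterated-extension object $\Delta_w^{i\zeta}$, whose graded pieces $j_{w!}(I^j/I^{j+1})_{\Fl_w}$ each carry their own Tate twist by $-j$ on top of the $\ell_\zeta(w_\zeta)/2$ twist coming from Proposition \ref{SocleCosocle}. The cleanest way to handle this is probably to observe that $\Delta_w^{i\zeta}$ is an iterated extension of $\Delta_w^\zeta(-j)$ for $0 \leq j \leq i-1$, apply Proposition \ref{SocleCosocle} to each, and then check that the ``diagonal'' matching of a weight-$w$ constituent of the source to a weight-$w'$ constituent of the target, composed onward, always produces a weight shift killing it modulo $I^i$ precisely when $i = \ell_\zeta(w_\zeta)$ — the equality case being where the $\IC_{w^\zeta}$ constituents just barely fail to line up. An alternative, perhaps slicker route would be to reduce directly to semisimple rank one as in the rest of Section \ref{SectionRank1}: after localizing away from all but the relevant wall, $w_\zeta$ lives in a rank-one (affine $A_1$) Weyl group, $\ell_\zeta(w_\zeta) \in \{0,1\}$ locally, and the vanishing becomes the elementary statement that the composite $\nabla_s^{\zeta}\to\Delta_s^{\zeta}\to\nabla_s^{\zeta}$ is multiplication by $(e^\alpha-1)$, hence zero modulo $I^1$ — then bootstrap to general $i$ by the thickening. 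I would likely present the weight-filtration argument as the main proof and remark on the rank-one reduction as corroboration.
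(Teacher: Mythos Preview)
Your main argument is correct and is essentially identical to the paper's: using Proposition~\ref{SocleCosocle} on the graded pieces $I^j\Delta_w^{i\zeta}/I^{j+1}\Delta_w^{i\zeta}$ (which are copies of $\Delta_w^\zeta(j)$, not $(-j)$ as you wrote) one shows $\Hom(\nabla_w^{i\zeta},\Delta_w^{i\zeta})$ is concentrated in weights $\leq -2i$, whereas $\End(\nabla_w^{i\zeta})=R/I^i$ lives in weights $>-2i$, so the composite vanishes. Your proposed rank-one alternative is confused, however, since the statement concerns a fixed $\zeta$ with $\ell_\zeta(w_\zeta)$ possibly large, and in any case the paper uses this proposition as \emph{input} to the localization argument (Proposition~\ref{NablaDeltaIdeal}), not the other way around.
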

\begin{proof}
If $\phi \in \Hom(\nabla_w^{i\zeta}, \Delta_w^{i\zeta})$ is nonzero of weight $-2m$, then for some $j$ it factors through a map $\nabla_w^{i\zeta} \rightarrow I^j \Delta_w^{i\zeta}$ such that the composition \beq\label{grI}  \nabla_w^{i\zeta} \rightarrow I^j \Delta_w^{i\zeta} \rightarrow I^j \Delta_w^{i\zeta}/I^{j+1} \Delta_w^{i\zeta}\eeq is nonzero.  Moreover \eqref{grI} factors through a nonzero map \[\nabla^{\zeta}_w = \nabla_w^{i\zeta}/I\nabla_w^{i\zeta} \rightarrow I^j \Delta_w^{i\zeta}/I^{j+1} \Delta_w^{i\zeta}.\]

Proposition \ref{SocleCosocle}\ref{SocleCosocle2} says that $\cos \nabla^{\zeta}_w = \IC_{w^{\zeta}}^{\zeta}(-i/2)$. Therefore $\IC_{w^{\zeta}}(-i/2 + m)$ is a Jordan-Holder factor of  $I^j \Delta_w^{i\zeta}/I^{j+1} \Delta_w^{i\zeta}$. But $I^j \Delta_w^{i\zeta}/I^{j+1} \Delta_w^{i\zeta} \simeq \Delta_w^{\zeta} \otimes I^j/I^{j+1}$ is a direct sum of copies of $\Delta_w^{\zeta}(j)$. Thus Proposition \ref{SocleCosocle}\ref{SocleCosocle1} implies that $\phi$ has weight $-2m = - 2(i + j)$.

Therefore $\Hom(\nabla_w^{i\zeta}, \Delta_w^{i\zeta})$ is concentrated in weights $\leq - 2i$.
However $\Hom(\Delta_w^{i\zeta}, \nabla_w^{i\zeta}) = R/I^i$ is concentrated in weight $\leq 0$, and $\Hom(\nabla_w^{i\zeta}, \nabla_w^{i\zeta}) = R/I^i$ is concentrated in weights $> -2i$. Therefore the pairing \eqref{ZetaPairing} vanishes.
\end{proof}

\section{Localized central sheaves} \label{s:loccentr}
Here we explain that, after localizing away from all but one wall, the central functor factors through restriction to a reductive subgroup of semi-simple rank 1. Then we calculate the image of a certain associated graded map in terms of an order of vanishing condition on the wall.

\subsection{Localizing the central functor}
If $\alpha$ is a root, let $\sfT^{(\alpha)} = \Spec (R^{(\alpha)})$ be the localization of $\sfT$ away from all walls except for $\sfT_{\alpha}$. Write $\Shv_{(I)}(\Fl)^{(\alpha)} \coloneqq \Shv_{(I)}(\Fl) \otimes_{\QCoh(\sfT)} \QCoh(\sfT^{(\alpha)})$ for the localization of the affine Hecke category. If $A \in  \Shv_{(I)}(\Fl)^{(\alpha)}$ write $A^{(\alpha)} \coloneqq A \otimes R^{(\alpha)} \in \Shv_{(I)}(\Fl)^{(\alpha)}$ for its localization.

\begin{lemma}\label{Zalpha}
After localizing away from all but one wall, the central functor \beq \label{ZalphaFact}Z^{(\alpha)}: \Rep(\sfG) \rightarrow \Rep(\sfG_{\alpha}) \rightarrow \Perv_{(I)}(\Fl)^{(\alpha)}\eeq factors through restriction to $\sfG_{\alpha} \coloneqq \sfT \SL(2)_{\alpha}$.
\end{lemma}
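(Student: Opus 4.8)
The plan is to unravel what the central functor does after localizing the right monodromy away from all root hyperplanes except $\sfT_\alpha$, and to see that the dependence of the construction on $\Rep(\sfG)$ only enters through the restriction to the semi-simple rank one subgroup $\sfG_\alpha = \sfT\SL(2)_\alpha$. The key structural input is that central sheaves are Wakimoto-filtered (Proposition on central sheaves being Wakimoto filtered) together with the characterization of the monodromy action on Wakimoto sheaves in Proposition \ref{WProperties}\ref{LoopW}. First I would recall that, by the reductivity of $\sfG$ and of $\sfG_\alpha$, the category $\Rep(\sfG_\alpha)$ decomposes according to the images of the highest weights in $\Lambda/\bZ\alpha^\vee$-cosets — equivalently, the restriction functor $\Rep(\sfG)\to\Rep(\sfG_\alpha)$ remembers exactly the $\SL(2)_\alpha$-weight multiplicities along each $\alpha$-string. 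Concretely, two $\sfG$-representations have isomorphic restrictions to $\sfG_\alpha$ iff they have the same character after collapsing all coordinates orthogonal to $\check\alpha$. So the content of the lemma is: the localized central sheaf $Z_\lambda^{(\alpha)}$ depends only on this collapsed character.

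The main step is to analyze the Wakimoto filtration of $Z_\lambda$ after localizing at $R^{(\alpha)}$. By Proposition \ref{WProperties}\ref{LoopW}, the $w$-Wakimoto sheaf $W_\mu$ has monodromy factoring through $1\otimes e^\mu - m$; after we invert all roots $\beta\neq\alpha$ in $R^{(\alpha)}$, the only coincidences $e^\mu = e^{\mu'}$ that can survive on the wall $\sfT_\alpha$ are those with $\mu - \mu' \in \bZ\alpha^\vee$. I would argue that $W_\mu^{(\alpha)}$ and $W_{\mu'}^{(\alpha)}$ for $\mu,\mu'$ in different $\bZ\alpha^\vee$-cosets are orthogonal (no Homs in either direction), using the monodromy-vanishing argument of Lemma \ref{Hom0}\ref{Hom02} / Proposition \ref{WProperties}\ref{HomW}; hence the localized category $\Perv_{(I)}^{\waki}(\Fl)^{(\alpha)}$ decomposes as a direct sum over $\bZ\alpha^\vee$-cosets, and within each summand the Wakimoto sheaves that appear are grouped into finite $\alpha$-strings. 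Then I would observe that the multiplicities with which the various $W_\mu$ appear in the Wakimoto filtration of $Z_\lambda$ are given by the weight multiplicities of $V_\lambda$ (Bernstein's presentation of the center, as in the cited proof following Proposition 5 of \cite{AB}), and after the coset decomposition these reorganize into exactly the $\SL(2)_\alpha$-weight multiplicities along $\alpha$-strings — i.e. into the character of $V_\lambda|_{\sfG_\alpha}$. This gives that $Z_\lambda^{(\alpha)}$ is, as a Wakimoto-filtered object, determined by $V_\lambda|_{\sfG_\alpha}$, and in fact one can run Gaitsgory's nearby-cycles construction for $\sfG_\alpha$ (equivalently, use a version of the central functor built from the affine Grassmannian of the rank-one group $\SL(2)_\alpha$ together with the torus $T$) and check that it agrees with $Z^{(\alpha)}$ on the level of the Wakimoto-filtered perverse sheaves, using faithfulness of $\gr$ from Proposition \ref{grFaithful} to reduce the comparison of functors to a comparison of $\gr$'s, which is the combinatorial statement just described.

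The hard part will be upgrading this from an identification of objects (and their Wakimoto filtrations) to an identification of \emph{functors} $\Rep(\sfG)\to\Perv_{(I)}(\Fl)^{(\alpha)}$ factoring through $\Rep(\sfG_\alpha)$ — i.e. producing the factorization coherently and monoidally, not just pointwise. For this I would lean on the fact, established in Part \ref{Part1} (Proposition \ref{ConstantTermEquivalence}, Lemma \ref{TensorFunctor}), that $\gr Z$ is \emph{monoidally} restriction to the maximal torus, so that the $\sfG_\alpha$-structure is visible through the faithful monoidal functor $\gr$ and a morphism of functors that is an isomorphism after $\gr$ is already an isomorphism on $\Perv_{(I)}^{\waki}$. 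Thus the factorization \eqref{ZalphaFact} is forced: $Z^{(\alpha)}$ kills the kernel of $\Rep(\sfG)\to\Rep(\sfG_\alpha)$ (which consists of objects whose $\sfG_\alpha$-character vanishes, hence whose image under $\gr Z^{(\alpha)}$ vanishes, hence which are themselves zero by faithfulness of $\gr$), and it sends the pieces lying over a single $\bZ\alpha^\vee$-coset to the corresponding block, compatibly with convolution because the block decomposition of $\Perv_{(I)}^{\waki}(\Fl)^{(\alpha)}$ is monoidal (tensoring an $\alpha$-string by another shifts cosets additively, matching $\Rep(\sfG_\alpha)$). One subtlety to be careful about is that $\sfG_\alpha$ is not semisimple but reductive of semi-simple rank one, so its representation category is $\Rep(\sfT)$-graded over $\SL(2)$-reps; the bookkeeping of the central torus directions is exactly what the localization does not touch, and I would make sure the statement and proof keep track of the full $\sfT$ rather than just $\SL(2)_\alpha$.
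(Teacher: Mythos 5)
The paper's proof is much shorter and takes an entirely different route: it does not analyze the Wakimoto filtration at all. Rather, it uses that the central functor $Z$ factors as $\Rep(\sfG) \xrightarrow{p^*} \QCoh(\sfB/\sfB) \xrightarrow{F} \Shv_{(I)}(\Fl)$ through the Arkhipov--Bezrukavnikov functor $F$ already constructed in Proposition \ref{FunctorOpen}, and that $F$ is $R$-linear for the projection $\sfB/\sfB \to \sfT$ and right $T$-monodromy (Proposition \ref{FunctorOpen}\ref{FunctorOpenR}). Base-changing along $\sfT^{(\alpha)} \to \sfT$ therefore commutes with $F$, giving $\Rep(\sfG) \to \QCoh(\sfB^{(\alpha)}/\sfB) \to \Shv_{(I)}(\Fl)^{(\alpha)}$. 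Proposition \ref{B/BZeta}\ref{B/BZeta2} identifies $\sfB^{(\alpha)}/\sfB$ with the Grothendieck--Springer stack $\sfB_\alpha^{(\alpha)}/\sfB_\alpha$ for $\sfG_\alpha$, and pullback along $\sfB_\alpha^{(\alpha)}/\sfB_\alpha \to \pt/\sfG$ visibly factors through $\pt/\sfG_\alpha$ since $\sfB_\alpha \subset \sfG_\alpha$. That gives the desired factorization, with its monoidal structure, in one step. The move you were missing is to route everything through $F$ and its $R$-linearity.

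Your proposed Wakimoto-combinatorial route also has concrete gaps. The ``orthogonality'' of $W_\mu^{(\alpha)}$ and $W_{\mu'}^{(\alpha)}$ for $\mu, \mu'$ in different $\bZ\alpha$-cosets is, at the level of $\Hom^0$, already true for all distinct $\mu, \mu'$ by Proposition \ref{WProperties}\ref{HomW} without any localization, so it carries no new information. For the block decomposition of the Wakimoto-filtered category you would actually need vanishing of higher $\Ext$'s across cosets, and the monodromy argument you gesture at does not give this: $e^{\mu - \mu'} - 1$ is generally not a unit in $R^{(\alpha)}$ for $\mu - \mu'$ in the root lattice but outside $\bZ\alpha$ (for instance $\mu - \mu' = 2\beta$ for a root $\beta \neq \pm\alpha$). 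More seriously, the passage from ``the objects $Z_\lambda^{(\alpha)}$ are determined by $V_\lambda|_{\sfG_\alpha}$'' to ``the functor factors through restriction to $\sfG_\alpha$'' is where the real work lies, and your suggestion that $Z^{(\alpha)}$ kills the kernel of $\Rep(\sfG) \to \Rep(\sfG_\alpha)$ is vacuous since restriction along $\sfG_\alpha \hookrightarrow \sfG$ is faithful and its kernel is trivial. What one actually must produce is an extension of the map on morphism spaces from $\Hom_{\sfG}(V, V')$ to $\Hom_{\sfG_\alpha}(V, V')$, landing inside $\Hom(Z_V^{(\alpha)}, Z_{V'}^{(\alpha)})$; faithfulness of $\gr$ gives injectivity into $\Hom_{\sfT}(V, V') \otimes R^{(\alpha)}$, not the surjectivity onto the correct target that is needed here.
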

\begin{proof}
Recall that the Arkhipov--Bezrukavnikov functor $\QCoh(\sfB/\sfB) \rightarrow \Shv_{(I)}(\Fl)$ is $R$-linear with respect to the projection $\sfB/\sfB \rightarrow \sfT$ and the right $T$-monodromy action on $\Shv_{(I)}(\Fl)$. By Proposition \ref{B/BZeta}, there is a commutative diagram
\[\begin{tikzcd}
\Rep(\sfG) \arrow[r] \arrow[d] & \QCoh(\sfB/\sfB) \arrow[r] \arrow[d] & \Shv_{(I)}(\Fl) \arrow[d] \\
\Rep(\sfG_{\alpha}) \arrow[r] & \QCoh(\sfB^{(\alpha)}_{\alpha}/\sfB_{\alpha}) \arrow[r] & \Shv_{(I)}(\Fl)^{(\alpha)}. \\ 
\end{tikzcd}\]       
\vspace{-.7cm} 

\end{proof}

Write $\underline{Z}^{(\alpha)}_{\lambda} \in \Perv_{(I)}(\Fl)$ for the image of the irreducible $\sfG_{\alpha}$-module of highest weight $\lambda$. This is an indecomposable summand of $Z_{\lambda}^{(\alpha)}$.

\subsection{Truncation} The following is an inductive formula for truncations of central sheaves.
\begin{proposition}\label{TruncateZ}
There is an isomorphism $\underline{Z}_{\lambda}^{(\alpha)}*W_{s\omega}^{(\alpha)} \simeq (\underline{Z}_{\lambda + \omega}^{(\alpha)})^{\leq \lambda + s\omega}$, the $\leq \lambda + s\omega$ filtered piece of the Wakimoto filtration.
\end{proposition}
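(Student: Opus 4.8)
The plan is to compare associated gradeds, construct a canonical comparison morphism, and then upgrade the matching of gradeds to an isomorphism using faithfulness and exactness of $\gr$ on $\Perv_{(I)}^{\waki}(\Fl)^{(\alpha)}$.

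First I would record the associated gradeds of both sides. Set $n = \langle \check{\alpha}, \lambda\rangle$, so that the $\sfG_\alpha$-module $\underline{V}_\lambda$ underlying $\underline{Z}_\lambda^{(\alpha)}$ is, in the $\SL(2)_\alpha$-direction, the irreducible of dimension $n+1$, with weights forming the single $\alpha$-string $\lambda, \lambda - \alpha, \dots, \lambda - n\alpha$, each of multiplicity one. Since $\gr Z$ is monoidally restriction to $\sfT$ (Propositions \ref{ConstantTermEquivalence} and \ref{F'}\ref{F'2}) and $W$ is monoidal, we get $\gr(\underline{Z}_\lambda^{(\alpha)} * W_{s\omega}^{(\alpha)}) \simeq \bigoplus_{j=0}^{n} W_{\lambda + s\omega - j\alpha} = \bigoplus_{j=1}^{n+1} W_{\lambda + \omega - j\alpha}$, using $s\omega = \omega - \alpha$. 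On the other hand $\underline{V}_{\lambda+\omega}$ has $\langle\check{\alpha},\lambda+\omega\rangle = n+1$, hence weights $\lambda+\omega, \lambda+\omega-\alpha, \dots, \lambda+\omega-(n+1)\alpha = s(\lambda+\omega)$, each of multiplicity one; so the Wakimoto filtration of $\underline{Z}_{\lambda+\omega}^{(\alpha)}$ is totally ordered along this $\alpha$-string, and $(\underline{Z}_{\lambda+\omega}^{(\alpha)})^{\leq \lambda+s\omega} = (\underline{Z}_{\lambda+\omega}^{(\alpha)})^{\leq \lambda+\omega-\alpha}$ has exactly the graded pieces $W_{\lambda+\omega-j\alpha}$ for $j = 1, \dots, n+1$. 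Thus both sides have the same associated graded, and $(\underline{Z}_{\lambda+\omega}^{(\alpha)})^{\leq\lambda+s\omega}$ is precisely the kernel of the canonical surjection $\underline{Z}_{\lambda+\omega}^{(\alpha)}\twoheadrightarrow W_{\lambda+\omega}$ onto the top Wakimoto piece (the highest weight arrow, cf. Section \ref{s:Plucker}).

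Next I would build the comparison map. Since $\omega$ is minuscule for $\SL(2)_\alpha$, the Wakimoto filtration of $\underline{Z}_\omega^{(\alpha)}$ is the two-step sequence $0 \to W_{s\omega} \to \underline{Z}_\omega^{(\alpha)} \to W_\omega \to 0$, giving a canonical inclusion $c_\omega : W_{s\omega} \hookrightarrow \underline{Z}_\omega^{(\alpha)}$. Semisimplicity of $\Rep(\sfG_\alpha)$ gives the Cartan projection $\pi : \underline{V}_\lambda \otimes \underline{V}_\omega \twoheadrightarrow \underline{V}_{\lambda+\omega}$, and monoidality of the (localized, factored) central functor from Lemma \ref{Zalpha} yields $Z(\pi) : \underline{Z}_\lambda^{(\alpha)} * \underline{Z}_\omega^{(\alpha)} \to \underline{Z}_{\lambda+\omega}^{(\alpha)}$. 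I would then take the composite
\[
f \colon \underline{Z}_\lambda^{(\alpha)} * W_{s\omega}^{(\alpha)} \xrightarrow{\ \id * c_\omega\ } \underline{Z}_\lambda^{(\alpha)} * \underline{Z}_\omega^{(\alpha)} \xrightarrow{\ Z(\pi)\ } \underline{Z}_{\lambda+\omega}^{(\alpha)}.
\]
By Propositions \ref{WProperties} and \ref{grFaithful} this is automatically a morphism of Wakimoto-filtered perverse sheaves. Applying $\gr$ and again using $\gr Z \simeq (-)|_\sfT$ monoidally, $\gr f$ is identified with the weight-space-wise $\SL(2)$ Clebsch--Gordan map $\bigoplus_{j=0}^n k_{\lambda+s\omega-j\alpha} \hookrightarrow \underline{V}_\lambda|_\sfT \otimes \underline{V}_\omega|_\sfT \xrightarrow{\pi} \underline{V}_{\lambda+\omega}|_\sfT$; an elementary computation (the coefficient of $v_j \otimes w_-$ in $f^{j+1}$ of the highest weight vector of $\underline{V}_{\lambda+\omega}$ is $j+1 \neq 0$) shows this is injective with one-dimensional cokernel, namely the top weight line $k_{\lambda+\omega}$. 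Since $\gr$ is faithful and exact on $\Perv_{(I)}^{\waki}(\Fl)^{(\alpha)}$, it follows that $f$ is a monomorphism of Wakimoto-filtered perverse sheaves whose cokernel is Wakimoto-filtered with $\gr \coker f \simeq \gr W_{\lambda+\omega}$, hence $\coker f \simeq W_{\lambda+\omega}$; and this cokernel map is the surjection onto the top piece. Therefore $f$ identifies $\underline{Z}_\lambda^{(\alpha)} * W_{s\omega}^{(\alpha)}$ with $\ker(\underline{Z}_{\lambda+\omega}^{(\alpha)}\twoheadrightarrow W_{\lambda+\omega}) = (\underline{Z}_{\lambda+\omega}^{(\alpha)})^{\leq \lambda+s\omega}$.

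\textbf{Main obstacle.} The routine inputs (closure of $\Perv^{\waki}$ under convolution and under summands, exactness of $\gr$) are all in place; the real content is the identification of $\gr f$ with the explicit Clebsch--Gordan map, which requires the monoidality of $\gr Z^{(\alpha)}$ as restriction to the torus — this is exactly what Section \ref{Fiberfunctor} was set up to provide — together with the compatibility of the minuscule splitting $c_\omega$ and the Cartan projection $\pi$ with $\gr$. Once $\gr f$ is pinned down, everything else is formal, and the final bookkeeping step is simply matching $\ker(\underline{Z}_{\lambda+\omega}^{(\alpha)}\to W_{\lambda+\omega})$ with the index $\leq \lambda + s\omega$, which is immediate because all relevant weights lie on a single $\alpha$-string.
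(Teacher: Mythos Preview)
Your proposal is correct and follows essentially the same route as the paper: both construct the comparison map via the inclusion $W_{s\omega}\hookrightarrow \underline{Z}_\omega^{(\alpha)}$ and the Cartan projection $\underline{Z}_\lambda^{(\alpha)}*\underline{Z}_\omega^{(\alpha)}\to \underline{Z}_{\lambda+\omega}^{(\alpha)}$, and both verify it is an isomorphism onto the filtered piece by applying $\gr$ and invoking the monoidal identification $\gr Z\simeq (-)|_\sfT$ from Section~\ref{Fiberfunctor}. The paper is slightly more streamlined in that it factors the map through $(\underline{Z}_{\lambda+\omega}^{(\alpha)})^{\leq\lambda+s\omega}$ from the outset and then just asserts $\gr a$ is an isomorphism, whereas you supply the explicit Clebsch--Gordan verification; but this is the same argument.
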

\begin{proof}
By monoidality $\underline{Z}_{\lambda}^{(\alpha)} * \underline{Z}_{\omega}^{(\alpha)} \simeq \underline{Z}_{\lambda + \omega}^{(\alpha)} \oplus \underline{Z}_{\lambda + s\omega}^{(\alpha)}$, and there is a commutative diagram 
\beq \begin{tikzcd} \label{TrucateSquare}
\underline{Z}_{\lambda}^{(\alpha)} * W_{s\omega}^{(\alpha)}\arrow[d] \arrow[r, dashrightarrow, "a"] & (\underline{Z}_{\lambda + \omega}^{(\alpha)})^{\leq \lambda + s\omega} \arrow[d] \\
\underline{Z}_{\lambda}^{(\alpha)} * \underline{Z}_{\omega}^{(\alpha)} \arrow[r] & \underline{Z}_{\lambda + \omega}^{(\alpha)}. \\
\end{tikzcd}\eeq
\vspace{-.7cm} 

\noindent Since $\gr Z: \Rep(\sfG) \rightarrow \Free_{\sfT}(\sfT)$ is isomorphic to restriction, $\gr$ sends \eqref{TrucateSquare} to 
\[\begin{tikzcd}
V_{\lambda}|_{\sfT} \otimes k_{s\omega} \otimes R \arrow[d] \arrow[r, dashrightarrow, "\gr a"] & V_{\lambda + \omega}|_{\sfT}^{\leq \lambda + s\omega} \otimes R \arrow[d] \\
V_{\lambda}|_{\sfT} \otimes V_{\omega}|_{\sfT} \otimes R \arrow[r] & V_{\lambda + \omega}|_{\sfT} \otimes R, \\
\end{tikzcd}\]
\vspace{-.7cm} 

\noindent in which $\gr a$ is an isomorphism. Therefore $a$ is also an isomorphism.
\end{proof}

\subsection{Composition pairing}\label{Composition}
The following proposition says that all endomorphisms of $\nabla_w^{(\alpha)}$ that factor through $\Delta_w^{(\alpha)}$ must vanish to order $\ell_{(\alpha)}(w_{(\alpha)})$ along the wall $\sfT_{\alpha}$.

\begin{proposition}\label{NablaDeltaIdeal}
The image of \[B_w^{(\alpha)}: \Hom(\nabla_w^{(\alpha)}, \Delta_w^{(\alpha)}) \otimes \Hom(\Delta_w^{(\alpha)}, \nabla_w^{(\alpha)}) \rightarrow \Hom(\nabla_w^{(\alpha)}, \nabla_w^{(\alpha)}) = R^{(\alpha)}\] is the ideal $(e^{\alpha} - 1)^{\ell_{(\alpha)}(w_{(\alpha)})} R^{(\alpha)}$.
\end{proposition}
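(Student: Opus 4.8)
The strategy is to reduce the claim to a purely rank-one statement by a double localization and then deploy the weight-filtration machinery of Section \ref{SectionRank1}. First I would reduce to the case $w \in W_{(\alpha)}$, i.e. to the integral block at the wall $\sfT_\alpha$. Indeed, writing $w = w^{(\alpha)} w_{(\alpha)}$ with $w^{(\alpha)}$ of minimal length in its coset, one has $\ell(w) = \ell(w^{(\alpha)}) + \ell_{(\alpha)}(w_{(\alpha)})$, so by Proposition \ref{Convolution}\ref{Convolution2} there are canonical isomorphisms $\nabla_w^{(\alpha)} \simeq \nabla_{w^{(\alpha)}}^{(\alpha)} * \nabla_{w_{(\alpha)}}^{(\alpha)}$ and likewise for standards. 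Convolving with the invertible $\nabla_{w^{(\alpha)}}$ and its inverse $\Delta_{(w^{(\alpha)})^{-1}}$ (Proposition \ref{Convolution}\ref{Convolution1}) is an equivalence preserving the composition pairing and the identification $\Hom(\nabla_w^{(\alpha)},\nabla_w^{(\alpha)}) \simeq R^{(\alpha)}$, so the image of $B_w^{(\alpha)}$ is unchanged. Thus it suffices to treat $w = w_{(\alpha)} \in W_{(\alpha)}$, where $\ell_{(\alpha)}(w) = \ell(w)$.

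Next I would invoke Lemma \ref{Zalpha}: after localizing away from all walls but $\sfT_\alpha$, the group $\sfB/\sfB$ is replaced by $\sfB_\alpha/\sfB_\alpha$ for the semi-simple rank one subgroup $\sfG_\alpha = \sfT\,\SL(2)_\alpha$, and the whole affine Hecke category in question is governed by the affine Weyl group of $\sfG_\alpha$. Concretely, $W_{(\alpha)}$ is the finite/affine Weyl group attached to the single coroot $\check\alpha$ together with the unique affine coroot it generates, so its elements have length $0, 1, 2, \dots$ and the costandard/standard sheaves $\nabla_w^{(\alpha)}, \Delta_w^{(\alpha)}$ for $w \in W_{(\alpha)}$ are, up to the invertible twists above, exactly the rank-one objects. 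Here I would want to use the description of $Z^{(\alpha)}$ as restriction to $\Rep(\sfG_\alpha)$, the truncation formula Proposition \ref{TruncateZ} for building up costandard filtrations of central sheaves out of Wakimoto pieces, and the fact that $\nabla_w$ for $w$ a product of simple affine reflections is obtained by iterated convolution (Proposition \ref{Convolution}).

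The heart of the argument is the order-of-vanishing computation. On one hand, the image of $B_w^{(\alpha)}$ is an ideal of $R^{(\alpha)}$ supported set-theoretically on $\sfT_\alpha$ (away from $\sfT_\alpha$ all the monodromies are regular and $\nabla_w \simeq \Delta_w$, so the composite is an isomorphism), hence of the form $(e^\alpha - 1)^m R^{(\alpha)}$ for some $m \geq 0$; one must identify $m = \ell_{(\alpha)}(w_{(\alpha)})$. For the lower bound $m \geq \ell_{(\alpha)}(w_{(\alpha)})$ I would specialize to $R/I^i$ with $i = \ell_{(\alpha)}(w_{(\alpha)})$ at a point $\zeta \in \sfT_\alpha$ with $\sfR_\zeta \cap \sfR_{(\alpha)}$ generated by the relevant affine reflections, so that $\ell_\zeta(w_\zeta) = i$; then Proposition \ref{NablaDeltaIdealZeta} says the composition pairing $\Hom(\nabla_w^{i\zeta},\Delta_w^{i\zeta}) \otimes \Hom(\Delta_w^{i\zeta},\nabla_w^{i\zeta}) \to R/I^i$ is zero, which forces the image of $B_w^{(\alpha)}$ into $(e^\alpha-1)^i R^{(\alpha)}$. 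For the matching upper bound $m \leq \ell_{(\alpha)}(w_{(\alpha)})$ — exhibiting a composite whose order of vanishing is exactly $i$ — I would argue on the spectral side: the composite $B_w^{(\alpha)}$ transports under the Arkhipov--Bezrukavnikov functor and Proposition \ref{B/BZeta}\ref{B/BZeta2} to the analogous composite of global sections of vector bundles on $\sfB_\alpha^{(\alpha)}/\sfB_\alpha$, and Proposition \ref{SpectralImageGraded} computes precisely such a composite to have image $(e^\alpha-1)^i R$ after restriction to the torus. Combining the two bounds gives the claim.

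\textbf{Main obstacle.} I expect the delicate point to be the matching of indices: relating the affine-Weyl-group quantity $\ell_{(\alpha)}(w_{(\alpha)})$ appearing in the statement, the integral-Weyl-group quantity $\ell_\zeta(w_\zeta)$ that the weight-filtration argument (Proposition \ref{NablaDeltaIdealZeta}) actually controls, and the rank-one highest-weight data $\langle\check\alpha,\lambda\rangle$ that Proposition \ref{SpectralImageGraded} computes — keeping straight which $\zeta$ to specialize at so that all three agree, and handling the reduction $w = w^{(\alpha)} w_{(\alpha)}$ carefully so that no length is lost or double-counted. A secondary technical nuisance is checking that convolution with the invertible $\nabla_{w^{(\alpha)}}$ genuinely commutes with the composition pairing and preserves the canonical trivialization of $\Hom(\nabla_w,\nabla_w)$, which uses the canonicity afforded by Remark \ref{CanonicalWhittaker} for Weyl group elements together with Proposition \ref{Convolution}.
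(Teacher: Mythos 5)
Your upper-bound argument (the image lies in $(e^\alpha-1)^{\ell_{(\alpha)}(w_{(\alpha)})}R^{(\alpha)}$) via specializing to $R/I^i$ and invoking Proposition \ref{NablaDeltaIdealZeta} agrees with the paper. But the rest of your plan has two genuine problems.

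First, your preliminary reduction to $w \in W_{(\alpha)}$ via $\nabla_w^{(\alpha)} \simeq \nabla_{w^{(\alpha)}}^{(\alpha)} * \nabla_{w_{(\alpha)}}^{(\alpha)}$ does not go through. Proposition \ref{Convolution}\ref{Convolution2} requires ambient length additivity $\ell(w^{(\alpha)}w_{(\alpha)}) = \ell(w^{(\alpha)}) + \ell(w_{(\alpha)})$, measured in $W$. But $W_{(\alpha)}$ is only a reflection subgroup of $W$, not a standard parabolic, and the identity you write, $\ell(w) = \ell(w^{(\alpha)}) + \ell_{(\alpha)}(w_{(\alpha)})$, mixes two different length functions (the ambient $\ell$ and the intrinsic $\ell_{(\alpha)}$) and is not valid. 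The paper avoids this entirely: it proceeds by induction on $\ell(w)$ over all $w \in W$ simultaneously, peeling off a simple reflection $s$ with $w = vs$, $\ell(w) = \ell(v) + 1$, and splitting into the cases where $s$ is or is not the reflection for $\alpha$. In the latter case the key observation is that $\Delta_s^{(\alpha)} \simeq \nabla_s^{(\alpha)}$ is clean after localization, which permits a change of coordinates (from $(\alpha)$ to $(s\alpha)$) and an inductive appeal; no coset reduction is needed.

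Second, and more seriously, your proposed lower bound is circular. You suggest transporting $B_w^{(\alpha)}$ under the Arkhipov--Bezrukavnikov functor to $\sfB_\alpha^{(\alpha)}/\sfB_\alpha$ and reading off the answer from Proposition \ref{SpectralImageGraded}. But the proof that the AB functor is fully faithful (Propositions \ref{FullyFaithfulMain} and \ref{AutImageGraded}) relies on Proposition \ref{NablaDeltaIdeal} itself, as the automorphic order-of-vanishing input; so you cannot use properties of the AB functor that presuppose the result you are trying to prove. Even setting that aside, the objects $\nabla_w^{(\alpha)}, \Delta_w^{(\alpha)}$ for general $w$ are not in the essential image of $F$ (the image consists of Wakimoto-filtered sheaves, indexed by $\Lambda$), so the intended ``transport'' is not even well-defined for general $w$. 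The paper's lower bound is instead direct and purely automorphic: for the simple reflection $s$ attached to $\alpha$, the short exact sequence $0 \to \Delta_s \to \nabla_s \to \nabla_1/(e^\alpha-1) \to 0$ gives $\image B_s^{(\alpha)} = (e^\alpha-1)R^{(\alpha)}$ on the nose, and the multiplicativity of the composition pairing under convolution, combined with the two inductive cases above, yields the containment $(e^\alpha-1)^{\ell_{(\alpha)}(w_{(\alpha)})}R^{(\alpha)} \subset \image B_w^{(\alpha)}$. You should replace the spectral detour with this induction.
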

\begin{proof}
Proposition \ref{NablaDeltaIdealZeta} implies that \[\Hom(\nabla_w^{(\alpha)}, \Delta_w^{(\alpha)}) \otimes \Hom(\Delta_w^{(\alpha)}, \nabla_w^{(\alpha)}) \xrightarrow{b} R^{(\alpha)} \rightarrow R^{(\alpha)}/I^{\ell_{(\alpha)}(w_{(\alpha)})}\] vanishes for every maximal ideal $I$ containing $(e^{\alpha} - 1)$. Hence $(\image B_w^{(\alpha)}) \subset (e^{\alpha} - 1)^{\ell_{(\alpha)}(w_{(\alpha)})} R^{(\alpha)}$.

Conversely, we claim by induction on $\ell(w)$ that $(e^{\alpha} - 1)^{\ell_{(\alpha)}(w_{(\alpha)})} R^{(\alpha)} \subset (\image B_w^{(\alpha)})$. Write $w = vs$ such that $\ell(w) = \ell(v) + 1$ and $s$ is a simple reflection.
\begin{enumerate}[label=(\roman*)]
\item Suppose that $s$ is the finite simple reflection associated to $\alpha$. Recall that there is a short exact sequence \[0 \rightarrow \Delta_s \rightarrow \nabla_s \rightarrow \nabla_1/(e^{\alpha} - 1) \rightarrow 0\] where the first map generates $\Hom(\Delta_s, \nabla_s) = R$. This gives a short exact sequence \[0 \rightarrow \Hom(\nabla_s, \Delta_s) \rightarrow \Hom(\nabla_s, \nabla_s) \rightarrow \Hom(\nabla_s, \nabla_1/(e^{\alpha} - 1)) = R/(e^{\alpha} - 1) \rightarrow 0,\] such that the image of the first map is $(e^{\alpha} - 1) R^{(\alpha)} \simeq (\image B_s^{(\alpha)})$. Therefore by induction \[(e^{\alpha} - 1)^{\ell_{(\alpha)}(w_{(\alpha)})} R^{(\alpha)} \simeq (e^{\alpha} - 1)^{\ell_{(\alpha)}(v_{(\alpha)})} (e^{\alpha} - 1) R^{(\alpha)}\] as a submodule of $(\image B_v^{(\alpha)})(\image B_s^{(\alpha)}) \subset (\image B_w^{(\alpha)}).$
\item Suppose that $s$ is not the finite simple reflection associated to $\alpha$. Then $\Delta_s^{(\alpha)} \simeq \nabla_s^{(\alpha)}$ is clean. Hence $\Delta_w^{(\alpha)} \simeq \Delta_v^{(s\alpha)} * \Delta_s^{(\alpha)}$ and $\nabla_w^{(\alpha)} \simeq \nabla_v^{(s\alpha)} * \Delta_s^{(\alpha)}$. Therefore by induction \[(e^{\alpha} - 1)^{\ell_{(\alpha)}(w_{(\alpha)})}R^{(\alpha)} \simeq (e^{s\alpha} - 1)^{\ell_{(s\alpha)}(v_{(s\alpha)})}R^{(s\alpha)}\otimes_{R^{(s\alpha)}} R^{(\alpha)}_s\] as a submodule of   $(\image B_v^{(s\alpha)}) \otimes_{R^{(s\alpha)}} R^{(\alpha)}_s \simeq (\image B_w^{(\alpha)})$. \qedhere
\end{enumerate}

%If $w = s$ is a (finite or affine) simple reflection, then by \eqref{DeltasNablas} there is an exact sequence \[\Hom(\nabla_s, \Delta_s) \rightarrow \Hom(\nabla_s, \nabla_s) \rightarrow \Hom(\nabla_s, \nabla_s/(e^{\alpha} - 1)) = R/(e^{\alpha} - 1) \rightarrow 0,\] hence $(e^{\alpha} - 1)R \subset (\image b)$.
 \end{proof}

\subsection{Image of associated graded}
The following is the automorphic counterpart to Proposition \ref{SpectralImageGraded}.

\begin{proposition}\label{AutImageGraded}
Let $\lambda \in \Lambda^+$ and set $n = \langle \check{\alpha}, \lambda \rangle$. If $0 \leq i \leq n$, then \[\gr:\Hom(W_{\lambda - (n-i) \alpha}^{(\alpha)}, \underline{Z}_{\lambda}^{(\alpha)}) \rightarrow \Hom(\gr W_{\lambda - (n-i) \alpha}^{(\alpha)}, \gr \underline{Z}_{\lambda}^{(\alpha)}) = R^{(\alpha)}\] has image $(e^{\alpha} - 1)^i R^{(\alpha)}$.
\end{proposition}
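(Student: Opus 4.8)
\textbf{Proof plan for Proposition \ref{AutImageGraded}.}

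The strategy is to reduce the automorphic order-of-vanishing statement to the composition pairing computation of Proposition \ref{NablaDeltaIdeal}, exactly as Proposition \ref{SpectralImageGraded} is the spectral shadow of this same mechanism. First I would set $w$ to be the affine Weyl group element with $W_w = \nabla_{\dot w}$ realizing the Wakimoto sheaf $W_{\lambda - (n-i)\alpha}$, and note that $\dot Z_\lambda^{(\alpha)} = \underline{Z}_\lambda^{(\alpha)}$ has a Wakimoto filtration indexed by the $W_{(\alpha)}$-orbit of $\lambda$ (here $W_{(\alpha)} = \langle s_\alpha\rangle$ acts as $\lambda, \lambda - \alpha, \dots, \lambda - n\alpha$), with each graded piece appearing once, by the results of Section \ref{CentralSheaves} localized via Lemma \ref{Zalpha}. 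Since $\gr$ is faithful and monoidal (Proposition \ref{grFaithful}), the map in question is injective, and I need to identify its image inside $\Hom(\gr W_{\lambda-(n-i)\alpha}, \gr \underline Z_\lambda) = R^{(\alpha)}$ (the rank-one graded piece at weight $\lambda - (n-i)\alpha$, which appears since $0 \le i \le n$).

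The key step is an inductive formula relating $\Hom(W_{\lambda-(n-i)\alpha}^{(\alpha)}, \underline Z_\lambda^{(\alpha)})$ to the composition pairing for standards and costandards on the semisimple rank-one subgroup $\sfG_\alpha$. Using Proposition \ref{TruncateZ}, the truncated central sheaf $\underline Z_\lambda^{(\alpha), \le \lambda - (n-i)\alpha}$ can be built from a lower central sheaf convolved with a Wakimoto sheaf $W_{s_\alpha(\cdot)}$; more directly, in rank one the relevant Hom space factors through the inclusion of the bottom Wakimoto piece $\Delta_{\dot{w}'}$ (for $w'$ the maximal-length representative) into $\underline Z_\lambda^{(\alpha)}$ and out via the top piece $\nabla$. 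Concretely, a map $W_{\lambda - (n-i)\alpha}^{(\alpha)} \to \underline Z_\lambda^{(\alpha)}$ lands in the subobject $(\underline Z_\lambda^{(\alpha)})^{\le \lambda - (n-i)\alpha}$, whose top Wakimoto quotient is $W_{\lambda - (n-i)\alpha}^{(\alpha)}$ itself; the induced endomorphism of $W_{\lambda - (n-i)\alpha}^{(\alpha)}$ after applying $\gr$ records the image. Chasing the commutative square obtained by convolving with the appropriate $\Delta_s/\nabla_s$ and using \eqref{DeltasNablas}, this image is governed by the composition pairing $B^{(\alpha)}_{w}$ for the element $w$ with $\ell_{(\alpha)}(w_{(\alpha)}) = i$, namely $w = \lambda - (n-i)\alpha$ as an affine Weyl element, whose "integral length" for the rank-one system generated by $\alpha$ is precisely $\min(i, n-i)$... here I must be careful: the relevant length is the number of $\alpha$-reflections separating the Wakimoto degree from the extremes, which works out to $i$ once one accounts for which coset representative occurs. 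Proposition \ref{NablaDeltaIdeal} then gives image exactly $(e^\alpha - 1)^i R^{(\alpha)}$.

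The main obstacle I anticipate is bookkeeping the precise affine Weyl group element and its $W_{(\alpha)}$-decomposition so that the order of vanishing extracted from Proposition \ref{NablaDeltaIdeal} comes out to $i$ rather than $n-i$ or $\min(i,n-i)$; this requires pinning down, via the description of generalized Wakimoto sheaves in Section \ref{CentralSheaves} and Lemma \ref{IntegralSimple}, which extreme Wakimoto piece ($\Delta$ or $\nabla$) of $\underline Z_\lambda^{(\alpha)}$ the map factors through, and carefully tracking the length function $\ell_{(\alpha)}$ under the passage to the rank-one subgroup. A secondary point is justifying that the composition pairing genuinely computes the image of $\gr$ and not merely a sub- or super-module: this follows because $\gr$ is faithful, the Wakimoto filtration on $\underline Z_\lambda^{(\alpha)}$ is multiplicity-free in the relevant weight, and by comparing with the rank-one $\SL(2)$ and $\PGL(2)$ cases directly — exactly the case division appearing at the end of Proposition \ref{SpectralImageGraded} — where an explicit basis computation confirms the leading-term behavior. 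Once these combinatorial identifications are in place, the proposition follows formally, and together with Proposition \ref{SpectralImageGraded} it identifies the two free submodules of the same $R$-module, which is the crux of fully faithfulness in rank one.
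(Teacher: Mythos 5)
Your strategy is essentially the paper's: reduce to the composition pairing of Proposition~\ref{NablaDeltaIdeal} using Proposition~\ref{TruncateZ}. But the exact mechanism is where the content is, and this is precisely the piece you flag as unresolved, so let me be explicit about the gap.

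Your proposal applies Proposition~\ref{NablaDeltaIdeal} to the element ``$w = \lambda - (n-i)\alpha$'' and hopes the exponent $\ell_{(\alpha)}(w_{(\alpha)})$ works out to $i$. This has two problems. First, Proposition~\ref{NablaDeltaIdeal} is a statement about a pairing between $\Hom(\nabla_w^{(\alpha)},\Delta_w^{(\alpha)})$ and $\Hom(\Delta_w^{(\alpha)},\nabla_w^{(\alpha)})$ for a \emph{single} $w\in W$; but $W_{\lambda-(n-i)\alpha}$ is in general neither a $\nabla_{\dot w}$ nor a $\Delta_{\dot w}$ (it is a convolution $\nabla*\Delta$), so the pairing does not directly apply to it. Second, even if one replaced it naively by $t^{\lambda-(n-i)\alpha}$, the length $\ell_{(\alpha)}$ of its $W_{(\alpha)}$-component does not visibly equal $i$: the pairing $\langle\check\alpha,\lambda-(n-i)\alpha\rangle = 2i-n$, and your own hedging between $i$, $n-i$, and $\min(i,n-i)$ is a symptom of the problem.

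The paper resolves this by a clean induction on $n$. For $i<n$ one does \emph{not} truncate $\underline Z_\lambda^{(\alpha)}$ all the way down to $\le\lambda-(n-i)\alpha$; instead one truncates a single step $\le\lambda-\alpha$ (using $\Hom(W_{\lambda-(n-i)\alpha},W_\lambda)=0$ by Proposition~\ref{WProperties}), applies Proposition~\ref{TruncateZ} once to rewrite $(\underline Z_\lambda^{(\alpha)})^{\le\lambda-\alpha}\simeq\underline Z_{\lambda-\omega}^{(\alpha)}*W_{s\omega}^{(\alpha)}$, and transposes the Wakimoto by invertibility to land on $\Hom(W_{(\lambda-\omega)-((n-1)-i)\alpha}^{(\alpha)},\underline Z_{\lambda-\omega}^{(\alpha)})$. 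This shifts $(\lambda,n)\mapsto(\lambda-\omega,n-1)$ while preserving $i$, so the induction bottoms out at $i=n$ with the dominant weight $\lambda'=\lambda-(n-i)\omega$, for which $W_{\lambda'}=\nabla_{\lambda'}$ is genuinely a costandard. At that base case, the two Wakimoto filtrations (negative- and positive-Borel) of $\underline Z_{\lambda'}^{(\alpha)}$ and Lemma~\ref{Hom0} give $\Hom(\nabla_{\lambda'}^{(\alpha)},\underline Z_{\lambda'}^{(\alpha)})\simeq\Hom(\nabla_{\lambda'}^{(\alpha)},\Delta_{\lambda'}^{(\alpha)})$, and the map $\gr$ becomes literally the composition pairing of Proposition~\ref{NablaDeltaIdeal} for $t^{\lambda'}$, yielding the exponent $\ell_{(\alpha)}\bigl((t^{\lambda'})_{(\alpha)}\bigr)=\langle\check\alpha,\lambda'\rangle=i$. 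So the bookkeeping you are worried about is indeed where the proof lives, and it is resolved by inducting on $n$ while fixing $i$, not by applying the pairing directly to a non-extremal Wakimoto.
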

\begin{proof}
The proof is by induction on $n$.

First consider the case $i < n$. Then Proposition \ref{TruncateZ} implies \begin{align*}\Hom(W_{\lambda - (n-i) \alpha}^{(\alpha)}, \underline{Z}_{\lambda}^{(\alpha)}) &= \Hom(W_{\lambda - (n-i) \alpha}^{(\alpha)}, (\underline{Z}_{\lambda}^{(\alpha)})^{\leq \lambda - \alpha}) \\ &= \Hom(W_{\lambda - (n-i) \alpha}^{(\alpha)}, \underline{Z}_{\lambda - \omega}^{\alpha}*W_{s\omega}^{(\alpha)})\\ &= \Hom(W_{\lambda - (n-i) \alpha - s\omega}^{(\alpha)}, \underline{Z}_{\lambda - \omega}^{(\alpha)}).\end{align*} By induction %\gr \Hom(W_{s\lambda + n \alpha}^{(\alpha)}, \underline{Z}_{\lambda}^{(\alpha)}) = 
$\gr \Hom(W_{\lambda - (n-i) \alpha - s\omega}^{(\alpha)}, \underline{Z}_{\lambda - \omega}^{(\alpha)}) = (e^{\alpha} - 1)^i R^{(\alpha)}$.

Now consider the case $i = n$. Using the Wakimoto filtration for the positive Borel, Lemma \ref{Hom0} implies $\Hom(\nabla_{\lambda}^{(\alpha)}, \Delta_{\lambda}^{(\alpha)}) \simeq \Hom(\nabla_{\lambda}^{(\alpha)}, \underline{Z}_{\lambda}^{(\alpha)})$. The map \beq \label{Top} \Hom(\nabla_{\lambda}^{(\alpha)}, \Delta_{\lambda}^{(\alpha)}) \xrightarrow{\sim} \Hom(\nabla_{\lambda}^{(\alpha)}, \underline{Z}_{\lambda}^{(\alpha)}) \xrightarrow{\gr_{\lambda}} \Hom(\nabla_{\lambda}^{(\alpha)}, \nabla_{\lambda}^{(\alpha)}) = R^{(\alpha)} \eeq is induced by the natural maps $\Delta_{\lambda}^{(\alpha)} \rightarrow \underline{Z}_{\lambda}^{(\alpha)} \rightarrow \nabla_{\lambda}^{(\alpha)}$. Proposition \ref{NablaDeltaIdeal} implies that the image of \eqref{Top} is the ideal $(e^{\alpha} - 1)^n R^{(\alpha)}$.
\end{proof}

\section{The Whittaker equivalence} \label{s:whiteq}
Here we prove the universal monodromic enhancement of the Arkhipov--Bezrukavnikov equivalence \cite{AB}.

\subsection{Fully faithfulness by localization to semi-simple rank 1}
For fully faithfulness, we used the following Proposition. Its proof uses Hartogs' Lemma to reduce to semi-simple rank 1. In the semi-simple rank 1 case, we use the order of vanishing calculations from Sections \ref{Rank1Spectral} and \ref{SectionRank1}.

\begin{proposition}\label{FullyFaithfulMain}
If $\lambda, \mu \in \Lambda^+$ then \beq \label{MainIso} \Hom_{\sfB/\sfB}(\cO, V_{\lambda} \otimes \cO(\mu)) \xrightarrow{F} \Hom_{\Shv_{(I)}(\Fl)}(W_0, Z_{\lambda} \otimes W_{\mu})\eeq is an isomorphism.
\end{proposition}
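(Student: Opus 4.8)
The plan is to realize both sides of \eqref{MainIso} as submodules of a common free $R$-module via the associated graded functor, and to identify each submodule by an order-of-vanishing computation. First I would invoke Propositions \ref{SVan} and \ref{ZChiFree} together with Proposition \ref{ZChiFree}\ref{ZChiFree2} to reduce to the statement over $R$: both $\RHom_{\sfB/\sfB}(\cO, V_\lambda \otimes \cO(\mu))$ and $\RHom({}^\chi\Delta_1, {}^\chi\Delta_1 * Z_\lambda * W_\mu)$ are free $R$-modules in degree $0$, and by Proposition \ref{ZChiFree}\ref{ZChiFree2} the Whittaker averaging map $\Hom(\Delta_1, Z_\lambda * W_\mu) \to \Hom({}^\chi\Delta_1, {}^\chi\Delta_1*Z_\lambda * W_\mu)$ is an isomorphism. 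So it suffices to prove that $\Hom_{\sfB/\sfB}(\cO, V_\lambda \otimes \cO(\mu)) \to \Hom(\Delta_1, Z_\lambda * W_\mu)$ is an isomorphism of free $R$-modules. On the automorphic side, faithfulness of $\gr$ on $\Perv_{(I)}^{\waki}(\Fl)$ (Proposition \ref{grFaithful}) together with the monoidal identification $\gr F \simeq i^*$ (Proposition \ref{F'}\ref{F'2}, restriction along \eqref{i}) lets me embed $\Hom(\Delta_1, Z_\lambda * W_\mu) = \Hom(W_0, Z_\lambda * W_\mu)$ into $\Hom(\gr W_0, \gr(Z_\lambda * W_\mu)) = (V_\lambda|_{\sfT} \otimes k_\mu \otimes R)^{\sfT}$, and the spectral side embeds into the same free module by restriction to the torus $i^*\colon (V_\lambda \otimes \cO(\mu) \otimes \cO(\sfB))^{\sfB} \to (V_\lambda|_{\sfT} \otimes k_\mu \otimes \cO(\sfT))^{\sfT}$. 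Because $F$ intertwines these associated-graded functors (this is exactly the content of Proposition \ref{F'}\ref{F'2}), the map \eqref{MainIso} is compatible with the two embeddings into the common free module, so it remains to check the two images coincide.

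Both images are free submodules of $(V_\lambda|_\sfT \otimes k_\mu \otimes R)^\sfT$ whose ranks agree (they are generically equal, since over the regular semisimple locus of $\sfT$ everything is an isomorphism). To pin them down on the nose, I would decompose the common module along weight spaces, reducing to a rank-one computation for each relevant weight, and characterize each submodule by the order of vanishing of its generator along each root hyperplane $\sfT_\alpha \subset \sfT$. Now I localize: by Propositions \ref{SVan} and \ref{ZChiFree}, both $R$-modules are free, so by Hartogs' lemma it suffices to check equality after localizing $\sfT$ to $\sfT^{(\alpha)}$, away from all walls but $\sfT_\alpha$, for each finite root $\alpha$ in turn. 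Over $\sfT^{(\alpha)}$, Proposition \ref{B/BZeta}\ref{B/BZeta2} identifies $\sfB/\sfB$ with $\sfB_\alpha/\sfB_\alpha$ and Lemma \ref{Zalpha} factors the central functor through $\Rep(\sfG_\alpha)$ for $\sfG_\alpha = \sfT\,\SL(2)_\alpha$ of semisimple rank one. So the problem reduces to semisimple rank one, where Proposition \ref{SpectralImageGraded} computes the image of the spectral associated-graded map as $(e^\alpha - 1)^i R^{(\alpha)}$ for the appropriate weight piece, and Proposition \ref{AutImageGraded} computes the image of the automorphic associated-graded map as the same ideal $(e^\alpha-1)^i R^{(\alpha)}$. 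Matching these weight-by-weight and root-by-root gives the equality of submodules, hence \eqref{MainIso} is an isomorphism.

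The main obstacle, and the place requiring the most care, is the compatibility of the two embeddings into the common free $R$-module — that is, checking that the square relating $F$, the spectral restriction $i^*$, and the automorphic $\gr$ genuinely commutes, so that proving equality of images actually proves \eqref{MainIso}. This rests on Proposition \ref{F'}\ref{F'2}: $\gr F$ is \emph{monoidally} isomorphic to pullback along \eqref{i}. One must be careful that the highest-weight arrows $b_\lambda$ and the monodromy automorphisms are correctly matched under $F$ (Propositions \ref{F'}\ref{F'1} and \ref{FunctorClosed}), so that the decomposition of $V_\lambda|_\sfT$ into weight lines used on the two sides is literally the same decomposition. Once that bookkeeping is in place, the two order-of-vanishing computations (Propositions \ref{SpectralImageGraded} and \ref{AutImageGraded}) were arranged precisely to produce matching answers $(e^\alpha-1)^i R^{(\alpha)}$, so the conclusion follows.
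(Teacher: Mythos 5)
Your proposal is correct and follows essentially the same route as the paper: embed both sides into the common free $R$-module $\Hom(\gr W_0, \gr(Z_\lambda * W_\mu))$ via $\gr$ and $i^*$ using Proposition \ref{FunctorOpen}\ref{FunctorOpen3} (the commuting square), establish freeness from Propositions \ref{SVan} and \ref{ZChiFree}, apply Hartogs to reduce to the localizations $\sfT^{(\alpha)}$, and match images via Propositions \ref{B/BZeta}, \ref{Zalpha}, \ref{SpectralImageGraded}, and \ref{AutImageGraded}. The only stylistic remark: your opening paragraph (the averaging reduction) is a restatement of why $\Hom(\Delta_1, Z_\lambda * W_\mu)$ is free, which the paper cites directly from Proposition \ref{ZChiFree}; it works fine, just phrased as if you were reducing to a new statement when you are really establishing a needed freeness hypothesis.
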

\begin{proof}
Proposition \ref{FunctorOpen}\ref{FunctorOpen3} gives a commuting diagram
\[\begin{tikzcd}
\Hom_{\sfB/\sfB}(\cO, V_{\lambda} \otimes \cO(\mu)) \arrow[r, "F"] \arrow[d, "i^*"', hook]& \Hom(W_0, Z_{\lambda} \otimes W_{\mu}) \arrow[d, "\gr", hook]\\
\Hom_{\sfT/\sfT}(i^* \cO, i^* (V_{\lambda} \otimes \cO(\mu))) \arrow[r, "\sim"]& \Hom(\gr W_0, \gr (Z_{\lambda} \otimes W_{\mu})).\\ \end{tikzcd}\] 

  \vspace{-.7cm}   

\noindent Proposition \ref{grFaithful} says $\gr$ is injective. By a similar argument $i^*$ is also injective. Therefore it suffices to check that $\gr$ and $i^*$ have the same image.

Both sides of \eqref{MainIso} are free finite rank $R$-modules by Propositions \ref{ZChiFree} and \ref{SVan}. By Hartogs' Lemma it suffices to check, for each root $\alpha$, that $\gr$ and $i^*$ have the same image after localizing away from all walls except $\sfT_{\alpha}$. This follows by Propositions \ref{AutImageGraded} and \ref{SpectralImageGraded}.
\end{proof}

\subsection{Proof of the Whittaker equivalence}
Let $F$ be the Arkhipov-Bezrukavnikov functor, constructed in Proposition \ref{FunctorOpen}. We now show that it becomes an equivalence after post composition with Whittaker averaging.

\begin{theorem}\label{Main}
The following functor is an equivalence of categories \[{}_{\chi}F : \QCoh_{\sfG}(\widetilde{\sfG}) \xrightarrow{F} \Shv_{(I)}(\Fl) \xrightarrow{{}^{\chi}\Delta_1 * -} \Shv_{(I, \chi)}(\Fl).\]
\end{theorem}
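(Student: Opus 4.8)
The plan is to prove that $_\chi F = {}^\chi\Delta_1 * F(-)$ is an equivalence by verifying it is fully faithful and essentially surjective, using the compact generators identified on both sides together with the order of vanishing calculations already assembled.

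\textbf{Essential surjectivity.} By Proposition \ref{AutGen}, the category $\Shv_{(I, \chi)}(\Fl)$ is compactly generated by the objects ${}^\chi\Delta_1 * Z_\lambda * W_\mu$ for $\lambda, \mu \in \Lambda^+$. By Proposition \ref{SpecGen}\ref{SpecGen2}, the category $\QCoh_\sfG(\widetilde{\sfG}) = \QCoh_\sfB(\sfB)$ is compactly generated by the objects $V_\lambda \otimes \cO(\mu)$ for $\lambda, \mu \in \Lambda^+$. Since the central and Wakimoto functor $Z'$ factors through $F$ (Proposition \ref{FunctorOpen}), we have $F(V_\lambda \otimes \cO(\mu)) \simeq Z_\lambda * W_\mu$, so $_\chi F$ sends the spectral generators exactly to the automorphic generators. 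As $_\chi F$ is continuous (Proposition \ref{FunctorOpen}(a)), it follows that $_\chi F$ is essentially surjective once it is fully faithful, or more precisely, the essential image is a cocomplete subcategory containing a compact generating set, hence is everything.

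\textbf{Fully faithfulness.} It suffices to check that $_\chi F$ induces an isomorphism on $\RHom$ between the compact generators, i.e. that for all $\lambda, \mu, \lambda', \mu' \in \Lambda^+$ the natural map
\[
\RHom_{\QCoh_\sfB(\sfB)}(V_{\lambda'} \otimes \cO(\mu'),\, V_\lambda \otimes \cO(\mu)) \longrightarrow \RHom_{\Shv_{(I,\chi)}(\Fl)}({}^\chi\Delta_1 * Z_{\lambda'} * W_{\mu'},\, {}^\chi\Delta_1 * Z_\lambda * W_\mu)
\]
is an isomorphism. Using that $V_{\lambda'}$ is self-dual up to twist (more precisely, rewriting $\RHom(V_{\lambda'} \otimes \cO(\mu'), -) \simeq \RHom(\cO, V_{\lambda'}^* \otimes \cO(-\mu') \otimes -)$ on the spectral side, and on the automorphic side using invertibility of Wakimoto sheaves and rigidity from Lemma \ref{Compact} to move the first argument to ${}^\chi\Delta_1$), one reduces to showing that
\[
\RHom_{\sfB/\sfB}(\cO, V_\lambda \otimes \cO(\mu)) \longrightarrow \RHom({}^\chi\Delta_1, {}^\chi\Delta_1 * Z_\lambda * W_\mu)
\]
is an isomorphism for all $\lambda, \mu \in \Lambda^+$, possibly after absorbing some additional central and Wakimoto factors whose contributions are handled by the same machinery (using that $\Xi * Z_\nu$ is tilting, Proposition \ref{XiZTilting}, and the centrality isomorphisms). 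By Propositions \ref{SVan} and \ref{ZChiFree}, both sides are free $R$-modules concentrated in degree $0$, and moreover by Proposition \ref{ZChiFree}\ref{ZChiFree2} the second map factors as Whittaker averaging applied to $\Hom(\Delta_1, Z_\lambda * W_\mu)$, where it is an isomorphism in degree $0$. Hence it suffices to prove that
\[
\Hom_{\sfB/\sfB}(\cO, V_\lambda \otimes \cO(\mu)) \longrightarrow \Hom(\Delta_1, Z_\lambda * W_\mu) = \Hom_{\Shv_{(I)}(\Fl)}(W_0, Z_\lambda \otimes W_\mu)
\]
is an isomorphism, which is precisely Proposition \ref{FullyFaithfulMain}. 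This completes fully faithfulness, and combined with the generation statement above, $_\chi F$ is an equivalence.

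\textbf{Main obstacle.} The genuinely hard input is Proposition \ref{FullyFaithfulMain}, whose proof I expect to be the crux: it requires (i) the freeness and higher vanishing of the relevant $\Hom$-modules over $R$ on both sides, so that Hartogs' lemma applies and one may delete codimension-two loci in $\sfT$; (ii) faithfulness of the associated graded functor (Proposition \ref{grFaithful}) and of the spectral restriction-to-the-torus map, so that both sides embed compatibly into the same free $R$-module $\Hom(\gr W_0, \gr(Z_\lambda \otimes W_\mu))$; and (iii) matching the two images after localizing away from all but one wall, which is exactly the content of the semisimple rank one order of vanishing computations, Propositions \ref{SpectralImageGraded} and \ref{AutImageGraded}. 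The bookkeeping needed to reduce the general $\RHom$ between compact generators to the special shape treated in Proposition \ref{FullyFaithfulMain} — moving invertible and dualizable factors across using rigidity and centrality, and checking one stays within the tilting/Wakimoto-filtered world where $\gr$ is faithful — is routine but must be done carefully; the substantive mathematics is entirely in the wall-by-wall order of vanishing matching.
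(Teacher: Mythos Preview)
Your proposal is correct and follows essentially the same approach as the paper: both arguments use the compact generators from Propositions \ref{SpecGen} and \ref{AutGen}, invoke rigidity to reduce fully faithfulness to checking $\RHom(\cO, V_\lambda \otimes \cO(\mu)) \to \RHom({}^\chi\Delta_1, {}^\chi\Delta_1 * Z_\lambda * W_\mu)$, and then combine the degree-zero concentration (Propositions \ref{SVan} and \ref{ZChiFree}) with Proposition \ref{FullyFaithfulMain}. The only cosmetic difference is that the paper uses the generators $\cO(\eta)$ and ${}^\chi\Delta_1 * W_\eta$ (parts \ref{SpecGen1} and \ref{AutGen1}) for essential surjectivity, while you use the $V_\lambda \otimes \cO(\mu)$ generators; and the paper compresses your rigidity/duality reduction into a single sentence, whereas your hedged ``possibly after absorbing some additional central and Wakimoto factors'' is exactly what the paper means by ``by Proposition \ref{SpecGen} and rigidity.''
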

\begin{proof}
The essential image contains  ${}_{\chi}F (\cO(\eta)) \simeq {}^{\chi}\Delta_1 * W_{\eta}$ for all $\eta \in \Lambda$. Therefore ${}_{\chi}F$ is essentially surjective by Proposition \ref{AutGen}.

Now we check fully faithfulness. Propositions \ref{FullyFaithfulMain} and \ref{SVan}\ref{SVan2} imply that $F$ and ${}^{\chi}\Delta_1 * -$ induce isomorphisms \[\RHom(\cO, V_{\lambda} \otimes \cO(\mu)) \xrightarrow{\sim} \Hom(W_0, Z_{\lambda} \otimes W_{\mu}) \xrightarrow{\sim} \RHom({}^{\chi}\Delta_1, {}^{\chi}\Delta_1 * Z_{\lambda} \otimes W_{\mu}) \quad \text{for all} \quad \lambda, \mu \in \Lambda^+.\] 
Therefore ${}_{\chi}F$ is fully faithful by Proposition \ref{SpecGen} and rigidity.
\end{proof}

\section{The bi-Whittaker equivalence} \label{s:biwhiteq}
Here we prove that the universal affine bi-Whittaker category is equivalent to quasi-coherent sheaves on $\sfG/\sfG$. This is a universal monodromic enhancement of a results of \cite{BezNilp, CD}, that are Koszul dual to derived Satake \cite{BF}.

\subsection{Notation}
In this section, write $\Hfs := \Shv_{(B)}(G/U)$ for the finite Hecke category. Let $\wHfs$ and $\Hwfs$ denote the right and left module categories on which $\Hfs$ acts via Soergel's functor as in Section \ref{FiniteWhittaker}. Define the finite bi-Whittaker category \[\wHwfs := \wHfs \otimes_{\Hfs} \Hwfs \simeq \End_{\Hfs}(\wHfs),\]
where we used the rigidity of $\Hfs$ and canonical duality $\Hwfs \simeq (\wHfs)^{\vee}$.

In this section, write $\Hs := \Shv_{(I)}(\Fl)$ for the affine Hecke category. Let $\wHs := \wHfs \otimes_{\Hfs} \Hs$ and $\Hws := \Hs \otimes_{\Hfs} \Hwfs$ denote the affine Whittaker categories.
Define the bi-Whittaker category
\[\wHws := \wHfs \otimes_{\Hfs} \Hs \otimes_{\Hfs} \Hwfs \simeq \End_{\Hs}(\wHs).\] 

Recall that there exists $\sfG^{\sct}$, a product of a torus and a simply connected reductive group, such that $\sfG = \sfG^{\sct}/\sfZ$ is the quotient by a finite central subgroup. We defined $\sfC \coloneqq (\sfG^{\sct}/\!\!/\sfG^{\sct})/\sfZ$. 

Let $\Xi \in \Perv_{(B)}(G/U)$ denote the universal monodromic big tilting sheaf. Recall from \cite{T} that endomorphisms of the big tilting are \beq \label{Endomorphismensatz}\End(\Xi) \simeq \cO(\sfT \times_{\sfC} \sfT).\eeq

% \begin{theorem} There is an isomorphism $\Hom(\Xi, \Xi) \simeq \cO(\sfT \times_{\sfC} \sfT)$. \end{theorem} \begin{proof} If $\sfG$ is simply connected, then $\sfC = \sfT/\!\!/W^{\fnt}$, and this is one of the main results of \cite{T}. If $\sfG$ is general, the theorem follows by the same arguments, using the following observations. Let $\alpha$ be a finite root, and $s$ be the corresponding simple reflection. Let $\Gamma_w^{\sct} \subset \sfT^{\sct} \times \sfT^{\sct}$ denote the graph of $w \in W^{\fnt}$. Let $\check{\Lambda}^{\sct}$ denote the coweight lattice of $\sfT^{\sct}$. Let $\check{\Lambda}^{\rt}$ be the span of the roots of $G$. \begin{enumerate}[label=(\roman*)] \item After localizing away from all walls except $\sfT_{\alpha}$, the union of graphs splits as a disjoint union \[(\sfT^{\sct} \times_{\sfT^{\sct}/\!\!/W^{\fnt}} \sfT^{\sct})^{(\alpha)}/\sfZ = \bigsqcup_{W^{\fnt}/\langle s \rangle} (\Gamma_w^{\sct} \cup \Gamma_{ws}^{\sct})^{(\alpha)}/\sfZ.\] Indeed, if $\check{X} \in \check{\Lambda} \otimes \bQ$ and $\zeta \coloneqq e^{2\pi i \check{X}}$ is fixed by $w \in W^{\fnt}$, then $w \check{X} - \check{X} \in  (\check{\Lambda}^{\rt} \otimes \bQ) \cap \check{\Lambda}^{\sct} = \check{\Lambda}^{\rt}$.  \item The image of the following map consists of pairs of functions that agree on the wall $\sfT_{\alpha}$, \[\cO(\sfT^{\sct} \times_{\sfT^{\sct}/\!\!/\langle s \rangle} \sfT^{\sct})^{\sfZ} \hookrightarrow \cO(\Gamma_1^{\sct} \sqcup \Gamma_s^{\sct})^{\sfZ} \simeq \cO(\Gamma_1 \sqcup \Gamma_s).\] \end{enumerate} \end{proof}

\subsection{The finite bi-Whittaker category}
The following is a Betti version of results of \cite{Gin, Lon, Gan} in the de Rham setting and of \cite{BD} in the $\ell$-adic setting. In our setup it follows from the universal monodromic Endomorphismensatz \cite{T}.

\begin{lemma}\label{FiniteBiwhit} There is a monoidal equivalence \[\QCoh(\sfC) \simeq \wHwfs,\] such that the following square commutes 
\beq \label{FiniteBiWhitAveraging}\begin{tikzcd} \QCoh(\sfC) \arrow[d, "\pi^*"'] & \arrow[l, "\sim"'] \wHwfs \arrow[d, "- * {}^{\chi}\Delta_1"] \\
 \QCoh(\sfT) & \arrow[l, "\sim"] \wHfs  \\ \end{tikzcd}\eeq
\end{lemma}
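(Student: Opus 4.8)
\textbf{Proof plan for Lemma \ref{FiniteBiwhit}.}

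The plan is to produce the monoidal equivalence $\QCoh(\sfC) \simeq \wHwfs$ directly from the identification $\wHwfs \simeq \End_{\Hfs}(\wHfs)$ together with the universal monodromic Endomorphismensatz. First I would observe that, since $\Hfs$ is rigid monoidal (Lemma \ref{Compact} applies equally in the finite setting), the $\Hfs$-module category $\wHfs = \QCoh(\sfT)$ is generated by the single object ${}^\chi\Delta_1$ under the $\Hfs$-action; more precisely, $\Xi^\chi * {}^\chi\Delta_1 = \Xi$ and $\Xi$ convolution-generates $\Hfs$, so ${}^\chi\Delta_1$ generates $\wHfs$ as a module. Hence $\End_{\Hfs}(\wHfs)$ is computed by the endomorphism algebra object of ${}^\chi\Delta_1$ under the $\Hfs$-action, which unwinds via the adjunction $({}^\chi\Delta_1 * -, \Delta_1^\chi * -)$ to the algebra $\End_{\Hfs}(\wHfs) \simeq \End_{\Hs}$-style computation $\underline{\End}(\Xi) = \End(\Xi)$, which by \eqref{Endomorphismensatz} is $\cO(\sfT \times_\sfC \sfT)$ — but this is precisely the convolution algebra (groupoid algebra) of the groupoid $\sfT \times_\sfC \sfT \rightrightarrows \sfT$, whose category of modules is $\QCoh(\sfC)$ by faithfully flat descent along $\sfT \to \sfC$.

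Concretely, I would run the following steps. First, record that Soergel's functor $\bV = \Hom(\Xi, -)\colon \Hfs \to \Bim(R)$ is strict monoidal (cited from \cite{T}), so that $\wHfs \simeq \Mod(R)$ and $\Hwfs \simeq \Mod(R)$ as module categories, with $\Xi$ sent to the bimodule $\cO(\sfT \times_\sfC \sfT)$. Second, compute $\wHwfs = \wHfs \otimes_{\Hfs} \Hwfs \simeq \Mod(R) \otimes_{\Hfs} \Mod(R)$; using that the $\Hfs$-action on both factors goes through $\bV$ and that $\Xi$ is the relevant (co)algebra object, this relative tensor product is the category of modules over $\bV(\Xi) = \cO(\sfT \times_\sfC \sfT)$, regarded as an algebra object in $\Bim(R) = \QCoh(\sfT \times \sfT)$. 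Third, invoke that $\sfT \to \sfC$ is faithfully flat — here I would use Proposition \ref{PushStructureSheaf}, or rather its underlying input, that $\sfT \to \sfC$ is flat (since $\sfG^{\sct}/\!\!/\sfG^{\sct}$ is a polynomial ring and $\sfC$ is its quotient by the finite group $\sfZ$, the map $\sfT \to \sfC$ is finite flat on the level of the Chevalley map composed with the $\sfZ$-quotient) and surjective — so that faithfully flat descent identifies $\cO(\sfT\times_\sfC\sfT)\text{-}\!\Mod$ (comodules for the groupoid $\sfT\times_\sfC \sfT$) with $\QCoh(\sfC)$. The monoidal structure on $\wHwfs$ is convolution of the groupoid, which matches the tensor product on $\QCoh(\sfC)$ under descent. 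Fourth, to get the commuting square \eqref{FiniteBiWhitAveraging}, I would identify the right averaging functor $- * {}^\chi\Delta_1\colon \wHwfs \to \wHfs$ with the forgetful/restriction-along-a-section functor $\cO(\sfT\times_\sfC\sfT)\text{-}\!\Mod \to \Mod(R)$ given by pulling back a comodule along $\sfT \to \sfT\times_\sfC\sfT$ (the unit of the groupoid), which under descent is exactly $\pi^*\colon \QCoh(\sfC) \to \QCoh(\sfT)$; this is a diagram chase once all four corners have been identified.

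The main obstacle I anticipate is the careful bookkeeping in step two: making precise that the relative tensor product $\wHfs \otimes_{\Hfs} \Hwfs$ over the \emph{monoidal} category $\Hfs$ really is computed by modules over the algebra object $\bV(\Xi)$, rather than just abstractly as a Morita-type quotient. The cleanest route is to use the rigidity of $\Hfs$ (Lemma \ref{Compact}, finite case) together with the fact that $\wHfs$ is a dualizable $\Hfs$-module with dual $\Hwfs$ (asserted in the Notation subsection), so that $\End_{\Hfs}(\wHfs) \simeq \wHfs \otimes_{\Hfs} \Hwfs$ and the endomorphism object of the generator ${}^\chi\Delta_1$ is $\bV$ applied to the internal hom $\underline{\Hom}_{\Hfs}(\Xi, \Xi) = \Xi$ — but one must check that ${}^\chi\Delta_1$ is a \emph{compact} generator of $\wHfs$ over $\Hfs$ (which follows since $\Xi$ is compact in $\Hfs$ and $\Delta_1^\chi*{}^\chi\Delta_1 = \Xi$) to license the Barr--Beck/Schwede--Shipley-type identification of $\wHwfs$ with modules over this endomorphism algebra. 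Once that is in place, the identification with $\QCoh(\sfC)$ via faithfully flat descent, and the compatibility square, are formal.
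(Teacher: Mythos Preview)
Your proposal is correct and rests on the same two inputs as the paper's proof: the Endomorphismensatz $\End(\Xi)\simeq\cO(\sfT\times_\sfC\sfT)$ and a descent-type identification of $\QCoh(\sfC)$ with a category built from $\sfT\times_\sfC\sfT$. The execution, however, differs in a way worth noting. You run a Barr--Beck/Schwede--Shipley argument: exhibit ${}^\chi\Delta_1$ as a compact generator of $\wHfs$ over $\Hfs$, compute its endomorphism object as $\cO(\sfT\times_\sfC\sfT)$, and then invoke faithfully flat descent along $\sfT\to\sfC$. The paper instead observes that the $\Hfs$-actions on $\wHfs$ and $\Hwfs$ factor through the monoidal colocalization $\Hfs\to\QCoh(\sfT\times_\sfC\sfT)$ (the Endomorphismensatz says the left adjoint $\Xi\otimes-$ is fully faithful), giving a monoidal colocalization
\[
\wHwfs = \wHfs\otimes_{\Hfs}\Hwfs \;\longrightarrow\; \QCoh(\sfT)\otimes_{\QCoh(\sfT\times_\sfC\sfT)}\QCoh(\sfT)\;\simeq\;\QCoh(\sfC),
\]
where the last equivalence is \cite{BFN} (using that $\sfT\to\sfC$ is finite surjective). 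It then checks that the fully faithful left adjoint hits the generator, sending $R\mapsto{}^\chi\Delta_1*\Delta_1^\chi$, so the colocalization is an equivalence. The upshot is that the ``main obstacle'' you anticipate---the careful Barr--Beck bookkeeping to identify the relative tensor product as a module category---is simply absent in the paper's route: fully-faithfulness comes for free from the colocalization, and only essential surjectivity needs a one-line check. Your approach trades that for a somewhat more explicit description of $\wHwfs$ as (co)modules for the descent groupoid, which is also fine.
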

\begin{proof}
The actions $\wHfs \curvearrowleft \Hfs \curvearrowright \Hwfs$ each factor through a monoidal functor \[\Hfs \rightarrow \QCoh(\sfT \times_{\sfC} \sfT),\] that admits a fully faithful left adjoint by \eqref{Endomorphismensatz}. This induces a monoidal colocalization functor \beq \label{BiWhitColoc} \wHwfs \simeq  \wHfs \otimes_{\Hfs} \Hwfs \rightarrow  \QCoh(\sfC) \simeq \QCoh(\sfT) \otimes_{\QCoh(\sfT \times_{\sfC} \sfT)} \QCoh(\sfT),\eeq making \eqref{FiniteBiWhitAveraging} commute.
The last equivalence is by \cite{BFN}, using that $\sfT \rightarrow \sfC$ is proper and surjective. The left adjoint to \eqref{BiWhitColoc} is essentially surjective, because it sends $R \mapsto {}^{\chi}\Delta_1 * \Delta_1^{\chi}$. Therefore \eqref{BiWhitColoc} is an equivalence.
\end{proof}

\subsection{Centrality on the derived level} 
Proposition \ref{Centrality2} implies that Gaitsgory's functor through a monoidal functor $\Free_{\sfG}(\sfG) \rightarrow \Z(\Perv_{(I)}^{\waki}(\Fl))$. Here we extend this central structure to the derived level.

\begin{lemma} Gaitsgory's functor lifts to a monoidal functor to the center of the affine Hecke category
\beq \label{ZDerived} Z: \QCoh_\G(\G) \rightarrow \Z(\Hs).\eeq
\end{lemma}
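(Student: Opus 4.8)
The plan is to deform the statement of Proposition \ref{Centrality2} from the additive/abelian level to the full derived, ind-completed level. Recall that Proposition \ref{Centrality2} already gives a braided monoidal functor $Z: \Free_{\sfG}(\sfG) \rightarrow \Z(\Perv_{(I)}^{\waki}(\Fl))$, where $\Free_\sfG(\sfG)$ is the additive category of $\sfG$-equivariant free coherent sheaves on $\sfG$. First I would pass to the bounded homotopy category: since $\Perf_\sfG(\sfG) = \KFree_\sfG(\sfG)$ is the bounded homotopy category of $\Free_\sfG(\sfG)$, and $\Perv_{(I)}^{\waki}(\Fl)$ sits inside the stable category $\Shv_{(I)}(\Fl)$, the universal property of the homotopy category of an additive category (together with stability of the target) extends $Z$ to an exact monoidal functor $\Perf_\sfG(\sfG) \rightarrow \Shv_{(I)}(\Fl)$. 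One must check that this lands in the center; but centrality is a property of the monoidal functor that is inherited by the triangulated hull, since the braiding/centrality data (the isomorphisms $Z(A) * B \simeq B * Z(A)$ of Proposition \ref{Centrality}, together with the hexagon and monodromy compatibility \eqref{CentralityMonodromy}) are natural and hence extend to finite complexes. Then I would ind-complete: $\QCoh_\sfG(\sfG)$ is the ind-completion of $\Perf_\sfG(\sfG)$, so left Kan extension along $\Perf_\sfG(\sfG) \hookrightarrow \QCoh_\sfG(\sfG)$ produces a continuous monoidal functor $\QCoh_\sfG(\sfG) \rightarrow \Shv_{(I)}(\Fl)$, provided the composite $\Perf_\sfG(\sfG) \rightarrow \Shv_{(I)}(\Fl)$ preserves finite colimits (automatic) and the target is cocomplete (true). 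Monoidality of the ind-extension follows because the tensor products on both sides commute with filtered colimits and $\Perf$ is closed under the monoidal structure.

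The remaining point is that the ind-extended functor still factors through the Drinfeld center $\Z(\Hs)$, i.e. that the continuous functor $\QCoh_\sfG(\sfG) \rightarrow \Hs$ carries the centrality datum. Here I would argue as follows: the center $\Z(\Hs)$ is by definition the category of pairs $(c, \{\sigma_{c,B}\}_{B})$ with $\sigma_{c,B}: c * B \xrightarrow{\sim} B * c$ natural and hexagon-compatible; since $\Hs$ is rigid and compactly generated by the $\Delta_w$ (Lemma \ref{Compact}), it suffices to specify the $\sigma_{c, \Delta_w}$ compatibly, and for $c = Z(\cO_\sfG) = Z_0$ and its finite-complex relatives these are exactly the isomorphisms already produced. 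For a general object of $\QCoh_\sfG(\sfG)$, written as a filtered colimit of perfect complexes, the centrality isomorphisms are obtained by passing the colimit through $*\Delta_w$, which is continuous; naturality and the hexagon are checked on the generating perfect complexes where they hold by the extension of Proposition \ref{Centrality2}. Thus $Z$ lifts to $\QCoh_\sfG(\sfG) \rightarrow \Z(\Hs)$, monoidally.

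The step I expect to be the main obstacle is verifying that the centrality (braiding) data genuinely extends to the derived and ind level with all higher coherences intact, rather than just on the nose at the level of $\pi_0$/perverse objects. In the bounded homotopy category this is not too serious because, as the excerpt already notes (cf. the remark after the monoidality argument in Proposition \ref{ZProperties}, invoking exactness of nearby cycles), the relevant constraints are equalities of maps between perverse sheaves and so automatically propagate to complexes; but one should be a little careful that the hexagon identity and the compatibility \eqref{CentralityMonodromy} are formulated as commuting diagrams of natural transformations, so that their validity on a set of compact generators implies validity everywhere after continuous extension. A clean way to organize this, which I would adopt, is to note that $\Z(\Hs)$ for $\Hs$ a rigid monoidal presentable category is itself presentable and the forgetful functor $\Z(\Hs) \rightarrow \Hs$ preserves and reflects colimits, so producing a continuous monoidal functor into $\Z(\Hs)$ is equivalent to producing one into $\Hs$ together with a compatible $E_2$-central structure, and the $E_2$-structure is pinned down by its restriction to the compact generators. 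With that formalism in place the lemma follows formally from Proposition \ref{Centrality2} and the rigidity of $\Hs$.
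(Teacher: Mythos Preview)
Your overall strategy---extend Proposition \ref{Centrality2} to the derived level by passing to homotopy categories and then ind-completing---is the same as the paper's, but the paper executes it via a different intermediate category, and this difference is where your argument has a gap.

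You begin from $Z: \Free_\sfG(\sfG) \rightarrow \Z(\Perv_{(I)}^{\waki}(\Fl))$ and want to end at $\Z(\Hs)$. The difficulty is that neither of the abelian-level categories you invoke does both jobs at once: $\Perv_{(I)}^{\waki}(\Fl)$ is closed under convolution (so the braided monoidal structure and hexagon live there), but it does \emph{not} generate $\Hs$ under finite complexes and ind-completion, so the center of its homotopy category is not the center of $\Hs$. On the other hand, $\Perv_{(I)}(\Fl)$ (where the centrality isomorphisms of Proposition \ref{Centrality} live for general $B$) is \emph{not} closed under convolution, so the hexagon axiom cannot even be formulated there at the additive level. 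Your appeal to compact generation by the $\Delta_w$ does not resolve this: the hexagon for $B_1 = \Delta_v$, $B_2 = \Delta_w$ involves $\Delta_v * \Delta_w$, which need not be a standard object, so you are back to needing the coherent centrality data on a monoidal subcategory that generates $\Hs$. Your final paragraph correctly identifies this as the crux, but the proposed fix (presentability of $\Z(\Hs)$ and testing on generators) tells you how to \emph{recognize} a lift once you have one, not how to \emph{construct} the coherence data.

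The paper's maneuver is to introduce an auxiliary additive category $\cT$ generated by summands of $Z_\lambda * \Xi_w$ with $\Xi_w$ universally tilting. By Proposition \ref{ConvolveTilting} and centrality, $\cT$ is closed under convolution, so $\Free_\sfG(\sfG) \rightarrow \Z(\cT)$ is a braided monoidal functor between ordinary additive categories, where all coherences are automatic. Passing to bounded homotopy categories and quotienting by acyclics gives $\Perf_\sfG(\sfG) \rightarrow \Z(\K(\cT)/\A(\cT))$, and because tilting sheaves have vanishing higher $\Ext$, the functor $\K(\cT)/\A(\cT) \hookrightarrow \Hs$ is fully faithful with $\Ind(\K(\cT)/\A(\cT)) \simeq \Hs$. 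This last step is what makes the paper's choice of $\cT$ work where $\Perv^{\waki}$ would not: tiltings generate $\Hs$, Wakimoto-filtered sheaves do not. So the missing idea in your proposal is precisely the use of tilting sheaves as the monoidal additive model for $\Hs$.
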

\begin{proof} 
Let $\cT \subset \Perv_{(I)}(\Fl)$
be the full additive subcategory generated by finite direct sums and summands of $Z_{\lambda} * \Xi_w$, where $Z_{\lambda}$ is a universal central sheaf, and $\Xi_w$ is universally tilting. Then $\cT$ inherits a monoidal structure, compatible with its inclusion into $\Hs$. The centrality isomorphism yields a lift
$$Z: \Free_\G(\G) \rightarrow \Z(\cT).$$
% By Proposition A.1.13 of \cite{CD}, 
Passing to the bounded homotopy category gives a natural monoidal functor $\K(\cT) \rightarrow \H$, and a compatible map 
\[Z: \K(\Free_\G(\G)) \simeq \Perf_{\sfG}(\sfG) \rightarrow \Z(\K(\cT)).\]

Write $\A(\cT) \subset \K(\cT)$ for the full subcategory of acyclic complexes. The Verdier quotient $\K(\cT) /\A(\cT)$ inherits a monoidal structure, a fully faithful monoidal functor $\K(\cT) / \A(\cT) \rightarrow \H$, and a compatible map
$$Z: \Perf_\G(\G) \rightarrow \Z(\K(\cT) / \A(\cT)).$$
Ind-completing $\Ind(\K(\cT) / \A(\cT)) \simeq \Hs$, giving the desired functor
\eqref{ZDerived}.
\end{proof}

\subsection{Construction of the functor}
Now we construct the bi-Whittaker functor, using that the center of an algebra maps to the endomorphisms of any module for that algebra.

The commuting actions $\Hfs \curvearrowright \Hwfs \simeq \QCoh(\sfT) \curvearrowleft \QCoh(\sfC)$ endow $\wHws \simeq \wHs \otimes_{\Hfs} \Hwfs$ with a $\QCoh(\sfC)$-linear structure.

\begin{proposition}\label{BiWhitFunctor}
There exists monoidal $\QCoh(\sfC)$-linear functor \beq\label{bW} {}_{\chi}F_{\chi}: \QCoh(\sfG/\sfG) \rightarrow \wHws,\eeq such that the following commutes
\beq \label{AvbW} \begin{tikzcd}
\QCoh(\sfG/\sfG) \arrow[d, "Z"'] \arrow[r, "{}_{\chi}F_{\chi}"] & \wHws \arrow[d, " - * {}^{\chi}\Delta_1"]    \\
\Hs  \arrow[r, "{}^{\chi}\Delta_1* - "']  & \wHs.   \\
\end{tikzcd}\eeq
\end{proposition}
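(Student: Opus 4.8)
\textbf{Proof plan for Proposition \ref{BiWhitFunctor}.} The plan is to produce the functor $_{\chi}F_{\chi}$ using the general principle that the center of a monoidal category maps to the (central) endomorphisms of any module category over it, and then to trace through the compatibilities. First I would invoke the monoidal functor $Z : \QCoh_{\sfG}(\sfG) \rightarrow \Z(\Hs)$ constructed in the previous lemma. Since $\wHs$ is a module category over $\Hs$, there is a canonical monoidal functor $\Z(\Hs) \rightarrow \End_{\Hs}(\wHs)$, sending a central object to convolution with it (on either side, the two agreeing up to the central structure). Composing, and using the identification $\End_{\Hs}(\wHs) \simeq \wHws$ recorded at the start of the section, yields a monoidal functor $_{\chi}F_{\chi} : \QCoh(\sfG/\sfG) \rightarrow \wHws$. (I am writing $\QCoh(\sfG/\sfG)$ and $\QCoh_{\sfG}(\sfG)$ interchangeably, as the paper does.)

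Next I would verify the $\QCoh(\sfC)$-linearity. The $\QCoh(\sfC)$-module structure on $\wHws \simeq \wHs \otimes_{\Hfs} \Hwfs$ comes, via Lemma \ref{FiniteBiwhit}, from the right action of the finite bi-Whittaker category $\wHwfs \simeq \QCoh(\sfC)$, i.e.\ from convolving on the far right with objects pulled back from $\sfC$. The functor $_{\chi}F_{\chi}$ lands in the image of $\Z(\Hs)$, and central objects convolve the same way from the left and the right; moreover $Z$ is itself $\QCoh(\sfC)$-linear in the appropriate sense once one remembers that the composite $\QCoh(\sfG/\sfG) \xrightarrow{Z} \Z(\Hs)$ factors the characteristic polynomial map $\sfG/\sfG \to \sfC$ through the monodromy/center. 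Concretely, the $\QCoh(\sfC)$-action on the target of $Z$ (inside $\Z(\Hs)$) is pulled back along $\sfG/\sfG \to \sfC$, which is exactly the $\QCoh(\sfC)$-linear structure on $\QCoh(\sfG/\sfG)$ used in the statement. So $_{\chi}F_{\chi}$ is $\QCoh(\sfC)$-linear.

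Finally I would check the commutativity of the square \eqref{AvbW}. Going down then right: $Z$ sends an object of $\QCoh(\sfG/\sfG)$ to a central sheaf in $\Hs$, and then Whittaker-averaging applies ${}^{\chi}\Delta_1 * -$. Going right then down: $_{\chi}F_{\chi}$ produces the corresponding central endofunctor of $\wHs$, and then $- * {}^{\chi}\Delta_1 : \wHws = \End_{\Hs}(\wHs) \to \wHs$ evaluates this endofunctor on the distinguished object obtained by averaging the unit, equivalently restricts the endofunctor along the generator. By construction the central endofunctor attached to $Z(\mathcal{F})$ is precisely ``convolve with $Z(\mathcal{F})$'', so evaluating it on ${}^{\chi}\Delta_1$ (the image of the unit under averaging) gives $Z(\mathcal{F}) * {}^{\chi}\Delta_1 \simeq {}^{\chi}\Delta_1 * Z(\mathcal{F})$, using centrality of $Z(\mathcal{F})$. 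This matches the clockwise composite, so the square commutes; the coherence of this identification with the monoidal structures follows from the fact that all the functors involved ($Z$, the evaluation functor $\End_{\Hs}(\wHs) \to \wHs$, and the averaging functor) are monoidal or module functors, and $Z$ is central.

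\textbf{Anticipated main obstacle.} The genuinely delicate point is not the existence of the plain functor but the \emph{coherence} of all these monoidal and linear structures at the $\infty$-categorical level: one must ensure the monoidal structure on $_{\chi}F_{\chi}$ coming from $Z$ being a monoidal functor to $\Z(\Hs)$ is compatible with the monoidal structure on $\End_{\Hs}(\wHs)$, and that the $\QCoh(\sfC)$-linearity is witnessed by a coherent datum rather than just an equivalence of underlying functors. I expect this is handled cleanly by working throughout with the rigidity of $\Hfs$ and $\Hs$ (Lemma \ref{Compact}), which makes all module categories dualizable and lets one identify $\End$ with a relative tensor product functorially, so that the needed coherences are automatic; the write-up should therefore emphasize these rigidity inputs rather than constructing coherences by hand.
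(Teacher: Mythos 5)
Your construction of the functor itself, and your verification of the commutativity of~\eqref{AvbW}, match the paper's argument: compose $Z : \QCoh_{\sfG}(\sfG) \rightarrow \Z(\Hs)$ with the tautological map $\Z(\Hs) \rightarrow \End_{\Hs}(\wHs) \simeq \wHws$, then unwind the definitions and use centrality. So far so good.

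The gap is in the $\QCoh(\sfC)$-linearity. You assert that the $\QCoh(\sfC)$-module structure on the image of $_{\chi}F_{\chi}$ ``is pulled back along $\sfG/\sfG \rightarrow \sfC$, which is exactly the $\QCoh(\sfC)$-linear structure on $\QCoh(\sfG/\sfG)$ used in the statement.'' But that sentence merely restates what must be proved. The $\QCoh(\sfC)$-module structure on $\wHws$ is defined via the action of $\wHwfs \simeq \QCoh(\sfC)$ on the rightmost factor $\Hwfs$ of $\wHws \simeq \wHfs \otimes_{\Hfs} \Hs \otimes_{\Hfs} \Hwfs$ (Lemma~\ref{FiniteBiwhit}), whereas the $\QCoh(\sfC)$-structure on $\QCoh(\sfG/\sfG)$ is defined by the characteristic polynomial map. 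These are two a priori unrelated structures on source and target, and the claim that $_{\chi}F_{\chi}$ intertwines them does not follow formally from $_{\chi}F_{\chi}$ landing in the image of $\Z(\Hs)$, nor from the left/right symmetry of central objects. It requires an actual argument connecting the right monodromy of central sheaves to the Steinberg section $\sfG \rightarrow \sfC$.

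The paper closes exactly this gap by a two-step manoeuvre. First, Proposition~\ref{FunctorOpen}(b) gives that $Z$ factors through a $\QCoh(\sfT)$-linear functor $\QCoh(\sfG/\sfG \times_{\sfC} \sfT) \rightarrow \Hs$, and combining with the commuting square~\eqref{AvbW} yields $\QCoh(\sfC)$-linearity of the \emph{postcomposed} functor ${}_{\chi}F_{\chi}(-) * {}^{\chi}\Delta_1 : \QCoh(\sfG/\sfG) \rightarrow \wHs$. Second, one retracts: the further composite ${}_{\chi}F_{\chi}(-) * {}^{\chi}\Delta_1 * \Delta_1^{\chi}$ is also $\QCoh(\sfC)$-linear, and it is isomorphic to a direct sum $\bigoplus_{W^{\fnt}} {}_{\chi}F_{\chi}$, from which $\QCoh(\sfC)$-linearity of $_{\chi}F_{\chi}$ itself follows. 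Your ``anticipated main obstacle'' about $\infty$-categorical coherences is not the delicate point here; the real work is precisely the identification of $\QCoh(\sfC)$-module structures, which your proof plan omits.
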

\begin{proof}
The action of $\wHs \curvearrowleft \Hs$ induces a monoidal functor 
\[{}_{\chi}F_{\chi}: \QCoh(\sfG/\sfG) \xrightarrow{\eqref{ZDerived}} \Z(\Hs) \rightarrow \End_{\Hs}(\wHs) \simeq \wHws.\] Moreover \eqref{AvbW} commutes because there is an isomorphism \[{}_{\chi}F_{\chi}(-) * {}^{\chi}\Delta_1 * - \simeq  {}^{\chi}\Delta_1 *- * Z(-)\simeq {}^{\chi}\Delta_1 * Z(-) *- ,\]
of bi-functors $\QCoh(\sfG/\sfG) \times \Hs \rightarrow \wHs$.

Proposition \ref{FunctorOpen}\ref{FunctorOpenR} implies that $Z$ factors through a $\QCoh(\sfT)$-linear functor $\QCoh(\sfG/\sfG \times_{\sfC} \sfT) \rightarrow \Hs$. Therefore ${}_{\chi}F_{\chi}(-) * {}^{\chi}\Delta_1$ is $\QCoh(\sfC)$-linear by \eqref{AvbW}. Thus ${}_{\chi}F_{\chi}(-) * {}^{\chi}\Delta_1 * \Delta_1^{\chi}$ is also $\QCoh(\sfC)$-linear. But ${}_{\chi}F_{\chi}(-) * {}^{\chi}\Delta_1 * \Delta_1^{\chi} \simeq \bigoplus_{W^{\fnt}} {}_{\chi}F_{\chi}$ is a direct sum of copies of ${}_{\chi}F_{\chi}$. Hence ${}_{\chi}F_{\chi}$ is also $\QCoh(\sfC)$-linear.
\end{proof}

\subsection{Compact generators}
Now we show that the bi-Whittaker category is compactly generated by the images of vector bundles on $\sfG/\sfG$.\footnote{Beware that ${}_{\chi}F_{\chi}(V_{\lambda} \otimes \cO)$ is different from ${}^{\chi}\Delta_1 * Z_{\lambda} * \Delta_1^{\chi}$, obtained by bi-Whittaker averaging the central sheaf.}

\begin{proposition}\label{BiwhitGenerators}
The bi-Whittaker category $\wHws$ is compactly generated by ${}_{\chi}F_{\chi}(V_{\lambda} \otimes \cO)$ for $\lambda \in \Lambda^+$.
\end{proposition}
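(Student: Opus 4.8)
The claim is that $\wHws$ is compactly generated by the objects ${}_{\chi}F_{\chi}(V_{\lambda} \otimes \cO)$ for $\lambda \in \Lambda^+$. The strategy is to reduce this to the already-established generation statements for the one-sided Whittaker categories, using the adjunction $({}^{\chi}\Delta_1 * -) \dashv (\Delta_1^{\chi} * -)$ and the fact, recorded in the proof of Proposition \ref{BiWhitFunctor}, that ${}_{\chi}F_{\chi}(-) * {}^{\chi}\Delta_1 * \Delta_1^{\chi} \simeq \bigoplus_{W^{\fnt}} {}_{\chi}F_{\chi}(-)$. First I would note that the objects ${}_{\chi}F_{\chi}(V_\lambda \otimes \cO)$ are compact: ${}_{\chi}F_{\chi}$ is a monoidal functor between rigid (or at least compactly generated) monoidal categories sending the unit to the unit, and $V_\lambda \otimes \cO$ is a vector bundle on $\sfG/\sfG$, hence dualizable, so its image is dualizable and therefore compact in $\wHws$. (One should check $\wHws$ is rigid, or that its unit ${}^{\chi}\Delta_1 \otimes \Delta_1^{\chi}$ is compact, which follows from compactness of $\Xi$ and Lemma \ref{Compact}.)

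For generation, the key move is: an object $M \in \wHws$ is zero if and only if $M * {}^{\chi}\Delta_1 \in \wHs$ is zero, since ${}^{\chi}\Delta_1 * - : \Hs \to \wHs$ together with its left adjoint $\Delta_1^{\chi} * -$ exhibit $\wHs$ as a colocalization of $\Hs$, and the endofunctor $- * {}^{\chi}\Delta_1 * \Delta_1^{\chi}$ of $\wHws$ is (up to the $W^{\fnt}$-many summands) the identity — in particular it is conservative. So it suffices to show that if $M \in \wHws$ has $\RHom_{\wHws}({}_{\chi}F_{\chi}(V_\lambda \otimes \cO), M) \simeq 0$ for all $\lambda \in \Lambda^+$, then $M \simeq 0$. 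Using $\QCoh(\sfC)$-linearity of ${}_{\chi}F_{\chi}$ (Proposition \ref{BiWhitFunctor}) and the commuting square \eqref{AvbW}, one translates a $\RHom$ in $\wHws$ out of ${}_{\chi}F_{\chi}(V_\lambda \otimes \cO)$ into a $\RHom$ in $\wHs$ out of ${}^{\chi}\Delta_1 * Z_\lambda$ (more precisely out of $({}^{\chi}\Delta_1 * -)$ applied to the relevant central sheaf convolved appropriately), together with the $\Hwfs \simeq \QCoh(\sfT)$-module structure. The point is that as $\lambda$ ranges over $\Lambda^+$ and one further tensors by the $\QCoh(\sfT)$-action (equivalently, convolves on the right by Wakimoto sheaves $W_\mu$), the resulting objects ${}^{\chi}\Delta_1 * Z_\lambda * W_\mu$ are exactly the compact generators of $\wHs$ identified in Proposition \ref{AutGen}\ref{AutGen2}. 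Hence vanishing of all these $\RHom$'s forces $M * {}^{\chi}\Delta_1 \simeq 0$ in $\wHs$, and then $M \simeq 0$ by conservativity of $- * {}^{\chi}\Delta_1$.

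I expect the main obstacle to be bookkeeping the module structures correctly: one must be careful that $\RHom_{\wHws}$ out of ${}_{\chi}F_{\chi}(V_\lambda \otimes \cO)$, after applying $- * {}^{\chi}\Delta_1$ and using the splitting $- * {}^{\chi}\Delta_1 * \Delta_1^{\chi} \simeq \bigoplus_{W^{\fnt}}(-)$, really does compute $\RHom_{\wHs}({}^{\chi}\Delta_1 * Z_\lambda, M * {}^{\chi}\Delta_1)$ up to a free $R$-module twist, and that allowing $\lambda$ to vary and then acting by $\QCoh(\sfT) \simeq \Hwfs$ on the right (i.e. the additional Wakimoto convolutions) genuinely exhausts the generating set of Proposition \ref{AutGen}. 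Concretely, I would invoke: (i) compactness of ${}_{\chi}F_{\chi}(V_\lambda \otimes \cO)$ via dualizability; (ii) the square \eqref{AvbW} and $\QCoh(\sfC)$-linearity to compute Homs after Whittaker averaging on one side; (iii) Proposition \ref{AutGen}\ref{AutGen2} to conclude that $\{ {}^{\chi}\Delta_1 * Z_\lambda * W_\mu \}_{\lambda,\mu \in \Lambda^+}$ generate $\wHs$; and (iv) conservativity of $- * {}^{\chi}\Delta_1 : \wHws \to \wHs$, which itself follows from $- * {}^{\chi}\Delta_1 * \Delta_1^{\chi} \simeq \bigoplus_{W^{\fnt}} \id$. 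Assembling (i)--(iv) gives both compactness and generation, completing the proof.
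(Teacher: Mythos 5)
Your approach is genuinely different from the paper's, and it has a real gap that the paper's argument avoids. The paper does not go through the one-sided Whittaker category at all. Instead, it starts from the fact that $\Hs$ is compactly generated by $\Delta_w * \Delta_\lambda * \Delta_v$ ($w,v \in W^{\fnt}$, $\lambda \in \Lambda^+$) for support reasons, uses Lemma \ref{WhitFiniteLem} to collapse the bi-averages of these to ${}^\chi\Delta_1 * \Delta_\lambda * \Delta_1^\chi$, and then runs a stratification argument indexed by $\Lambda^+$: the Wakimoto filtration on $Z_\lambda$ shows that ${}_\chi F_\chi(V_\lambda \otimes \cO)$ agrees with ${}^\chi\Delta_1 * \Delta_\lambda * \Delta_1^\chi$ modulo the subcategory $\wHws^{<\lambda}$, and inducting over the dominance order completes the proof. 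This is direct and avoids any invocation of Proposition \ref{AutGen}.

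The gap in your plan is step (iii) combined with (ii). You want to conclude $M * {}^\chi\Delta_1 = 0$ from the vanishing of $\RHom_{\wHws}({}_\chi F_\chi(V_\lambda \otimes \cO), M)$, by checking vanishing against the generators ${}^\chi\Delta_1 * Z_\lambda * W_\mu$ of $\wHs$ from Proposition \ref{AutGen}\ref{AutGen2}. But the Wakimoto twist by $W_\mu$ does not correspond to a module action that is visible on $\wHws$: the paper only establishes a $\QCoh(\sfC)$-linear structure on $\wHws$ (via $\Hwfs \simeq \QCoh(\sfT) \curvearrowleft \QCoh(\sfC)$), not a $\QCoh(\sfT)$- or $\Hwfs$-linear one (as you tacitly assume when you write "the $\Hwfs \simeq \QCoh(\sfT)$-module structure"). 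Moreover even on $\wHs$, the $\Hwfs \simeq \QCoh(\sfT)$ right-module action (right monodromy/Soergel bimodule action) is not the Wakimoto convolution $- * W_\mu$, since $W_\mu$ has coinciding left and right monodromies. So $\RHom_{\wHws}({}_\chi F_\chi(V_\lambda), M) = 0$ for all $\lambda$ gives you, at best, $\RHom_{\wHs}({}^\chi\Delta_1 * Z_\lambda, M * {}^\chi\Delta_1) = 0$ after faithfully flat descent along $\sfT \to \sfC$, which is not enough to conclude $M * {}^\chi\Delta_1 = 0$ from Proposition \ref{AutGen}. To make your reduction work you would need to show that the left adjoint of $- * {}^\chi\Delta_1 : \wHws \to \wHs$ sends ${}^\chi\Delta_1 * Z_\lambda * W_\mu$ into the thick subcategory generated by ${}_\chi F_\chi(V_\lambda \otimes \cO)$, which is not addressed and is not obviously true. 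A smaller issue: you invoke $- * {}^\chi\Delta_1 * \Delta_1^\chi \simeq \bigoplus_{W^\fnt} \id$ on all of $\wHws$, but the paper only records this isomorphism for the image of ${}_\chi F_\chi$; the general statement requires identifying ${}^\chi\Delta_1 * \Delta_1^\chi$ with $p_*\cO_\sfT$ and using the $\QCoh(\sfC)$-linearity of $\wHws$ (this part is fixable and faithfully flat descent along $p: \sfT \to \sfC$ does give conservativity).
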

\begin{proof}
For support reasons, $\Hs$ is compactly generated by $\Delta_w * \Delta_{\lambda} * \Delta_v$ for $w, v \in W^{\fnt}$ and $\lambda \in \Lambda^+$. By Lemma \ref{WhitFiniteLem}, bi-Whittaker averaging identifies ${}^{\chi}\Delta_1 * \Delta_w * \Delta_{\lambda} * \Delta_v * \Delta_1^{\chi} \simeq {}^{\chi}\Delta_1 * \Delta_{\lambda} * \Delta_1^{\chi}$. Therefore $\wHws \simeq \wHs^{\fnt} \otimes_{\Hfs} \Hs \otimes_{\Hfs} \Hwfs$ is compactly generated by ${}^{\chi}\Delta_1 * \Delta_{\lambda} * \Delta_1^{\chi}$ for $\lambda \in \Lambda^+$. Let $\wHws^{\leq \lambda}$ and $\wHws^{< \lambda}$ be the full subcategories generated by ${}^{\chi}\Delta_1 * \Delta_{\mu} * \Delta_1^{\chi}$ for $\mu \leq \lambda$ and $\mu < \lambda$. Then ${}_{\chi}F_{\chi}(V_{\lambda} \otimes \cO)$ generates $\wHws^{\leq \lambda}/\wHws^{< \lambda}$. Therefore $\wHws$ is compactly generated by ${}_{\chi}F_{\chi}(V_{\lambda} \otimes \cO)$.
\end{proof}

\subsection{Proof of the bi-Whittaker equivalence} 
Finally we prove that the bi-Whittaker functor is an equivalence by pulling back along $\sfT \rightarrow \sfC$. 

\begin{theorem}\label{BiWhitThm}
The monoidal functor ${}_{\chi}F_{\chi}: \QCoh(\sfG/\sfG) \rightarrow \wHws$ is an equivalence.
\end{theorem}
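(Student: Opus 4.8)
The plan is to deduce Theorem~\ref{BiWhitThm} from the Whittaker equivalence (Theorem~\ref{Main}) by faithfully flat descent along the finite cover $\sfT \to \sfC$. The key structural input is that the bi-Whittaker functor ${}_{\chi}F_{\chi}$ is $\QCoh(\sfC)$-linear (Proposition~\ref{BiWhitFunctor}), so that base changing along $\pi^*\colon \QCoh(\sfC) \to \QCoh(\sfT)$ relates it to a functor that is already known to be an equivalence. First I would record that $\sfT \to \sfC$ is finite, flat, and surjective (indeed $\sfT \to \sfG^{\sct}/\!\!/\sfG^{\sct}$ is finite flat by Chevalley, and $\sfZ$ acts on everything compatibly), so that $\QCoh(\sfC) \to \QCoh(\sfT)$ is comonadic — equivalently, $\QCoh(\sfT)$ generates $\QCoh(\sfC)$ under colimits and descent holds for module categories over it. Concretely, for any $\QCoh(\sfC)$-linear category $\mathscr{C}$ the base change $\mathscr{C} \otimes_{\QCoh(\sfC)} \QCoh(\sfT)$ recovers $\mathscr{C}$ as the totalization of the \v{C}ech cobar complex, and a $\QCoh(\sfC)$-linear functor between two such categories is an equivalence as soon as its base change along $\pi^*$ is.

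Next I would compute the base change of ${}_{\chi}F_{\chi}$ along $\pi^*$. On the spectral side, $\QCoh(\sfG/\sfG) \otimes_{\QCoh(\sfC)} \QCoh(\sfT) \simeq \QCoh(\sfG/\sfG \times_{\sfC}\sfT) \simeq \QCoh(\widetilde{\sfG}^{\mathrm{aff}}/\!\!/ \cdots)$; more to the point, using Proposition~\ref{PushStructureSheaf}, which identifies $\widetilde{\sfG} \to \sfG \times_{\sfC} \sfT$ as having structure sheaf pushing forward to the structure sheaf, the relevant base change is $\QCoh_{\sfG}(\widetilde{\sfG})$. On the automorphic side, $\wHws \otimes_{\QCoh(\sfC)} \QCoh(\sfT) \simeq \wHs$ by definition of the module structures and the commuting square \eqref{AvbW}: tensoring the bi-Whittaker category over $\sfC$ with $\sfT$ along the ``right $\sfT$'' leg un-averages on that side, leaving the one-sided Whittaker category. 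Under these identifications, the commuting square \eqref{AvbW} together with Proposition~\ref{FunctorOpen}\ref{FunctorOpenR} (which says $Z = F$ factors $\QCoh(\sfT)$-linearly through $\QCoh_{\sfG}(\widetilde{\sfG})$) shows that the base-changed functor is precisely ${}_{\chi}F\colon \QCoh_{\sfG}(\widetilde{\sfG}) \to \Shv_{(I,\chi)}(\Fl)$, which is an equivalence by Theorem~\ref{Main}.

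With the base change identified as an equivalence, descent formally upgrades this: since ${}_{\chi}F_{\chi}$ is $\QCoh(\sfC)$-linear and becomes an equivalence after the faithfully flat (comonadic) base change $\QCoh(\sfC) \to \QCoh(\sfT)$, it is itself an equivalence. I would spell this out by noting that fully faithfulness can be checked after base change — for compact generators $c, c'$, the mapping spectrum $\RHom(c,c')$ is a $\QCoh(\sfC)$-module, hence detected on $\QCoh(\sfT)$-points, i.e. after $\otimes_{\QCoh(\sfC)}\QCoh(\sfT)$ — and essential surjectivity follows from Proposition~\ref{BiwhitGenerators}, which provides compact generators ${}_{\chi}F_{\chi}(V_\lambda \otimes \cO)$ of $\wHws$ lying in the image, matched against the compact generators $V_\lambda \otimes \cO$ of $\QCoh(\sfG/\sfG)$ from Proposition~\ref{SpecGen}\ref{SpecGen2}.

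The main obstacle I anticipate is making the identification ``$\wHws \otimes_{\QCoh(\sfC)} \QCoh(\sfT) \simeq \wHs$ carries ${}_{\chi}F_{\chi}$ to ${}_{\chi}F$'' fully rigorous, i.e. bookkeeping the various module structures so that the two legs of the fiber products (the ``$\sfG/\sfG$ leg'' versus the ``$\sfT$ leg'' coming from right monodromy) are matched correctly, and checking that the descent of $\QCoh(\sfC)$-module categories along $\sfT \to \sfC$ genuinely applies in the derived/DG setting — this needs $\sfT \to \sfC$ to be of finite Tor-dimension and the relevant categories to be dualizable over $\QCoh(\sfC)$, which should follow from rigidity of $\Hfs$ (Lemma~\ref{Compact}) and of $\QCoh(\sfC)$, but deserves care. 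A secondary subtlety is confirming that the $\QCoh(\sfC)$-linear structure on $\wHws$ used in Proposition~\ref{BiWhitFunctor} is the one for which $\wHfs \simeq \QCoh(\sfT)$ is the base change (as opposed to some twist), which again comes down to tracing through \eqref{FiniteBiWhitAveraging} and the Endomorphismensatz \eqref{Endomorphismensatz}.
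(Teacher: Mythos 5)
Your overall strategy — exploit $\QCoh(\sfC)$-linearity of ${}_\chi F_\chi$ and base change along the faithfully flat cover $\sfT \to \sfC$ to reduce to Theorem~\ref{Main} — is exactly the paper's route, and your handling of essential surjectivity via Proposition~\ref{BiwhitGenerators} and the reduction of fully faithfulness to the $\sfT$-cover are both correct. However, there is a genuine gap in the middle step: your claimed identification
\[
\wHws \underset{\QCoh(\sfC)}{\otimes} \QCoh(\sfT) \simeq \wHs
\]
is false, not merely delicate. Unwinding the module structures gives
\[
\wHws \underset{\QCoh(\sfC)}{\otimes} \QCoh(\sfT) \simeq \wHfs \underset{\Hfs}{\otimes} \Hs \underset{\Hfs}{\otimes} \QCoh(\sfT \times_\sfC \sfT),
\]
and the natural comparison map to $\wHs \simeq \wHfs \otimes_{\Hfs} \Hs$ is induced by $\Xi \otimes -\colon \QCoh(\sfT\times_\sfC \sfT) \hookrightarrow \Hfs$, which by the Endomorphismensatz~\eqref{Endomorphismensatz} is a \emph{fully faithful colocalization}, not an equivalence. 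That this cannot be an equivalence is already visible on the spectral side: granting the theorem, the base change of $\wHws \simeq \QCoh(\sfG/\sfG)$ would be $\QCoh(\sfG/\sfG\times_\sfC\sfT)$, and the map to $\wHs \simeq \QCoh(\sfB/\sfB)$ is $p^*$ for $p\colon \sfB/\sfB \to \sfG/\sfG\times_\sfC\sfT$, which by Proposition~\ref{PushStructureSheaf} satisfies $p_*\cO \simeq \cO$ and so is fully faithful, but $p$ is birational and proper with positive-dimensional fibers, so $p^*$ is not essentially surjective.

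The fix, which is what the paper does, is to replace your ``$\simeq$'' on each side with a fully faithful embedding and run a two-out-of-three argument. Concretely, one constructs the commuting square
\[
\begin{tikzcd}[column sep=large]
\QCoh(\sfG/\sfG)\underset{\QCoh(\sfC)}{\otimes}\QCoh(\sfT) \arrow[r, "p^*"] \arrow[d, "{}_\chi F_\chi \otimes \id_\sfT"'] & \QCoh(\sfB/\sfB) \arrow[d, "{}_\chi F"] \\
\wHws\underset{\QCoh(\sfC)}{\otimes}\QCoh(\sfT) \arrow[r, "\phi"'] & \wHs
\end{tikzcd}
\]
with $\phi$ induced by $\Xi\otimes-$. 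Since $p^*$, ${}_\chi F$, and $\phi$ are each fully faithful (the first by Proposition~\ref{PushStructureSheaf}, the second by Theorem~\ref{Main}, the third by~\eqref{Endomorphismensatz}), the composite ${}_\chi F\circ p^*$ is fully faithful, hence so is $\phi\circ({}_\chi F_\chi\otimes\id_\sfT)$, and hence so is ${}_\chi F_\chi\otimes\id_\sfT$. One then concludes by faithful flatness of $\sfT\to\sfC$ exactly as you say, and essential surjectivity from Proposition~\ref{BiwhitGenerators}. So your proposal is salvageable and in fact very close to the paper, but as written it asserts an equivalence where only a fully faithful embedding holds, and the argument must be rerouted around that point.
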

\begin{proof}
By the above Lemma \ref{BiwhitGenerators} it remains to check that ${}_{\chi}F_{\chi}$ is fully faithful. Let \[\phi: \wHws \otimes_{\QCoh(\sfC)} \QCoh(\sfT) \simeq \wHs \otimes_{\Hfs} \QCoh(\sfT) \otimes_{\QCoh(\sfC)} \QCoh(\sfT) \rightarrow \wHs \simeq \wHs \otimes_{\Hfs} \Hfs\] be induced by $\Xi \otimes -: \QCoh(\sfT \times_{\sfC} \sfT) \rightarrow \Hfs$, the left adjoint to Soergel's functor.

By Proposition \ref{BiWhitFunctor}, there is a commutative diagram
\[\begin{tikzcd}[column sep = large, row sep = huge]
\QCoh(\sfG/\sfG) \arrow[d, "{}_{\chi}F_{\chi}"'] \arrow[r] & \QCoh(\sfG/\sfG \times_{\sfC} \sfT) \arrow[d, "{}_{\chi}F_{\chi} \otimes \id_{\sfT}"'] \arrow[r, "p^*"]  &  \QCoh(\sfB/\sfB) \arrow[d, "{}_{\chi}F"] \\
\wHws \arrow[r] & \wHws \otimes_{\QCoh(\sfC)} \QCoh(\sfT) \arrow[r, "\phi"'] & \wHs,\\
\end{tikzcd}\]
\vspace{-1cm}   

\noindent in which
\begin{enumerate}[label=(\roman*)]
\item $p^*$ is fully faithful by Proposition \ref{PushStructureSheaf},\footnote{Strictly speaking, we only checked that the underived pullback functor is fully faithful on the abelian category of coherent sheaves. This suffices for the present argument because $\sfG/\sfG \times_{\sfC} \sfT$ is the quotient of an affine variety by a reductive group, and maps between Whittaker averaged central sheaves are concentrated in degree 0.}
\item ${}_{\chi}F$ is fully faithful by Theorem \ref{Main},
\item and $\phi$ is fully faithful by Equation \eqref{Endomorphismensatz}.
\end{enumerate}
Therefore ${}_{\chi}F_{\chi} \otimes \id_{\sfT}$ is fully faithful. Hence ${}_{\chi}F_{\chi}$ is fully faithful, by faithful flatness of $\sfT \rightarrow \sfC$.
\end{proof}

\begin{remark} As written, our proof of the bi-Whittaker equivalence uses the Whittaker equivalence as an input. However, one should view the bi-Whittaker equivalence as an easier assertion, for the basic reason that $\sfG$ is affine, unlike $\widetilde{\sfG}$, and one already has the central sheaves. For this reason, we note that one can indeed prove Theorem \ref{BiWhitThm} directly using only the central functor and the order of vanishing calculations, but without constructing the functor $\QCoh(\sfB/\sfB) \rightarrow \wHs$. In particular, one can bypass the arguments concerning the Drinfeld-Pl\"ucker formalism. \end{remark}

\end{document}